\documentclass[11pt,a4paper]{article}

\usepackage[utf8]{inputenc}
\usepackage[T1]{fontenc}
\usepackage[english]{babel}
\usepackage{amsmath,amssymb,amsthm,mathtools}
\usepackage{booktabs}
\usepackage{longtable}
\usepackage{array}
\usepackage{enumitem}
\usepackage{geometry}
\usepackage{xcolor}
\usepackage[hidelinks]{hyperref}

\setlist{nosep}
\newtheorem{theorem}{Theorem}[section]
\newtheorem{lemma}[theorem]{Lemma}
\newtheorem{proposition}[theorem]{Proposition}
\newtheorem{corollary}[theorem]{Corollary}
\theoremstyle{definition}
\newtheorem{definition}[theorem]{Definition}
\newtheorem{assumption}[theorem]{Assumption}
\newtheorem{model}[theorem]{Model Law}
\newtheorem{conjecture}[theorem]{Conjecture}
\theoremstyle{remark}
\newtheorem{remark}[theorem]{Remark}

\newcommand{\RR}{\mathbb{R}}
\newcommand{\ZZ}{\mathbb{Z}}
\newcommand{\NN}{\mathbb{N}}
\newcommand{\EE}{\mathcal{E}}

\newcommand{\FF}{\mathcal{F}}
\newcommand{\GG}{\Gamma}
\newcommand{\TT}{\mathcal{T}}
\newcommand{\MM}{\mathcal{M}}
\newcommand{\dd}{\mathrm{d}}
\newcommand{\id}{\mathrm{id}}
\newcommand{\codepath}[1]{\nolinkurl{#1}}
\newcommand{\norm}[1]{\left\lVert #1 \right\rVert}
\DeclareMathOperator{\rank}{rank}

\DeclareMathOperator{\Hol}{Hol}

\title{\textbf{Fractal Algebraic Topology of Semantic Computation}\\
\large A Peer-Review-Oriented Formalization of the SSTD/BrainiaK Concept Bundle}
\author{Jean-Philippe Garnier\\
\small BrainiaK Research}
\date{June 2026}

\begin{document}
\maketitle

\begin{abstract}
This manuscript develops material from the internal French research notes
\emph{Traite de Topologie Algebrique Fractale} into an academic
manuscript.
The editorial rule is strict: implementation names are not used as
mathematical proofs, analogies are not promoted to theorems, and every
formal result is either proved from explicit assumptions or downgraded to
a model law, conjecture, or empirical claim.  The central object is
$T^n$, a finite heterogeneous concept container formalized as a section
of a product bundle whose slots include an empirical sensorimotor base
$\RR^{14}$, grammatical fibres, polarity, intensity, vision and audition
slots, an SSTD spectral slot, a refined compositional fibre, and auxiliary
tool/metric/axis/hint slots.  We prove elementary structural results
about product-bundle representation, heterogeneous GCM metrics, and
continuity of componentwise operations.  We then give conditional
results for Frobenius-inspired crystal composition, Gamma/CNS
curvature-Hopf modelling, Kalman convergence, SSTD bundle morphisms, and
SpiderR flat-connection idealizations.  Each conditional result includes
its assumptions, proof status, implementation correspondence, and the
boundary between mathematics, model assumptions, and empirical evidence.
\end{abstract}

\noindent\textbf{Keywords:}
semantic representation, product bundle, heterogeneous metric,
compositional semantics, SSTD, BrainiaK, Kalman filtering, Hopf
bifurcation, Frobenius algebra, flat connection.

\section*{Acknowledgements}

The author thanks Amir Shashin Hedayat for careful reading, editorial
corrections, and comments on presentation.  This acknowledgement does
not imply co-authorship or responsibility for the mathematical claims of
the manuscript.

\section{Introduction}

The source treatise proposes a unifying language for the BrainiaK
architecture: concept vectors $T^n$, the Gamma/CNS dynamical system, the
SSTD sedimentation codec, accumulation of semantic crystals, and
Penrose/Frobenius-inspired composition.  Its original form was
exploratory and system-facing.  This paper turns that material into a
peer-review-oriented mathematical manuscript.

The guiding constraint is methodological.  A result is not a theorem
because the implementation suggests it, nor because an analogy is
plausible.  It is a theorem only when it follows from definitions and
assumptions stated in the paper.  Implementation files are cited only as
correspondence between the formal objects and BrainiaK code surfaces.
Empirical results are reported only as empirical claims and require
traceable datasets, metrics, and scripts before they can be treated as
validated evidence.

\subsection{Contributions}

This manuscript makes five contributions.
\begin{enumerate}
  \item It gives a precise product-bundle formalization of $T^n$.
  \item It defines a heterogeneous GCM metric and proves elementary
  metric and topological facts under explicit completeness assumptions.
  \item It audits the six central claims of the French treatise and
  classifies each as theorem, proposition, model law, conjecture, or
  empirical claim.
  \item It corrects the status of the Frobenius, Hopf, SSTD-complexity,
  and SpiderR flatness claims so that no unsupported result remains.
  \item It provides implementation correspondence without confusing code
  existence with mathematical proof.
\end{enumerate}

\subsection{Canonical Notation and Dimensional Hygiene}

The current formal base is the empirical semantic space
\[
  \RR^{14}=\RR^3_{\mathrm{emo}}\oplus
  \RR^6_{\mathrm{per}}\oplus \RR^5_{\mathrm{mot}},
\]
where the three socles encode affective, perceptual, and motor
coordinates.  Other spaces are not competing foundations.  They are
slots or extensions:
\[
  \RR^6_{\mathrm{vis}},\quad
  \RR^6_{\mathrm{aud}},\quad
  S^5,\quad
  \RR^{3072}_{\mathrm{SSTD}},\quad
  \FF_{\mathrm{refined}}.
\]
Earlier $\RR^{24}$ descriptions are treated as a special extended
direct-sum presentation and are not used as the canonical base in this
paper.  When $S^5$ appears in older notes, it is treated as a
normalization manifold or optional submanifold associated with a
six-coordinate sensory representation, not as a replacement for the
canonical $\RR^{14}$ base.

\subsection{Controlled Notation Register}

The following notation is used throughout the manuscript.  The table is
part of the dimensional hygiene of the paper: symbols in different rows
are not silently identified.

\begin{longtable}{>{\raggedright\arraybackslash}p{0.18\textwidth}
>{\raggedright\arraybackslash}p{0.24\textwidth}
>{\raggedright\arraybackslash}p{0.44\textwidth}}
\caption{Controlled notation register.}
\label{tab:notation-register}\\
\toprule
\textbf{Symbol} & \textbf{Type} & \textbf{Role in the manuscript}\\
\midrule
\endfirsthead
\toprule
\textbf{Symbol} & \textbf{Type} & \textbf{Role in the manuscript}\\
\midrule
\endhead
$B_{14}$ or $\RR^{14}$
& Real vector space
& Canonical empirical semantic base
$\RR^3_{\mathrm{emo}}\oplus\RR^6_{\mathrm{per}}\oplus
\RR^5_{\mathrm{mot}}$.\\
\addlinespace
$\RR^{24}$
& Extended presentation
& Older direct-sum notation interpreted, when used, as
$B_{14}\oplus U$ with $\dim U=10$; not the canonical base.\\
\addlinespace
$S^5$
& Smooth submanifold of $\RR^6$
& Optional normalization constraint for a six-coordinate sensory slot;
not a replacement for $B_{14}$.\\
\addlinespace
$E_k$
& Slot space
& The $k$th component space of the heterogeneous product
$\EE=\prod_{k=1}^{12}E_k$.\\
\addlinespace
$E_7=\RR^{3072}_{\mathrm{SSTD}}$
& SSTD slot
& Spectral text slot; later identified with three aggregated
$1024$-dimensional views.\\
\addlinespace
$T^n$
& Concept tuple / section notation
& Historical source notation formalized here as a section of a finite
product bundle, equivalently an element of $\EE$.\\
\addlinespace
$d_{\mathrm{GCM}}$
& Weighted product metric
& Heterogeneous metric on $\EE$, built from positive slot weights and
slot metrics.\\
\addlinespace
$\Gamma(t)$
& Matrix-valued trajectory
& Gamma/CNS diagnostic object used only under explicit spectral and
curvature assumptions.\\
\addlinespace
$R(t)$
& Scalar curvature proxy
& Model quantity, typically
$R(t)=\alpha N(t)+\beta\sigma_\Gamma(t)$; not derived from Gamma
dynamics unless separately proved.\\
\addlinespace
$\lambda_{\max}(t)$
& Spectral scalar
& Leading spectral value in the curvature--spectrum model law; Hopf
claims require additional dynamical data.\\
\addlinespace
$\kappa$
& Positive model parameter
& Curvature-law scale parameter in
$\lambda_{\max}=R/(R+\kappa)$.\\
\addlinespace
$F,H,Q,R$
& Kalman system data
& State transition, observation map, process covariance, and observation
covariance in the linear Gaussian filtering section.\\
\addlinespace
$A,\Omega,\Hol$
& Connection data
& Connection one-form, curvature, and holonomy in the SpiderR
idealization; deployed operators require separate commutator checks.\\
\bottomrule
\end{longtable}

\begin{proposition}[Notation register prevents dimensional conflation]
If the notation register is respected, then no statement about
$B_{14}$, $\RR^{24}$, $S^5$, $\RR^{3072}_{\mathrm{SSTD}}$, or $T^n$
can be transferred to another one of these objects without an explicit
map or inclusion.
\end{proposition}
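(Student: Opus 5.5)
The plan is to treat the proposition as a statement about typed objects. ``Respecting the register'' will be formalized as the rule that every symbol in Table~\ref{tab:notation-register} denotes a distinct object of the stated type. A statement about one object then transfers to another only through a morphism in the appropriate category: a linear map, an inclusion, a smooth embedding, or a projection. I would first make this precise. A statement $\varphi(X)$ about an object $X$ is a formula whose free variable ranges over $X$, or whose predicates are built from the structure of $X$ (vector space structure, metric, smooth structure). Transferring $\varphi$ to $Y$ means producing a formula $\varphi'(Y)$ with the same truth value. I would show that this requires a specified correspondence. In one direction, given a map $f\colon X\to Y$, any transfer is obtained by pulling back along $f$ or pushing forward along $f$. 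In the other, without $f$, nothing in the register's data relates the carriers of $X$ and $Y$.

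The key step is to exhibit, for each pair among $B_{14}$, $\RR^{24}$, $S^5$, $\RR^{3072}_{\mathrm{SSTD}}$, and $T^n\in\EE$, that they are not canonically identified. This splits into three kinds of pair.
\begin{itemize}
\item \emph{Spaces of different dimension.} The pairs $B_{14}$ vs.\ $\RR^{24}$, $B_{14}$ vs.\ $\RR^{3072}$, and $S^5$ vs.\ $\RR^{14}$ have dimensions $14,24,3072$ and $5$. Dimension is an isomorphism invariant, so no isomorphism exists. Any relating map (the inclusion $B_{14}\hookrightarrow B_{14}\oplus U$, the inclusion $S^5\subset\RR^6$, a slot inclusion $E_k\hookrightarrow\EE$) is non-unique or extra data, and must therefore be named.
\item \emph{Objects of different type.} $S^5$ is a compact manifold, not a vector space. $T^n$ is an element (section), not a space.
\item \emph{Failure of naive transfer.} A short counterexample in each case shows that a property true in one object can fail in the other. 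Linearity (closure under addition) holds in $B_{14}$ but fails on $S^5$. Compactness holds on $S^5$ but fails on $\RR^{14}$. Completeness holds for each slot, but a statement about a single tuple $T^n$ is not a statement about $\EE$.
\end{itemize}
Together these show that transfer is impossible in general unless an explicit map is supplied. Conversely, whenever a map is supplied, the register records it. Examples are $\RR^{24}=B_{14}\oplus U$ and the slot inclusion $E_7=\RR^{3072}_{\mathrm{SSTD}}$.

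The main obstacle is that the proposition is meta-mathematical. ``Statement'' and ``transfer'' are informal, so the proof is only as rigorous as the formalization chosen in the first step. I would keep that formalization minimal. The conclusion is really the contrapositive: any valid transfer factors through a map, and the register's rule forbids implicit maps. The proof then reduces to the definitional observation that respecting the register means exactly ``no implicit identification.'' The counterexamples above show this rule is not vacuous, since naive identification would produce false statements.
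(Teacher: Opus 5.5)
Your proposal is correct and follows the paper's argument. The paper also reduces the claim to the definitional point that the register gives the listed objects distinct types or roles, and that any valid transfer needs an explicit function, inclusion, projection or quotient, so respecting the register forbids identification by notation alone. Your dimension-invariance and counterexample checks make that ``different types or roles'' premise concrete but are not needed for the conclusion. One aside is not justified as stated: ``completeness holds for each slot'' is only a hypothesis of the paper's completeness corollary, and for example $E_4=\RR^+$ need not be complete, so drop or qualify that illustration.
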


\begin{proof}
The register assigns each symbol a type and role.  A valid transfer of a
statement between typed mathematical objects requires a function,
inclusion, projection, quotient map, or other explicitly stated
relation.  Since the register declares the listed objects to have
different types or roles, respecting the register forbids treating them
as identical by notation alone.  Therefore any transfer between them
must pass through an explicit map or inclusion.
\end{proof}

\subsection{Result Classes}

The paper uses the following result classes.
\begin{description}
  \item[Definition.] A mathematical convention introduced by this paper.
  A definition does not require proof, but later claims using it do.
  \item[Assumption.] A condition under which a theorem or proposition is
  proved.  Assumptions must not be confused with empirical facts.
  \item[Theorem or proposition.] A statement proved from definitions,
  assumptions, and cited standard results.  A theorem is used for central
  claims; a proposition for local claims.
  \item[Model law.] A structural equation adopted as part of a model.
  Its consequences may be proved, but the law itself is not thereby
  derived from first principles.
  \item[Conjecture.] A plausible mathematical claim for which this paper
  does not yet provide a proof.
  \item[Empirical claim.] A measurement-dependent statement.  It requires
  a dataset, metric, sample size, configuration, and reproducible path.
\end{description}

This classification is not cosmetic.  It is the mechanism by which the
manuscript remains suitable for peer review while still preserving the
conceptual ambition of the original treatise.

\subsection{Relation to Existing Mathematics}

The formal material belongs to several standard mathematical families.
The product-bundle construction is elementary topology and differential
geometry in the trivial-bundle case \cite{Bredon1993,Lang1999}.  The
GCM metric is a weighted product metric, with the cognitive motivation
coming from Nosofsky's generalized context model \cite{Nosofsky1986}.
Frobenius algebras are standard objects in algebra and topological
quantum field theory \cite{Kock2004}.  Hopf bifurcation theory is a
classical local theorem about eigenvalue crossings in smooth dynamical
systems \cite{GuckenheimerHolmes1983,Kuznetsov2004}.  Kalman convergence
belongs to linear filtering theory \cite{Kalman1960,AndersonMoore1979}.
The contribution here is not to reinvent these theories, but to locate
the SSTD/BrainiaK objects inside them without overstating what has been
proved.

\section{Foundational Conventions}

This section fixes several conventions that are often implicit in
engineering descriptions of semantic systems.  They are made explicit
because several possible reviewer objections concern exactly these
points: whether heterogeneous slots can be treated as a single space,
whether different dimensional descriptions conflict, and whether code
paths can function as evidence for mathematical claims.

\begin{definition}[Typed finite product]
Let $(E_k)_{k=1}^m$ be a finite family of sets.  The typed product
$\prod_{k=1}^mE_k$ is the set of tuples $x=(x_1,\ldots,x_m)$ such that
$x_k\in E_k$ for every $k$.  A map $F:Y\to\prod_kE_k$ is typed if every
component $\pi_kF:Y\to E_k$ is well-defined.
\end{definition}

\begin{lemma}[Product membership criterion]
For a finite family $(E_k)_{k=1}^m$, a tuple-valued expression
$F(y)=(F_1(y),\ldots,F_m(y))$ defines a map
$F:Y\to\prod_kE_k$ if and only if $F_k(y)\in E_k$ for every $y\in Y$
and every $k$.
\end{lemma}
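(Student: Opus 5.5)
The plan is to unfold the Definition of the typed finite product and prove the two implications separately. The key preliminary is to say precisely what "defines a map" means here. A map $F:Y\to\prod_kE_k$ is a rule assigning to each $y\in Y$ an element $F(y)$ of the codomain $\prod_kE_k$. By the Definition of the typed product, a tuple $x=(x_1,\ldots,x_m)$ lies in $\prod_kE_k$ exactly when $x_k\in E_k$ for every $k$. So the whole lemma reduces to applying this membership condition pointwise in $y$. This is a set-theoretic argument, and no topology or earlier result beyond the Definition is needed.

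\emph{Forward direction.} Assume $F$ is a map into $\prod_kE_k$ and fix $y\in Y$. Then $F(y)=(F_1(y),\ldots,F_m(y))$ belongs to $\prod_kE_k$. By the Definition, each coordinate satisfies $F_k(y)\in E_k$. Since $y$ was arbitrary, this holds for every $y$ and every $k$.

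\emph{Reverse direction.} Assume $F_k(y)\in E_k$ for all $y\in Y$ and all $k$. For each fixed $y$, the tuple $(F_1(y),\ldots,F_m(y))$ then satisfies the membership condition of the Definition, so it lies in $\prod_kE_k$. Hence $y\mapsto F(y)$ assigns to every point of $Y$ a unique element of the codomain, which is exactly a map $Y\to\prod_kE_k$. I would also note that $\pi_kF=F_k$, so $F$ is typed in the sense of the Definition.

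The only real obstacle is interpretive rather than mathematical. One must fix that a "tuple-valued expression" means a family of assignments $y\mapsto F_k(y)$, each single-valued, so that being well defined on each slot is already given. With that convention, finiteness of $m$ plays no essential role beyond matching the Definition as stated. I would remark this briefly rather than try to generalize.
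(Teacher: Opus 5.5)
Your proposal is correct and follows the paper's proof: each direction is the defining membership condition of the typed product applied pointwise for every $y\in Y$. Your remarks that $\pi_kF=F_k$ and that finiteness plays no essential role are accurate but not needed.
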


\begin{proof}
If $F$ maps into the product, then by definition its $k$th coordinate
lies in $E_k$.  Conversely, if every coordinate expression lies in the
corresponding slot, then $F(y)$ satisfies the defining membership
condition of the Cartesian product for every $y$.
\end{proof}

\begin{definition}[Mathematical claim versus implementation
correspondence]
An implementation correspondence is a statement of the form: a named
file, class, function, report, or command realizes, approximates, or
tests a mathematical object defined in this manuscript.  Such a
correspondence is not a mathematical premise unless the manuscript also
states a formal axiom extracted from it.
\end{definition}

\begin{proposition}[Implementation existence is not theorem evidence]
The existence of a program implementing an operation with name
\texttt{foo} does not by itself prove that the formal operation
$\mathrm{foo}$ satisfies associativity, commutativity, flatness, Frobenius
compatibility, convergence, or any other universal mathematical law.
\end{proposition}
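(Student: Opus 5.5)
The plan is to argue by exhibiting counterexamples: the claim is that a certain inference (program exists with name \texttt{foo} $\Rightarrow$ formal $\mathrm{foo}$ satisfies law $L$) is invalid. It therefore suffices to show that the premise is compatible with the failure of each listed law. First I will use the preceding definition of implementation correspondence. A named program contributes a mathematical premise only when the manuscript extracts a formal axiom from it. Without such an axiom, the only formal content of ``a program named \texttt{foo} exists'' is that some computable map with that label exists, and the label imposes no constraint on the map.

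Next I will construct, for each law, a concrete computable binary operation on a simple carrier such as $\ZZ$ or $\RR^n$ that could legitimately carry the name \texttt{foo} but violates the law.
\begin{itemize}
  \item For associativity and commutativity, take $x\ast y = x-y$ on $\ZZ$, since $(1-1)-1\neq 1-(1-1)$ and $1-0\neq 0-1$.
  \item For convergence, take an iteration $x_{t+1}=2x_t$ from $x_0=1$.
  \item For flatness, take a connection one-form $A$ on $\RR^2$ with $\Omega = dA + A\wedge A \neq 0$, e.g.\ $A = x\,dy$ in the abelian case, so $\Omega=dx\wedge dy$.
  \item For Frobenius compatibility, take a multiplication and comultiplication pair for which the Frobenius relation $(\mu\otimes\id)(\id\otimes\delta)=\delta\mu$ fails, e.g.\ $\delta=0$ with a unital $\mu$. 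This fails on $1\otimes1$, because $\delta\mu(1\otimes 1)=0$ while counit compatibility cannot hold.
\end{itemize}
Each of these is plainly implementable, so ``existence of an implementation named \texttt{foo}'' holds in every case while the law fails.

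Finally I will handle the phrase ``any other universal mathematical law''. A universal law is a first-order sentence $\forall \bar x\,\varphi(\bar x)$ over the operation. For any nontrivial such law, meaning one not valid in all structures of the given signature, there is by definition a structure falsifying it. When the carrier is countable or finite-dimensional, that structure can be chosen computable, for instance a finite structure, which is always computable. So the premise never entails the law. Laws valid in every structure are trivially true and are not ``proved by'' the program either, so the statement holds as intended.

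The main obstacle is this last step: making precise what ``universal mathematical law'' and ``implementing'' mean, so that the general claim is not vacuous or false. I will restrict to laws that are not logically valid for the signature, and I will use the finite-model witness whenever the law has a finite counterexample. Otherwise I will rely on the explicit list of counterexamples above, and note that the proposition's content is exactly the invalidity of the inference schema rather than the falsity of any particular law.
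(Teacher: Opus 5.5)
Your route differs from the paper's. The paper only observes that a program name supplies neither a formal domain nor a universal proof over that domain, so at most one gets a proof after formalizing the program's semantics, or finite empirical evidence. You instead show the inference is semantically invalid by exhibiting, for each listed law, an implementable operation that violates it. That strategy is legitimate and more concrete, and your witnesses for associativity, commutativity, convergence and flatness are correct. The Frobenius witness, however, is wrong. If $\delta=0$, then $(\mu\otimes\id)(\id\otimes\delta)$, $(\id\otimes\mu)(\delta\otimes\id)$ and $\delta\mu$ are all the zero map, so the Frobenius compatibility equations \emph{hold}. What fails is counitality, which is a different axiom; your justification (``counit compatibility cannot hold'') conflates the two. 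Moreover, on a one-dimensional space every pair of linear maps $\mu,\delta$ satisfies both Frobenius equations, so any witness needs $\dim A\geq 2$. A correct one is $A=\RR^2$ with coordinatewise product, unit $\mathbf{1}=e_1+e_2$, and $\delta(x)=x\otimes\mathbf{1}$. Then $(\id\otimes\mu)(\delta\otimes\id)(e_1\otimes e_2)=e_1\otimes e_2\neq 0=\delta\mu(e_1\otimes e_2)$.

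The step for ``any other universal mathematical law'' is also unjustified as written. A non-valid first-order sentence need not have a finite countermodel, and a countable countermodel need not be computable. There are satisfiable sentences with no computable model. If the negation of a law were such a sentence, every computable interpretation would satisfy the law, so your reading of the premise (``some computable map called \texttt{foo} exists'') would actually entail it. The step does go through if you define a universal law as the universal closure of a \emph{quantifier-free} formula. Its negation is then existential, and a satisfiable existential sentence has a finite model, hence a computable one. To build it, restrict to the finite set $S$ of values of the subterms occurring in the sentence, keep the operation wherever its value lies in $S$, and send all other argument tuples to a fixed element of $S$. With that restriction your ``otherwise'' case never arises, while laws not of this form, such as convergence or flatness, are covered only by your explicit list. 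The paper's formulation avoids the issue entirely, because it concerns what a name contributes as proof, not whether countermodels exist.
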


\begin{proof}
Universal mathematical laws quantify over all inputs in a specified
domain and assert exact equalities, inequalities, or limiting
properties.  A program name supplies neither a formal domain nor a
universal proof over that domain.  At most, after the program semantics
are formalized, one may prove that the program computes a map with a
given property, or one may report empirical evidence from finite tests.
Therefore implementation existence alone is logically insufficient for
the theorem-level properties listed in the proposition.
\end{proof}

\begin{definition}[Dimensional presentations]
The canonical BrainiaK semantic base in this manuscript is the direct
sum
\[
  B_{14}=\RR^3\oplus\RR^6\oplus\RR^5\cong\RR^{14}.
\]
An extended presentation is any finite direct sum
\[
  B_{14}\oplus U
\]
where $U$ is an auxiliary coordinate space.  A normalized sensory
presentation is a submanifold or quotient used to constrain a sensory
slot, for example a sphere inside a six-dimensional slot.
\end{definition}

\begin{proposition}[No conflict between base and extensions]
Let $U$ be any finite-dimensional real vector space.  Then
$B_{14}$ embeds linearly into $B_{14}\oplus U$ by
$b\mapsto(b,0)$ and is recovered by the projection
$(b,u)\mapsto b$.  Thus an extended presentation does not replace the
canonical base unless a new base is explicitly declared.
\end{proposition}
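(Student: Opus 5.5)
The plan is to verify the two maps directly and then read the conclusion off their composition. Define $\iota:B_{14}\to B_{14}\oplus U$ by $\iota(b)=(b,0)$ and $p:B_{14}\oplus U\to B_{14}$ by $p(b,u)=b$. The first step is to check linearity of both maps using the componentwise vector-space structure of the direct sum. For $\iota$ this reads $\iota(\lambda b+\mu b')=(\lambda b+\mu b',0)=\lambda(b,0)+\mu(b',0)$. The computation for $p$ is the same. By the product membership criterion (the lemma on typed finite products), both maps are well-defined, since every coordinate expression lands in the correct slot: $b\in B_{14}$ and $0\in U$.

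The second step is to establish $p\circ\iota=\id_{B_{14}}$, which is immediate since $p(\iota(b))=p(b,0)=b$. From this identity I will deduce two things. First, $\iota$ is injective, because $\iota(b)=\iota(b')$ implies $b=p\iota(b)=p\iota(b')=b'$; hence $\iota$ is a linear embedding onto the subspace $B_{14}\oplus\{0\}$. Second, $p$ recovers $b$ from its image, so no information about the base is lost in passing to the extension. I will also note that $\iota\circ p$ is the idempotent projection onto $B_{14}\oplus 0$ with kernel $0\oplus U$. This exhibits $B_{14}$ as a canonical direct summand, and the splitting is independent of any choice of basis in $U$.

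The final, interpretive clause (``does not replace the canonical base unless a new base is explicitly declared'') is the only step that is not pure linear algebra, and I expect it to be the main obstacle in the sense that it must be tied to the paper's conventions rather than proved outright. I will argue as follows. By the definition of dimensional presentations, the canonical base is fixed as $B_{14}$. An extended presentation is defined to be $B_{14}\oplus U$, a space that contains $B_{14}$ only through $\iota$ and maps back to it through $p$. Any statement about the base therefore transfers to the extension only along these explicit maps, as required by the notation-register proposition, and in particular it transfers without distortion because $p\iota=\id$. Replacing the base would require declaring a different space as canonical, which is a new definition and not a consequence of the existence of $\iota$ and $p$. So absent such a declaration, the base remains $B_{14}$.
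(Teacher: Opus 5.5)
Your proposal is correct and follows essentially the paper's argument: both check linearity of $\iota$ and $p$, establish $p\circ\iota=\id_{B_{14}}$, and conclude that $B_{14}$ is a retract of $B_{14}\oplus U$. The paper gets injectivity directly from $(b,0)=(b',0)$ rather than from the retraction identity, which makes no real difference. Your handling of the interpretive clause is more explicit than the paper's, but keep ``transfers without distortion'' limited to $\iota(B_{14})$ or passage through $p$, not all of $B_{14}\oplus U$, as the paper's later proposition on extension data stresses.
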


\begin{proof}
The map $i:B_{14}\to B_{14}\oplus U$, $i(b)=(b,0)$, is linear and
injective because $i(b)=i(b')$ implies $(b,0)=(b',0)$ and hence
$b=b'$.  The projection $p:B_{14}\oplus U\to B_{14}$, $p(b,u)=b$, is
linear and satisfies $p\circ i=\id_{B_{14}}$.  Therefore the original
base is a retract of the extended presentation.
\end{proof}

\begin{corollary}[Interpretation of older \texorpdfstring{$\RR^{24}$}{R24}
language]
If an older draft uses $\RR^{24}$ as $B_{14}\oplus U$ with
$\dim U=10$, then its first fourteen canonical coordinates can be
compared with the present manuscript through the projection
$\RR^{24}\to B_{14}$.  Any theorem in this manuscript about $B_{14}$
does not automatically extend to the auxiliary $U$ coordinates unless
the slot metric and update laws on $U$ are also stated.
\end{corollary}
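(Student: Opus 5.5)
The corollary has two parts, and I will handle each with the previous proposition (No conflict between base and extensions) applied with $U$ a ten-dimensional real vector space.

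\emph{First claim: comparison via projection.} By the stated interpretation, $\RR^{24}$ means $B_{14}\oplus U$ with $\dim U=10$. The proposition gives the linear projection $p:B_{14}\oplus U\to B_{14}$, $p(b,u)=b$, and the embedding $i(b)=(b,0)$ with $p\circ i=\id_{B_{14}}$. I will read the ``first fourteen canonical coordinates'' of a vector in the older presentation as exactly $p$ of that vector. Then any object of the older draft, say a concept vector $x=(b,u)$ or a trajectory $t\mapsto x(t)$, has a well-defined image $p(x)$ or $p\circ x$ in $B_{14}$. That image is the object the present manuscript treats. The identity $p\circ i=\id$ shows the comparison loses nothing for objects that already lie in $B_{14}$. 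This settles the first assertion, provided the notation register is respected, so that $\RR^{24}$ and $B_{14}$ are related only through the explicit map $p$.

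\emph{Second claim: no automatic extension to $U$.} This is the negative part, and it is the main obstacle, since ``does not automatically extend'' has to be given a precise meaning. I will formalize it as follows. A theorem about $B_{14}$ uses only the structure on $B_{14}$, namely its slot metrics and update laws. For a property of the extended space there are two cases.
\begin{itemize}
\item If the property depends on the $U$ coordinates, for example completeness or continuity of the full metric, or convergence of an update on $B_{14}\oplus U$, then it is undetermined by the $B_{14}$ data.
\item If it does not depend on them, no extension is needed, because the statement can be pulled back along $p$.
\end{itemize}

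To make the undetermined case concrete, I will give two choices of metric and update on $U$ that agree on $B_{14}$ and differ in the conclusion.

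\begin{itemize}
\item \emph{Convergence.} Use the identity (or a contraction) on $B_{14}$, together with the map $u\mapsto 2u$ on $U$ in one case and $u\mapsto 0$ in the other. A convergence statement that holds on $B_{14}$ then fails on $B_{14}\oplus U$ in the first case.
\item \emph{Completeness.} Equip $U$ with a non-complete metric. This could be the metric pulled back from an open ball through a homeomorphism, or, more simply, the restriction of a metric to a non-closed subset if slots are allowed to be subsets.
\end{itemize}

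Either construction shows the $B_{14}$ theorem cannot imply the corresponding $U$ statement without added data. By the earlier proposition that implementation existence is not theorem evidence, this added data cannot come from code names either. It must be stated explicitly as a slot metric and update law on $U$, which is the proviso in the corollary.
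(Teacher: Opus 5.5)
Your proposal is correct. The first part coincides with the paper's proof, which simply invokes the preceding proposition with $\dim U=10$.

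The second part takes a different route. The paper argues only by scope: a theorem about $B_{14}$ quantifies over $B_{14}$ alone, and adding coordinates changes the domain and may introduce new distances, update laws, or observability questions. You instead prove actual non-determination by exhibiting extensions that agree on $B_{14}$ but reach opposite conclusions. The iterates of $(b,u)\mapsto(b,2u)$ diverge for $u\neq0$, while those of $(b,u)\mapsto(b,0)$ converge. Likewise, a metric on $U$ pulled back from an open ball through a homeomorphism makes the weighted product incomplete, whereas any norm metric on $U$ keeps it complete.

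The paper's version buys brevity and treats the claim as essentially definitional. Yours gives the informal phrase ``does not automatically extend'' a checkable meaning, and it covers both halves of the proviso (slot metric and update law). Your convergence example is essentially the paper's very next result, the proposition that extension data are additional structure, which contrasts $h(b,u)=u$ with $h(b,u)=0$. You sharpen it to show that a property changes, not merely the lift. Your completeness example has no counterpart in the paper.

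Two small points. Your fallback of restricting a metric to a non-closed subset falls outside the setup, since $U$ is a ten-dimensional vector space, but the pulled-back metric already suffices. The appeal to the proposition on implementation evidence is not needed for this corollary.
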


\begin{proof}
The first assertion is the preceding proposition with $\dim U=10$.
The second follows because a theorem about $B_{14}$ quantifies only over
that space.  Adding coordinates changes the domain and may introduce
new distances, update laws, or observability questions.
\end{proof}

\begin{proposition}[Extension data are additional structure]
Let $f:B_{14}\to B_{14}$ be a map.  A map
$F:B_{14}\oplus U\to B_{14}\oplus U$ whose base component is $f$
is not determined by $f$ alone.  It is determined only after specifying
an auxiliary update map
\[
  h:B_{14}\oplus U\to U,
  \qquad F(b,u)=(f(b),h(b,u)).
\]
Consequently, a theorem about $f$ on $B_{14}$ does not become a theorem
about an extended $\RR^{24}$ presentation until the auxiliary law $h$
and its hypotheses are stated.
\end{proposition}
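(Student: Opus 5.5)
The plan has three steps: a decomposition of any map into the product, a non-uniqueness construction, and a short consequence for theorems.

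\textbf{Decomposition.} I will apply the Product membership criterion (the Lemma on typed finite products) with $m=2$, $E_1=B_{14}$ and $E_2=U$. Write $p:B_{14}\oplus U\to B_{14}$ and $q:B_{14}\oplus U\to U$ for the coordinate projections. Then every map $F:B_{14}\oplus U\to B_{14}\oplus U$ is the pair
\[
F(b,u)=(p F(b,u),\, qF(b,u)).
\]
"Base component is $f$" I read as $pF(b,u)=f(b)$ for all $(b,u)$. Setting $h:=q\circ F$, every such $F$ has the form $F(b,u)=(f(b),h(b,u))$. Conversely, for any $h:B_{14}\oplus U\to U$, the criterion shows that $(b,u)\mapsto(f(b),h(b,u))$ is a well-defined map into the product whose base component is $f$. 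So maps $F$ with base component $f$ correspond bijectively to maps $h:B_{14}\oplus U\to U$. This is the sense in which $F$ "is determined only after specifying" $h$.

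\textbf{Non-uniqueness.} To show $F$ is not determined by $f$ alone, I need two distinct extensions whenever $U\neq 0$. The choices $h_0(b,u)=0$ and $h_1(b,u)=u$ give $F_0(0,u)=(f(0),0)$ and $F_1(0,u)=(f(0),u)$. These differ at any $u\neq 0$, so $F_0\neq F_1$ although both have base component $f$. Since the statement concerns the $\RR^{24}$ presentation, I fix $\dim U=10$; this rules out the degenerate case $U=0$, where the extension is unique. I will state that caveat explicitly.

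\textbf{Consequence.} The final sentence I treat as in the Corollary on older $\RR^{24}$ language. A theorem about $f$ quantifies over $B_{14}$ and constrains only $p\circ F$. Properties of $F$ that involve the $U$ coordinates, such as fixed points, contraction or convergence, depend on $h$. The pair $F_0,F_1$ above already shows that such properties can differ: for example $F_0$ contracts the $U$ directions and $F_1$ does not. Hence no such property follows from hypotheses on $f$ until $h$ and its hypotheses are stated.

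The only real obstacle is interpretive rather than technical. The phrase "is not determined by $f$ alone" has to be made precise, as non-uniqueness of $F$ given $f$, together with the $U\neq0$ caveat. The last sentence is a metatheoretic remark and should be justified by the counterexample pair, not proved as a formal theorem.
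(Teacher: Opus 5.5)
Your proposal is correct and follows essentially the same route as the paper's proof. Both set $h=\pi_U\circ F$ to obtain the bijection between lifts with base component $f$ and auxiliary maps $h$, and both use the pair $h(b,u)=0$ versus $h(b,u)=u$ (with the $U\neq 0$ caveat) to show that $f$ alone does not determine $F$ or its properties.
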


\begin{proof}
For any candidate lift $F$ with base component $f$, define
$h=\pi_U\circ F$, where $\pi_U:B_{14}\oplus U\to U$ is the auxiliary
projection.  Then $F(b,u)=(f(b),h(b,u))$.  Conversely, any such $h$
defines a map $F$ by the displayed formula.  Hence the lift is
equivalent to choosing $h$.  Without this choice, there are generally
many lifts.  For instance $h(b,u)=u$ and $h(b,u)=0$ give different
extensions whenever $U\neq 0$.  Therefore properties of $f$ alone do
not determine properties of $F$ on the larger space.
\end{proof}

\begin{proposition}[Sphere slots are constraints, not dimensional
replacements]
Let $S^5=\{x\in\RR^6:\|x\|_2=1\}$.  Treating a six-coordinate sensory
slot as normalized on $S^5$ defines a constraint on that slot.  It does
not identify the canonical base $B_{14}$ with $S^5$.
\end{proposition}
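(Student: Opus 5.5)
The plan is to prove the two assertions separately: first, normalization is a constraint on a single slot; second, it cannot identify $B_{14}$ with $S^5$. For the first, I fix a six-dimensional sensory slot $E_j\cong\RR^6$ of the typed product $\EE=\prod_kE_k$. Normalization means replacing $E_j$ by the subset $S^5\subset E_j$, that is, restricting to the subset
\[
  \EE_{S}=\{x\in\EE : \|x_j\|_2=1\}\subset\EE .
\]
By the product membership criterion (Lemma on product membership), a typed map lands in $\EE_S$ exactly when its $j$th component lands in $S^5$ and all other components land in their unchanged slots. Hence the normalization modifies only slot $j$, and only by restriction. A convenient way to realize it is the retraction $x\mapsto x/\|x\|_2$ on $\RR^6\setminus\{0\}$. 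This is a constraint map inside the slot, not a change of any other slot or of the base.

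For the second assertion, I would show that $S^5$ cannot stand in for $B_{14}$ by notation alone, and that no structure-preserving identification exists. The argument uses invariants.
\begin{itemize}
  \item $B_{14}\cong\RR^{14}$ is a vector space, while $S^5$ is not. It does not contain $0$ and is not closed under addition or scaling, since $2x\notin S^5$ when $x\in S^5$.
  \item Topologically, $S^5$ is compact and $\RR^{14}$ is not, so the two are not homeomorphic.
  \item As smooth manifolds their dimensions are $5$ and $14$, so there is no diffeomorphism between them.
\end{itemize}
Any identification would therefore have to be an explicitly declared map, which cannot be a homeomorphism. This matches the notation register proposition: the objects have different types, so a transfer requires an explicit map.

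Finally, I would note that the sensory slot sits inside $B_{14}$ only via a coordinate inclusion $\RR^6_{\mathrm{per}}\hookrightarrow B_{14}$. Under this inclusion, $S^5$ becomes a subset of a $6$-dimensional summand and is not all of $B_{14}$. The argument mirrors the retract argument in the proposition on base versus extensions.

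The only potential obstacle is making ``identify'' precise enough to be refutable. I would define it as the existence of a bijection preserving the relevant structure (linear, topological, or smooth). The compactness and dimension invariants then settle each case, and I expect the rest to be routine.
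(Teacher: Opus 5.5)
Your proposal is correct and has the same two-part structure as the paper's proof. Both argue that the normalization $x\mapsto x/\|x\|_2$ acts only at slot level, and that $S^5$, a five-dimensional manifold inside $\RR^6$, differs in type and dimension from the fourteen-dimensional vector space $B_{14}$.

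The two arguments differ in the second half. The paper leaves ``identify'' informal. It only observes that the normalization map itself supplies no bijection with $B_{14}$ and does not preserve its vector-space structure. You make ``identify'' precise as a structure-preserving bijection, and then exclude every linear, topological, or smooth identification by invariants: no inherited linear closure, compactness of $S^5$ against non-compactness of $\RR^{14}$, and $5\neq14$. This gives a strictly stronger statement that can actually be refuted. The paper's shorter argument still suffices, because the proposition is about what the normalization convention does, not about ruling out every possible identification. Your description of the constraint as the subspace $\{x\in\EE:\|x_j\|_2=1\}$, via the product membership criterion, is likewise a sharper version of the paper's phrase ``slot-level operation''.

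There is one small inconsistency. The slot $E_j$ you fix in the first part is a separate product coordinate, such as $E_5=\RR^6_{\mathrm{vis}}$ or $E_6=\RR^6_{\mathrm{aud}}$. It is not the perceptual summand of $E_1=B_{14}$. Your closing remark about $\RR^6_{\mathrm{per}}\hookrightarrow B_{14}$ therefore applies only if the normalized slot is taken to be that socle. In either reading the conclusion is unaffected.
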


\begin{proof}
$S^5$ is a subset of $\RR^6$ and a smooth five-dimensional manifold,
whereas $B_{14}$ is a fourteen-dimensional real vector space.  The
normalization map $x\mapsto x/\|x\|_2$, when defined, maps nonzero
vectors of a six-dimensional sensory component into $S^5$.  This is a
slot-level operation.  It neither supplies a bijection with $B_{14}$
nor preserves the vector-space structure of the whole semantic base.
\end{proof}

\begin{proposition}[Spectral slots are separate typed coordinates]
Let $V=\RR^{3072}$ be an SSTD spectral slot and let
\[
  \EE=E_1\times\cdots\times E_m
\]
be a typed product with $E_1=B_{14}$ and $E_j=V$ for some $j$.
Then the inclusion of both $B_{14}$ and $V$ in $\EE$ is dimensionally
consistent: $B_{14}$ is not identified with $V$, and a statement about
one coordinate transfers to the other only through an explicitly defined
map between those coordinates.
\end{proposition}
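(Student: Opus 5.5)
The plan is to argue through the typed-product structure, using the Product membership criterion and the coordinate projections $\pi_k:\EE\to E_k$. For each $k$ we have the canonical slot inclusion, relative to a fixed basepoint $e=(e_1,\ldots,e_m)\in\EE$ (for vector-space slots, take $e_\ell=0$):
\[
  \iota_k:E_k\to\EE,\qquad \iota_k(x)_k=x,\quad \iota_k(x)_\ell=e_\ell\ (\ell\neq k).
\]
By the Product membership criterion, $\iota_k$ is a well-defined map into $\EE$, since each coordinate lies in its own slot. Moreover $\pi_k\circ\iota_k=\id_{E_k}$. This is exactly the retract argument of the proposition on base and extensions, now applied to $k=1$ with $E_1=B_{14}$ and to $k=j$ with $E_j=V$. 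So both spaces sit inside $\EE$ as typed coordinates, which gives dimensional consistency in the sense that each inclusion preserves its own slot's dimension and linear structure.

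Next I show that $B_{14}$ is not identified with $V$. The images $\iota_1(B_{14})$ and $\iota_j(V)$ live in different coordinate positions. Concretely, $\pi_1\circ\iota_j$ is the constant map at $e_1$, and $\pi_j\circ\iota_1$ is the constant map at $e_j$. Hence no coordinate of $\EE$ simultaneously records an element of $B_{14}$ and one of $V$. As a sanity check that no canonical linear isomorphism could exist anyway, I note $\dim B_{14}=14\neq 3072=\dim V$. Since $j\neq 1$, the inclusions are distinct, and the only points their images share are of the form $e$ (for $V$ linear with basepoints $0$, they meet only at $0$).

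Finally, for the transfer clause, I formalize a statement about a coordinate as a predicate $P$ on $E_1$, pulled back to $\EE$ as $P\circ\pi_1$. To obtain a statement about the $j$th coordinate one needs a predicate on $E_j$. Any passage from the first to the second must therefore specify a map $\phi:E_j\to E_1$ (giving $P\circ\phi\circ\pi_j$), or a map in the other direction. This is precisely the \emph{explicitly defined map} in the statement. The argument mirrors the proposition on extension data being additional structure: different choices of $\phi$, for example the zero map versus the truncation to the first $14$ coordinates, yield inequivalent transferred statements, so no transfer is determined by the product structure alone.

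The main obstacle is the informal phrase ``a statement transfers.'' I will make it precise as above, via predicates pulled back along projections, so that the claim becomes the rigorous assertion that $\pi_1$ and $\pi_j$ factor through each other only via a chosen map. The rest is routine.
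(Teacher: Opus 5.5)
Your proposal is correct and follows essentially the same route as the paper's proof: separate coordinate projections $\pi_k$, the dimension mismatch $14\neq 3072$, and transfer only through an explicitly specified map $B_{14}\to V$ or $V\to B_{14}$; your basepointed inclusions $\iota_k$ and predicate pullbacks $P\circ\phi\circ\pi_j$ just make precise what the paper leaves at the level of typing discipline. One wording fix: neither of $\pi_1,\pi_j$ factors through the other via any map at all, because the two coordinates vary independently in $\EE$, so your closing reformulation should say that the transferred predicate $P\circ\phi\circ\pi_j$ depends on the chosen $\phi$ and in general differs from $P\circ\pi_1$, not that the projections ``factor through each other only via a chosen map.''
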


\begin{proof}
In a Cartesian product, each coordinate has its own projection
$\pi_k:\EE\to E_k$.  The equality of two coordinates is not even typed
unless their codomains have been identified by an additional map.  Here
$B_{14}$ and $V$ have different dimensions, $14$ and $3072$, and play
different coordinate roles.  Thus the product can contain both spaces
without asserting that they are equal.  Any transfer of a theorem from
$B_{14}$ to $V$ would require a specified map such as
$B_{14}\to V$ or $V\to B_{14}$ and hypotheses about that map.
\end{proof}

\begin{theorem}[Dimensional consistency of the manuscript]
Assume the canonical base is $B_{14}=\RR^3\oplus\RR^6\oplus\RR^5$,
older $\RR^{24}$ language is interpreted as $B_{14}\oplus U$ with
$\dim U=10$, $S^5$ is used only as a normalization constraint inside a
six-dimensional sensory slot, $\RR^{3072}$ is used only as a typed SSTD
spectral slot, and $T^n$ denotes a finite typed product section rather
than a Euclidean base.  Under these conventions, the symbols
$\RR^{14}$, $\RR^{24}$, $S^5$, $\RR^{3072}$, and $T^n$ introduce no
dimensional conflict.
\end{theorem}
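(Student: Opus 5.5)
The plan is to first make ``dimensional conflict'' precise, and then discharge it pairwise using the propositions already proved in this section. I will say that two of the listed symbols are in \emph{dimensional conflict} if the manuscript uses them as the same typed object, or transfers a statement from one to the other, without an explicit map, inclusion, projection or constraint between them, and their underlying dimensions or types differ. With this definition the theorem is a finite check over the ten unordered pairs drawn from $\{\RR^{14},\RR^{24},S^5,\RR^{3072},T^n\}$. For each pair, the conventions in the hypothesis either supply an explicit relating map or declare the two objects to be distinct typed coordinates. By the Notation register proposition, that is exactly what rules out silent identification.

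The pairs are handled by earlier results as follows. For $(\RR^{14},\RR^{24})$ I will invoke the proposition on no conflict between base and extensions and its corollary. The inclusion $b\mapsto(b,0)$ and the projection $\RR^{24}\to B_{14}$ with $p\circ i=\id$ make the relation explicit. The proposition on extension data shows that nothing about $B_{14}$ is silently lifted to the auxiliary coordinates $U$. For $(\RR^{14},S^5)$ and $(\RR^{24},S^5)$ I will use the sphere-slot proposition: $S^5$ lives only inside a six-dimensional sensory slot as a normalization constraint, so it is related to $B_{14}$ (or $B_{14}\oplus U$) only through the slot inclusion $\RR^6_{\mathrm{per}}\hookrightarrow B_{14}$ and the partial map $x\mapsto x/\|x\|_2$. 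For every pair involving $\RR^{3072}$ I will apply the spectral-slot proposition: it is a separate factor $E_7$ of $\EE$ with its own projection, so it is not identified with $B_{14}$, its extension, or $S^5$. Finally, $T^n$ is by hypothesis an element (section) of the typed product $\EE=\prod_k E_k$. Its relation to each other symbol is therefore given by the coordinate projections $\pi_k$, and by the product membership lemma these are well-defined exactly because each component lies in its declared slot.

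To finish, I will observe that every explicit relation used is a well-typed map with declared domain and codomain: linear inclusions and projections, a partial normalization, and coordinate projections. Hence no equation in the manuscript equates objects of different dimension. Combined with the Notation register proposition, this gives the absence of conflict.

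The main obstacle is the first step, not the case check. ``Dimensional conflict'' is informal, and the theorem is only as strong as the definition chosen, so I must state a definition that is neither vacuous nor unverifiable. I will first try the typing-based definition above, which reduces everything to the existence of explicit maps. I will then note honestly that the theorem is a consistency statement about the conventions. It says nothing about whether later sections respect them; that would have to be checked statement by statement, and I would flag it as an assumption rather than claim it.
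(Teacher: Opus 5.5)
Your proposal is correct and follows essentially the same route as the paper. The paper likewise disposes of $\RR^{24}$, $S^5$, $\RR^{3072}$ and $T^n$ through the earlier retraction, sphere-slot, spectral-slot and typed-product facts, and closes with the principle that any comparison between distinct roles needs an explicit map; your definition of ``dimensional conflict'' and your ten-pair enumeration make that closing step more systematic. One small correction: the hypothesis does not say which six-dimensional sensory slot carries the $S^5$ constraint (it may be $E_5$ or $E_6$ rather than the perceptual socle of $B_{14}$), so relate $S^5$ to the other symbols through whichever slot inclusion or product projection actually applies, not only through $\RR^6_{\mathrm{per}}\hookrightarrow B_{14}$.
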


\begin{proof}
The canonical base is $B_{14}$ by definition.  Older $\RR^{24}$
presentations are handled by the retraction
$B_{14}\hookrightarrow B_{14}\oplus U\to B_{14}$, so they are extended
presentations rather than replacements for $B_{14}$.  The sphere
$S^5$ is a submanifold of a six-dimensional sensory coordinate and not
the whole fourteen-dimensional base.  The space $\RR^{3072}$ is a
separate product coordinate, so it coexists with $B_{14}$ without being
identified with it.  Finally, $T^n$ denotes membership in a finite typed
product or section space, not a claim that all slots share one Euclidean
dimension.  Each symbol therefore has a distinct type and a distinct
role.  Since every comparison between distinct roles requires an
explicit map, no dimensional equality is asserted implicitly.
\end{proof}

\begin{lemma}[Weighted-sum and maximum metrics are topologically
equivalent]
Let $(E_k,d_k)_{k=1}^m$ be metric spaces and let $w_k>0$.  On
$\prod_kE_k$ define
\[
  d_\Sigma(x,y)=\sum_kw_kd_k(x_k,y_k),\qquad
  d_\infty(x,y)=\max_k d_k(x_k,y_k).
\]
Then $d_\Sigma$ and $d_\infty$ induce the same topology.
\end{lemma}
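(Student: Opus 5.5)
The plan is to show that the two metrics are strongly (Lipschitz) equivalent. Two metrics $d,d'$ on the same set induce the same topology whenever there are constants $c,C>0$ with $c\,d\le d'\le C\,d$. In that case every open $d$-ball of radius $r$ about a point contains the $d'$-ball of radius $cr$ about the same point, and conversely. So a set is open for one metric exactly when it is open for the other. The whole argument therefore reduces to a two-sided inequality between $d_\Sigma$ and $d_\infty$, and it relies essentially on the product being finite ($m<\infty$) and on every weight being strictly positive.

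Set $w_{\min}=\min_k w_k>0$ and $W=\sum_k w_k<\infty$; both are well defined because there are only $m$ weights. For the lower bound, fix $x,y$ and choose an index $j$ at which $d_\infty(x,y)=d_j(x_j,y_j)$. Since every summand of $d_\Sigma$ is nonnegative,
\[
  d_\Sigma(x,y)\ \ge\ w_j d_j(x_j,y_j)\ \ge\ w_{\min}\,d_\infty(x,y).
\]
For the upper bound, each term satisfies $d_k(x_k,y_k)\le d_\infty(x,y)$, so
\[
  d_\Sigma(x,y)\ \le\ \Big(\sum_k w_k\Big) d_\infty(x,y)=W\,d_\infty(x,y).
\]
Together these give $w_{\min}\,d_\infty\le d_\Sigma\le W\,d_\infty$.

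To pass from the inequalities to the topology, I would argue with balls. Let $O$ be $d_\Sigma$-open and let $x\in O$. Pick $r>0$ with $B_\Sigma(x,r)\subseteq O$. By the upper bound, $B_\infty(x,r/W)\subseteq B_\Sigma(x,r)$, so $O$ is $d_\infty$-open. Symmetrically, by the lower bound, $B_\Sigma(x,w_{\min}r)\subseteq B_\infty(x,r)$, so every $d_\infty$-open set is $d_\Sigma$-open.

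I would also note in passing that $d_\Sigma$ and $d_\infty$ are themselves metrics. Symmetry and the triangle inequality hold componentwise. Definiteness uses $w_k>0$: if $d_\Sigma(x,y)=0$, then every $d_k(x_k,y_k)=0$, so $x=y$.

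There is no genuine obstacle here. The only step needing care is making visible where the hypotheses enter: positivity of the weights supplies the lower constant $w_{\min}>0$, and finiteness of the product supplies both the finite upper constant $W$ and the fact that the maximum is attained. I would close with a remark that both conditions are necessary. If some weight vanished, $d_\Sigma$ would fail to be a metric. If the product were infinite, the constants could degenerate and the equivalence could fail. As a byproduct, both metrics induce the product topology, since the $d_\infty$-balls are finite products of the slot balls.
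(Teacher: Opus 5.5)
Your proof is correct and follows the paper's argument: the paper proves the same two-sided bound $w_{\min}d_\infty\le d_\Sigma\le W d_\infty$ with $w_{\min}=\min_k w_k$ and $W=\sum_k w_k$, then simply asserts that mutually dominating metrics have the same open sets, whereas you spell out the ball inclusions explicitly. One small caveat on your closing remark: a zero weight breaks definiteness of $d_\Sigma$ only when that slot has at least two points, so calling positivity ``necessary'' is slightly overstated, but this does not affect the proof.
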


\begin{proof}
Let $w_{\min}=\min_kw_k$ and $W=\sum_kw_k$.  For all $x,y$,
\[
  w_{\min}d_\infty(x,y)\leq d_\Sigma(x,y)\leq Wd_\infty(x,y).
\]
Indeed, the largest coordinate distance appears in the sum with weight
at least $w_{\min}$, and every coordinate distance is bounded above by
$d_\infty(x,y)$.  The two metrics therefore dominate one another by
positive constants, so they have the same open sets.
\end{proof}

\begin{remark}[Why this matters]
The manuscript uses weighted GCM-style sums because they match the
intended cognitive interpretation.  The preceding lemma shows that, for
finite slot families with positive weights, the basic topology is not an
artefact of this particular weighted presentation.
\end{remark}

\section{Proof Status Table}

Table~\ref{tab:status} records the status of the six original claims.
The table is part of the scientific content: it prevents exploratory
architecture language from being mistaken for proved mathematics.

\begin{table}[htbp]
\centering
\small
\begin{tabular}{p{0.23\textwidth}p{0.20\textwidth}p{0.43\textwidth}}
\toprule
\textbf{Original claim} & \textbf{Status here} & \textbf{Reason}\\
\midrule
$T^n$ is a section of a heterogeneous product bundle
& Theorem
& Proved after formalizing slots as a finite product of metric spaces.\\
\addlinespace
Frobenius algebra on crystals
& Conditional proposition / conjecture
& True only after an exact multiplication, unit, counit, comultiplication,
and Frobenius identity are defined and verified. Current code can provide
round-trip diagnostics, not a proof by itself.\\
\addlinespace
CNS-Hopf equals knowledge curvature threshold
& Model law plus conditional proposition
& The formula $\lambda_{\max}=R/(R+\kappa)$ is a modelling assumption unless
derived from Gamma dynamics. Hopf requires a spectral crossing condition.\\
\addlinespace
Kalman convergence guaranteed by fibre structure
& Standard theorem under Kalman assumptions
& Convergence follows from detectability/stabilizability and bounded noise,
not from fibre language alone.\\
\addlinespace
SSTD is a bundle morphism
& Proposition under defined projections
& A morphism claim is formal once base and fibre projections are specified.
Complexity is linear in text length for fixed spectral depth/dimension,
plus lookup costs; transformer attention is quadratic, not exponential.\\
\addlinespace
SpiderR is a flat connection with trivial holonomy
& Conditional proposition / idealization
& Flatness follows only under commutativity and local triviality assumptions.
Non-commuting Spider operations would produce curvature.\\
\bottomrule
\end{tabular}
\caption{Proof status of the six central results from the source treatise.}
\label{tab:status}
\end{table}

\section{Theorem-by-Theorem Audit of the Original Six Claims}

This section records the publication status of the six headline claims
before the detailed mathematical development.  The purpose is to make
the proof burden explicit.  Code paths may identify where a structure is
implemented, but no implementation path is treated as a mathematical
proof.

\begin{longtable}{p{0.17\textwidth}p{0.21\textwidth}p{0.28\textwidth}p{0.22\textwidth}}
\caption{Audit of the six original theorem-level claims.}
\label{tab:audit}\\
\toprule
\textbf{Claim} & \textbf{Status in this manuscript} &
\textbf{Proof object used here} & \textbf{Upgrade obligation}\\
\midrule
\endfirsthead
\toprule
\textbf{Claim} & \textbf{Status in this manuscript} &
\textbf{Proof object used here} & \textbf{Upgrade obligation}\\
\midrule
\endhead
$T^n$ as product-bundle section
& Proved theorem
& Finite product $\EE=\prod_{k=1}^{12}E_k$ over the discrete base
$I=\{1,\ldots,12\}$; sections are in bijection with product elements.
& Keep the slot list fixed for each theorem statement and state the metric
chosen for every non-vector slot.\\
\addlinespace
Frobenius algebra on crystals
& Downgraded to conditional proposition, conjecture, and finite toy model
& Standard definition of finite-dimensional Frobenius algebra; a proved
finite label model shows consistency of the algebraic pattern.
& Define the deployed crystal vector space, multiplication, unit,
comultiplication, counit, and verify associativity, coassociativity,
counitality, and the Frobenius identities.\\
\addlinespace
CNS--Hopf curvature unification
& Model law plus conditional theorem
& The equation $\lambda_{\max}=R/(R+\kappa)$ is assumed as a model law;
the Hopf conclusion uses the classical Hopf theorem only after an
eigenvalue crossing assumption.
& Derive or empirically calibrate the curvature--spectrum equation from
the Gamma dynamics, then compute the Jacobian and crossing condition for
the concrete system.\\
\addlinespace
Kalman convergence from BrainiaK fibres
& Standard Kalman theorem under standard hypotheses
& Discrete linear Gaussian state-space model with detectability and
stabilizability; an explicit contraction inequality gives the displayed
error bound.
& Identify the deployed state, observation map, noise model, and prove
detectability/stabilizability or restrict the theorem to an observable
quotient.\\
\addlinespace
SSTD as bundle morphism
& Proved proposition after defining projections
& Trivial text bundle, spectral bundle, and commuting square
$\pi_{\mathrm{spec}}\circ\Phi_{\mathrm{SSTD}}
=p\circ\pi_{\mathrm{text}}$.
& Publish the exact domain projection and encoder interface; benchmark
complexity against quadratic self-attention, not an exponential straw
comparison.\\
\addlinespace
SpiderR as flat connection
& Conditional flat-connection idealization
& Trivial vector bundle with constant commuting local connection matrices;
curvature is zero because both $\dd A$ and $A\wedge A$ vanish.
& For the concrete SpiderR operators, compute commutators or curvature on
the relevant state region.  If non-zero, state a curved-connection result
instead of flatness.\\
\bottomrule
\end{longtable}

\paragraph{Audit conclusion.}
Only the product-bundle, metric, type-preservation, SSTD-morphism, and
explicitly conditional flatness statements are theorem-level claims in
this manuscript.  Frobenius, curvature--spectrum, Hopf, Kalman, and
SpiderR claims are promoted only under their stated assumptions.  This
is the publication-safe reading of the source treatise.

\section{Source-Claim Traceability and Downgrades}

The French source and its summary contain several strong statements that
are intentionally not reproduced as theorems.  Table~\ref{tab:source-trace}
records how each such statement is handled.  This is a safeguard against
silent omission: every downgraded statement is either proved in a weaker
form, labelled as a model law, or excluded from theorem-level use.

\begin{longtable}{p{0.24\textwidth}p{0.29\textwidth}p{0.33\textwidth}}
\caption{Traceability from the source treatise to the peer-review-safe
manuscript.}\label{tab:source-trace}\\
\toprule
\textbf{Source statement} & \textbf{Issue for peer review} &
\textbf{Treatment in this manuscript}\\
\midrule
\endfirsthead
\toprule
\textbf{Source statement} & \textbf{Issue for peer review} &
\textbf{Treatment in this manuscript}\\
\midrule
\endhead
$T^n$ is a section of a heterogeneous product bundle
& Safe only after fixing the finite slot set and metrics.
& Kept as a theorem for the twelve-slot product bundle; variable
implementation slots are handled by choosing a metric class.\\
\addlinespace
The deployed crystal system is a Frobenius algebra
& The source proof assumes associativity, unit, duality, and Frobenius
compatibility without verifying exact maps on the deployed state space.
& Downgraded to a conditional criterion and conjecture; a finite label
Frobenius algebra is proved only as a reference model.\\
\addlinespace
Penrose THICK/THIN substitution proves semantic Frobenius structure
& A substitution matrix is not by itself an isomorphism with semantic
crystal composition.
& Kept only as a proved golden-ratio substitution calculation and a
formal analogy.\\
\addlinespace
$\lambda_{\max}(t)=R(t)/(R(t)+\kappa)$
& The equation is not derived from a specified Gamma/CNS vector field.
& Labelled as a model law; only its algebraic consequences are proved.\\
\addlinespace
Hopf bifurcation is equivalent to $R(t)>R_c$
& Hopf requires an equilibrium branch, Jacobian spectrum,
transversality, and nondegeneracy conditions.
& Replaced by a conditional Hopf prediction under an explicit crossing
assumption.\\
\addlinespace
Gamma has 93 fibres
& The number is an implementation/model parameter, not needed for the
abstract matrix theorem.
& Replaced by a general $m$-fibre matrix $\Gamma(t)$; a concrete paper
may instantiate $m=93$ with repository evidence.\\
\addlinespace
Kalman convergence is guaranteed by fibre structure
& Fibre representation does not imply observability, detectability, or
stabilizability.
& Replaced by the standard Kalman convergence theorem under standard
hypotheses plus a counterexample showing projection can lose
observability.\\
\addlinespace
SSTD complexity is $O(L\log L)$ and transformers are exponential
& Transformer self-attention is quadratic in sequence length, not
exponential; SSTD complexity depends on lookup and diffusion model.
& Replaced by $O(L\log|\mathcal L|+LDd)$ under fixed spectral depth and
dimension, with comparison to quadratic self-attention.\\
\addlinespace
SSTD is $O(L/\log L)$ faster and scales to arbitrary text length
& Speedup ratios require hardware, baseline, input distribution, and
benchmark reports; arbitrary-length claims need memory/runtime bounds.
& Excluded from theorem-level claims; moved to empirical protocol
requirements.\\
\addlinespace
SpiderR is flat with trivial holonomy
& Flatness requires vanishing curvature; non-commuting operators produce
curvature.
& Kept only as a flat idealization under constant commuting matrices;
non-commutation is shown to produce curvature.\\
\addlinespace
Source code paths such as
\codepath{brainiak/mathcore/tensor/tncore.py},
\codepath{brainiak/mathcore/tensor/contraction.py},
\codepath{brainiak/mathcore/sstd/sstd_encoder.py},
\codepath{brainiak/mathcore/gamma/gamma_unified.py},
\codepath{brainiak/mathcore/kalman/kalman_filter.py}, and
\codepath{brainiak/mathcore/spider/spider_connection.py}
& These exact paths were not verified in the current worktree.
& Current verified paths are cited where available; absent paths are
labelled as architectural/source labels rather than evidence.\\
\bottomrule
\end{longtable}

\begin{longtable}{p{0.46\textwidth}p{0.15\textwidth}p{0.29\textwidth}}
\caption{Implementation-path verification status in the current worktree.}
\label{tab:paths}\\
\toprule
\textbf{Path} & \textbf{Status} & \textbf{Use in manuscript}\\
\midrule
\endfirsthead
\toprule
\textbf{Path} & \textbf{Status} & \textbf{Use in manuscript}\\
\midrule
\endhead
\codepath{brainiak/mathcore/fda/semantic/sstd_codec.py}
& Verified present & $T^n$ container and SSTD slot correspondence.\\
\codepath{brainiak/mathcore/fda/semantic/socle_anchor.py}
& Verified present & $\RR^{14}$ anchor/GCM correspondence.\\
\codepath{brainiak/mathcore/fda/semantic/frobenius.py}
& Verified present & Frobenius-inspired diagnostics, not proof.\\
\codepath{brainiak/mathcore/gamma_unified.py}
& Verified present & Gamma/CNS implementation surface.\\
\codepath{brainiak/mathcore/hopf_detector.py}
& Verified present & Hopf diagnostic surface, not Hopf theorem proof.\\
\codepath{brainiak/mathcore/kalman_bank.py}
& Verified present & Kalman implementation surface.\\
\codepath{brainiak/mathcore/fda/semantic/learning_weights.py}
& Verified present & Learning-weight correspondence.\\
\codepath{brainiak/mathcore/fda/semantic/encoder_sts22.py}
& Verified present & STS benchmark surface; no empirical number asserted.\\
\codepath{brainiak/mathcore/fda/semantic/encoder_sick.py}
& Verified present & SICK benchmark surface; no empirical number asserted.\\
\codepath{brainiak/mathcore/fda/semantic/sstd_embedder.py}
& Verified present & SSTD embedding surface.\\
\codepath{brainiak/mathcore/fda/semantic/spider.py} and related
\texttt{spider\_*} semantic files
& Verified present & SpiderR implementation surfaces; flatness remains
conditional.\\
\codepath{brainiak/mathcore/tensor/tncore.py},
\codepath{brainiak/mathcore/tensor/contraction.py},
\codepath{brainiak/mathcore/sstd/sstd_encoder.py},
\codepath{brainiak/mathcore/gamma/gamma_unified.py},
\codepath{brainiak/mathcore/kalman/kalman_filter.py},
\codepath{brainiak/mathcore/spider/spider_connection.py}
& Not verified present & Historical/source labels only; not evidence.\\
\bottomrule
\end{longtable}

\section{Terminological Correspondence with the French Source}

The English manuscript is not a literal translation of the French
source.  It is a peer-review-safe formalization.  Table~\ref{tab:terms}
records how the main source terms are preserved, renamed, or downgraded.
This prevents a reviewer from mistaking a missing slogan for a missing
concept, and prevents an internal slogan from re-entering the paper as
an unsupported theorem.

\begin{definition}[Faithful downgrade]
A source claim is faithfully downgraded if the mathematical content that
can be proved is retained, the unproved part is explicitly labelled as a
model law, conjecture, empirical claim, or future requirement, and no
new theorem-level assertion is introduced.
\end{definition}

\begin{proposition}[Faithful downgrade preserves source content without
overclaiming]
If a source claim is faithfully downgraded, then the resulting
manuscript neither deletes the source idea nor asserts it as a proved
theorem beyond the available proof.
\end{proposition}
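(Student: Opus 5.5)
The argument unpacks the Definition of faithful downgrade clause by clause and matches each clause to one half of the conclusion. The statement makes two claims about the resulting manuscript: (i) the source idea is not deleted, and (ii) it is not asserted as a proved theorem beyond the available proof. The definition supplies three conditions: (a) the provable mathematical content is retained, (b) the unproved remainder is explicitly labelled as a model law, conjecture, empirical claim, or future requirement, and (c) no new theorem-level assertion is introduced.

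For (i), I would split the source claim into its provable part and its unproved remainder. This split exists and is exhaustive by the way condition (b) is phrased. Condition (a) places the provable part in the manuscript as a theorem, proposition, or proved weaker form. Condition (b) places the remainder in the manuscript under one of the non-theorem labels of the Result Classes subsection. Every component of the source idea therefore appears somewhere in the manuscript, so nothing is deleted.

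For (ii), I would argue by contradiction. Suppose the manuscript asserted, as a proved theorem, some content of the source claim exceeding the available proof. That content either belongs to the unproved remainder or is new. If it belongs to the remainder, then by (b) it carries a non-theorem label. The Result Classes subsection makes those classes disjoint from the class ``theorem or proposition,'' so this is a contradiction. If it is new, it is a new theorem-level assertion, which (c) forbids. Either way the assumption fails.

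The main obstacle is that ``content'' and ``beyond the available proof'' are informal, so the proof is only as rigorous as the definition. I would handle this by stating explicitly that a claim's content is taken to be partitioned into a proved part and an unproved part, and that the result classes are treated as mutually exclusive labels. Under those conventions the argument is a direct, essentially tautological unwinding of the definition, in the same spirit as the proofs of the earlier propositions on the notation register and on implementation existence.
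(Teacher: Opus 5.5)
Your proposal is correct and matches the paper's proof: both simply unwind the definition, using retention of provable content for non-deletion, and the non-theorem labelling plus the no-new-theorem clause to rule out overclaiming. Your only refinements are that you also use the labelling clause for non-deletion (the paper uses retention alone there), and that you state explicitly the partition and disjoint-class conventions the paper leaves implicit.
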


\begin{proof}
By definition, the provable mathematical content is retained.  Thus the
source idea is not deleted.  The unproved content is assigned an
explicit non-theorem status, so it is not asserted as a proved theorem.
The condition that no new theorem-level assertion is introduced prevents
the downgrade from smuggling in a stronger claim.
\end{proof}

\begin{longtable}{p{0.20\textwidth}p{0.25\textwidth}p{0.16\textwidth}p{0.25\textwidth}}
\caption{Terminological correspondence between the French source and this
English formalization.}
\label{tab:terms}\\
\toprule
\textbf{French/source term} & \textbf{Formal English treatment} &
\textbf{Status} & \textbf{Reason}\\
\midrule
\endfirsthead
\toprule
\textbf{French/source term} & \textbf{Formal English treatment} &
\textbf{Status} & \textbf{Reason}\\
\midrule
\endhead
\emph{Espace $T^n$}
& Heterogeneous finite concept product $\EE=\prod_{k=1}^{12}E_k$
& Proved core
& Finite products and sections over a discrete base are formal.\\
\addlinespace
\emph{Section globale du fibré produit}
& Global section theorem for the disjoint product bundle
& Proved theorem
& Explicit bijection between sections and tuples.\\
\addlinespace
\emph{Auto-descriptivité}
& Slot projections, typed product, and metric metadata
& Interpretive corollary only
& Self-description is meaningful as stored structure, not as a separate
topological theorem unless formalized.\\
\addlinespace
\emph{Hétérogénéité contrôlée}
& Positive weighted GCM product metric
& Proved metric property
& Heterogeneity is controlled by typed slots and positive weights.\\
\addlinespace
\emph{Cristaux}
& Elements, logged trees, or labels in a chosen crystal space
& Formal only after choice of space
& The term has several possible mathematical realizations.\\
\addlinespace
\emph{Algèbre de Frobenius sur les cristaux}
& Conditional Frobenius criterion plus finite label reference model
& Downgraded
& Deployed maps are not yet fixed and algebraic identities are not yet
proved.\\
\addlinespace
\emph{Pavage de Penrose THICK/THIN}
& Two-tile substitution matrix and golden-ratio growth
& Proved analogy
& The substitution theorem is real; the semantic isomorphism is not
claimed.\\
\addlinespace
\emph{Gamma 93 fibres}
& Gamma matrix with general dimension $m$; 93 as model parameter
& Parameterized
& The proofs do not require the special number 93.\\
\addlinespace
\emph{Courbure de l'espace de connaissance}
& Curvature proxy $R(t)=\alpha N(t)+\beta\sigma_\Gamma(t)$
& Model definition
& No Ricci tensor on a specified manifold is derived in the source.\\
\addlinespace
\emph{Courbure de Ricci}
& Future upgrade requirement
& Not asserted
& Requires a smooth manifold, metric tensor, connection, and curvature
calculation.\\
\addlinespace
\emph{CNS-Hopf}
& Classical Hopf theorem plus conditional prediction
& Conditional
& Hopf needs a vector field, equilibrium branch, Jacobian crossing, and
nondegeneracy.\\
\addlinespace
\emph{Convergence Kalman garantie par la structure fibrée}
& Standard Kalman convergence under detectability/stabilizability
& Downgraded
& Fibre coordinates alone do not imply observability.\\
\addlinespace
\emph{SSTD morphisme de fibrés}
& Bundle morphism criterion over a domain projection
& Proved under definition
& The commuting-square property is formal once projections are fixed.\\
\addlinespace
\emph{SSTD $O(L\log L)$}
& $O(L\log|\mathcal L|+LDd)$ under an explicit computational model
& Corrected
& Complexity depends on lexicon lookup, diffusion depth, and dimension.\\
\addlinespace
\emph{Transformers exponentiels}
& Quadratic self-attention comparison
& Corrected
& Standard attention is quadratic in sequence length, not exponential.\\
\addlinespace
\emph{Textes de longueur arbitraire}
& Benchmark and memory/runtime requirement
& Empirical only
& Arbitrary length requires resource bounds and experiments.\\
\addlinespace
\emph{SpiderR connexion plate}
& Flat connection idealization under constant commuting matrices
& Conditional
& Noncommuting operators yield curvature.\\
\addlinespace
\emph{Holonomie triviale}
& Consequence of flatness on simply connected domains
& Conditional
& Flatness and topology of the base must both be checked.\\
\addlinespace
\emph{Universalité SpiderR}
& Promotion requirement only
& Not asserted
& Universal claims require quantification over a class of bundles and
operators.\\
\addlinespace
\emph{Conscience}
& Excluded from theorem-level claims
& Not asserted
& The present manuscript concerns semantic topology, not a theory of
consciousness.\\
\bottomrule
\end{longtable}

\section{Homogeneous Result Cards for the Six Central Claims}

Each central source claim is rewritten below in the same peer-review
format: assumptions, formal statement, proof location, implementation
use, and mathematical versus empirical status.

\begin{definition}[Complete result card]
A complete result card for a central claim consists of six fields:
\begin{enumerate}
  \item the assumptions or hypotheses under which the claim is read;
  \item the formal mathematical statement, or the explicit statement
  that no theorem is being asserted;
  \item the proof location, external theorem citation, or downgrade
  reason;
  \item the implementation correspondence, restricted to verified code
  paths or architectural labels;
  \item the mathematical status: theorem, proposition, lemma, model law,
  conjecture, or definition;
  \item the empirical status: validated empirical claim, protocol only,
  internal-draft report, or no empirical content.
\end{enumerate}
\end{definition}

\begin{proposition}[Complete cards prevent central-claim status drift]
If each central claim is represented by a complete result card and the
card is kept synchronized with the theorem statement used in the body,
then no central claim can silently change from conjecture, model law, or
empirical protocol into a theorem-level assertion.
\end{proposition}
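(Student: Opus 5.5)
The argument will be a direct unpacking of the definition of a complete result card, in the same style as the proof of the faithful-downgrade proposition. First I will make precise what a "silent change" of status means. It is a change in which the body asserts a claim at theorem level, while the status recorded for that claim still reads conjecture, model law, or empirical protocol. It also covers the case where the body asserts it at theorem level with no recorded justification at all. With this reading the statement becomes a contrapositive: under the two hypotheses (each central claim has a complete card, and each card is synchronized with its body statement), every change of status is visible on the card and is accompanied by a justification.

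The key steps, in order, are as follows. (i) Fix a central claim $C$ and its card, and use fields 2, 3 and 5 of the Definition of a complete result card: field 5 records the mathematical status, field 2 the formal statement (or the explicit declaration that no theorem is asserted), and field 3 the proof location, external citation, or downgrade reason. (ii) Suppose the body now asserts $C$ as a theorem. Synchronization forces field 2 to contain that theorem statement and field 5 to read theorem/proposition/lemma. (iii) Completeness forces field 3 to be filled. Since the status is now theorem-level, the entry cannot be a downgrade reason; it must be a proof location or a cited standard result. (iv) Hence the change is recorded in fields 2, 3 and 5 together with its proof obligation, so it is not silent. If instead field 5 still reads conjecture, model law, or protocol, then synchronization is violated, contradicting the hypothesis. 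Field 6 handles the empirical case in the same way: an empirical protocol cannot become a theorem without field 6 and field 5 diverging from the body.

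The main obstacle is step (iii). The definition of a complete card only requires field 3 to be \emph{some} proof location, citation, or downgrade reason; it does not require it to match the status in field 5. To close this gap I will read synchronization as a consistency requirement among fields 2, 3 and 5 as well as with the body, since the statement's phrase ``kept synchronized with the theorem statement used in the body'' supports that reading. The proof will state this interpretation explicitly. The proposition is therefore a bookkeeping consequence of the definitions rather than a substantive mathematical theorem, and I will note that in a sentence, following the paper's own editorial conventions.
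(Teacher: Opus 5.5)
Your proposal is correct and is essentially the paper's own argument: a silent change is a theorem-level presentation in the body without explicit notice, synchronization forces the card's status and proof-location fields to reflect any such presentation, and conversely a card that still reads conjecture, model law, or protocol forbids the body from presenting the claim as a theorem. The one difference is that you include ``no recorded justification'' in your definition of silence, which is what creates your step~(iii) obstacle and the extra consistency reading of synchronization; the paper defines silence only as absence of explicit notice, for which the synchronized status field already suffices, so your stipulation is harmless but unnecessary.
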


\begin{proof}
A silent change would require the body of the paper to present a central
claim as theorem-level while the reader receives no explicit notice that
its status has changed.  In a complete result card, the mathematical
status field must name the claim's status and the proof-location field
must either identify the proof or state the downgrade reason.  If the
card is synchronized with the body, any theorem-level presentation in
the body must therefore be reflected in the status and proof fields of
the card.  The change is no longer silent.  Conversely, if those fields
still say model law, conjecture, or empirical protocol, synchronization
forbids the body from presenting the same central claim as an
unqualified theorem.
\end{proof}

\begin{proposition}[Implementation correspondence is not a proof-card
substitute]
In a complete result card, the implementation correspondence field
cannot replace the proof-location or empirical-status fields.
\end{proposition}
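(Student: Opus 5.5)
The plan is to argue directly from the definition of a complete result card, treating the six fields as distinct entries with distinct contents, and then invoking the earlier Proposition that implementation existence is not theorem evidence, together with the Definition distinguishing mathematical claims from implementation correspondence. The claim to establish is that filling field (4) can never discharge the obligation carried by field (3) or by field (6). I will split the argument into two cases, one for each field that might be substituted.

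\textbf{Case of the proof-location field.} By the definition of a complete result card, field (3) must contain a proof location, an external theorem citation, or a downgrade reason. By the Definition of implementation correspondence, field (4) contains only statements that a named file, class, or function realizes, approximates, or tests a formal object. Such a statement is not a mathematical premise unless a formal axiom is separately extracted from it. By the Proposition that implementation existence is not theorem evidence, a correspondence of this kind supplies neither a formal domain nor a universal proof over one. Hence it is none of the three admissible contents of field (3). A downgrade reason is also excluded: a downgrade reason must say why a theorem is \emph{not} asserted, whereas a code path makes no assertion about proof status at all.

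\textbf{Case of the empirical-status field.} Field (6) must take one of four values: validated empirical claim, protocol only, internal-draft report, or no empirical content. By the Result Classes convention, a validated empirical claim requires a dataset, a metric, a sample size, a configuration, and a reproducible path. A code path provides at most the last of these, so it cannot certify validation. It also does not determine which of the weaker values applies, since the same code may underlie a protocol, a draft report, or nothing empirical. Therefore field (4) does not fix the content of field (6).

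The main obstacle is the empirical case. One might object that a verified code path that \emph{tests} an object, as in Table~\ref{tab:paths}, already constitutes empirical evidence. I would handle this by noting that the definition only permits the word ``tests'' to name the object tested. The outcomes and the measurement data are not part of the correspondence field, so the remaining requirements stay unmet. I would conclude that in both cases field (4) leaves the other fields' obligations undischarged, and so it cannot replace them.
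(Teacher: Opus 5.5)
Your proposal is correct and follows essentially the same route as the paper, which also argues that the implementation, proof-location and empirical-status fields play distinct evidential roles: a code path establishes no universal theorem, and it supplies no dataset, metric, sample size or frozen output for an empirical claim. Your two-case split is a more granular version of that argument, and it also explicitly rules out the downgrade-reason option, which the paper's proof leaves implicit.
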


\begin{proof}
The implementation correspondence field records where a formal object or
architectural label is represented in the BrainiaK code surface.  A
proof-location field records a mathematical derivation or a cited
external theorem.  An empirical-status field records whether measured
evidence is traceable.  These are distinct evidential roles.  Knowing
that a code path exists does not establish a universal theorem and does
not provide dataset, metric, sample size, or frozen output for an
empirical claim.  Therefore implementation correspondence cannot
substitute for either mathematical proof or empirical traceability.
\end{proof}

\paragraph{$T^n$ product-bundle section.}
\textbf{Assumptions.}  A fixed twelve-slot family $(E_k,d_k)$, positive
weights, and the discrete base $I=\{1,\ldots,12\}$.  \textbf{Formal
statement.}  Sections of the disjoint bundle
$\bigsqcup_{k\in I}E_k\to I$ are canonically bijective with
$\prod_kE_k$.  \textbf{Proof.}  The bijection and inverse are written in
Theorem~\ref{thm:tn-section-card-ref}.  \textbf{Implementation use.}
Repository paths identify slot-style containers and GCM weights; they
do not prove the bijection.  \textbf{Status.}  Mathematical theorem.
\textbf{Empirical status.}  No empirical claim is asserted.

\paragraph{Crystal Frobenius structure.}
\textbf{Assumptions.}  A finite-dimensional vector space $A$ and exact
maps $(\mu,\eta,\delta,\epsilon)$ satisfying the Frobenius algebra
axioms.  \textbf{Formal statement.}  If these maps satisfy the axioms,
then $A$ is a Frobenius algebra.  \textbf{Proof.}  The conditional
criterion follows directly from the definition; a finite label model is
proved by basis calculation.  \textbf{Implementation use.}  The current
code can motivate candidate maps and diagnostics, but numerical
round-trips are empirical evidence only.  \textbf{Status.}  Conditional
criterion plus conjecture for deployed crystals.
\textbf{Empirical status.}  Protocol/diagnostic only; no validated
numerical claim is asserted.

\paragraph{CNS--Hopf curvature law.}
\textbf{Assumptions.}  A Gamma matrix $\Gamma(t)$, a curvature proxy
$R(t)=\alpha N(t)+\beta\sigma_\Gamma(t)$, the model law
$\lambda_{\max}=R/(R+\kappa)$, and an independent Hopf crossing
assumption for a concrete vector field.  \textbf{Formal statement.}
The curvature threshold follows algebraically from the model law; Hopf
prediction follows only under the crossing assumption.  \textbf{Proof.}
The threshold is obtained by monotone inversion and the Hopf conclusion
uses the classical Hopf theorem.  \textbf{Implementation use.}  Gamma
and Hopf code paths can provide diagnostics, not derivations.
\textbf{Status.}  Model law plus conditional proposition.
\textbf{Empirical status.}  No empirical Hopf or curvature-calibration
claim is asserted.

\paragraph{Kalman convergence.}
\textbf{Assumptions.}  A linear Gaussian state-space model with
detectability and stabilizability, plus any contraction bound used for
the explicit error inequality.  \textbf{Formal statement.}  The Kalman
covariance recursion converges under standard hypotheses, and the
displayed error bound follows from a scalar contraction recursion.
\textbf{Proof.}  Standard Kalman theory and finite geometric-series
iteration.  \textbf{Implementation use.}  Implementation paths identify
candidate state/update surfaces; they do not verify observability.
\textbf{Status.}  Standard theorem under standard hypotheses.
\textbf{Empirical status.}  No benchmark or deployed convergence rate is
asserted.

\paragraph{SSTD bundle morphism.}
\textbf{Assumptions.}  A text set $X$, domain projection $p:X\to D$,
trivial text bundle, spectral bundle, and encoder output
$(p(x),z(x))$.  \textbf{Formal statement.}  The map is a bundle
morphism because the projection square commutes.  \textbf{Proof.}
Direct calculation of
$\pi_{\mathrm{spec}}\circ\Phi_{\mathrm{SSTD}}=
p\circ\pi_{\mathrm{text}}$.  \textbf{Implementation use.}  The
repository documents the $3072=1024\times3$ slot and benchmark encoders.
\textbf{Status.}  Mathematical proposition under defined projections;
performance remains empirical.
\textbf{Empirical status.}  Performance is protocol-only unless a
traceable benchmark report is attached.

\paragraph{SpiderR flat connection.}
\textbf{Assumptions.}  A trivial vector bundle, constant local connection
matrices, and commutativity of those matrices.  \textbf{Formal
statement.}  Under those assumptions curvature vanishes and closed-loop
holonomy is trivial.  \textbf{Proof.}  Curvature is
$\dd A+A\wedge A=0$ and parallel transport reduces to an endpoint
exponential.  \textbf{Implementation use.}  Spider files identify
operator families; commutators for the deployed operators must still be
computed to promote the concrete claim.  \textbf{Status.}  Proved
idealization, not yet a deployed-system theorem.
\textbf{Empirical status.}  No empirical holonomy or flatness benchmark
is asserted.

\section{Categorical and Algebraic-Topological Scope}

The phrase ``fractal algebraic topology'' is used in the source corpus
as a unifying title.  In this manuscript it has a restricted technical
meaning.  The proved topological content concerns finite products,
sections of a trivial bundle over a discrete base, metric topology,
continuity, quotients by observed coordinates, and connection curvature
under explicit assumptions.  The proved algebraic content concerns
finite products, finite label Frobenius algebras, substitution matrices,
and conditional algebraic criteria.  The manuscript does not construct a
new homology theory, cohomology theory, spectral sequence, or invariant
of fractal spaces.

\begin{definition}[Semantic product functor]
Let $\mathbf{Met}$ be the category of metric spaces and Lipschitz maps.
For a fixed finite index set $I=\{1,\ldots,m\}$, define
\[
  \Pi_I:\mathbf{Met}^I\to\mathbf{Met}
\]
by sending a family $(E_k,d_k)_{k\in I}$ with positive weights $w_k$ to
the weighted product metric space
\[
  \Pi_I(E_k)=\left(\prod_{k\in I}E_k,\ \sum_{k\in I}w_kd_k\right).
\]
On morphisms $(f_k:E_k\to F_k)_{k\in I}$, define
\[
  \Pi_I(f_k)(x_1,\ldots,x_m)=(f_1(x_1),\ldots,f_m(x_m)).
\]
\end{definition}

\begin{proposition}[Product functoriality]
The construction $\Pi_I$ is a functor from finite families of metric
spaces and Lipschitz maps to metric spaces and Lipschitz maps.
\end{proposition}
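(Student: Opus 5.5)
We check three things in turn: the product is again a metric space, the product of Lipschitz maps is Lipschitz, and $\Pi_I$ preserves identities and composition. The weights $w_k>0$ are fixed data attached to each index $k\in I$ and are used identically on the source and target families. This convention makes $\Pi_I(f_k)$ well-defined as a morphism between two weighted products.

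\emph{Objects.} For $d_\Sigma=\sum_k w_kd_k$ we verify the metric axioms coordinatewise.
\begin{itemize}
  \item Nonnegativity and symmetry are inherited from each $d_k$.
  \item The triangle inequality follows by summing the slot triangle inequalities with nonnegative weights.
  \item For definiteness: if $d_\Sigma(x,y)=0$, then every term $w_kd_k(x_k,y_k)$ vanishes. Since $w_k>0$, this forces $x_k=y_k$ for all $k$.
\end{itemize}
Strict positivity of the weights is used exactly here. The topology agrees with the max-metric topology by the earlier lemma on weighted-sum and maximum metrics, though functoriality itself does not need this.

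\emph{Morphisms.} Let $f_k:(E_k,d_k)\to(F_k,d'_k)$ be $L_k$-Lipschitz. Writing $f=\Pi_I(f_k)$ and $d'_\Sigma=\sum_k w_kd'_k$, we estimate
\[
  d'_\Sigma(f(x),f(y))=\sum_k w_k d'_k(f_k(x_k),f_k(y_k))\le \sum_k w_kL_k d_k(x_k,y_k)\le L\,d_\Sigma(x,y),
\]
with $L=\max_k L_k$. This $L$ is finite because $I$ is finite, which is the only place finiteness of the index set matters. By the product membership criterion, $f$ is a well-defined typed map into $\prod_kF_k$.

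\emph{Functor laws.} Both follow componentwise from the definition of $\Pi_I$ on tuples of maps:
\[
  \Pi_I(\id_{E_k})=\id_{\prod E_k},\qquad \Pi_I(g_k\circ f_k)=\Pi_I(g_k)\circ\Pi_I(f_k).
\]

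The main obstacle is not a calculation but bookkeeping: $\mathbf{Met}^I$ must be read as having objects that carry the fixed weight vector $(w_k)$, so that source and target products use the same weights. Otherwise the Lipschitz constant must be enlarged to $\max_k L_k w'_k/w_k$. I would state this convention explicitly at the start of the proof, and note that the bound remains finite in either reading since $I$ is finite and all weights are positive.
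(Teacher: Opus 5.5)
Your proof is correct and follows the paper's route: identities and composition are preserved componentwise, and the Lipschitz property comes from the weighted estimate. Your fallback constant $\max_k w'_kL_k/w_k$ for differing source and target weights is exactly the constant the paper uses, and your check of the metric axioms on objects is something the paper's proof takes for granted here and proves separately in its \emph{Metric property} theorem. One small correction to an aside: finiteness of $I$ is also what makes $\sum_k w_kd_k$ finite-valued, so bounding $\max_k L_k$ is not the only place it matters.
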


\begin{proof}
The identity family $(\id_{E_k})_k$ is sent to the identity map on
$\prod_kE_k$.  If $(f_k:E_k\to F_k)_k$ and
$(g_k:F_k\to G_k)_k$ are composable families, then
\[
  \Pi_I(g_k)\circ\Pi_I(f_k)(x_1,\ldots,x_m)
  =(g_1(f_1(x_1)),\ldots,g_m(f_m(x_m)))
  =\Pi_I(g_k\circ f_k)(x_1,\ldots,x_m).
\]
Thus identities and composition are preserved.

It remains only to check that the image map is Lipschitz.  If each
$f_k$ is $L_k$-Lipschitz, then
\[
  \sum_kw'_kd'_k(f_kx_k,f_ky_k)
  \leq \sum_kw'_kL_kd_k(x_k,y_k)
  \leq \left(\max_k\frac{w'_kL_k}{w_k}\right)
       \sum_kw_kd_k(x_k,y_k),
\]
where $w_k$ and $w'_k$ are the input and output product weights.  Hence
$\Pi_I(f_k)$ is Lipschitz.
\end{proof}

\begin{definition}[Semantic subobject]
Given a concept product $\EE=\prod_kE_k$, a semantic subobject is a
subset $S\subseteq\EE$ equipped with the subspace topology and the
restricted metric.
\end{definition}

\begin{proposition}[Constraints define semantic subobjects]
Let $\varphi:\EE\to Y$ be continuous and let $C\subseteq Y$ be closed.
Then
\[
  S=\varphi^{-1}(C)
\]
is a closed semantic subobject of $\EE$.  If $\EE$ is complete and $S$
is given the restricted metric, then $S$ is complete.
\end{proposition}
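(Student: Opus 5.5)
The argument has two steps: closedness from continuity, then completeness from the standard fact that a closed subset of a complete metric space is complete. Throughout, $\EE=\prod_kE_k$ carries the weighted GCM product metric $d_\Sigma$. By the lemma on weighted-sum and maximum metrics, this metric induces the product topology, so ``continuous'' and ``closed'' are unambiguous for either presentation.

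\emph{Closedness.} Since $C$ is closed in $Y$, its complement $Y\setminus C$ is open. Continuity of $\varphi$ makes $\varphi^{-1}(Y\setminus C)=\EE\setminus\varphi^{-1}(C)$ open in $\EE$. Hence $S=\varphi^{-1}(C)$ is closed. Equipping $S$ with the subspace topology and the restriction of $d_\Sigma$ makes it a semantic subobject in the sense of the definition. The restricted metric induces the subspace topology, as it does for any subset of a metric space.

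\emph{Completeness.} Take a Cauchy sequence $(s_n)$ in $S$ with respect to the restricted metric. It is also Cauchy in $\EE$, because the distances are literally the same. By completeness of $\EE$ it converges to some $x\in\EE$. Since $S$ is closed, it contains the limits of its convergent sequences, so $x\in S$. Convergence $d_\Sigma(s_n,x)\to0$ is exactly convergence in the restricted metric, so $(s_n)$ converges in $S$.

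I do not expect a genuine obstacle. The only point needing care is the bookkeeping of which topology ``closed'' refers to. This is settled by the weighted/maximum equivalence lemma together with the observation that a closed subset of a metric space is sequentially closed. If $\EE$ has not been assumed complete directly, a useful side remark is that completeness of each $(E_k,d_k)$ suffices. Indeed, $d_\Sigma$ is bounded below by $w_{\min}d_\infty$, so a $d_\Sigma$-Cauchy sequence is Cauchy in each slot. It therefore converges slotwise, and by the upper bound $Wd_\infty$ it converges in $d_\Sigma$.
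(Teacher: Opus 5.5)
Your proof is correct and follows the paper's own proof: $S$ is closed as the preimage of a closed set under a continuous map, and a Cauchy sequence in $S$ converges in the complete space $\EE$, with closedness putting the limit back in $S$. Your extra remarks are sound but not needed for this proposition; the topology point holds via the weighted/maximum lemma plus the standard fact that $d_\infty$ induces the product topology, and the slotwise-completeness remark is the paper's later Completeness corollary.
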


\begin{proof}
Continuity of $\varphi$ implies that the preimage of the closed set
$C$ is closed in $\EE$.  A closed subset of a complete metric space is
complete with the restricted metric: every Cauchy sequence in $S$ is a
Cauchy sequence in $\EE$, hence converges to some $x\in\EE$; closedness
of $S$ implies $x\in S$.
\end{proof}

\begin{definition}[Fractal language, formal and informal]
A use of the word \emph{fractal} is formal in this manuscript only when
it is tied to a stated self-similar substitution, recursive tree
construction, scale-indexed family, or iterated operator.  Otherwise it
is treated as descriptive source language.
\end{definition}

\begin{proposition}[Recursive trees give a scale filtration]
Let $\mathcal T_{\leq n}$ be the set of logged semantic trees of depth
at most $n$.  Then
\[
  \mathcal T_{\leq0}\subseteq\mathcal T_{\leq1}\subseteq
  \mathcal T_{\leq2}\subseteq\cdots
\]
is an increasing filtration, and every finite logged tree belongs to
some $\mathcal T_{\leq n}$.
\end{proposition}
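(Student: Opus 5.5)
The plan is to argue directly from the definition of $\mathcal T_{\leq n}$ as a sublevel set of the depth function. First I would make explicit what the statement leaves implicit: a depth map $\operatorname{depth}:\mathcal T\to\NN\cup\{\infty\}$ on the class $\mathcal T$ of logged semantic trees, with $\operatorname{depth}(T)$ the maximal length (number of edges) of a root-to-leaf path, so that the root-only tree has depth $0$. With this convention I set
\[
  \mathcal T_{\leq n}=\{T\in\mathcal T:\operatorname{depth}(T)\leq n\}.
\]
I take \emph{finite} to mean that $T$ has finitely many nodes. No structure of the logging format beyond its underlying rooted tree is needed.

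For the inclusions, take $T\in\mathcal T_{\leq n}$. Then $\operatorname{depth}(T)\leq n\leq n+1$, so $T\in\mathcal T_{\leq n+1}$. Induction on the gap, or transitivity of $\leq$ on $\NN$, then gives $\mathcal T_{\leq n}\subseteq\mathcal T_{\leq n'}$ for all $n\leq n'$. Hence the family is an increasing filtration indexed by $\NN$.

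For exhaustion, let $T$ be a finite logged tree with $N$ nodes. A root-to-leaf path in a tree visits distinct nodes, because trees are acyclic. Every such path therefore has at most $N$ nodes, hence at most $N-1$ edges. There are only finitely many such paths, so the maximum defining $\operatorname{depth}(T)$ exists and satisfies $\operatorname{depth}(T)\leq N-1$. Thus $T\in\mathcal T_{\leq N-1}$, and consequently
\[
  \bigcup_n\mathcal T_{\leq n}
\]
contains every finite logged tree.

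The only step that needs care is this exhaustion step. It requires depth to be a well-defined natural number for finite trees, which is exactly where finiteness and acyclicity enter; an infinite chain would have infinite depth and lie in no $\mathcal T_{\leq n}$. I would close with a remark that the filtration is \emph{fractal} only in the formal sense of the preceding definition, namely as a scale-indexed family, and that no self-similarity claim is asserted.
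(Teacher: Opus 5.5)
Your proof is correct and follows the paper's argument. The inclusions come from $\mathrm{depth}(T)\leq n\leq n+1$, and exhaustion uses the fact that a finite logged tree has finite depth, so it lies in $\mathcal T_{\leq n}$ for $n=\mathrm{depth}(T)$; your explicit bound $\mathrm{depth}(T)\leq N-1$ via acyclicity just spells out the step the paper attributes to ``definition of finiteness.''
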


\begin{proof}
If a tree has depth at most $n$, then it also has depth at most $n+1$,
so the inclusions hold.  Every finite logged tree has finite depth by
definition of finiteness; therefore it belongs to
$\mathcal T_{\leq n}$ for $n$ equal to its depth.
\end{proof}

\begin{remark}[Boundary of the claim]
The filtration above is the precise mathematical content behind the
recursive/fractal language currently used in this manuscript.  It does
not imply Hausdorff dimension, self-similar measure, fractal curvature,
or a new algebraic-topological invariant unless those objects are
separately defined and studied.
\end{remark}

\section{The Heterogeneous Concept Bundle}

\subsection{Slots}

Let $I=\{1,\ldots,12\}$ index the standard slots of a BrainiaK concept
state:
\[
\begin{array}{ccl}
E_1&=&\RR^{14},\\
E_2&=&\RR^2,\\
E_3&=&\ZZ/2\ZZ,\\
E_4&=&\RR^+,\\
E_5&=&\RR^6_{\mathrm{vis}},\\
E_6&=&\RR^6_{\mathrm{aud}},\\
E_7&=&\RR^{3072}_{\mathrm{SSTD}},\\
E_8&=&\FF_{\mathrm{refined}},\\
E_9&=&\TT^\ast_{\mathrm{tools}},\\
E_{10}&=&\MM^\ast_{\mathrm{metrics}},\\
E_{11}&=&\mathcal{A}^\ast_{\mathrm{axes}},\\
E_{12}&=&\mathcal{H}^\ast_{\mathrm{hints}}.
\end{array}
\]
Here the star denotes a finite list or finite dictionary object.  For
the mathematical results below, every $E_k$ is equipped with a metric
$d_k$.  For finite symbolic lists and dictionaries, one may use a
discrete or edit-distance metric.  For variable-dimensional refined
fibres, the theorem applies to any fixed implementation class after
choosing a metric on that class.

\begin{remark}[On the variable slot]
The refined-fibre slot $E_8$ is variable in implementation because a
compositional tree can have arbitrary finite depth.  For the finite
product theorems one may take $E_8$ to be the set of all finite rooted
labelled binary trees whose nodes carry $\RR^{14}$ vectors and structural
metadata, equipped with any complete tree metric.  One simple choice is
a bounded edit metric plus a weighted sum of node-vector distances after
optimal tree alignment.  The exact choice affects empirical retrieval,
not the product-bundle theorem.
\end{remark}

\begin{definition}[Heterogeneous concept product]
The standard concept space is the finite product
\[
  \EE=\prod_{k=1}^{12} E_k.
\]
A concept vector $T^n$ is an element of $\EE$; the superscript $n$
indicates that implementations may expose a dynamic sub-selection or
extension of the standard slots.
\end{definition}

\begin{definition}[Canonical projections and injections]
For each slot $k$, let
\[
  \pi_k:\EE\to E_k,\qquad \pi_k(x_1,\ldots,x_{12})=x_k
\]
be the canonical projection.  If a neutral element $e_j\in E_j$ is
fixed for every $j\neq k$, define the slot injection
\[
  \iota_k:E_k\to \EE,\qquad
  \iota_k(u)=(e_1,\ldots,e_{k-1},u,e_{k+1},\ldots,e_{12}).
\]
\end{definition}

\begin{proposition}[Slot projections are continuous and Lipschitz]
Under the metric-slot assumption below, each projection $\pi_k$ is
$1/w_k$-Lipschitz from $(\EE,d_{\mathrm{GCM}})$ to $(E_k,d_k)$.
\end{proposition}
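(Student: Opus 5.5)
The plan is to read $d_{\mathrm{GCM}}$ as the weighted product metric of the semantic product functor,
\[
  d_{\mathrm{GCM}}(x,y)=\sum_{j=1}^{12}w_jd_j(x_j,y_j),\qquad w_j>0,
\]
which is the form the metric-slot assumption is expected to fix. The whole argument is then a single termwise comparison. Since the metric-slot assumption is stated only after the proposition, the first step is to make explicit the three features used: each $d_j$ is a genuine metric, so it is nonnegative; every weight is strictly positive; and $d_{\mathrm{GCM}}$ is the finite positive-weighted sum above.

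Next, fix $k$ and $x,y\in\EE$. Every summand $w_jd_j(x_j,y_j)$ is nonnegative, so dropping all terms with $j\neq k$ can only decrease the sum. This gives
\[
  w_k\,d_k(\pi_kx,\pi_ky)=w_k\,d_k(x_k,y_k)\leq d_{\mathrm{GCM}}(x,y).
\]
Dividing by $w_k>0$ yields $d_k(\pi_kx,\pi_ky)\leq w_k^{-1}d_{\mathrm{GCM}}(x,y)$. This is exactly the $1/w_k$-Lipschitz bound claimed. Continuity then follows immediately, because Lipschitz maps between metric spaces are continuous: given $\varepsilon>0$, the choice $\delta=w_k\varepsilon$ works.

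For the topological reading, one can also appeal to the lemma that weighted-sum and maximum metrics are topologically equivalent. It shows that continuity of $\pi_k$ does not depend on the chosen weights, although the Lipschitz constant does depend on them.

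The only real obstacle is definitional rather than technical. If the forthcoming metric-slot assumption allowed a different aggregation than a plain weighted sum, for example a GCM similarity such as $\exp(-c\,d)$ or an $\ell^p$ combination, the constant would change. In that case I would first check that $d_{\mathrm{GCM}}$ still dominates each weighted slot term $w_kd_k$. The argument above uses nothing beyond that domination, nonnegativity, and positivity of $w_k$. For the symbolic slots $E_9$ through $E_{12}$ and the tree slot $E_8$, nothing extra is needed: any metric chosen there (discrete, edit, or tree metric) is nonnegative, which is all the term-dropping step requires.
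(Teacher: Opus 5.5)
Your proof is correct and matches the paper's argument: drop the nonnegative terms with $j\neq k$ from $d_{\mathrm{GCM}}(x,y)=\sum_j w_j d_j(x_j,y_j)$ to get $w_k d_k(x_k,y_k)\leq d_{\mathrm{GCM}}(x,y)$, then divide by $w_k>0$. Your remarks on continuity and on the weight-independence of the topology are correct but not needed, and the normalization $\sum_k w_k=1$ in the metric-slot assumption plays no role here.
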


\begin{proof}
For $x,y\in\EE$,
\[
  w_k d_k(\pi_k x,\pi_k y)
  =w_k d_k(x_k,y_k)
  \leq \sum_j w_j d_j(x_j,y_j)
  =d_{\mathrm{GCM}}(x,y).
\]
Thus $d_k(\pi_kx,\pi_ky)\leq w_k^{-1}d_{\mathrm{GCM}}(x,y)$.
\end{proof}

\begin{definition}[Trivial product bundle]
Let $B=I$ be the discrete base.  Define the disjoint total space
\[
  \bigsqcup_{k\in I} E_k
\]
with projection $\pi(x)=k$ for $x\in E_k$.  A section is a map
$s:I\to\bigsqcup_k E_k$ such that $s(k)\in E_k$.
\end{definition}

\begin{theorem}[$T^n$ as a global section]
\label{thm:tn-section-card-ref}
The space of sections of the above product bundle is canonically
isomorphic to $\EE=\prod_{k=1}^{12}E_k$.
\end{theorem}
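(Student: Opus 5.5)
The plan is to write down the two maps explicitly and check that they are mutually inverse. Let $\Gamma(I,\bigsqcup_kE_k)$ denote the set of sections, i.e.\ maps $s:I\to\bigsqcup_kE_k$ with $\pi\circ s=\id_I$, which is the same as requiring $s(k)\in E_k$. Define
\[
  \Psi:\Gamma\Bigl(I,\bigsqcup_kE_k\Bigr)\to\EE,\qquad \Psi(s)=(s(1),\ldots,s(12)),
\]
and
\[
  \Phi:\EE\to\Gamma\Bigl(I,\bigsqcup_kE_k\Bigr),\qquad \Phi(x)(k)=x_k\in E_k\subseteq\textstyle\bigsqcup_jE_j .
\]

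\textbf{Step 1: both maps are well defined.} For $\Psi$, the condition $s(k)\in E_k$ is exactly the membership condition of the Product membership criterion lemma, so $\Psi(s)$ lies in the typed product. For $\Phi$, the value $\Phi(x)$ is a map into the disjoint union with $\pi(\Phi(x)(k))=k$, so it is a section.

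\textbf{Step 2: they are mutually inverse.} First, $\Psi(\Phi(x))=(x_1,\ldots,x_{12})=x$. Second, $\Phi(\Psi(s))(k)=s(k)$ for every $k$, so $\Phi(\Psi(s))=s$ by extensionality of functions on $I$.

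The one point needing care, and the only real obstacle, is the disjointness of the total space. Several slots coincide as sets; for example, $E_5$ and $E_6$ are both $\RR^6$. The union must therefore be read as the tagged union $\{(k,u):u\in E_k\}$, with $\pi(k,u)=k$ and $E_k$ identified with the fibre $\pi^{-1}(k)$. With this reading, $\Phi(x)(k)=(k,x_k)$ and $\Psi(s)_k$ is the second component of $s(k)$. I would state this convention explicitly, since otherwise the projection $\pi$ is not even well defined.

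\textbf{Step 3: in what sense the isomorphism is canonical.} The bijection uses no choices: no neutral elements, weights, or metrics enter. It is also natural in the slot family. For any family of maps $f_k:E_k\to F_k$, postcomposition of sections corresponds under $\Psi$ to the product map $\Pi_I(f_k)$ of the Product functoriality proposition, and this is checked coordinatewise. As an optional strengthening, one can transport the product topology along $\Psi$, or give sections the sup or GCM-type metric $\sum_kw_kd_k(s(k),s'(k))$. Then $\Psi$ is an isometry onto $(\EE,d_{\mathrm{GCM}})$ by construction, so the isomorphism holds in $\mathbf{Met}$ and not only as sets.
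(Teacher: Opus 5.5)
Your proposal is correct and is essentially the paper's proof: the paper defines the same two maps (with the names $\Phi$ and $\Psi$ interchanged relative to yours) and checks directly that both composites are identities. Your tagged-union reading of $\bigsqcup_k E_k$ (needed because, for example, $E_5$ and $E_6$ are both $\RR^6$) and your naturality and metric remarks are sound refinements that the paper leaves implicit, with one small caveat: the isometry is onto $(\EE,d_{\mathrm{GCM}})$ only when sections carry the weighted-sum metric, not the sup metric.
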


\begin{proof}
For any section $s$, define
\[
  \Phi(s)=(s(1),\ldots,s(12))\in\prod_{k=1}^{12}E_k.
\]
Conversely, for any product element $x=(x_1,\ldots,x_{12})$ with
$x_k\in E_k$, define $\Psi(x):I\to\bigsqcup_k E_k$ by $\Psi(x)(k)=x_k$.
Then $\Psi(x)$ is a section.  Moreover $\Phi(\Psi(x))=x$ and
$\Psi(\Phi(s))=s$.  Thus $\Phi$ is a bijection with inverse $\Psi$.
\end{proof}

\begin{corollary}[Finite-slot observability]
Any claim about equality of two concept states in the standard product
bundle is reducible to equality of their twelve slots:
\[
  x=y\quad\Longleftrightarrow\quad \pi_k(x)=\pi_k(y)\ \text{for all }k.
\]
\end{corollary}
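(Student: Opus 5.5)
The plan is to prove the two directions of the biconditional separately. The substantive direction is recovering equality from slotwise equality, and I will route it through the section isomorphism of Theorem~\ref{thm:tn-section-card-ref} so that the corollary visibly inherits its status from that theorem rather than standing on its own.

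For the forward direction, assume $x=y$. Each $\pi_k$ is a well-defined function $\EE\to E_k$ by the definition of the canonical projections. Applying a function to equal arguments gives equal values, so $\pi_k(x)=\pi_k(y)$ for every $k\in I$.

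For the reverse direction, assume $\pi_k(x)=\pi_k(y)$ for all $k$, which says $x_k=y_k$ for each $k=1,\ldots,12$.
\begin{enumerate}
  \item Let $\Psi$ be the map from the proof of Theorem~\ref{thm:tn-section-card-ref}. The sections $\Psi(x)$ and $\Psi(y)$ are maps $I\to\bigsqcup_kE_k$ with $\Psi(x)(k)=x_k=y_k=\Psi(y)(k)$.
  \item They therefore agree at every point of the finite discrete base $I$. By extensionality of functions, $\Psi(x)=\Psi(y)$.
  \item Since $\Phi\circ\Psi=\id_{\EE}$, applying $\Phi$ gives $x=\Phi(\Psi(x))=\Phi(\Psi(y))=y$.
\end{enumerate}
The same conclusion also follows directly from equality of tuples in a Cartesian product, which is coordinatewise by definition, as in the Product membership criterion. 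Going through the section isomorphism is preferred here only because it ties the corollary to the theorem it follows.

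I do not expect a real obstacle, since the statement is essentially definitional. The one point needing care is the phrase ``any claim about equality''. I would read it strictly as set-theoretic equality of elements of $\EE$ for the fixed twelve-slot list. It does not cover closeness in $d_{\mathrm{GCM}}$, and it does not cover equality modulo a dynamic sub-selection $T^n$.

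For the metric reading, there is a matching remark that follows from the positivity of the weights. Since $d_{\mathrm{GCM}}(x,y)=\sum_kw_kd_k(x_k,y_k)$ with $w_k>0$ and each $d_k\ge0$, the sum is zero exactly when every $d_k(x_k,y_k)=0$. By the identity of indiscernibles for each slot metric, this gives $d_{\mathrm{GCM}}(x,y)=0\iff x=y$.
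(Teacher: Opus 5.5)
Your proof is correct. Your fallback argument, that equality in a Cartesian product is coordinatewise by definition, is exactly the paper's one-line proof.

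Your main route through $\Psi$ and $\Phi\circ\Psi=\id_{\EE}$ from Theorem~\ref{thm:tn-section-card-ref} is valid, but the difference is only nominal. In that theorem's proof, the identity $\Phi(\Psi(x))=x$ is itself justified by the same definitional fact, namely $x=(\pi_1x,\ldots,\pi_{12}x)$ for every $x\in\EE$. So the detour makes a dependency on the theorem visible without adding any logical content. The paper's direct appeal is shorter and avoids the intermediate section objects. If a $12$-tuple is encoded as a function on $I$, $\Psi$ is essentially the identity, and your extensionality step is literally the paper's argument.

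Two small points:
\begin{enumerate}
\item The Product membership criterion concerns membership in $\prod_kE_k$, not equality. The direct route should cite the definition of the typed product instead.
\item Your closing remark that $d_{\mathrm{GCM}}(x,y)=0\iff x=y$ is correct. However, it is the identity-of-indiscernibles step in the proof of the Metric property theorem, not part of this corollary.
\end{enumerate}
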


\begin{proof}
This is the defining equality relation in a finite Cartesian product.
\end{proof}

\subsection{Bundle Mechanics and Presentation Invariance}

The section theorem above is elementary, but it is the structural hinge
of the manuscript.  It lets us reason about $T^n$ by reasoning about
slots.  This subsection records the slot-level constructions that will
be used later: retractions, observed subproducts, slot permutations, and
bounded normalizations.  These are deliberately modest results.  They
make precise how a heterogeneous representation can be reorganized
without changing its mathematical content.

\begin{definition}[Subproduct projection]
For a nonempty subset $J\subseteq I$, define the subproduct
\[
  \EE_J=\prod_{k\in J}E_k
\]
and the projection
\[
  \pi_J:\EE\to\EE_J,\qquad \pi_J(x)=(x_k)_{k\in J}.
\]
When every $E_k$ is metric and positive weights $w_k$ are fixed, write
\[
  d_{\mathrm{GCM}}(x,y)=\sum_{k\in I}w_kd_k(x_k,y_k)
\]
for the full weighted product metric.  The subproduct $\EE_J$ carries
the restricted weighted metric
\[
  d_J(u,v)=\sum_{k\in J}w_kd_k(u_k,v_k).
\]
\end{definition}

\begin{proposition}[Subproduct projections are Lipschitz]
For every nonempty $J\subseteq I$, the projection
$\pi_J:(\EE,d_{\mathrm{GCM}})\to(\EE_J,d_J)$ is $1$-Lipschitz.
\end{proposition}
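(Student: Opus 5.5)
The plan is a direct comparison of the two weighted sums, in the same spirit as the proof that each single projection $\pi_k$ is $1/w_k$-Lipschitz. Fix $x,y\in\EE$ and write $u=\pi_J(x)$, $v=\pi_J(y)$. By the definition of the subproduct projection, $u_k=x_k$ and $v_k=y_k$ for every $k\in J$. Hence
\[
  d_J(\pi_Jx,\pi_Jy)=\sum_{k\in J}w_kd_k(x_k,y_k).
\]

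Next, split the full index set as $I=J\sqcup(I\setminus J)$ and write
\[
  d_{\mathrm{GCM}}(x,y)=\sum_{k\in J}w_kd_k(x_k,y_k)+\sum_{k\in I\setminus J}w_kd_k(x_k,y_k).
\]
Every term in the second sum is nonnegative, since $w_k>0$ and $d_k\geq0$; if $J=I$ that sum is simply empty. Dropping it gives $d_J(\pi_Jx,\pi_Jy)\leq d_{\mathrm{GCM}}(x,y)$, which is the $1$-Lipschitz inequality.

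There is no real obstacle here. The only point to watch is that $\EE_J$ carries the \emph{restricted} weighted metric $d_J$, with the same weights $w_k$ as the full metric, and not a renormalised one. That is exactly why the Lipschitz constant is $1$, rather than the factor $1/w_k$ that appears for a single projection. I would also remark that $d_J$ is indeed a metric on $\EE_J$: it is a positive weighted sum of metrics over a nonempty finite index set, and nonemptiness of $J$ ensures that definiteness holds.
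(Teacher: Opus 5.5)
Your argument is correct and is exactly the paper's proof: $d_J(\pi_Jx,\pi_Jy)$ is the sub-sum over $J$ of the nonnegative terms $w_kd_k(x_k,y_k)$, so it is bounded by the full sum $d_{\mathrm{GCM}}(x,y)$. One small correction to your closing aside: definiteness of $d_J$ comes from $w_k>0$ and from each $d_k$ being a metric, not from $J\neq\emptyset$ (an empty $J$ would give a one-point space, on which the zero function is trivially a metric), and definiteness is not needed for the Lipschitz bound anyway.
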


\begin{proof}
For $x,y\in\EE$,
\[
  d_J(\pi_Jx,\pi_Jy)=\sum_{k\in J}w_kd_k(x_k,y_k)
  \leq \sum_{k\in I}w_kd_k(x_k,y_k)
  =d_{\mathrm{GCM}}(x,y).
\]
\end{proof}

\begin{assumption}[Neutral completion]
For each slot $k\in I$, fix a neutral element $e_k\in E_k$.  For a
nonempty $J\subseteq I$, define
\[
  \eta_J:\EE_J\to\EE
\]
by inserting the coordinates of $u\in\EE_J$ on $J$ and inserting
$e_k$ on $I\setminus J$.
\end{assumption}

\begin{proposition}[Observed subproducts are retracts]
Under the neutral completion assumption,
\[
  \pi_J\circ\eta_J=\id_{\EE_J}.
\]
Consequently $\EE_J$ is a retract of $\EE$.
\end{proposition}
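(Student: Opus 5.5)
The plan is to verify the identity $\pi_J\circ\eta_J=\id_{\EE_J}$ coordinatewise, and then read off the retract statement from the definition of a retraction.

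First, fix $u=(u_k)_{k\in J}\in\EE_J$. By the neutral completion assumption, $\eta_J(u)$ is the element $x\in\EE$ with $x_k=u_k$ for $k\in J$ and $x_k=e_k$ for $k\in I\setminus J$. This is a well-defined element of $\EE$ by the Product membership criterion, because each coordinate lies in its own slot $E_k$. Applying the subproduct projection keeps exactly the coordinates indexed by $J$. Hence $\pi_J(\eta_J(u))=(x_k)_{k\in J}=(u_k)_{k\in J}=u$. Equality of tuples in $\EE_J$ is coordinatewise equality, as in the Finite-slot observability corollary applied to the finite product $\EE_J$. So the composite is the identity.

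Next, the retract claim follows directly: $\eta_J$ is a section of $\pi_J$, which gives a set-level retraction $\EE\to\EE_J$ with right inverse $\eta_J$. If the statement is read in the metric or topological category, continuity is also needed. $\pi_J$ is $1$-Lipschitz by the preceding proposition. For $\eta_J$, compute $d_{\mathrm{GCM}}(\eta_Ju,\eta_Jv)=\sum_{k\in J}w_kd_k(u_k,v_k)+\sum_{k\notin J}w_kd_k(e_k,e_k)=d_J(u,v)$. So $\eta_J$ is in fact an isometric embedding, and $\EE_J$ is a retract of $\EE$ in the metric sense as well.

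There is no genuine obstacle here. The only point needing care is making sure the "retract" conclusion is stated in a category where both maps are morphisms, and the isometry computation handles this. The neutral coordinates contribute zero because $d_k(e_k,e_k)=0$.
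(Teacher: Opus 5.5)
Your proof is correct and follows the paper's argument: both check coordinatewise that $\eta_J(u)$ has $k$th coordinate $u_k$ for every $k\in J$, so projecting back to $J$ returns $u$. Your extra computation goes slightly beyond the paper, which only asserts the set-level retraction. You show that $\eta_J$ is an isometric embedding, because $d_k(e_k,e_k)=0$, and you combine this with the $1$-Lipschitz projection $\pi_J$, which makes $\EE_J$ a retract in the metric category as well.
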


\begin{proof}
For $u=(u_k)_{k\in J}\in\EE_J$, the completed point $\eta_J(u)$ has
$k$th coordinate $u_k$ for every $k\in J$.  Projecting back to $J$
therefore returns exactly $u$.  This is the definition of a retraction.
\end{proof}

\begin{corollary}[Loss of unobserved coordinates]
If $J\subsetneq I$ and at least one unobserved slot $E_\ell$ with
$\ell\notin J$ has two distinct elements, then $\pi_J$ is not injective.
\end{corollary}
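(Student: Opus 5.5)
The plan is to exhibit two distinct points of $\EE$ that agree on every observed slot in $J$ and differ only in the slot $\ell$. By hypothesis there are $a,b\in E_\ell$ with $a\neq b$. Since $\ell\notin J$, the observed coordinates can be fixed arbitrarily. The simplest choice is the neutral completion already available: pick any $u\in\EE_J$ (for instance $u=\pi_J(e)$ with $e=(e_1,\ldots,e_{12})$, or any element, using that every $E_k$ is nonempty because it contains $e_k$). Set $x=\eta_J(u)$, and let $y$ be the tuple obtained from $x$ by replacing its $\ell$th coordinate by $b$, after first arranging that $x_\ell=a$; equivalently, define $x$ and $y$ coordinatewise, with $x_k=y_k$ for $k\neq\ell$, $x_\ell=a$ and $y_\ell=b$.

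The key steps, in order, are as follows. First, I verify that $x,y\in\EE$ by the Product membership criterion, since each coordinate lies in the correct slot. Second, I compute $\pi_J(x)=(x_k)_{k\in J}=(y_k)_{k\in J}=\pi_J(y)$, using $\ell\notin J$, so that the coordinates on $J$ coincide by construction. Third, I invoke the Finite-slot observability corollary: since $\pi_\ell(x)=a\neq b=\pi_\ell(y)$, we get $x\neq y$. Together these show that $\pi_J$ identifies two distinct points, so it is not injective.

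There is no real obstacle. The only point needing care is that every slot $E_k$ for $k\neq\ell$ be nonempty, so that the common coordinates exist. This is guaranteed by the Neutral completion assumption, which supplies $e_k\in E_k$, or simply by the fact that every slot listed contains at least one element. The hypothesis $J\subsetneq I$ is used only to ensure that some $\ell\notin J$ exists, and this is provided explicitly in the statement. One could also remark, as a consistency check, that $\pi_J$ admits the section $\eta_J$ by the retract proposition, so it is surjective. Non-injectivity then says that this retraction forgets information, which is the intended interpretation of the corollary.
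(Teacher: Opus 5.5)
Your proposal is correct and follows the paper's own argument: choose distinct $a,b\in E_\ell$, hold all other coordinates fixed, and observe that the two resulting points of $\EE$ have identical $J$-coordinates but differ in slot $\ell$. Your explicit check that the remaining slots are nonempty (via the neutral elements $e_k$) is a detail the paper leaves implicit. The detour through $\eta_J(u)$ is unnecessary, since your coordinatewise definition of $x$ and $y$ already does the job.
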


\begin{proof}
Choose two distinct elements $a,b\in E_\ell$ and fix all other
coordinates.  The two resulting points of $\EE$ differ in slot $\ell$
but have identical $J$-coordinates, so their images under $\pi_J$ are
equal.
\end{proof}

\begin{definition}[Slot permutation]
Let $\sigma:I\to I$ be a bijection.  The permuted presentation
$\EE^\sigma$ has slot $E_{\sigma(k)}$ in position $k$.  Define
\[
  P_\sigma:\EE\to\EE^\sigma,\qquad
  P_\sigma(x)_k=x_{\sigma(k)}.
\]
The weights in $\EE^\sigma$ are permuted in the same way.
\end{definition}

\begin{proposition}[Slot permutations preserve the product metric]
For every slot permutation $\sigma$, the map $P_\sigma$ is an isometry
between the original weighted product and the permuted weighted product.
\end{proposition}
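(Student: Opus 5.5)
The plan is to make the permuted metric explicit and then verify isometry by reindexing the finite sum. First I will pin down the data of $\EE^\sigma$. Its $k$th slot is $(E_{\sigma(k)},d_{\sigma(k)})$. The clause ``the weights are permuted in the same way'' means its $k$th weight is $w^\sigma_k=w_{\sigma(k)}$. Its weighted product metric is therefore
\[
  d^\sigma(u,v)=\sum_{k\in I}w_{\sigma(k)}\,d_{\sigma(k)}(u_k,v_k),
\]
which is a genuine GCM metric because every $w_{\sigma(k)}>0$. The map $P_\sigma$ is well typed by the product membership criterion, since $P_\sigma(x)_k=x_{\sigma(k)}\in E_{\sigma(k)}$.

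Next comes the key computation. For $x,y\in\EE$,
\[
  d^\sigma(P_\sigma x,P_\sigma y)=\sum_{k\in I}w_{\sigma(k)}d_{\sigma(k)}(x_{\sigma(k)},y_{\sigma(k)})
  =\sum_{j\in I}w_jd_j(x_j,y_j)=d_{\mathrm{GCM}}(x,y).
\]
The middle equality is the substitution $j=\sigma(k)$. It is legitimate because $\sigma$ is a bijection of the finite set $I$, so a finite sum of real numbers is invariant under reindexing. This shows $P_\sigma$ is distance-preserving.

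To conclude that $P_\sigma$ is an isometry in the full sense, namely a bijective distance-preserving map, I will write down the inverse explicitly: $Q(u)_j=u_{\sigma^{-1}(j)}$. The typing checks out, since the slot of $\EE^\sigma$ in position $\sigma^{-1}(j)$ is $E_{\sigma(\sigma^{-1}(j))}=E_j$. A direct check gives $P_\sigma\circ Q=\id$ and $Q\circ P_\sigma=\id$. Injectivity alone also follows from distance preservation, but the explicit inverse is needed for surjectivity.

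The main obstacle is not analytic. It is the bookkeeping convention for how the weights and slot metrics travel with the slots. If the weights were not permuted alongside the slots, the statement would fail in general. I therefore plan to state the convention $w^\sigma_k=w_{\sigma(k)}$ (and likewise $d^\sigma_k=d_{\sigma(k)}$) explicitly before doing the computation.
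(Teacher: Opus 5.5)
Your proposal is correct and follows the paper's proof: both make the permuted weights and slot metrics explicit and reduce the claim to reindexing the finite sum by $j=\sigma(k)$. Your explicit inverse $Q(u)_j=u_{\sigma^{-1}(j)}$ adds the surjectivity check that the paper leaves implicit, which is a harmless and welcome completion.
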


\begin{proof}
With permuted weights and metrics,
\[
  d_{\mathrm{GCM}}^\sigma(P_\sigma x,P_\sigma y)
  =\sum_{k\in I}w_{\sigma(k)}
     d_{\sigma(k)}(x_{\sigma(k)},y_{\sigma(k)}).
\]
Since $\sigma$ is a bijection, this sum is exactly
$\sum_{j\in I}w_jd_j(x_j,y_j)=d_{\mathrm{GCM}}(x,y)$.
\end{proof}

\begin{definition}[Slotwise isomorphic presentations]
Two concept products
$\EE=\prod_kE_k$ and $\EE'=\prod_kE'_k$ are slotwise isomorphic if
there are bijections $\phi_k:E_k\to E'_k$ for every $k$.  They are
slotwise isometric if, in addition,
\[
  d'_k(\phi_k(a),\phi_k(b))=d_k(a,b)
\]
for every slot $k$ and all $a,b\in E_k$.
\end{definition}

\begin{proposition}[Isometric change of presentation]
If two concept products are slotwise isometric and carry the same
positive weights, then
\[
  \Phi(x_1,\ldots,x_{12})=(\phi_1(x_1),\ldots,\phi_{12}(x_{12}))
\]
is an isometry.
\end{proposition}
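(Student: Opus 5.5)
The plan is to verify the isometry property by direct computation slot by slot, and then check that $\Phi$ is a bijection so that it is an isometry onto $\EE'$. Write $d_{\mathrm{GCM}}$ and $d'_{\mathrm{GCM}}$ for the weighted product metrics on $\EE$ and $\EE'$. Both use the same positive weights $w_k$, with $d_{\mathrm{GCM}}(x,y)=\sum_k w_kd_k(x_k,y_k)$ and $d'_{\mathrm{GCM}}$ defined analogously using the $d'_k$.

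First, I note that $\Phi$ is a well-defined map into $\EE'$. This follows from the product membership criterion (Lemma on product membership), because each $\phi_k(x_k)\in E'_k$.

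Second, I compute, for $x,y\in\EE$,
\[
  d'_{\mathrm{GCM}}(\Phi x,\Phi y)=\sum_{k=1}^{12}w_k\,d'_k(\phi_k(x_k),\phi_k(y_k))=\sum_{k=1}^{12}w_k\,d_k(x_k,y_k)=d_{\mathrm{GCM}}(x,y).
\]
The middle equality applies the slotwise isometry hypothesis term by term. The equal weights are what allow the two sums to be identified summand by summand.

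Third, I show surjectivity, so that $\Phi$ is an isometric bijection and not merely an isometric embedding. Injectivity already follows from distance preservation, because $d_{\mathrm{GCM}}$ is a metric (alternatively, from the finite-slot observability corollary together with injectivity of each $\phi_k$). For surjectivity, the inverse is $\Phi^{-1}(x')=(\phi_1^{-1}(x'_1),\ldots,\phi_{12}^{-1}(x'_{12}))$, which is defined because each $\phi_k$ is a bijection. One checks directly that $\Phi\circ\Phi^{-1}$ and $\Phi^{-1}\circ\Phi$ are the identities, coordinatewise. The same computation applied to $\Phi^{-1}$ shows that it also preserves distances.

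There is no genuine obstacle here. The only point requiring care is the remark about equal weights: if the weights differed, the middle equality would fail and one would obtain only a bi-Lipschitz equivalence, as in the weighted-sum lemma. It is therefore worth stating explicitly where the shared weights are used.
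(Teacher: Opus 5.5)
Your proof is correct and follows the paper's argument: its core is the same one-line computation $d'_{\mathrm{GCM}}(\Phi x,\Phi y)=\sum_k w_k d'_k(\phi_k(x_k),\phi_k(y_k))=\sum_k w_k d_k(x_k,y_k)=d_{\mathrm{GCM}}(x,y)$, which applies the shared weights and the slotwise isometry hypothesis term by term. You also check well-definedness and build the coordinatewise inverse from the $\phi_k^{-1}$; the paper leaves these implicit in the bijections required by slotwise isomorphism, and they are a worthwhile addition if ``isometry'' is read as a surjective distance-preserving map.
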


\begin{proof}
For $x,y\in\EE$,
\[
  d'_{\mathrm{GCM}}(\Phi x,\Phi y)
  =\sum_kw_kd'_k(\phi_k(x_k),\phi_k(y_k))
  =\sum_kw_kd_k(x_k,y_k)
  =d_{\mathrm{GCM}}(x,y).
\]
\end{proof}

\begin{corollary}[Theorems invariant under isometric presentation]
Any theorem in this manuscript whose hypotheses and conclusion are
stated only in terms of the product metric, projections, and slotwise
maps remains true after a slotwise isometric change of presentation.
\end{corollary}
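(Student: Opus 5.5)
The plan is to treat the slotwise isometry $\Phi$ of the preceding proposition as an isomorphism of all the structure the corollary allows theorems to mention. I would then transport hypotheses forward and conclusions back. Concretely, I will show that $\Phi$ intertwines every primitive notion in the admissible vocabulary, namely the weighted metric $d_{\mathrm{GCM}}$, the slot projections $\pi_k$ and subproduct projections $\pi_J$, and slotwise maps $(f_k)_k$.

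The first step is to record the intertwining identities.
\begin{itemize}
\item $d'_{\mathrm{GCM}}(\Phi x,\Phi y)=d_{\mathrm{GCM}}(x,y)$, which is the preceding proposition.
\item $\pi'_k\circ\Phi=\phi_k\circ\pi_k$, and more generally $\pi'_J\circ\Phi=\Phi_J\circ\pi_J$, where $\Phi_J=\prod_{k\in J}\phi_k$ is itself an isometry of $(\EE_J,d_J)$ onto $(\EE'_J,d'_J)$ by the same computation restricted to $J$.
\item Every slotwise map $F=(f_k)_k$ on $\EE$ corresponds to $F'=(\phi_k f_k\phi_k^{-1})_k$ on $\EE'$, with $F'\circ\Phi=\Phi\circ F$.
\end{itemize}
Since each $\phi_k$ is a bijective isometry, its inverse is an isometry too. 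Hence Lipschitz constants, continuity, completeness, closedness of preimages and injectivity of maps all transfer in both directions.

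The second step is the transport argument itself, done by induction on the construction of the statement. I would formalize ``stated only in terms of'' as statements built from these primitives by Boolean connectives and quantification over points, subsets and slotwise maps. A quantifier over $\EE$ becomes a quantifier over $\EE'$ via the bijection $\Phi$, and each atomic formula has the same truth value on both sides by the identities of the first step. So a theorem valid for every admissible presentation, in particular the one instantiated at $\EE'$, translates into the same statement at $\EE$, and conversely. As sanity checks I would verify that the retract proposition (with neutral elements $e'_k=\phi_k(e_k)$) and the Lipschitz projection propositions come out correctly.

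The main obstacle is that the corollary's hypothesis is informal: ``stated only in terms of'' is not a precise syntactic class. Some earlier statements also use extra data, such as neutral elements, vector-space structure on $\RR^{14}$, or the discrete group structure on $\ZZ/2\ZZ$, which a mere isometry need not preserve. I would handle this by explicitly restricting the claim to the vocabulary above, transporting any auxiliary data such as neutral elements along the $\phi_k$. I would then remark that linear or algebraic statements require the $\phi_k$ to preserve that additional structure, and are otherwise outside the corollary's scope.
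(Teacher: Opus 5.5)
Your proposal is correct and takes the same route as the paper. The slotwise isometry $\Phi$ preserves $d_{\mathrm{GCM}}$, so convergence, Cauchy sequences, continuity and Lipschitz estimates transfer. Projections and slotwise maps are carried across by conjugation with the $\phi_k$, which is exactly your intertwining identities $\pi'_k\circ\Phi=\phi_k\circ\pi_k$ and $F'\circ\Phi=\Phi\circ F$. Your induction on the syntax of statements and your transport of auxiliary data such as neutral elements ($e'_k=\phi_k(e_k)$) just make precise the paper's informal one-line conclusion that hypotheses and conclusions are preserved.
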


\begin{proof}
The isometry transports distances, convergence, Cauchy sequences,
continuity, and Lipschitz estimates exactly.  Projections and slotwise
maps are transported by conjugation with the coordinate isometries.
Therefore the hypotheses and conclusion of any such theorem are
preserved.
\end{proof}

\begin{definition}[Sensory normalization map]
On the punctured sensory slot $\RR^6\setminus\{0\}$, define
\[
  \nu(x)=\frac{x}{\|x\|_2}\in S^5.
\]
For $r>0$, let $U_r=\{x\in\RR^6:\|x\|_2\geq r\}$.
\end{definition}

\begin{proposition}[Normalization is continuous and locally Lipschitz
away from zero]
The map $\nu:\RR^6\setminus\{0\}\to S^5$ is continuous.  Moreover, on
each $U_r$ it is $2/r$-Lipschitz with respect to the Euclidean metric.
\end{proposition}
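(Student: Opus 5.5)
Continuity is immediate: on $\RR^6\setminus\{0\}$ the map $x\mapsto\|x\|_2$ is continuous and nonvanishing, so $\nu(x)=x/\|x\|_2$ is a quotient of continuous maps. It lands in $S^5$ because $\|\nu(x)\|_2=1$. Continuity will also follow from the Lipschitz estimate, since every nonzero $x$ lies in the interior of some $U_r$ (take $r=\|x\|_2/2$) and Lipschitz maps are continuous. So the real work is the bound on $U_r$.

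For $x,y\in U_r$, I will insert the intermediate vector $x/\|y\|_2$ and split
\[
  \nu(x)-\nu(y)=\Bigl(\frac{x}{\|x\|_2}-\frac{x}{\|y\|_2}\Bigr)+\frac{x-y}{\|y\|_2}.
\]
The second term has norm at most $\|x-y\|_2/\|y\|_2\le\|x-y\|_2/r$. The first term equals
\[
  x\,\frac{\|y\|_2-\|x\|_2}{\|x\|_2\|y\|_2},
\]
so its norm is $\bigl|\|y\|_2-\|x\|_2\bigr|/\|y\|_2$. By the reverse triangle inequality this is at most $\|x-y\|_2/r$. Adding the two pieces gives $\|\nu(x)-\nu(y)\|_2\le (2/r)\|x-y\|_2$, which is the claimed constant.

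There is no serious obstacle. The only point needing care is choosing the intermediate vector so that the factor $\|x\|_2$ cancels exactly in the first term. Only the lower bound $\|y\|_2\ge r$ is then used; the numerator's $\|x\|_2$ never needs bounding, so unboundedness of $U_r$ causes no trouble. I will remark that the constant $2/r$ is not sharp, but the sharp constant $1/r$ is not needed for the statement.
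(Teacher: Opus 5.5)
Your proof is correct and is essentially the paper's argument: the paper inserts the intermediate vector $y/\|x\|_2$ where you insert $x/\|y\|_2$. This is just the mirror image of your split, and in both versions each piece is bounded by $\|x-y\|_2/r$ using the lower bound $r$ on norms in $U_r$ and the reverse triangle inequality.
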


\begin{proof}
Continuity follows because $\nu$ is the quotient of continuous maps and
the denominator is nonzero on $\RR^6\setminus\{0\}$.  For
$x,y\in U_r$,
\[
  \left\|\frac{x}{\|x\|}-\frac{y}{\|y\|}\right\|
  \leq
  \left\|\frac{x-y}{\|x\|}\right\|
  +\left\|y\right\|\left|\frac{1}{\|x\|}-\frac{1}{\|y\|}\right|.
\]
The first term is at most $\|x-y\|/r$.  For the second term,
\[
  \|y\|\left|\frac{1}{\|x\|}-\frac{1}{\|y\|}\right|
  =\frac{|\|y\|-\|x\||}{\|x\|}
  \leq \frac{\|x-y\|}{r}
\]
by the reverse triangle inequality.  Summing gives the bound
$2\|x-y\|/r$.
\end{proof}

\begin{remark}[Why this normalization result is included]
It clarifies the role of $S^5$.  A sensory vector may be normalized to
a sphere away from zero, and this operation is stable on regions bounded
away from zero.  This does not change the canonical base into a sphere,
and it does not define a global continuous normalization at the origin.
\end{remark}

\subsection{Heterogeneous GCM Metric}

\begin{assumption}[Metric slots]
Each slot $(E_k,d_k)$ is a metric space.  The weights
$w_k>0$ satisfy $\sum_{k=1}^{12}w_k=1$.
\end{assumption}

\begin{definition}[Product GCM distance]
For $x,y\in\EE$, define
\[
  d_{\mathrm{GCM}}(x,y)=\sum_{k=1}^{12}w_k\,d_k(x_k,y_k).
\]
For the empirical base $\RR^{14}$, the canonical slot distance is a
socle-weighted city-block distance:
\[
  d_{14}(a,b)=
  \omega_{\mathrm{emo}}\|a_{\mathrm{emo}}-b_{\mathrm{emo}}\|_1+
  \omega_{\mathrm{per}}\|a_{\mathrm{per}}-b_{\mathrm{per}}\|_1+
  \omega_{\mathrm{mot}}\|a_{\mathrm{mot}}-b_{\mathrm{mot}}\|_1 .
\]
\end{definition}

\begin{theorem}[Metric property]
Under the metric-slot assumption, $d_{\mathrm{GCM}}$ is a metric on
$\EE$.
\end{theorem}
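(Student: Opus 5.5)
The plan is to verify the three metric axioms for $d_{\mathrm{GCM}}$ coordinatewise, using only that each $d_k$ is a metric and each $w_k>0$. Because the index set $I=\{1,\ldots,12\}$ is finite, $d_{\mathrm{GCM}}(x,y)=\sum_k w_k d_k(x_k,y_k)$ is a finite sum of nonnegative real numbers. It is therefore a well-defined element of $[0,\infty)$, and no convergence issue arises. The normalization $\sum_k w_k=1$ plays no role beyond fixing the scale; only positivity of the weights is used.

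\textbf{Step 1 (nonnegativity and identity of indiscernibles).} Each term $w_kd_k(x_k,y_k)$ is nonnegative, so $d_{\mathrm{GCM}}\geq 0$. If $x=y$, every term vanishes. Conversely, suppose $d_{\mathrm{GCM}}(x,y)=0$. A finite sum of nonnegative terms is zero only if every term is zero. Since $w_k>0$, this forces $d_k(x_k,y_k)=0$, hence $x_k=y_k$ for all $k$. The Corollary on finite-slot observability then gives $x=y$.

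\textbf{Step 2 (symmetry).} Symmetry follows termwise from $d_k(x_k,y_k)=d_k(y_k,x_k)$.

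\textbf{Step 3 (triangle inequality).} For $x,y,z\in\EE$, apply the slot triangle inequality $d_k(x_k,z_k)\leq d_k(x_k,y_k)+d_k(y_k,z_k)$ and multiply by $w_k>0$, which preserves the inequality. Summing over the twelve slots gives $d_{\mathrm{GCM}}(x,z)\leq d_{\mathrm{GCM}}(x,y)+d_{\mathrm{GCM}}(y,z)$.

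The only point needing care, and the nearest thing to an obstacle, is the role of strict positivity of the weights in Step 1. With some $w_k=0$, the function would only be a pseudometric, because points differing only in slot $k$ would be at distance zero. I would also remark, though it is not needed for the theorem, that $d_{14}$ is itself a metric on $E_1$ provided the socle weights $\omega_{\mathrm{emo}},\omega_{\mathrm{per}},\omega_{\mathrm{mot}}$ are positive, by the same argument applied to the three $\ell^1$ blocks. This ensures that the canonical choice for the base slot satisfies the metric-slot assumption.
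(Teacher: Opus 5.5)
Your proof is correct and takes essentially the paper's route: you check each metric axiom term by term, and you use $w_k>0$ to force every $d_k(x_k,y_k)=0$ in the identity-of-indiscernibles step. Your side remarks are accurate additions the paper leaves implicit: only positivity of the weights is used, not the normalization $\sum_k w_k=1$, and a zero weight would give only a pseudometric.
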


\begin{proof}
Non-negativity and symmetry follow from the same properties of each
$d_k$ and from $w_k>0$.  If $d_{\mathrm{GCM}}(x,y)=0$, then each
non-negative term $w_kd_k(x_k,y_k)$ is zero; since $w_k>0$ and $d_k$ is
a metric, $x_k=y_k$ for every $k$, hence $x=y$.  Conversely $x=y$
implies $d_{\mathrm{GCM}}(x,y)=0$.  For the triangle inequality, for
any $x,y,z$,
\[
  d_{\mathrm{GCM}}(x,z)=\sum_k w_kd_k(x_k,z_k)
  \leq \sum_k w_k\bigl(d_k(x_k,y_k)+d_k(y_k,z_k)\bigr)
  =d_{\mathrm{GCM}}(x,y)+d_{\mathrm{GCM}}(y,z).
\]
\end{proof}

\begin{proposition}[Equivalence with the product topology]
If every $E_k$ is metric and all weights are positive, then
$d_{\mathrm{GCM}}$ induces the finite product topology on $\EE$.
\end{proposition}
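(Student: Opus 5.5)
The plan is to reduce the statement to the earlier lemma on weighted-sum and maximum metrics, and then identify the topology of the maximum metric with the product topology. By that lemma, $d_{\mathrm{GCM}}$ (a weighted sum with $w_k>0$) and
\[
  d_\infty(x,y)=\max_k d_k(x_k,y_k)
\]
satisfy $w_{\min}d_\infty\leq d_{\mathrm{GCM}}\leq W d_\infty$. Hence they induce the same topology. The normalization $\sum_k w_k=1$ is not needed, only positivity and finiteness of the index set. It therefore suffices to show that $d_\infty$ induces the finite product topology, whose basis consists of the boxes $\prod_k U_k$ with $U_k$ open in $(E_k,d_k)$.

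\textbf{Step 1: metric-open sets are product-open.} I will show that every $d_\infty$-ball is product-open. The ball $B_\infty(x,r)$ is exactly the box $\prod_k B_{d_k}(x_k,r)$, since a maximum is less than $r$ if and only if every term is. This box is a basic open set of the product topology. Every $d_\infty$-open set is a union of such balls, so it is product-open.

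\textbf{Step 2: product-open sets are metric-open.} Let $\prod_k U_k$ be a basic box and take $x$ in it. For each $k$ choose $r_k>0$ with $B_{d_k}(x_k,r_k)\subseteq U_k$, and set $r=\min_k r_k$. This minimum is positive because the index set is finite. Then
\[
  B_\infty(x,r)\subseteq\prod_k B_{d_k}(x_k,r_k)\subseteq\prod_k U_k .
\]
So every box is $d_\infty$-open, and hence so is every product-open set.

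The only genuine point is the use of finiteness in Step 2: with infinitely many slots the minimum of the $r_k$ could be $0$, and the box topology would differ from the product topology. Since $I$ has twelve elements, no obstacle arises.

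As an alternative check avoiding the lemma, one could argue via sequences. Convergence $x^{(n)}\to x$ in $d_{\mathrm{GCM}}$ is equivalent to $d_k(x^{(n)}_k,x_k)\to 0$ for every $k$. Both topologies are first countable, since the finite product of metrizable spaces is metrizable. This equivalence then identifies the topologies, but the ball argument above is more direct.
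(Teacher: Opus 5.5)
Your proof is correct, but it takes a different route from the paper. The paper never passes through $d_\infty$. It gets ``metric topology $\supseteq$ product topology'' from the $w_k^{-1}$-Lipschitz continuity of the projections $\pi_k$, and then proves the same inclusion a second time with the explicit radius $\delta=\min_k w_kr_k$. It gets the reverse inclusion by noting that the box $\prod_k B_{d_k}(x_k,\epsilon)$ lies inside $B_{\mathrm{GCM}}(x,\epsilon)$, a step that uses the normalization $\sum_k w_k=1$ from the metric-slot assumption. You instead invoke the earlier lemma comparing $d_\Sigma$ with $d_\infty$. That reduces everything to the identity $B_\infty(x,r)=\prod_k B_{d_k}(x_k,r)$, so both inclusions become one-line box arguments and finiteness enters only through $\min_k r_k>0$. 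Your route is more modular and matches the proposition's stated hypothesis (only positive weights) exactly: as you observe, the normalization is unnecessary. Without it, the paper's $\epsilon$-box step would need radius $\epsilon/W$ instead of $\epsilon$. The paper's route is self-contained within the section and rests on the projection Lipschitz estimate, which it reuses immediately for the coordinatewise convergence criterion. Your closing sequential argument is valid but unnecessary once the ball argument is in place.
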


\begin{proof}
The metric topology is at least as fine as the product topology because
each projection is continuous by the Lipschitz estimate proved above.
Conversely, fix $x\in\EE$ and a $d_{\mathrm{GCM}}$-ball
$B_{\mathrm{GCM}}(x,\epsilon)$.  The product neighbourhood
\[
  U=\prod_{k=1}^{12}B_{d_k}(x_k,\epsilon)
\]
satisfies $U\subseteq B_{\mathrm{GCM}}(x,\epsilon)$ because
$\sum_kw_k=1$.  Now let
$V=\prod_kB_{d_k}(x_k,r_k)$ be a basic product neighbourhood, with
$r_k>0$.  Set $\delta=\min_k w_kr_k$.  If
$d_{\mathrm{GCM}}(x,y)<\delta$, then
$w_kd_k(x_k,y_k)<\delta\leq w_kr_k$ for every $k$, hence
$y\in V$.  Thus each topology contains the other.
\end{proof}

\begin{proposition}[Coordinatewise convergence criterion]
For a sequence $(x^m)_{m\geq1}$ in $\EE$ and $x\in\EE$,
\[
  x^m\to x\ \text{in }d_{\mathrm{GCM}}
  \quad\Longleftrightarrow\quad
  x^m_k\to x_k\ \text{in }d_k\ \text{for every }k.
\]
\end{proposition}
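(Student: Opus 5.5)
The plan is to prove the two implications directly from the definition $d_{\mathrm{GCM}}(x,y)=\sum_{k=1}^{12}w_kd_k(x_k,y_k)$, using only that the slot family is finite and the weights are positive. The earlier Lipschitz estimate for slot projections gives one direction at once. The reverse direction is a finite sum of null sequences.

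For ($\Rightarrow$): suppose $d_{\mathrm{GCM}}(x^m,x)\to0$. By the proposition that $\pi_k$ is $1/w_k$-Lipschitz,
\[
  d_k(x^m_k,x_k)\leq w_k^{-1}\,d_{\mathrm{GCM}}(x^m,x)\longrightarrow 0
\]
for each fixed $k$. Hence $x^m_k\to x_k$ in $d_k$.

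For ($\Leftarrow$): suppose $d_k(x^m_k,x_k)\to0$ for every $k\in\{1,\ldots,12\}$. Then each term $w_kd_k(x^m_k,x_k)$ tends to $0$. A finite sum of real null sequences is a null sequence, so $d_{\mathrm{GCM}}(x^m,x)\to0$. For an $\epsilon$-version, given $\epsilon>0$, choose $M_k$ so that $d_k(x^m_k,x_k)<\epsilon$ for $m\geq M_k$, and set $M=\max_kM_k$. Using $\sum_kw_k=1$, for $m\geq M$ we get $d_{\mathrm{GCM}}(x^m,x)<\sum_kw_k\epsilon=\epsilon$.

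There is no real obstacle here. The one point to flag is that finiteness of the index set is essential in the reverse direction: taking $M=\max_kM_k$ requires finitely many slots, and with infinitely many slots coordinatewise convergence need not control a weighted sum. Alternatively, one could cite the earlier proposition that $d_{\mathrm{GCM}}$ induces the product topology. Convergence in a product topology is coordinatewise, and metric convergence is determined by the topology. I prefer the direct estimate because it is self-contained.
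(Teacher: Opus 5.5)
Your proof is correct and follows the paper's own argument essentially verbatim. The forward direction uses the $1/w_k$-Lipschitz bound on the slot projections $\pi_k$, and the reverse direction takes $M=\max_k M_k$ over the finitely many slots and uses $\sum_k w_k=1$.
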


\begin{proof}
If $x^m\to x$ in $d_{\mathrm{GCM}}$, then the Lipschitz estimate for
the projections gives
$d_k(x^m_k,x_k)\leq w_k^{-1}d_{\mathrm{GCM}}(x^m,x)\to0$.
Conversely, if every coordinate converges, then for any $\epsilon>0$
choose $M_k$ such that $d_k(x^m_k,x_k)<\epsilon$ for $m\geq M_k$.
For $m\geq\max_kM_k$,
\[
  d_{\mathrm{GCM}}(x^m,x)
  =\sum_kw_kd_k(x^m_k,x_k)
  <\epsilon\sum_kw_k=\epsilon.
\]
\end{proof}

\begin{proposition}[Weighted stability]
Let $F:\EE\to\EE$ be componentwise Lipschitz: for every output slot $j$
there are constants $L_{jk}\geq0$ such that
\[
  d'_j(F_j(x),F_j(y))\leq \sum_k L_{jk}d_k(x_k,y_k).
\]
Then $F$ is Lipschitz from the input GCM metric to the output GCM metric
with constant at most
\[
  L_F=\max_k \frac{1}{w_k}\sum_j w'_jL_{jk}.
\]
\end{proposition}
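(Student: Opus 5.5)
The plan is a direct computation: expand the output GCM distance, apply the componentwise hypothesis in each output slot, and then exchange the order of the two finite sums. Here the output product carries slot metrics $d'_j$ and positive weights $w'_j$, and the input carries $d_k$ and $w_k$, exactly as in the functoriality estimate for $\Pi_I$.

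First, for $x,y\in\EE$, write
\[
  d'_{\mathrm{GCM}}(F(x),F(y))=\sum_j w'_j\,d'_j(F_j(x),F_j(y))
  \leq \sum_j w'_j\sum_k L_{jk}\,d_k(x_k,y_k),
\]
using the hypothesis slot by slot and $w'_j>0$. Second, since both sums are finite, swap them to get
\[
  \sum_k\Bigl(\sum_j w'_jL_{jk}\Bigr)d_k(x_k,y_k).
\]
Third, multiply and divide each term by $w_k>0$. This rewrites the expression as
\[
  \sum_k \frac{1}{w_k}\Bigl(\sum_j w'_jL_{jk}\Bigr)\,w_kd_k(x_k,y_k).
\]
Then bound each coefficient $\frac{1}{w_k}\sum_j w'_jL_{jk}$ by its maximum $L_F$ over $k$. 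Because every term $w_kd_k(x_k,y_k)$ is nonnegative, this bound is legitimate term by term. Factoring out $L_F$ leaves $L_F\sum_kw_kd_k(x_k,y_k)=L_F\,d_{\mathrm{GCM}}(x,y)$, which is the claim.

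There is no genuine obstacle. The only points needing care are that all weights are strictly positive, as guaranteed by the metric-slot assumption (so division by $w_k$ is allowed), and that $L_{jk}\geq0$ (so the inequalities are preserved when summing). If the output weights are not separately specified, I would take $w'_j=w_j$ and $d'_j=d_j$. The normalization $\sum_kw_k=1$ plays no role.
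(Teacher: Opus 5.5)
Your proposal is correct and follows the paper's proof: apply the slotwise hypothesis inside the output weighted sum, interchange the finite sums, and bound each coefficient $\frac{1}{w_k}\sum_j w'_jL_{jk}$ by $L_F$ against the nonnegative terms $w_kd_k(x_k,y_k)$. Your version simply writes out the multiply-and-divide-by-$w_k$ step that the paper compresses into one sentence.
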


\begin{proof}
Using the output metric,
\[
d'_{\mathrm{GCM}}(F(x),F(y))
\leq \sum_j w'_j\sum_k L_{jk}d_k(x_k,y_k)
=\sum_k\left(\sum_jw'_jL_{jk}\right)d_k(x_k,y_k).
\]
Since $d_{\mathrm{GCM}}(x,y)=\sum_kw_kd_k(x_k,y_k)$ and $w_k>0$, the
last expression is bounded by $L_Fd_{\mathrm{GCM}}(x,y)$.
\end{proof}

\begin{corollary}[Completeness]
If every $(E_k,d_k)$ is complete, then $(\EE,d_{\mathrm{GCM}})$ is
complete.
\end{corollary}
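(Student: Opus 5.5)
The plan is the standard Cauchy-sequence argument, reduced to the slots through the Lipschitz estimate for the projections $\pi_k$ and the coordinatewise convergence criterion proved above. I will work with the weighted metric $d_{\mathrm{GCM}}$ under the metric-slot assumption: positive weights with $\sum_kw_k=1$.

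\textbf{Step 1 (slot sequences are Cauchy).} Let $(x^m)_{m\geq1}$ be a Cauchy sequence in $(\EE,d_{\mathrm{GCM}})$. Fix a slot $k$. By the proposition on slot projections,
\[
  d_k(x^m_k,x^n_k)\leq w_k^{-1}d_{\mathrm{GCM}}(x^m,x^n).
\]
Since $w_k>0$ is a fixed constant, the sequence $(x^m_k)_m$ is Cauchy in $(E_k,d_k)$.

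\textbf{Step 2 (slot limits).} By the completeness hypothesis on $(E_k,d_k)$, each slot sequence has a limit $x_k\in E_k$. Define the candidate limit
\[
  x=(x_1,\ldots,x_{12}).
\]
By the product membership criterion, $x$ is a well-typed element of $\EE$.

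\textbf{Step 3 (convergence in $\EE$).} Each coordinate converges, $x^m_k\to x_k$, so the coordinatewise convergence criterion gives $x^m\to x$ in $d_{\mathrm{GCM}}$. This direction of the criterion uses finiteness of the slot set: the threshold is $\max_kM_k$ over twelve indices. Hence every Cauchy sequence in $\EE$ converges, and $(\EE,d_{\mathrm{GCM}})$ is complete.

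\textbf{Main obstacle.} There is no real analytic difficulty; the only point needing care is the use of finiteness in Step 3. With infinitely many slots, a uniform threshold need not exist, and one would need summable weights plus a tail estimate. For the twelve-slot product this issue does not arise.

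A secondary remark concerns the refined-fibre slot $E_8$. Its completeness is a hypothesis, since the remark on the variable slot asks for a \emph{complete} tree metric. The corollary therefore applies only to those metric choices and does not establish completeness of $E_8$ itself.
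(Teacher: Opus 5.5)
Your proposal is correct and follows essentially the same route as the paper's proof. Like the paper, you use $w_k>0$ (via the projection Lipschitz estimate) to make each slot sequence Cauchy, take slot limits by completeness, and conclude with the coordinatewise convergence criterion.
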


\begin{proof}
Let $(x^m)_{m\geq 1}$ be Cauchy in $d_{\mathrm{GCM}}$.  Since $w_k>0$,
each coordinate sequence $(x^m_k)_m$ is Cauchy in $E_k$.  Completeness
gives a limit $x_k\in E_k$ for each coordinate.  The coordinatewise
convergence criterion then implies
$x^m\to x=(x_1,\ldots,x_{12})$ in $d_{\mathrm{GCM}}$.
\end{proof}

\begin{corollary}[Componentwise continuity]
If $F:\EE\to\EE'$ has components $F_j$ that are continuous functions of
the input slots on which they depend, then $F$ is continuous for the
finite product topologies.
\end{corollary}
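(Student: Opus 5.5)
The plan is to reduce continuity of $F:\EE\to\EE'$ to continuity of its components, using the fact that both spaces carry the finite product topology. The Proposition on equivalence with the product topology tells us this topology is exactly the one induced by $d_{\mathrm{GCM}}$ (and by $d'_{\mathrm{GCM}}$ on the target). So it does not matter whether we argue with products or with the weighted metric.

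First, I would make precise what ``$F_j$ is a continuous function of the input slots on which it depends'' means. For each output slot $j$ there is a nonempty index set $J_j\subseteq I$ and a continuous map $G_j:\EE_{J_j}\to E'_j$ such that $F_j=G_j\circ\pi_{J_j}$. Here $\EE_{J_j}$ carries its product topology, equivalently the restricted weighted metric $d_{J_j}$ from the Definition of subproduct projection. If $F_j$ depends on no slots, it is constant and therefore continuous, so this degenerate case can be handled separately.

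Second, each $F_j$ is continuous on $\EE$. The Proposition that subproduct projections are Lipschitz gives that $\pi_{J_j}$ is $1$-Lipschitz, hence continuous. The composite $G_j\circ\pi_{J_j}$ is then continuous.

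Third, I would apply the universal property of the finite product topology. A map into $\prod_jE'_j$ is continuous if and only if every composite $\pi'_j\circ F=F_j$ is continuous. Since we have just shown each $F_j$ is continuous, $F$ is continuous.

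For a metric-only version, the same step can be done sequentially. If $x^m\to x$ in $d_{\mathrm{GCM}}$, then $F_j(x^m)\to F_j(x)$ for every $j$. The coordinatewise convergence criterion applied in $\EE'$ then yields $F(x^m)\to F(x)$, and sequential continuity suffices in metric spaces.

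No step is a real obstacle. The only delicate point is the interpretation of the hypothesis, namely that $G_j$ must be continuous for the topology of the subproduct $\EE_{J_j}$ and not merely separately in each variable. Separate continuity would not suffice, so I would state this reading explicitly.
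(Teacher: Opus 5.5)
Your proposal is correct and takes the same route as the paper, whose proof is exactly the universal property of the finite product topology: continuity is checked after composing with each output projection $\pi'_j$, and these composites are the components $F_j$. Your factorization $F_j=G_j\circ\pi_{J_j}$, combined with continuity of the subproduct projection, adds one useful step: it spells out the passage from ``continuous in the slots it depends on'' (read, as you rightly insist, as joint continuity on $\EE_{J_j}$) to ``continuous on $\EE$'', which the paper leaves implicit.
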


\begin{proof}
This is the universal property of finite product topologies.  Equivalently,
continuity can be checked after composition with every output projection;
these compositions are precisely the component functions $F_j$.
\end{proof}

\paragraph{Implementation correspondence.}
The dynamic heterogeneous container is implemented most directly by
\codepath{brainiak/mathcore/fda/semantic/sstd_codec.py}, where class
\texttt{Tn} exposes slot-style access and where the fixed chip-oriented
projection has dimension 3109.  The $\RR^{14}$ GCM weights and anchor
distances are implemented in
\codepath{brainiak/mathcore/fda/semantic/socle_anchor.py}.  These files
instantiate the definitions above; they are not used as proof.

\subsection{\texorpdfstring{The Empirical Base $\RR^{14}$}{The Empirical Base R14}}

\begin{definition}[Sensorimotor base]
The canonical empirical base is
\[
  \RR^{14}=
  \RR^3_{\mathrm{emo}}\oplus
  \RR^6_{\mathrm{per}}\oplus
  \RR^5_{\mathrm{mot}}.
\]
The first socle contains valence, arousal, and dominance coordinates;
the second contains auditory, gustatory, haptic, interoceptive,
olfactory, and visual coordinates; the third contains foot/leg,
hand/arm, head, mouth, and torso effector coordinates.
\end{definition}

\begin{proposition}[$\RR^{14}$ GCM is a norm metric]
If $\omega_{\mathrm{emo}},\omega_{\mathrm{per}},\omega_{\mathrm{mot}}>0$,
then $d_{14}$ is the metric induced by the norm
\[
  \norm{a}_{14}=
  \omega_{\mathrm{emo}}\norm{a_{\mathrm{emo}}}_1+
  \omega_{\mathrm{per}}\norm{a_{\mathrm{per}}}_1+
  \omega_{\mathrm{mot}}\norm{a_{\mathrm{mot}}}_1.
\]
\end{proposition}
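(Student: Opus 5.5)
The plan is to check directly that $\norm{\cdot}_{14}$ is a norm on $\RR^{14}$, and then observe that $d_{14}(a,b)=\norm{a-b}_{14}$ holds by definition. The decomposition $\RR^{14}=\RR^3_{\mathrm{emo}}\oplus\RR^6_{\mathrm{per}}\oplus\RR^5_{\mathrm{mot}}$ makes the socle extraction $a\mapsto(a_{\mathrm{emo}},a_{\mathrm{per}},a_{\mathrm{mot}})$ a linear isomorphism. In particular each projection $a\mapsto a_{\mathrm{emo}}$ (and likewise for the other two socles) is linear, so $(a-b)_{\mathrm{emo}}=a_{\mathrm{emo}}-b_{\mathrm{emo}}$. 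Substituting this into the defining formula of $d_{14}$ from the Product GCM distance definition gives $d_{14}(a,b)=\norm{a-b}_{14}$ at once.

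For the norm axioms, I will use that each $\norm{\cdot}_1$ is a norm on its socle and that the weights $\omega$ are positive.

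\emph{Absolute homogeneity.} Linearity of the socle projections gives $(\lambda a)_{\mathrm{emo}}=\lambda a_{\mathrm{emo}}$. Then $\norm{\lambda a_{\mathrm{emo}}}_1=|\lambda|\norm{a_{\mathrm{emo}}}_1$, and summing over the three socles gives $\norm{\lambda a}_{14}=|\lambda|\norm{a}_{14}$.

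\emph{Triangle inequality.} Apply the $\ell^1$ triangle inequality socle by socle, multiply by $\omega_{\mathrm{emo}},\omega_{\mathrm{per}},\omega_{\mathrm{mot}}\geq0$, and add.

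\emph{Definiteness.} This is the only step where strict positivity of the weights is needed. If $\norm{a}_{14}=0$, then each of the three non-negative terms vanishes. Since each $\omega>0$, each socle satisfies $\norm{a_{\mathrm{socle}}}_1=0$, so every socle component is zero, and hence $a=0$ because the direct sum decomposition is injective.

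Finally, I will invoke the standard fact that $(a,b)\mapsto\norm{a-b}$ is a metric for any norm, so $d_{14}$ is the metric induced by $\norm{\cdot}_{14}$. None of these steps is a real obstacle. The only point needing care is definiteness, where the hypothesis $\omega>0$ is used; with a zero weight, $\norm{\cdot}_{14}$ would be only a seminorm.
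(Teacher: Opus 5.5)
Your proof is correct and follows the same route as the paper, which simply notes that a positive weighted sum of norms is a norm and that $d_{14}(a,b)=\norm{a-b}_{14}$ is the induced metric; you unpack that one-line assertion axiom by axiom. Your definiteness step is slightly more careful than the paper's: each weighted $\ell^1$ term is only a seminorm on $\RR^{14}$, and it is the direct-sum decomposition together with $\omega>0$ that makes $\norm{\cdot}_{14}$ a genuine norm.
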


\begin{proof}
Positive weighted sums of norms are norms when all weights are positive.
The induced distance $d_{14}(a,b)=\norm{a-b}_{14}$ is therefore a metric.
\end{proof}

\begin{remark}[Interpretation]
The proof does not depend on the empirical origin of the coordinates.
The empirical datasets justify the intended meaning of the axes; the
metric property follows from normed-vector-space mathematics.
\end{remark}

\section{Worked Examples and Sanity Checks}

The preceding definitions are intentionally abstract.  This section
spells out concrete admissible instances and non-instances.  Its role is
to make the hypotheses testable: every later theorem depends on the
slots being typed and metrized, not merely named.

\begin{definition}[Finite symbolic slot]
Let $S$ be a finite set of symbolic labels.  The discrete metric on $S$
is
\[
  d_S(a,b)=
  \begin{cases}
    0,&a=b,\\
    1,&a\neq b.
  \end{cases}
\]
This model may be used for polarity labels, finite grammatical tags,
tool identifiers, or manually curated hint categories.
\end{definition}

\begin{proposition}[Finite symbolic slots are compact and complete]
Every finite symbolic slot with the discrete metric is compact and
complete.
\end{proposition}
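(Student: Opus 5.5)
The plan is to prove completeness and compactness separately. Both follow from one observation: under the discrete metric $d_S$, any two distinct points are at distance exactly $1$. The only structural input is finiteness of $S$, together with the fact that the discrete metric is a metric.

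\emph{Completeness.} Let $(a_m)_{m\geq1}$ be a Cauchy sequence in $(S,d_S)$. Applying the Cauchy condition with $\epsilon=1/2$ gives an index $M$ with $d_S(a_m,a_n)<1/2$ for all $m,n\geq M$. Since $d_S$ takes only the values $0$ and $1$, this forces $a_m=a_M$ for every $m\geq M$. The sequence is therefore eventually constant, so it converges to $a_M\in S$. This argument does not even use finiteness.

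\emph{Compactness.} Here I will use the open-cover definition directly. Every singleton $\{a\}$ equals the open ball $B_{d_S}(a,1/2)$, so every subset of $S$ is open. Now let $\{U_\alpha\}$ be an open cover of $S$. For each of the finitely many $a\in S$, choose one $U_{\alpha(a)}$ containing $a$. The family $\{U_{\alpha(a)}:a\in S\}$ is a finite subcover.

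As a cross-check, I can argue sequentially instead, which is valid in metric spaces. Any sequence in a finite set takes some value infinitely often, by pigeonhole. That yields a constant, hence convergent, subsequence.

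I expect no real obstacle here. The only points needing care are the degenerate case $S=\emptyset$, which is trivially compact and complete, and making sure the threshold $\epsilon<1$ is what converts the Cauchy condition into eventual constancy. The result also sets up a later use: by the earlier Completeness corollary for $d_{\mathrm{GCM}}$, finite symbolic slots such as $E_3=\ZZ/2\ZZ$ with the discrete metric do not obstruct completeness of $\EE$.
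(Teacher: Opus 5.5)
Your proof is correct and follows the paper's argument essentially verbatim: completeness comes from a threshold $\epsilon<1$ forcing Cauchy sequences to be eventually constant, and compactness comes from choosing one cover element per point of the finite set. Your remark that singletons are open and your sequential cross-check are harmless extras that the open-cover argument does not need.
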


\begin{proof}
Completeness follows because every Cauchy sequence in a discrete finite
metric is eventually constant: taking $\epsilon<1$ forces all sufficiently
late terms to have distance $0$ from one another.  Compactness follows
because every open cover of a finite set has a finite subcover, one
cover element for each point.
\end{proof}

\begin{definition}[Bounded continuous slot]
For $M>0$ and $d\in\NN$, a bounded continuous slot is a closed cube
$[-M,M]^d\subset\RR^d$ with any norm-induced metric.
\end{definition}

\begin{proposition}[Finite products of bounded and symbolic slots]
If every continuous slot is a closed bounded subset of a finite-
dimensional normed vector space and every symbolic slot is finite
discrete, then the full finite product concept space is compact and
complete.
\end{proposition}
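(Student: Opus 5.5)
The plan is to show that each slot is compact, take the product, and then transfer compactness to the weighted GCM metric.

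\emph{Continuous slots.} Each continuous slot is a closed and bounded subset of a finite-dimensional normed space. All norms on a finite-dimensional real space are equivalent, so closedness and boundedness can be checked in a Euclidean coordinate system. Heine--Borel then gives compactness in the Euclidean metric. Because the norms are equivalent, the slot metric and the Euclidean metric induce the same topology, so the slot is compact in its own metric as well.

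\emph{Symbolic slots.} Each symbolic slot is finite with the discrete metric, and the preceding proposition on finite symbolic slots already shows it is compact and complete.

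\emph{Compactness of the product.} The product is finite, so Tychonoff's theorem for finitely many factors gives compactness of $\prod_kE_k$ in the product topology. To avoid even this, one can argue by sequences. Given a sequence in $\EE$, extract a subsequence converging in slot $1$, then a further subsequence converging in slot $2$, and so on through the finitely many slots. The coordinatewise convergence criterion then shows that the final subsequence converges in $d_{\mathrm{GCM}}$.

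\emph{Passing to $d_{\mathrm{GCM}}$.} The proposition on equivalence with the product topology says $d_{\mathrm{GCM}}$ induces the product topology. Compactness is a topological property, so $(\EE,d_{\mathrm{GCM}})$ is compact.

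\emph{Completeness.} There are two routes, and either suffices. First, a compact metric space is complete, since a Cauchy sequence with a convergent subsequence converges. Second, each slot is complete, because compact metric spaces are complete or by the symbolic-slot proposition, so the completeness corollary for $d_{\mathrm{GCM}}$ applies directly.

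\emph{Main obstacle: scope of the hypothesis.} The proposition as stated only makes sense when \emph{every} slot falls into one of the two classes. The standard slot list contains $\RR^{14}$, $\RR^{+}$ and $\RR^{3072}$, which are unbounded, and the variable tree slot $E_8$, which is infinite and not a bounded subset of a finite-dimensional space. I would therefore state explicitly that the result applies to a concept space in which these slots have been replaced by bounded cubes and finite label sets. For example, the unbounded vector slots would be truncated to cubes $[-M,M]^d$ (with the positive slot $\RR^{+}$ similarly replaced by a closed bounded interval), and the tree slot would be replaced by trees of bounded depth with labels drawn from a finite set. I would add a remark that, without such a restriction, compactness genuinely fails: a sequence drifting to infinity in $\RR^{14}$ has no convergent subsequence. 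The remaining steps are routine.
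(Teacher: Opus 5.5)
Your proof is correct and follows the paper's route: Heine--Borel for the continuous slots, the finite-symbolic-slot proposition for the discrete ones, compactness of a finite product, and completeness via the weighted-GCM completeness corollary (or, as you note, because compact metric spaces are complete). You are slightly more careful than the paper in two places: you explicitly move compactness from the product topology to $d_{\mathrm{GCM}}$ using the topology-equivalence proposition, and your scope remark agrees with the paper's later observation that unbounded vector slots are not compact.
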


\begin{proof}
Closed bounded subsets of finite-dimensional real normed spaces are
compact by Heine--Borel and hence complete.  Finite symbolic slots are
compact and complete by the preceding proposition.  A finite product of
compact spaces is compact, and the completeness corollary for the
weighted GCM metric gives completeness.
\end{proof}

\begin{proposition}[Unbounded vector slots are not compact]
The slot $\RR^{3072}$ with any norm-induced metric is complete but not
compact.
\end{proposition}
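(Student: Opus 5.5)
The argument has two independent parts: completeness and failure of compactness.

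For completeness, we use the fact that all norms on the finite-dimensional space $\RR^{3072}$ are equivalent. So any norm-induced metric $d(a,b)=\|a-b\|$ satisfies $c\|a-b\|_\infty\leq\|a-b\|\leq C\|a-b\|_\infty$ for some constants $0<c\leq C$.

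Let $(a^m)$ be a Cauchy sequence for $\|\cdot\|$.
\begin{enumerate}
  \item By the lower bound, $(a^m)$ is Cauchy for $\|\cdot\|_\infty$.
  \item Then each of the $3072$ coordinate sequences is Cauchy in $\RR$.
  \item Completeness of $\RR$ gives a limit $a_i$ for each coordinate $i$, and hence a candidate limit $a=(a_1,\ldots,a_{3072})$.
  \item Finitely many coordinates converge together, so $\|a^m-a\|_\infty\to0$.
  \item By the upper bound, $\|a^m-a\|\to0$.
\end{enumerate}
This is the same finite-coordinate argument as the completeness corollary for $d_{\mathrm{GCM}}$, applied to real coordinates instead of slots.

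For non-compactness, it suffices to show that the space is unbounded, since every compact metric space is bounded. Fix any nonzero vector $v$, for instance the first standard basis vector. The points $nv$ satisfy $\|nv-0\|=n\|v\|\to\infty$, so no ball contains them all. Alternatively, one can check sequential compactness directly: the sequence $(nv)$ satisfies $\|nv-mv\|\geq\|v\|>0$ for $n\neq m$, so no subsequence is Cauchy. Hence no subsequence converges, and the space is not sequentially compact, which for metric spaces is equivalent to compactness.

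The only step that needs more than one line is the equivalence of norms on a finite-dimensional space. This is a standard result, and I would cite it rather than reprove it. If a self-contained argument is preferred, the usual proof goes as follows.
\begin{enumerate}
  \item Show that $\|\cdot\|$ is continuous with respect to $\|\cdot\|_\infty$, using the triangle inequality on a basis expansion.
  \item Minimise $\|\cdot\|$ over the $\|\cdot\|_\infty$-unit sphere, which is compact by Heine--Borel.
  \item The minimum is positive because the sphere does not contain $0$; it supplies the constant $c$, and the continuity estimate in step 1 supplies $C$.
\end{enumerate}
Nothing depends on the specific number $3072$, so the statement holds for any $\RR^d$ with $d\geq1$.
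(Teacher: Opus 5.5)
Your proof is correct and takes essentially the same route as the paper. The paper simply cites completeness of finite-dimensional normed spaces, which you unpack via norm equivalence and the coordinatewise argument. It rules out compactness with the same sequence $(ne_1)_{n\geq1}$, noting that its norm tends to infinity while every convergent sequence is bounded.
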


\begin{proof}
Finite-dimensional normed real vector spaces are complete.  They are not
compact because the sequence $(ne_1)_{n\geq1}$ has no convergent
subsequence: its norm tends to infinity, whereas every convergent
sequence in a metric space is bounded.
\end{proof}

\begin{remark}[Publication consequence]
Any theorem requiring compactness must either bound the SSTD slot, work
on a compact subset, or replace compactness by a weaker hypothesis such
as completeness or local compactness.  The present manuscript avoids
using compactness unless it is explicitly assumed.
\end{remark}

\begin{definition}[Observable quotient]
Let $\mathcal O\subseteq\{1,\ldots,12\}$ be a set of observed slots.
Define an equivalence relation on $\EE$ by
\[
  x\sim_{\mathcal O}y
  \quad\Longleftrightarrow\quad
  \pi_k(x)=\pi_k(y)\ \text{for all }k\in\mathcal O .
\]
The quotient $\EE/{\sim_{\mathcal O}}$ is the observable state space
associated with $\mathcal O$.
\end{definition}

\begin{proposition}[Observable quotient is determined by observed slots]
The map
\[
  q_{\mathcal O}:\EE/{\sim_{\mathcal O}}\to\prod_{k\in\mathcal O}E_k,
  \qquad [x]\mapsto(\pi_kx)_{k\in\mathcal O},
\]
is a well-defined bijection.
\end{proposition}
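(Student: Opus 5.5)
The plan is the standard three-step argument for a map defined on a quotient: well-definedness, injectivity, surjectivity. Write $\EE_{\mathcal O}=\prod_{k\in\mathcal O}E_k$ and $\pi_{\mathcal O}$ for the subproduct projection from the definition of subproduct projections. By definition, $x\sim_{\mathcal O}y$ holds exactly when $\pi_{\mathcal O}(x)=\pi_{\mathcal O}(y)$, so $\sim_{\mathcal O}$ is the kernel relation of $\pi_{\mathcal O}$. It is therefore an equivalence relation: reflexivity, symmetry and transitivity are inherited from equality in $\EE_{\mathcal O}$, which justifies forming the quotient.

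\textbf{Well-definedness and injectivity.} If $[x]=[y]$, then $x\sim_{\mathcal O}y$, so $\pi_k(x)=\pi_k(y)$ for every $k\in\mathcal O$. Hence $(\pi_kx)_{k\in\mathcal O}=(\pi_ky)_{k\in\mathcal O}$, and $q_{\mathcal O}$ does not depend on the representative. Conversely, if $q_{\mathcal O}[x]=q_{\mathcal O}[y]$, the two tuples agree coordinatewise on $\mathcal O$ by the defining equality of a finite Cartesian product, in the spirit of the finite-slot observability corollary. That gives $x\sim_{\mathcal O}y$, i.e.\ $[x]=[y]$, so $q_{\mathcal O}$ is injective.

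\textbf{Surjectivity.} This is the only step that needs an extra input. Given $u=(u_k)_{k\in\mathcal O}$, I would take the neutral completion $x=\eta_{\mathcal O}(u)$. The retraction identity $\pi_{\mathcal O}\circ\eta_{\mathcal O}=\id$ from the proposition on observed subproducts then gives $q_{\mathcal O}[x]=u$. If one prefers not to invoke the neutral completion assumption, any choice of elements $e_k\in E_k$ for $k\notin\mathcal O$ does the same job, provided every slot is nonempty, which holds since each $E_k$ is a metric slot. Two edge cases need a remark: if $\mathcal O=\emptyset$, the target is the one-point empty product and the quotient is a single class; for $\mathcal O\neq\emptyset$ the argument above applies unchanged.

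So $q_{\mathcal O}$ is the bijection induced from the surjection $\pi_{\mathcal O}$ onto its kernel quotient. The main obstacle is just the nonemptiness of the unobserved slots needed for surjectivity, and the neutral completion handles it.
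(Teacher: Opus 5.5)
Your proof is correct and follows essentially the same route as the paper. Well-definedness and injectivity come directly from the definition of $\sim_{\mathcal O}$, and surjectivity comes from filling the unobserved slots: the paper uses arbitrary elements under an explicit nonemptiness assumption, while you use the neutral completion $\eta_{\mathcal O}$ and the retraction $\pi_{\mathcal O}\circ\eta_{\mathcal O}=\id$. One small caution: being a metric space does not by itself make a slot nonempty, so what actually does the work is the neutral elements or the paper's nonemptiness hypothesis, and the same hypothesis is needed for your $\mathcal O=\emptyset$ case to yield a single class.
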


\begin{proof}
If $x\sim_{\mathcal O}y$, then their observed coordinates agree, so
$q_{\mathcal O}$ is well-defined.  It is injective because equal
observed coordinate tuples are exactly the defining condition for
$\sim_{\mathcal O}$.  It is surjective because any tuple in
$\prod_{k\in\mathcal O}E_k$ can be extended to an element of $\EE$ by
choosing arbitrary coordinates in the unobserved slots, assuming those
slots are nonempty.
\end{proof}

\begin{corollary}[Hidden fibres are quotiented out, not estimated]
If a Kalman or SSTD observation uses only slots in $\mathcal O$, then
unobserved slot differences vanish in the observable quotient.  They
cannot be recovered from quotient data without additional dynamics,
priors, constraints, or measurements.
\end{corollary}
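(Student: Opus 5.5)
The plan is to make the statement precise through the observable quotient map $q_{\mathcal O}$ from the preceding proposition, and then show that quotient data cannot separate points that differ only in unobserved slots. First I fix what ``uses only slots in $\mathcal O$'' means. An observation is a map $h:\EE\to Y$ that factors through the projection $\pi_{\mathcal O}:\EE\to\prod_{k\in\mathcal O}E_k$, that is, $h=g\circ\pi_{\mathcal O}$ for some $g$. Equivalently, $h$ factors through $q_{\mathcal O}$ composed with the quotient map $\EE\to\EE/{\sim_{\mathcal O}}$. This holds by the definition of $\sim_{\mathcal O}$ together with the bijection $q_{\mathcal O}$.

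For the first assertion, take $x,y$ that differ only in slots $\ell\notin\mathcal O$. Then $\pi_k x=\pi_k y$ for every $k\in\mathcal O$, so $x\sim_{\mathcal O}y$ and $[x]=[y]$. Hence $h(x)=h(y)$, so the unobserved differences vanish in the quotient.

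For the non-recoverability assertion, I will argue by contradiction in the style of the corollary on loss of unobserved coordinates. Suppose some map $r$ defined on quotient data recovered slot $\ell\notin\mathcal O$, meaning $r([x])=\pi_\ell x$ for all $x$. If $E_\ell$ has two distinct elements $a,b$, fix the remaining coordinates and form $x,y$ with $x_\ell=a$ and $y_\ell=b$. Then $[x]=[y]$, which forces $a=r([x])=r([y])=b$, a contradiction. If every unobserved slot is a singleton, there is nothing to recover, and the claim is vacuous.

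The phrase ``without additional dynamics, priors, constraints, or measurements'' will be handled by noting what each of these does. Each one either adds slots to $\mathcal O$, restricts the state set to a subobject $S\subseteq\EE$ on which $\pi_{\mathcal O}$ becomes injective, or enlarges the data beyond $\EE/{\sim_{\mathcal O}}$. In every case it supplies information that is not a function of $[x]$ alone.

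The main obstacle is interpretive rather than technical: the corollary's wording is informal. The careful step is therefore to state the formalization (factoring through $\pi_{\mathcal O}$, with recovery meaning a section of the quotient) before running the two-point argument above.
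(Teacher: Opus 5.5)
Your proposal is correct and follows the paper's route. The paper's proof simply observes that all points of an equivalence class share their observed coordinates and are sent to the same observable state, so quotient data alone cannot recover their unobserved differences. Your version makes this explicit: you formalize ``uses only slots in $\mathcal O$'' as factoring through $\pi_{\mathcal O}$ and run the two-point contradiction, adding the needed condition that the unobserved slot has two distinct elements. That is a sharper rendering of the same argument, not a different approach.
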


\begin{proof}
All points in the same equivalence class have identical observed
coordinates.  The quotient map sends them to the same observable state.
Therefore any recovery of their unobserved differences must use
information not contained in the quotient observation.
\end{proof}

\section{Contraction Operators and Type Preservation}

The original treatise repeatedly uses the idea that semantic
composition preserves type: a word, phrase, sentence, multimodal
observation, or memory remains a concept object of the same kind.  This
section proves the precise version of that statement for weighted
contractions on the empirical base and for lifted operations on $T^n$.

\begin{definition}[Socle-wise affine contraction]
Let $B=\RR^{14}$ with socle decomposition
$B=B_1\oplus B_2\oplus B_3$.  For a relation symbol $r$, choose weights
$\theta_{r,s}\in[0,1]$ for $s=1,2,3$.  Define
\[
  \mu_r(a,b)\big|_{B_s}
  =\theta_{r,s}a\big|_{B_s}+(1-\theta_{r,s})b\big|_{B_s}.
\]
The map $\mu_r:B\times B\to B$ is the relation-$r$ socle-wise affine
contraction.
\end{definition}

\begin{theorem}[Type preservation on the empirical base]
For every relation $r$, $\mu_r$ maps $B\times B$ into $B$.  Moreover,
if each socle coordinate is restricted to a convex interval, then
$\mu_r$ preserves that interval.
\end{theorem}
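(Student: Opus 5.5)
The plan is to argue socle by socle, using the direct-sum decomposition $B=B_1\oplus B_2\oplus B_3$ and the product membership criterion proved earlier.

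\textbf{First assertion.} Fix $r$ and $a,b\in B$. For each $s$, the restrictions $a|_{B_s}$ and $b|_{B_s}$ lie in the real vector space $B_s$. Since $\theta_{r,s}$ and $1-\theta_{r,s}$ are real scalars, the combination $\theta_{r,s}a|_{B_s}+(1-\theta_{r,s})b|_{B_s}$ is again in $B_s$, because $B_s$ is closed under scalar multiplication and addition. The defining formula therefore assigns to each socle an element of that socle. By the product membership criterion applied to $B\cong B_1\times B_2\times B_3$, the three components assemble into a unique element of $B$, so $\mu_r:B\times B\to B$ is well defined. I will also note that $\mu_r$ is continuous, indeed Lipschitz for $d_{14}$ with constant at most $1$ in each argument. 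This follows from the triangle inequality for $\|\cdot\|_1$ on each socle together with $\theta_{r,s},1-\theta_{r,s}\leq 1$, and I will record it as a remark.

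\textbf{Second assertion.} I will read ``each socle coordinate is restricted to a convex interval'' coordinatewise: every real coordinate $i$ belonging to socle $s$ is constrained to an interval $J_i\subseteq\RR$. The set $K=\prod_i J_i$ is then a box, hence convex. For $a,b\in K$, the $i$th coordinate of $\mu_r(a,b)$ is $\theta a_i+(1-\theta)b_i$ with $\theta=\theta_{r,s}\in[0,1]$. This is a convex combination of two points of $J_i$, so it lies in $J_i$: if $\alpha\le a_i,b_i\le\beta$, then $\alpha\le\theta a_i+(1-\theta)b_i\le\beta$, because the coefficients are nonnegative and sum to $1$. The same argument covers open, half-open, and unbounded intervals.

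\textbf{Main obstacle.} The only real difficulty is interpretive: fixing exactly what ``preserves that interval'' means. I will state explicitly the reading used, namely that $\mu_r(K\times K)\subseteq K$ for every box $K$ of coordinatewise intervals. I will also observe that the argument extends verbatim to any convex subset $C_s\subseteq B_s$ per socle, since the convex-combination step uses only convexity. The hypothesis $\theta_{r,s}\in[0,1]$ is essential here, and I will mention it: for $\theta>1$, taking $a=1$, $b=0$ in $[0,1]$ gives the output $\theta\notin[0,1]$. The rest of the proof is routine.
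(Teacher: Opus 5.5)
Your proof is correct and takes essentially the same route as the paper: each socle $B_s$ is closed under the linear combination, the three socle components reassemble via the direct sum, and the interval claim follows from the convex-combination step with $\theta_{r,s}\in[0,1]$. The paper uses a single interval $J$; your coordinatewise intervals $J_i$ and the extension to convex sets $C_s\subseteq B_s$ are a mild generalization of it, and your Lipschitz remark is something the paper proves separately in the next proposition.
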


\begin{proof}
The expression defining each socle is a linear combination of two
vectors in that same socle, hence lies in the socle.  Taking the direct
sum over all socles gives an element of $B$.  If coordinates lie in a
convex interval $J$, then
$\theta x+(1-\theta)y\in J$ for every $x,y\in J$ and
$\theta\in[0,1]$.
\end{proof}

\begin{proposition}[Lipschitz bound in the product-sum norm]
Equip $B$ with the norm $\norm{\cdot}_{14}$ above, and equip
$B\times B$ with
\[
  \norm{(a,b)}_{\Sigma}=\norm{a}_{14}+\norm{b}_{14}.
\]
Then each $\mu_r$ is $1$-Lipschitz from the product-sum norm to
$\norm{\cdot}_{14}$:
\[
  \norm{\mu_r(a,b)-\mu_r(a',b')}_{14}
  \leq \norm{a-a'}_{14}+\norm{b-b'}_{14}.
\]
\end{proposition}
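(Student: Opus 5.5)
The plan is to reduce the estimate to a socle-by-socle computation and then reassemble it using the definition of $\norm{\cdot}_{14}$ as a positively weighted sum of socle $\ell^1$ norms. The key structural fact is that $\mu_r$ is affine in each socle separately, with coefficients $\theta_{r,s}$ and $1-\theta_{r,s}$ that are both non-negative. It is moreover \emph{linear} in the pair $(a,b)$, since it has no constant term. Consequently
\[
  \mu_r(a,b)-\mu_r(a',b')=\mu_r(a-a',\,b-b'),
\]
so it suffices to bound $\norm{\mu_r(u,v)}_{14}$ by $\norm{u}_{14}+\norm{v}_{14}$ for arbitrary $u,v\in B$.

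For each socle $s\in\{1,2,3\}$, I will apply the triangle inequality and absolute homogeneity of the $\ell^1$ norm on $B_s$:
\[
  \norm{\theta_{r,s}u|_{B_s}+(1-\theta_{r,s})v|_{B_s}}_1
  \leq \theta_{r,s}\norm{u|_{B_s}}_1+(1-\theta_{r,s})\norm{v|_{B_s}}_1 .
\]
This step uses $\theta_{r,s}\in[0,1]$, so that $|\theta_{r,s}|=\theta_{r,s}$ and $|1-\theta_{r,s}|=1-\theta_{r,s}$. Since both coefficients are at most $1$, the right-hand side is in turn at most $\norm{u|_{B_s}}_1+\norm{v|_{B_s}}_1$.

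Next I multiply the socle inequality by the positive socle weight $\omega_s$ ($\omega_{\mathrm{emo}},\omega_{\mathrm{per}},\omega_{\mathrm{mot}}$) and sum over $s$. By the definition of $\norm{\cdot}_{14}$ in the proposition asserting that the $\RR^{14}$ GCM is a norm metric, this yields
\[
  \norm{\mu_r(u,v)}_{14}\leq\norm{u}_{14}+\norm{v}_{14}=\norm{(u,v)}_\Sigma .
\]
Substituting $u=a-a'$ and $v=b-b'$ gives exactly the displayed estimate, and hence the $1$-Lipschitz claim.

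There is no real obstacle here. The only point requiring care is to keep the socle mixing weights $\theta_{r,s}$ separate from the norm weights $\omega_s$: the bound must be obtained socle by socle, rather than by pulling a single global $\theta$ out of $\norm{\cdot}_{14}$. A slightly sharper bound, $\max(\theta_{r,s},1-\theta_{r,s})\le 1$, holds in each socle, but it is not needed.
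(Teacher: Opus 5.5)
Your proof is correct and takes essentially the same route as the paper: a socle-wise triangle inequality in $\ell^1$ using $\theta_{r,s},1-\theta_{r,s}\in[0,1]$, followed by multiplication by the positive socle weights and summation. Your preliminary reduction, which uses the linearity of $\mu_r$ to reduce to bounding $\norm{\mu_r(u,v)}_{14}$, only repackages the paper's direct socle-wise computation of $\mu_r(a,b)-\mu_r(a',b')$.
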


\begin{proof}
On each socle,
\[
\theta_{r,s}(a_s-a_s')+(1-\theta_{r,s})(b_s-b_s')
\]
has $\ell^1$ norm at most
\[
\theta_{r,s}\norm{a_s-a_s'}_1+
(1-\theta_{r,s})\norm{b_s-b_s'}_1
\]
by the triangle inequality.  Multiplying by the positive socle weights
and summing gives
\[
\norm{\mu_r(a,b)-\mu_r(a',b')}_{14}
\leq \norm{a-a'}_{14}+\norm{b-b'}_{14},
\]
because every $\theta_{r,s}$ and $1-\theta_{r,s}$ lies in $[0,1]$.
\end{proof}

\begin{definition}[Lifted semantic operation on $T^n$]
A lifted relation operation $\widehat{\mu}_r:\EE\times\EE\to\EE$ is an
operation whose first slot is $\mu_r$ on $\RR^{14}$, whose structural
slots are updated by specified component maps, and whose refined-fibre
slot records the two input children and the relation label $r$.
\end{definition}

\begin{theorem}[Type preservation for lifted operations]
If each component update of $\widehat{\mu}_r$ maps the corresponding
input slots into the corresponding output slot $E_k$, then
$\widehat{\mu}_r$ maps $\EE\times\EE$ into $\EE$.
\end{theorem}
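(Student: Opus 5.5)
The plan is to reduce the claim to the Product membership criterion lemma, which states that a tuple-valued expression defines a map into $\prod_k E_k$ exactly when every coordinate lands in its slot. First I will write $\widehat{\mu}_r$ explicitly as a tuple of component maps,
\[
  \widehat{\mu}_r(x,y)=\bigl(F_1(x,y),\ldots,F_{12}(x,y)\bigr),
  \qquad x,y\in\EE,
\]
where $F_1(x,y)=\mu_r(\pi_1x,\pi_1y)$, $F_8$ is the refined-fibre update, and the remaining $F_k$ are the specified structural updates. Each $F_k$ depends on $(x,y)$ only through the canonical projections $\pi_j$ of the input slots it uses. Those projections are well-defined maps $\EE\to E_j$, so each $F_k$ is a well-defined function on $\EE\times\EE$.

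Next I will check membership slot by slot. For $k=1$, the Type preservation on the empirical base theorem gives $\mu_r(\pi_1x,\pi_1y)\in\RR^{14}=E_1$. For the structural slots $k\neq1,8$, membership $F_k(x,y)\in E_k$ is exactly the hypothesis of the theorem. The refined-fibre slot $k=8$ is the step I expect to need care, because the lifted-operation definition describes $F_8$ as \emph{recording} the two children and the label $r$. I will handle it through the model of $E_8$ from the remark on the variable slot, namely finite rooted labelled binary trees carrying $\RR^{14}$ node vectors and metadata. The new tree has root vector $F_1(x,y)\in\RR^{14}$, root label $r$, and subtrees $\pi_8x$ and $\pi_8y$. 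Since both subtrees are finite, the new tree is finite with depth one more than the larger input depth, so it lies in $E_8$. If a different implementation class is chosen for $E_8$, this membership is again an instance of the theorem's hypothesis.

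Finally, with $F_k(x,y)\in E_k$ established for all $k$ and all $(x,y)$, the Product membership criterion gives $\widehat{\mu}_r:\EE\times\EE\to\EE$. Equivalently, by the section theorem, $k\mapsto F_k(x,y)$ is a section of the disjoint bundle, which closes the argument.
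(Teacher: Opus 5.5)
Your proposal is correct and takes essentially the paper's route: the paper's proof is simply the product membership criterion (the universal property of the finite product $\EE=\prod_k E_k$) applied to the output tuple, which is your final step. Your separate checks for slot $1$ (via type preservation on the empirical base) and slot $8$ (via the finite-tree model of $E_8$) are valid but redundant, since the theorem's hypothesis already asserts that every component update, including those two, lands in its slot.
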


\begin{proof}
This is the universal property of the product.  An element of $\EE$ is
exactly a 12-tuple with coordinate $k$ in $E_k$.  If each output
coordinate produced by $\widehat{\mu}_r$ lies in the appropriate $E_k$,
then the complete output tuple is an element of $\prod_kE_k=\EE$.
\end{proof}

\begin{corollary}[Closure of finite composition chains]
Any finite expression built from initial elements of $\EE$ by repeated
applications of lifted operations $\widehat{\mu}_{r_1},\ldots,
\widehat{\mu}_{r_m}$ is again an element of $\EE$.
\end{corollary}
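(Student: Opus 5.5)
The plan is to argue by induction on the structure of the finite expression, that is, on the number of applications of lifted operations it contains. First I will fix the syntax: a finite composition expression is either a leaf, an initial element $x\in\EE$, or a node $\widehat{\mu}_{r_i}(e,e')$ whose two children $e,e'$ are smaller expressions and whose label is one of $r_1,\ldots,r_m$. Because every expression is finite, each one has a well-defined finite number of operation nodes, or equivalently a finite depth. The Recursive-trees filtration proposition gives exactly this finite-depth membership, so the induction is well-founded.

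\textbf{Base case.} An expression with zero operation nodes is a leaf. By hypothesis it is an initial element of $\EE$.

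\textbf{Inductive step.} Suppose every expression with fewer than $n$ nodes evaluates to an element of $\EE$, and take an expression $\widehat{\mu}_{r_i}(e,e')$ with $n$ nodes. Each child has strictly fewer nodes, so by hypothesis $e,e'\in\EE$ and hence $(e,e')\in\EE\times\EE$. I will then apply the theorem on type preservation for lifted operations to conclude that $\widehat{\mu}_{r_i}(e,e')\in\EE$.

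The only real obstacle is that this theorem is conditional. It requires each component update of $\widehat{\mu}_{r_i}$ to map its input slots into the corresponding output slot $E_k$. I will therefore read the corollary as standing under that hypothesis for each of $r_1,\ldots,r_m$. Concretely:
\begin{itemize}
  \item the first slot is handled by the type-preservation theorem on the empirical base, since $\mu_r$ maps $B\times B$ into $B$;
  \item the refined slot $E_8$ is the set of all finite labelled trees, so recording two finite children together with the label $r$ yields another finite tree, which stays in $E_8$;
  \item the remaining slots are covered by the explicit hypothesis on their specified component maps.
\end{itemize}
I will state this assumption explicitly in the proof rather than leave it implicit, in line with the manuscript's status discipline. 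With it in place, the induction closes and every finite expression lies in $\EE$.
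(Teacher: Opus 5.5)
Your proof is correct and follows the paper's argument: induction on the number of composition steps, with the base case given by membership of the initial elements in $\EE$ and the inductive step given by the type-preservation theorem for lifted operations. Making that theorem's component-update hypothesis explicit is a worthwhile addition, since the paper leaves it implicit; the appeal to the recursive-tree filtration is unnecessary, because induction on the natural-number count of operation nodes is already well-founded.
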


\begin{proof}
Induction on the number of composition steps.  The base case is
membership of the initial elements in $\EE$.  The induction step is the
type-preservation theorem for lifted operations.
\end{proof}

\begin{remark}[What this theorem does and does not prove]
The theorem proves structural closure.  It does not prove that a chosen
relation detector selects the linguistically correct relation, nor that
the resulting semantic vector is empirically optimal.  Those are
implementation and evaluation questions.
\end{remark}

\section{Logged Composition and Exact Tree Inversion}

The source corpus often emphasizes that BrainiaK can decompose a
composed concept.  This is mathematically exact when the composition log
stores the children and relation at each node.  It is not exact when one
only has the final vector and no tree.

\begin{definition}[Logged binary semantic tree]
A logged semantic tree is defined recursively:
\begin{enumerate}
  \item A leaf is a labelled element $x\in\EE$ with no children.
  \item If $T_1,T_2$ are logged semantic trees and $r$ is a relation,
  then $T=(r,T_1,T_2,\widehat{\mu}_r(T_1,T_2))$ is a logged tree whose
  value is the output of the lifted operation and whose log stores
  $(r,T_1,T_2)$.
\end{enumerate}
\end{definition}

\begin{definition}[Tree comultiplication]
For an internal logged node $T=(r,T_1,T_2,v)$, define
\[
  \Delta_{\mathrm{tree}}(T)=(T_1,T_2).
\]
For a leaf, $\Delta_{\mathrm{tree}}$ is undefined unless a separate
approximate decomposition rule is supplied.
\end{definition}

\begin{theorem}[Exact inversion of logged composition]
Let $T=(r,T_1,T_2,v)$ be an internal logged tree with
$v=\widehat{\mu}_r(T_1,T_2)$.  Then recomposition after tree
comultiplication recovers the stored value:
\[
  \widehat{\mu}_r\bigl(\Delta_{\mathrm{tree},1}(T),
  \Delta_{\mathrm{tree},2}(T)\bigr)=v.
\]
\end{theorem}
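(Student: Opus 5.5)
The proof is a direct unfolding of the definitions. No estimate or limiting argument is needed; the only work is bookkeeping about what the log stores and what the operation acts on.

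\textbf{Step 1 (unfold the comultiplication).} By the definition of tree comultiplication, $\Delta_{\mathrm{tree}}(T)=(T_1,T_2)$ for the internal node $T=(r,T_1,T_2,v)$. Writing $\Delta_{\mathrm{tree},i}$ for the $i$th component, we get $\Delta_{\mathrm{tree},1}(T)=T_1$ and $\Delta_{\mathrm{tree},2}(T)=T_2$. This is well defined precisely because $T$ is internal; the leaf case, where $\Delta_{\mathrm{tree}}$ is undefined, is excluded by hypothesis.

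\textbf{Step 2 (recompose with the logged relation).} The relation symbol $r$ is read off the node's log, which stores $(r,T_1,T_2)$. Substituting Step 1 into the left-hand side gives $\widehat{\mu}_r(T_1,T_2)$. The hypothesis $v=\widehat{\mu}_r(T_1,T_2)$, which is also the recursive definition of a logged tree's value, then yields $v$. Since $\widehat{\mu}_r$ is a function, as fixed by the definition of lifted semantic operations and well typed by the type-preservation theorem for lifted operations, evaluating it twice on the same arguments returns the same element of $\EE$. This determinism is the whole content of exactness.

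\textbf{Step 3 (the one point needing care).} The main obstacle is a typing subtlety rather than a computation. $\widehat{\mu}_r$ is defined on $\EE\times\EE$, while $T_1,T_2$ are trees. I would make explicit that $\widehat{\mu}_r(T_1,T_2)$ means $\widehat{\mu}_r$ applied to the stored values $\mathrm{val}(T_1),\mathrm{val}(T_2)\in\EE$. This is the same convention already used in the definition of a logged tree, and $\mathrm{val}(T_i)\in\EE$ by the closure corollary for finite composition chains. With that convention fixed, both sides of the identity are the same expression, and the theorem follows.

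I would close by stressing the contrast drawn in the section introduction. Exactness relies entirely on the stored children and relation label. No inverse of $\widehat{\mu}_r$ acting on $v$ alone is claimed, and indeed none exists in general, since $\mu_r$ is a non-injective convex combination.
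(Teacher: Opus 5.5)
Your proposal is correct and follows the paper's proof, which is exactly your Steps 1--2: substitute $\Delta_{\mathrm{tree}}(T)=(T_1,T_2)$ and $v=\widehat{\mu}_r(T_1,T_2)$ into the left-hand side, while your Step 3 convention that $\widehat{\mu}_r$ acts on the stored values of $T_1,T_2$ only makes explicit a notational abuse the paper leaves implicit. One caveat on your closing aside: non-injectivity of the base map $\mu_r$ does not by itself show that $\widehat{\mu}_r$ has no inverse, because the refined-fibre slot of $\widehat{\mu}_r$ records the children, which is why the paper claims blind non-invertibility only for a bare base vector $v\in\RR^{14}$.
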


\begin{proof}
By definition, $\Delta_{\mathrm{tree}}(T)=(T_1,T_2)$ and
$v=\widehat{\mu}_r(T_1,T_2)$.  Substitution gives the equality.
\end{proof}

\begin{corollary}[Recursive recovery of leaves]
If every internal node of a finite semantic tree stores its relation and
children, then recursive application of $\Delta_{\mathrm{tree}}$
recovers all leaves and all internal relations exactly.
\end{corollary}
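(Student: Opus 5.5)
The plan is to argue by structural induction on the finite logged tree, following the recursive definition of logged binary semantic trees. Throughout, I will write $\mathrm{Leaves}(T)$ for the multiset of leaves of $T$ in left-to-right order and $\mathrm{Rel}(T)$ for the collection of relation labels stored at the internal nodes. I will define the recursive decomposition procedure $D$ as follows. On a leaf, $D$ returns that leaf and an empty relation list. On an internal node $T=(r,T_1,T_2,v)$, it reads the stored label $r$, applies $\Delta_{\mathrm{tree}}(T)=(T_1,T_2)$, recurses on both children, and concatenates the results. The claim to prove is then that $D(T)=(\mathrm{Leaves}(T),\mathrm{Rel}(T))$ for every finite logged tree. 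If one also wants the values, the claim extends to the statement that every stored intermediate value $v$ is recovered, by the exact inversion theorem.

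\textbf{Well-definedness.} First I will check that $D$ terminates. Finiteness means the tree has finite depth, so by the scale-filtration proposition $T\in\mathcal T_{\leq n}$ for some $n$. The induction can therefore run on $n$. The key point is that $\Delta_{\mathrm{tree}}$ is only ever applied to internal nodes, where it is defined by hypothesis. Recursion stops exactly at leaves, so the partiality of $\Delta_{\mathrm{tree}}$ on leaves never causes a problem.

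\textbf{Base and inductive step.} For the base case $n=0$, $T$ is a leaf: $D$ returns $T$ itself, and there are no internal relations. For the inductive step, take $T=(r,T_1,T_2,v)\in\mathcal T_{\leq n+1}$, so that $T_1,T_2\in\mathcal T_{\leq n}$. Since the node stores its relation and children, $D$ reads $r$ and obtains $(T_1,T_2)$ exactly from $\Delta_{\mathrm{tree}}$. The induction hypothesis gives exact recovery of the leaves and relations of $T_1$ and of $T_2$. Because $\mathrm{Leaves}(T)=\mathrm{Leaves}(T_1)\cdot\mathrm{Leaves}(T_2)$ and $\mathrm{Rel}(T)=\{r\}\cup\mathrm{Rel}(T_1)\cup\mathrm{Rel}(T_2)$, concatenating the recursive outputs gives exactly $\mathrm{Leaves}(T)$ and $\mathrm{Rel}(T)$. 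Exactness of the stored value $v$ at each node follows from the exact inversion theorem applied at that node.

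\textbf{Expected obstacle.} The only delicate point is bookkeeping rather than mathematics. The corollary's hypothesis that \emph{every} internal node stores its data must be used at each recursion step. I must also note that the stored leaves are the original labelled elements of $\EE$ and not reconstructions from vectors, so "exact" means syntactic identity of the stored objects. I will add a sentence making explicit that without the stored log (only $v$) no such recovery is claimed, since $\widehat{\mu}_r$ need not be injective.
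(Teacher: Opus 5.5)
Your proof is correct and follows the paper's own argument: induction on tree depth, with a leaf as the trivial base case and, at an internal node, $\Delta_{\mathrm{tree}}$ returning the two children of smaller depth, to which the induction hypothesis is applied. Your additional bookkeeping is a harmless elaboration of the same proof: the termination check, the explicit leaf and relation lists, and the recovery of stored values via the exact inversion theorem.
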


\begin{proof}
Induct on tree depth.  Depth zero is a leaf and requires no recovery.
For depth $d+1$, $\Delta_{\mathrm{tree}}$ returns the two child trees of
depth at most $d$; apply the induction hypothesis to each child.
\end{proof}

\begin{remark}[Blind inversion is different]
Given only a final vector $v\in\RR^{14}$, without the tree and relation
log, the inverse problem is generally underdetermined.  Many pairs
$(a,b)$ can satisfy $\mu_r(a,b)=v$ for a fixed affine contraction, and
many more if $r$ is unknown.  Blind nearest-neighbour recovery is an
approximation, not an algebraic inverse.
\end{remark}

\begin{proposition}[Affine contraction is generally non-injective]
Let $B=\RR^d$ with $d>0$, and let
\[
  \mu_\theta(a,b)=\theta a+(1-\theta)b
\]
for a fixed $\theta\in(0,1)$.  Then
$\mu_\theta:B\times B\to B$ is not injective.  Consequently, no
function $D:B\to B\times B$ can be a two-sided inverse of
$\mu_\theta$ on all of $B\times B$.
\end{proposition}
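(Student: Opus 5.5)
The plan is to exhibit two distinct input pairs with the same image, and then derive the second claim from non-injectivity alone.

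For the collision, fix a nonzero vector $u\in B$, which exists because $d>0.$ Compare the pair $(0,0)$ with
\[
  (a,b)=\bigl((1-\theta)u,\,-\theta u\bigr).
\]
By linearity,
\[
  \mu_\theta(a,b)=\theta(1-\theta)u-(1-\theta)\theta u=0=\mu_\theta(0,0).
\]
Since $\theta\in(0,1)$, we have $1-\theta\neq0$, so $a\neq0$ and the pair $(a,b)$ differs from $(0,0)$. This gives non-injectivity.

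As a cleaner structural remark, one could equally note that $\mu_\theta$ is a linear map $\RR^{2d}\to\RR^d$. Rank--nullity then gives a kernel of dimension at least $d>0$. Writing the kernel explicitly as $\{((1-\theta)u,-\theta u):u\in B\}$ avoids citing rank--nullity, and I would include it for concreteness.

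For the consequence, I would argue by contradiction. Suppose $D:B\to B\times B$ satisfies $D\circ\mu_\theta=\id_{B\times B}$, which is the half of the two-sided-inverse condition relevant on all of $B\times B$. Then $\mu_\theta$ is injective, because $\mu_\theta(p)=\mu_\theta(q)$ implies $p=D(\mu_\theta(p))=D(\mu_\theta(q))=q$. This contradicts the collision above, so no two-sided inverse, and indeed no left inverse, exists.

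There is no real obstacle here. The only care points are checking that the constructed pair is genuinely distinct, which uses $\theta\neq1$ and $u\neq0$, and noting that only the left-inverse half of ``two-sided'' is needed for the contradiction. I would close with a sentence tying this to the preceding remark: the exact inversion theorem for logged trees does not conflict with this result, because the tree log supplies exactly the fibre information that $\mu_\theta$ forgets.
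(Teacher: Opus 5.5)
Your proposal is correct and is essentially the paper's argument. The paper adds the kernel direction $\bigl((1-\theta)h,-\theta h\bigr)$ to an arbitrary pair $(a,b)$, while you specialize to the base point $(0,0)$; for the second claim, both arguments note that $D$ would have to send one common output to two distinct inputs, and your observation that only the left-inverse half $D\circ\mu_\theta=\id_{B\times B}$ is needed is a valid sharpening.
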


\begin{proof}
Choose any nonzero $h\in B$.  For any pair $(a,b)$,
\[
  \mu_\theta(a+(1-\theta)h,\ b-\theta h)
  =\theta a+\theta(1-\theta)h+(1-\theta)b
  -(1-\theta)\theta h
  =\mu_\theta(a,b).
\]
The two input pairs are distinct because $h\neq0$, hence
$\mu_\theta$ is not injective.  If a two-sided inverse
$D$ existed, then applying $D$ to the common output of two distinct
inputs would have to return both inputs, impossible for a function.
\end{proof}

\begin{corollary}[Logs are mathematical data, not mere metadata]
Exact recovery of a binary semantic composition using an affine
contraction requires stored decomposition data, additional constraints
selecting a unique representative, or a restricted domain on which the
composition map is injective.
\end{corollary}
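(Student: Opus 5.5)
The statement to prove is the corollary that logs are mathematical data. I read it as a trichotomy. Let $\mathcal R$ be any procedure that returns the input pair $(a,b)\in B\times B$ from the output $v=\mu_\theta(a,b)$. Then $\mathcal R$ must draw on at least one of three resources: information beyond $v$ (stored decomposition data), a rule that selects one preferred preimage (additional constraints), or a domain restriction $\Omega\subseteq B\times B$ on which $\mu_\theta|_\Omega$ is injective. The plan is a proof by contraposition that rests on the preceding proposition, which says affine contraction is generally non-injective.

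First I formalize the setting. Suppose recovery uses nothing but $v$. Then it is a function $D:B\to B\times B$ with $D(\mu_\theta(a,b))=(a,b)$ for all $(a,b)$ in the intended domain $\Omega$. I split into two cases.

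\emph{Case $\Omega=B\times B$.} Here $D$ would be a left inverse of $\mu_\theta$ on the full product. A left inverse forces injectivity: $\mu_\theta(p)=\mu_\theta(p')$ gives $p=D(\mu_\theta p)=D(\mu_\theta p')=p'$. The proposition, via the explicit shear $(a+(1-\theta)h,\,b-\theta h)$, shows $\mu_\theta$ is not injective, so this case is impossible. Recovery must therefore use either information other than $v$, which is the log, or a shrunk domain.

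\emph{Case $\Omega$ is a proper subset.} The same left-inverse argument shows that $\mu_\theta|_\Omega$ must be injective, which is exactly the third alternative. There is one more possibility: $D$ might succeed only up to a canonical choice in each fibre $\mu_\theta^{-1}(v)$. Then $D$ is a section of $\mu_\theta$. Its image $\Omega=D(B)$ is a set on which $\mu_\theta$ is injective, so this is the ``additional constraints selecting a unique representative'' alternative. I would note that choosing a representative and restricting the domain are two presentations of the same mathematics.

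The remaining step is to show the three alternatives are genuinely needed, rather than merely sufficient. The previous theorem on exact inversion of logged composition already shows that stored data suffices. For the other two alternatives, one example witnesses both: the slice $\Omega=\{(a,0)\}$, on which $\mu_\theta(a,0)=\theta a$ is injective. This establishes sufficiency for each alternative.

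The main obstacle I expect is making ``exact recovery'' precise enough that the trichotomy is a theorem rather than a slogan. I would fix the definition: a recovery scheme is a pair $(\Omega,D)$ together with optional side data $s$ attached to each input pair. If $s$ is nontrivial, we are in the first alternative. If not, the argument above forces injectivity on $\Omega$. Once that definition is in place, every step reduces to the non-injectivity proposition and the elementary fact that a left inverse implies injectivity.
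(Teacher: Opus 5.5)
Your proposal is correct and follows the paper's route. The paper's proof simply invokes the non-injectivity of $\mu_\theta$ to rule out global inversion from the output alone, then concludes that recovery must come from extra information, a domain restriction restoring injectivity, or a choice of representative; your left-inverse argument, and your observation that a section $D$ makes $\mu_\theta$ injective on $D(B)$, just make that inference explicit. One small point: your last paragraph, announced as showing the alternatives are ``genuinely needed,'' actually shows they are sufficient (nonvacuous), which the corollary does not assert, since necessity is already complete after your case analysis.
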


\begin{proof}
The preceding proposition rules out global exact inversion of the
unrestricted affine contraction.  Therefore exact recovery must come
from information not present in the final vector alone, from a
domain restriction restoring injectivity, or from a convention that
chooses one representative among many possible preimages.
\end{proof}

\section{Frobenius-Inspired Crystal Composition}

The source treatise states that the set of crystals forms a Frobenius
algebra.  This is a strong algebraic claim.  It is not justified by the
existence of a composition function alone.  A Frobenius algebra over a
field requires a vector space or module $A$, a multiplication
$\mu:A\otimes A\to A$, a unit $\eta$, a comultiplication
$\delta:A\to A\otimes A$, a counit $\epsilon$, and compatibility
conditions.

\begin{definition}[Frobenius algebra]
Let $A$ be a finite-dimensional vector space over $\RR$.  A Frobenius
algebra is a tuple $(A,\mu,\eta,\delta,\epsilon)$ such that
$(A,\mu,\eta)$ is an associative unital algebra,
$(A,\delta,\epsilon)$ is a coassociative counital coalgebra, and
\[
  (\mu\otimes \id)\circ(\id\otimes\delta)
  = \delta\circ\mu
  = (\id\otimes\mu)\circ(\delta\otimes\id).
\]
\end{definition}

\begin{proposition}[Conditional Frobenius criterion for crystals]
Let $A$ be a linearized crystal space.  If BrainiaK crystal composition
and decomposition provide maps
\[
  \mu:A\otimes A\to A,\qquad
  \eta:\RR\to A,\qquad
  \delta:A\to A\otimes A,\qquad
  \epsilon:A\to\RR
\]
that satisfy associativity, unitality, coassociativity, counitality, and
the Frobenius compatibility equation, then $A$ is a Frobenius algebra.
\end{proposition}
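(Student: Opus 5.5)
The plan is to verify the statement by unwinding the definition of a Frobenius algebra given just above and matching its clauses one by one against the hypotheses. Nothing beyond the definition should be needed. The only real task is to check that each hypothesis, read in its standard meaning, is exactly one clause of the definition.

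First I will fix the standing data. $A$ is the linearized crystal space, assumed to be a finite-dimensional real vector space, since the definition requires this. The four maps $\mu,\eta,\delta,\epsilon$ are $\RR$-linear with the stated domains and codomains. I will make explicit that "linearized crystal space" is read as including finite-dimensionality, and that the maps are understood to be linear. If either point is not part of the hypothesis, the statement must be read with it added, in keeping with the manuscript's policy of stating assumptions explicitly.

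Next I will match the hypotheses to the definition.
\begin{itemize}
\item Associativity, $\mu\circ(\mu\otimes\id)=\mu\circ(\id\otimes\mu)$, together with unitality, $\mu\circ(\eta\otimes\id)=\id=\mu\circ(\id\otimes\eta)$ under the canonical identifications $\RR\otimes A\cong A\cong A\otimes\RR$, says precisely that $(A,\mu,\eta)$ is an associative unital algebra.
\item Coassociativity, $(\delta\otimes\id)\circ\delta=(\id\otimes\delta)\circ\delta$, together with counitality, $(\epsilon\otimes\id)\circ\delta=\id=(\id\otimes\epsilon)\circ\delta$, says precisely that $(A,\delta,\epsilon)$ is a coassociative counital coalgebra.
\item The assumed Frobenius compatibility equation is literally the displayed identity $(\mu\otimes\id)\circ(\id\otimes\delta)=\delta\circ\mu=(\id\otimes\mu)\circ(\delta\otimes\id)$.
\end{itemize}
Since all three clauses of the definition hold, $(A,\mu,\eta,\delta,\epsilon)$ is a Frobenius algebra.

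There is no genuine obstacle. The one point requiring care is interpretive: the phrase "Frobenius compatibility equation" must be taken to mean both equalities in the definition, not only one of them. I will state this reading explicitly. I will also remark that the proof says nothing about whether the deployed BrainiaK maps actually satisfy these identities. That question remains a verification obligation, consistent with the earlier proposition that implementation existence is not theorem evidence.
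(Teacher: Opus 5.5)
Your proposal is correct and takes the paper's own approach: the paper likewise observes that the hypotheses are, clause by clause, the defining conditions of a Frobenius algebra, and it defers the substantive work to verifying those hypotheses for the deployed maps. You also make two readings explicit that the paper's proof leaves implicit, and both are worthwhile: that ``linearized crystal space'' must include finite-dimensionality of $A$ and linearity of the maps, which the definition requires, and that ``Frobenius compatibility'' means both displayed equalities.
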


\begin{proof}
The hypotheses state exactly that $(A,\mu,\eta)$ is an associative
unital algebra, that $(A,\delta,\epsilon)$ is a coassociative counital
coalgebra, and that the Frobenius compatibility equation holds.  These
three assertions are precisely the defining conditions for a Frobenius
algebra over $\RR$.  Therefore the tuple
$(A,\mu,\eta,\delta,\epsilon)$ satisfies the definition.  The substantive
work for a deployed crystal system is the separate verification of those
hypotheses for the chosen implementation of
$\mu,\eta,\delta,\epsilon$.
\end{proof}

\begin{remark}[Status of the BrainiaK claim]
The current code surface
\codepath{brainiak/mathcore/fda/semantic/frobenius.py} provides a
co-multiplication-like operation \texttt{delta} and a diagnostic
\texttt{frobenius\_check} measuring round-trip GCM error after
decomposition and recomposition.  A numerical round-trip diagnostic is
valuable engineering evidence, but it is not a proof of the Frobenius
identity.  Therefore the publishable claim is:
\emph{BrainiaK implements a Frobenius-inspired decomposition and
round-trip diagnostic on semantic fibres}.  A future theorem may be
claimed only after the exact vector space and exact maps are fixed and
the Frobenius equations are proven.
\end{remark}

\begin{proposition}[Round-trip success does not imply a Frobenius algebra]
Let $A$ be a nonzero vector space.  Suppose maps
$\mu:A\otimes A\to A$ and $\delta:A\to A\otimes A$ satisfy a
round-trip identity
\[
  \mu\circ\delta=\id_A .
\]
This identity alone does not imply that $(A,\mu,\delta)$ extends to a
Frobenius algebra.
\end{proposition}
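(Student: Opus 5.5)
We prove the statement by exhibiting an explicit counterexample: a nonzero real vector space $A$ with maps $\mu$ and $\delta$ such that $\mu\circ\delta=\id_A$, but for which no choice of $(\eta,\epsilon)$ makes $(A,\mu,\eta,\delta,\epsilon)$ a Frobenius algebra in the sense of the definition above. The cleanest obstruction is non-associativity of $\mu$. An associative unital algebra is required, so a single triple violating associativity rules out every extension, whatever unit and counit are supplied. The round-trip identity constrains $\mu$ only on the image of $\delta$, which is a proper subspace of $A\otimes A$ as soon as $\dim A\geq 2$. This leaves $\mu$ free elsewhere, and that freedom is what we exploit.

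Concretely, take $A=\RR^2$ with basis $e_1,e_2$, and set $\delta(e_i)=e_i\otimes e_i$ (extended linearly). Define $\mu$ on basis tensors by
\[
  \mu(e_i\otimes e_i)=e_i \quad (i=1,2), \qquad \mu(e_1\otimes e_2)=e_2, \qquad \mu(e_2\otimes e_1)=e_1 .
\]
Then $\mu\circ\delta(e_i)=\mu(e_i\otimes e_i)=e_i$, so $\mu\circ\delta=\id_A$ by linearity. For non-associativity we compare
\[
  \mu(\mu(e_1\otimes e_2)\otimes e_1)=\mu(e_2\otimes e_1)=e_1
\]
with
\[
  \mu(e_1\otimes\mu(e_2\otimes e_1))=\mu(e_1\otimes e_1)=e_1 .
\]
These agree, so this triple does not work. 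I would therefore tabulate the eight basis triples. Writing the product as $ab$, we have $e_1e_2=e_2$ and $e_2e_1=e_1$, so $ab=b$ whenever $a\neq b$, and also $aa=a$; that is, $xy=y$ for all basis elements. This is the right-projection magma, which is associative, so the first guess fails.

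I would switch to a product that is manifestly non-associative while keeping $e_ie_i=e_i$. Take $e_1e_2=e_2e_1=e_1+e_2$. Then
\[
  (e_1e_2)e_2=e_1e_2+e_2e_2=2e_2+e_1, \qquad e_1(e_2e_2)=e_1e_2=e_1+e_2 ,
\]
and these differ, so associativity fails.

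The main obstacle is making sure the obstruction is independent of the unspecified data $\eta$ and $\epsilon$. Associativity failure handles this automatically, since associativity involves only $\mu$. As a backup, one could instead keep $\mu$ associative and violate the Frobenius compatibility equation. That route would require checking both sides on basis elements, which is messier, so I would use the associativity route and close by noting that this supports the earlier remark that a round-trip diagnostic is not a proof of the Frobenius identities.
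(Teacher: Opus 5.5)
Your final counterexample is correct, and it takes a different route from the paper. The paper uses $A=\RR$, $\delta(x)=x\otimes 1$ and $\mu(a\otimes b)=a$, and argues that $\mu$ has no two-sided unit. You use $A=\RR^2$ and define $\mu$ on the basis $\{e_i\otimes e_j\}$ of $A\otimes A$, so it is a genuine linear map. You fix $\mu(e_i\otimes e_i)=e_i$ to force $\mu\circ\delta=\id_A$, and use the freedom off $\mathrm{im}\,\delta$ to break associativity ($e_1+2e_2$ versus $e_1+e_2$). That rules out every extension whatever $(\eta,\epsilon)$ are, since associativity involves $\mu$ alone.

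Your route is the sounder one, for two reasons. First, the paper's formula $(a,b)\mapsto a$ is not bilinear, so it does not define a map on $\RR\otimes\RR$: the element $1\otimes 2=2\otimes 1$ would have to go to both $1$ and $2$. Second, no one-dimensional counterexample exists. Linear $\mu,\delta$ on $A=\RR$ are scalars $c,d$, and the round trip forces $cd=1$. Then $\eta(1)=c^{-1}$ and $\epsilon(x)=cx$ give a Frobenius algebra, since all three expressions in the Frobenius identity equal $cd\,ab\,(1\otimes 1)$.

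Your observation that $\mathrm{im}\,\delta$ is a proper subspace only when $\dim A\geq 2$ is exactly the missing ingredient. It also shows the statement can only hold existentially, or for $\dim A\geq 2$. Your example extends to any such $A$ by sending all other off-diagonal $e_i\otimes e_j$ to $0$.

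In a final write-up, drop the false start. Alternatively, keep it as a second counterexample of the paper's non-unital type: in the linearized right-projection magma, $\mu(e_j\otimes u)=u$ for every $j$ and every $u$, so no vector is a right unit.
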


\begin{proof}
Take $A=\RR$ and define $\delta(x)=x\otimes1$ and
$\mu(a\otimes b)=a$.  Then $\mu\delta(x)=x$, so the round-trip identity
holds.  However $\mu$ has no two-sided unit.  Indeed, if $e\in\RR$ were
a unit, then the right-unit condition would require
\[
  \mu(a\otimes e)=a
\]
which holds for all $e$, but the left-unit condition would require
\[
  \mu(e\otimes a)=a
\]
for all $a$.  Since $\mu(e\otimes a)=e$, this is impossible unless all
$a$ are equal to $e$.  Thus the algebra is not unital and hence cannot
be a Frobenius algebra.  The example shows that even an exact
decompose/recompose identity is strictly weaker than the Frobenius
axioms.
\end{proof}

\begin{conjecture}[Exact crystal Frobenius model]
There exists a finite-dimensional quotient or completion of the
BrainiaK crystal space, together with exact maps induced by semantic
composition and tree decomposition, that forms a Frobenius algebra.
\end{conjecture}

\subsection{Algebraic Verification Obligations}

The conjecture above cannot be discharged by intuition about semantic
composition.  It requires exact algebraic data.  The following local
criteria state how such data would be checked in a finite or quotient
model.

\begin{definition}[Crystal magma and congruence]
A crystal magma is a set $C$ equipped with a binary operation
$\star:C\times C\to C$.  An equivalence relation $\sim$ on $C$ is a
congruence for $\star$ if
\[
  a\sim a'\ \text{and}\ b\sim b'
  \quad\Longrightarrow\quad
  a\star b\sim a'\star b'.
\]
\end{definition}

\begin{proposition}[Quotient composition is well-defined exactly under
congruence]
Let $(C,\star)$ be a crystal magma and let $\sim$ be an equivalence
relation.  The formula
\[
  [a]\star_{\sim}[b]=[a\star b]
\]
defines a binary operation on $C/{\sim}$ if and only if $\sim$ is a
congruence for $\star$.
\end{proposition}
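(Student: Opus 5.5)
The plan is to prove the two directions separately, working throughout with the canonical quotient map $q:C\to C/{\sim}$, $q(a)=[a]$. The only real content is unwinding what ``the formula defines a binary operation'' means. It means there is a function $\star_\sim:(C/{\sim})\times(C/{\sim})\to C/{\sim}$ such that
\[
  \star_\sim(q(a),q(b))=q(a\star b)\quad\text{for all }a,b\in C.
\]
Equivalently, the value $[a\star b]$ depends only on the classes $[a]$ and $[b]$, not on the chosen representatives $a$ and $b$. I would state this reformulation explicitly first, since both directions reduce to it.

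For the \emph{if} direction, assume $\sim$ is a congruence. Take classes $X,Y\in C/{\sim}$ and any two choices of representatives, $a,a'\in X$ and $b,b'\in Y$. Then $a\sim a'$ and $b\sim b'$, so the congruence property gives $a\star b\sim a'\star b'$, hence $[a\star b]=[a'\star b']$. The assignment $(X,Y)\mapsto[a\star b]$ is therefore independent of representatives. Since $q$ is surjective, every class has at least one representative, so this assignment is defined on all of $(C/{\sim})\times(C/{\sim})$. It is thus a genuine binary operation satisfying the displayed formula.

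For the \emph{only if} direction, assume such a $\star_\sim$ exists with $[a]\star_\sim[b]=[a\star b]$ for all $a,b$. Suppose $a\sim a'$ and $b\sim b'$, so that $[a]=[a']$ and $[b]=[b']$. Because $\star_\sim$ is a function, it returns the same output on equal inputs, which gives
\[
  [a\star b]=[a]\star_\sim[b]=[a']\star_\sim[b']=[a'\star b'].
\]
Hence $a\star b\sim a'\star b'$, and $\sim$ is a congruence.

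There is no serious obstacle here. The only step needing care is the first one: fixing precisely what ``well-defined'' means, namely the existence of a function compatible with $q\times q$. Once that is stated, each direction is a one-line check, and surjectivity of $q$ is used only to ensure the operation is total on $C/{\sim}$.
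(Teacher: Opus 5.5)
Your proof is correct and matches the paper's argument essentially line for line: the only-if direction uses the same chain $[a\star b]=[a]\star_\sim[b]=[a']\star_\sim[b']=[a'\star b']$, and the if direction is the same representative-independence check via the congruence property. Your reformulation through the quotient map $q$, and your remark that surjectivity of $q$ makes the operation total, are small clarifications that the paper leaves implicit.
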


\begin{proof}
If the quotient operation is well-defined and
$a\sim a'$, $b\sim b'$, then $[a]=[a']$ and $[b]=[b']$.  Hence
\[
  [a\star b]=[a]\star_\sim[b]=[a']\star_\sim[b']=[a'\star b'],
\]
so $a\star b\sim a'\star b'$ and $\sim$ is a congruence.

Conversely, suppose $\sim$ is a congruence.  If $[a]=[a']$ and
$[b]=[b']$, then $a\sim a'$ and $b\sim b'$, so
$a\star b\sim a'\star b'$.  Therefore $[a\star b]=[a'\star b']$, which
means that the displayed formula is independent of representatives.
\end{proof}

\begin{definition}[Linearization of a finite crystal magma]
For a finite crystal magma $(C,\star)$, let $\RR[C]$ be the real vector
space with basis $(e_c)_{c\in C}$.  The linearized multiplication is
the bilinear map $\mu_\star:\RR[C]\otimes\RR[C]\to\RR[C]$ defined on
basis elements by
\[
  \mu_\star(e_a\otimes e_b)=e_{a\star b}.
\]
\end{definition}

\begin{proposition}[Associativity passes to finite linearization]
The bilinear multiplication $\mu_\star$ on $\RR[C]$ is associative if
and only if the set operation $\star$ is associative on $C$.
\end{proposition}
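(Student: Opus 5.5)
The plan is to compare both sides of the associativity law on basis triples and then extend by trilinearity. For $a,b,c\in C$, the definition of $\mu_\star$ on basis elements gives
\[
  \mu_\star(\mu_\star(e_a\otimes e_b)\otimes e_c)=e_{(a\star b)\star c},
  \qquad
  \mu_\star(e_a\otimes\mu_\star(e_b\otimes e_c))=e_{a\star(b\star c)}.
\]
Thus the two bracketings of $\mu_\star$ agree on the triple $(e_a,e_b,e_c)$ exactly when the basis vectors $e_{(a\star b)\star c}$ and $e_{a\star(b\star c)}$ coincide. Since $(e_c)_{c\in C}$ is a basis, distinct labels give distinct (indeed linearly independent) vectors. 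So this holds exactly when $(a\star b)\star c=a\star(b\star c)$ in $C$.

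For the direction ``$\star$ associative $\Rightarrow$ $\mu_\star$ associative'', I will use that both maps
\[
  (x,y,z)\mapsto\mu_\star(\mu_\star(x\otimes y)\otimes z)
  \quad\text{and}\quad
  (x,y,z)\mapsto\mu_\star(x\otimes\mu_\star(y\otimes z))
\]
are trilinear on $\RR[C]^3$, being composites of bilinear maps. Equivalently, both are linear maps $\RR[C]^{\otimes 3}\to\RR[C]$. Two trilinear maps that agree on all triples of basis vectors agree everywhere: expand $x=\sum x_ae_a$, $y=\sum y_be_b$, $z=\sum z_ce_c$, and both sides become the same finite sum $\sum x_ay_bz_c\,(\cdots)$. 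By the basis computation above, associativity of $\star$ gives agreement on basis triples, and hence on all of $\RR[C]$.

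For the converse, I will assume $\mu_\star$ is associative and specialise to $x=e_a$, $y=e_b$, $z=e_c$. This gives $e_{(a\star b)\star c}=e_{a\star(b\star c)}$, and injectivity of $c\mapsto e_c$ forces $(a\star b)\star c=a\star(b\star c)$.

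The only point needing care is that $\mu_\star$ is well-defined as a bilinear map. This follows from the universal property of $\RR[C]\otimes\RR[C]$, whose basis is $(e_a\otimes e_b)_{a,b\in C}$, so a linear map is freely determined by its values on these basis tensors. With that settled, no step is a real obstacle. The argument is the standard fact that associativity of a multiplicative basis extends linearly, and that finiteness of $C$ is not actually needed beyond matching the definition.
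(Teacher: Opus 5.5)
Your proposal is correct and follows the paper's proof: both sides are computed on basis triples $(e_a,e_b,e_c)$, trilinearity extends agreement from basis triples to all of $\RR[C]$, and the converse uses the fact that distinct labels give distinct basis vectors. Your remarks that $\mu_\star$ is well defined via the basis $(e_a\otimes e_b)$ of the tensor product, and that finiteness of $C$ is not needed, are accurate additions that the paper leaves implicit.
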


\begin{proof}
If $\star$ is associative, then for basis vectors
\[
  \mu_\star(\mu_\star(e_a\otimes e_b)\otimes e_c)
  =e_{(a\star b)\star c}
  =e_{a\star(b\star c)}
  =\mu_\star(e_a\otimes\mu_\star(e_b\otimes e_c)).
\]
Both sides are trilinear in $(e_a,e_b,e_c)$, so equality on basis
triples implies equality for all elements.

Conversely, if $\mu_\star$ is associative, apply the associativity
identity to basis vectors.  Since the basis vectors of $\RR[C]$ are
distinct, equality
$e_{(a\star b)\star c}=e_{a\star(b\star c)}$ implies
$(a\star b)\star c=a\star(b\star c)$ for all $a,b,c\in C$.
\end{proof}

\begin{proposition}[Basis units pass to finite linearization]
Let $(C,\star)$ be a finite crystal magma and let $e\in C$.  The basis
vector $e_e\in\RR[C]$ is a two-sided unit for $\mu_\star$ if and only
if $e$ is a two-sided identity element for $\star$.
\end{proposition}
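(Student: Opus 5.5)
The argument will follow the associativity proposition just proved: check the identity on basis vectors, extend by bilinearity in one direction, and use linear independence of the basis in the other.

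For the direction ``$e$ is an identity for $\star$ $\Rightarrow$ $e_e$ is a unit'', I first compute on basis vectors. For every $c\in C$,
\[
  \mu_\star(e_e\otimes e_c)=e_{e\star c}=e_c
  \qquad\text{and}\qquad
  \mu_\star(e_c\otimes e_e)=e_{c\star e}=e_c .
\]
The maps $x\mapsto\mu_\star(e_e\otimes x)$ and $x\mapsto\mu_\star(x\otimes e_e)$ are linear, since $\mu_\star$ is bilinear. They agree with $\id_{\RR[C]}$ on the basis $(e_c)_{c\in C}$, so they equal the identity on all of $\RR[C]$. Hence $e_e$ is a two-sided unit.

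For the converse, suppose $e_e$ is a two-sided unit for $\mu_\star$. Apply the unit identities to a basis vector $e_c$:
\[
  e_{e\star c}=\mu_\star(e_e\otimes e_c)=e_c,
  \qquad
  e_{c\star e}=\mu_\star(e_c\otimes e_e)=e_c .
\]
The vectors $e_a$ for $a\in C$ are distinct basis elements, so $e_a=e_b$ forces $a=b$. Therefore $e\star c=c=c\star e$ for every $c\in C$, and $e$ is a two-sided identity for $\star$.

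No step is a real obstacle. The only point needing care is the inference from $e_a=e_b$ to $a=b$, which holds because the basis is indexed injectively by $C$. Finiteness of $C$ is used only so that $\RR[C]$ is the finite-dimensional space required by the Frobenius setting; the argument itself does not depend on it.
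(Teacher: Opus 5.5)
Your proof is correct and matches the paper's argument: verify the unit equations on the basis vectors $e_c$, extend to all of $\RR[C]$ by bilinearity, and for the converse read $e\star c=c=c\star e$ off the equality of distinct basis vectors. Your remark that finiteness of $C$ plays no role in the argument is also accurate.
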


\begin{proof}
If $e$ is a two-sided identity in $C$, then for every basis vector
$e_a$,
\[
  \mu_\star(e_e\otimes e_a)=e_{e\star a}=e_a,
  \qquad
  \mu_\star(e_a\otimes e_e)=e_{a\star e}=e_a.
\]
By bilinearity, $e_e$ is a two-sided unit on all of $\RR[C]$.

Conversely, if $e_e$ is a two-sided unit for $\mu_\star$, then applying
the unit equations to each basis vector gives
\[
  e_{e\star a}=e_a,\qquad e_{a\star e}=e_a.
\]
Basis vectors are distinct, hence $e\star a=a$ and $a\star e=a$ for
all $a\in C$.  Thus $e$ is a two-sided identity in the magma.
\end{proof}

\begin{corollary}[Quotient associativity under congruence]
Let $(C,\star)$ be an associative crystal magma and let $\sim$ be a
congruence for $\star$.  Then the quotient operation
$[a]\star_\sim[b]=[a\star b]$ is associative.  If $e\in C$ is a
two-sided identity, then $[e]$ is a two-sided identity in the quotient.
\end{corollary}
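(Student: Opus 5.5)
The plan is to reduce both claims to computations on representatives, using the preceding proposition that quotient composition is well-defined exactly when $\sim$ is a congruence. Since $\sim$ is assumed to be a congruence for $\star$, that proposition guarantees that $[a]\star_\sim[b]=[a\star b]$ is a genuine binary operation on $C/{\sim}$, independent of the chosen representatives. This is the only step where the congruence hypothesis is used. The remaining verifications simply push identities in $C$ through the canonical surjection $q:C\to C/{\sim}$, $q(a)=[a]$. By the definition of $\star_\sim$, this surjection satisfies $q(a\star b)=q(a)\star_\sim q(b)$.

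For associativity, I will take arbitrary classes $[a],[b],[c]$ and compute
\[
  ([a]\star_\sim[b])\star_\sim[c]=[a\star b]\star_\sim[c]
  =[(a\star b)\star c]=[a\star(b\star c)]
  =[a]\star_\sim[b\star c]=[a]\star_\sim([b]\star_\sim[c]).
\]
The middle equality uses associativity in $C$; every other equality is the defining formula for $\star_\sim$, which is legitimate because the operation is well-defined. Every element of $C/{\sim}$ has the form $[a]$, so this covers all triples.

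For the unit, I will take an arbitrary class $[a]$. Then $[e]\star_\sim[a]=[e\star a]=[a]$ and $[a]\star_\sim[e]=[a\star e]=[a]$, so $[e]$ is a two-sided identity.

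I expect no real obstacle here. The one point needing care is to invoke the well-definedness proposition \emph{before} manipulating classes, since the chain of equalities above silently changes representatives. Optionally, I can note the consequence for Frobenius-type models: combined with the linearization propositions for associativity and units, the quotient yields an associative unital algebra $\RR[C/{\sim}]$. The coalgebra and Frobenius axioms would still be separate obligations.
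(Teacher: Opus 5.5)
Your proof is correct and follows the paper's own argument: invoke the congruence criterion to get a well-defined quotient operation, then push associativity and the identity equations through representatives. Your intermediate equalities and the remark that $q(a\star b)=q(a)\star_\sim q(b)$ only spell out what the paper compresses into the chain $([a]\star_\sim[b])\star_\sim[c]=[(a\star b)\star c]=[a\star(b\star c)]=[a]\star_\sim([b]\star_\sim[c])$.
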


\begin{proof}
The quotient operation is well-defined by the congruence criterion.
For associativity,
\[
  ([a]\star_\sim[b])\star_\sim[c]
  =[(a\star b)\star c]
  =[a\star(b\star c)]
  =[a]\star_\sim([b]\star_\sim[c]),
\]
using associativity of $\star$.  If $e$ is a two-sided identity, then
\[
  [e]\star_\sim[a]=[e\star a]=[a],
  \qquad
  [a]\star_\sim[e]=[a\star e]=[a],
\]
so $[e]$ is a two-sided identity.
\end{proof}

\begin{lemma}[Basis checking for finite algebraic identities]
Let $V$ and $W$ be finite-dimensional vector spaces with a chosen basis
of $V$.  Two linear maps $F,G:V\to W$ are equal if and only if they
agree on every basis vector of $V$.
\end{lemma}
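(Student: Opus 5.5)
The argument has two directions; the forward one is immediate and the converse carries all the content.

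If $F=G$, then in particular $F(v_i)=G(v_i)$ for every basis vector $v_i$ of $V$.

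For the converse, fix the chosen basis $(v_1,\ldots,v_n)$ of $V$ and assume $F(v_i)=G(v_i)$ for every $i$. Any $v\in V$ has a unique expansion $v=\sum_i c_iv_i$ with $c_i\in\RR$; only the existence of the expansion is needed here. Linearity of $F$ and $G$ then gives
\[
  F(v)=\sum_i c_iF(v_i)=\sum_i c_iG(v_i)=G(v).
\]
Since $v$ was arbitrary, $F=G$.

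Equivalently, one can apply the same reasoning to the linear map $F-G$. It vanishes on a spanning set, hence its kernel, being a subspace, contains the span of that set, which is all of $V$.

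There is no real obstacle; the only point to state explicitly is that a basis spans $V$. Finite dimensionality is not actually needed, since every vector is a finite linear combination of basis vectors. In the intended application, which is checking the Frobenius identities on $\RR[C]$ or on $\RR[C]\otimes\RR[C]$, I would also note that the basis of a tensor product can be taken to be $\{e_a\otimes e_b\}$. This is what justifies the basis-triple checks already used for associativity in the propositions on finite linearization.
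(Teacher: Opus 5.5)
Your proof is correct and follows the same route as the paper: the forward direction is immediate, and for the converse you expand an arbitrary $v=\sum_i c_iv_i$ in the chosen basis and use linearity of $F$ and $G$. Your side remarks are also correct, namely the reformulation via the kernel of $F-G$, the observation that finite dimensionality is not needed, and the tensor-product basis $\{e_a\otimes e_b\}$ for the Frobenius checks.
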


\begin{proof}
If $F=G$, they agree on every vector.  Conversely, if they agree on a
basis and $v=\sum_i\alpha_iv_i$, then by linearity
\[
  F(v)=\sum_i\alpha_iF(v_i)=\sum_i\alpha_iG(v_i)=G(v).
\]
\end{proof}

\begin{corollary}[Finite Frobenius identities reduce to basis checks]
For a finite-dimensional proposed crystal algebra with specified
linear maps $\mu,\eta,\delta,\epsilon$, associativity,
coassociativity, unit, counit, and Frobenius compatibility may be
verified by checking the corresponding linear maps on tensor-product
basis vectors.
\end{corollary}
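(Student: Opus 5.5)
The plan is to reduce each axiom to an equality of two linear maps between finite-dimensional spaces built from $A$ and $\RR$, and then apply the basis-checking lemma to each pair. Fix a basis $(a_i)_{i=1}^n$ of $A$. Then the simple tensors $a_{i_1}\otimes\cdots\otimes a_{i_p}$ form a basis of $A^{\otimes p}$, and $\RR$ has basis $\{1\}$. I will use the identifications $\RR\otimes A\cong A\cong A\otimes\RR$; these are linear isomorphisms and send basis vectors to basis vectors.

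Next I rewrite each axiom as an equation $F=G$ of linear maps:
\begin{itemize}
  \item associativity: $\mu\circ(\mu\otimes\id)=\mu\circ(\id\otimes\mu)$, as maps $A^{\otimes3}\to A$;
  \item unit: $\mu\circ(\eta\otimes\id)=\id_A=\mu\circ(\id\otimes\eta)$, after the identification $\RR\otimes A\cong A$;
  \item coassociativity: $(\delta\otimes\id)\circ\delta=(\id\otimes\delta)\circ\delta$, as maps $A\to A^{\otimes3}$;
  \item counit: $(\epsilon\otimes\id)\circ\delta=\id_A=(\id\otimes\epsilon)\circ\delta$;
  \item Frobenius compatibility: the two equalities in the definition, as maps $A^{\otimes2}\to A^{\otimes2}$.
\end{itemize}

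The one point that needs care is that every composite above is linear. The maps $\mu,\eta,\delta,\epsilon$ are linear by hypothesis. The tensor product of linear maps, such as $\mu\otimes\id$, is linear by the universal property of the tensor product, and composites of linear maps are linear. So each side of each equation is a linear map between finite-dimensional spaces, and the lemma applies to the chosen tensor-product basis of its domain. Two maps $F,G:A^{\otimes p}\to A^{\otimes q}$ are therefore equal if and only if
\[
  F(a_{i_1}\otimes\cdots\otimes a_{i_p})=G(a_{i_1}\otimes\cdots\otimes a_{i_p})
\]
holds for all $n^p$ index tuples. Applying this to every listed identity gives the corollary.

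I expect the main obstacle to be bookkeeping rather than substance. Stating the unit and counit axioms requires the identifications with $\RR\otimes A$. Also, the spanning property of simple basis tensors is a standard fact about tensor products that the earlier lemma does not state. I will cite it as standard linear algebra, following the paper's convention of citing standard results.
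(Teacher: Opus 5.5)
Your proposal is correct and follows the paper's route: the paper likewise observes that each axiom is an equality of linear maps whose domains are finite tensor products of $A$ and $\RR$, notes that tensor products of chosen basis vectors form bases of those domains, and applies the basis-checking lemma to each equality. Your additional bookkeeping (linearity of the composites such as $\mu\otimes\mathrm{id}$, and the identifications $\RR\otimes A\cong A\cong A\otimes\RR$ needed to state the unit and counit laws) makes explicit what the paper's three-line proof leaves implicit.
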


\begin{proof}
Each algebraic axiom is an equality between linear maps whose domains
are finite tensor products of the underlying vector space and $\RR$.
Tensor products of chosen bases form bases of those domains.  The
preceding lemma applies to each equality.
\end{proof}

\begin{proposition}[Affine semantic averaging is generally not
associative]
Let $B=\RR^d$ with $d>0$ and
\[
  \mu_\theta(a,b)=\theta a+(1-\theta)b,\qquad 0<\theta<1.
\]
Then $\mu_\theta$ is not associative.
\end{proposition}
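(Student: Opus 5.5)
To show that $\mu_\theta$ is not associative, it suffices to exhibit one triple $(a,b,c)$ with $\mu_\theta(\mu_\theta(a,b),c)\neq\mu_\theta(a,\mu_\theta(b,c))$. First expand both bracketings symbolically:
\[
  \mu_\theta(\mu_\theta(a,b),c)=\theta^2a+\theta(1-\theta)b+(1-\theta)c,
\]
\[
  \mu_\theta(a,\mu_\theta(b,c))=\theta a+(1-\theta)\theta b+(1-\theta)^2c.
\]
The coefficients of $b$ agree. Subtracting, the difference is
\[
  (\theta^2-\theta)a+\bigl((1-\theta)-(1-\theta)^2\bigr)c
  =\theta(1-\theta)(c-a).
\]

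Next, choose a concrete witness. Take any nonzero $h\in B$, which exists since $d>0$. Set $a=0$, $b=0$ and $c=h$. The difference above then equals $\theta(1-\theta)h$. Because $0<\theta<1$, the scalar $\theta(1-\theta)$ is strictly positive. Hence the difference is nonzero, and the two bracketings disagree on this triple.

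No step is a genuine obstacle; the only point needing care is the coefficient bookkeeping in the two expansions. The hypothesis $\theta\in(0,1)$ is essential here. At $\theta=0$ or $\theta=1$ the operation becomes the projection $(a,b)\mapsto b$ or $(a,b)\mapsto a$, and both of these are associative. Since associativity is a universal law, this single counterexample suffices. The conclusion also complements the earlier non-injectivity result for $\mu_\theta$ as further evidence against a naive Frobenius structure.
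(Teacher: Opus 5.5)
Your proof is correct and follows essentially the same route as the paper: both expand the two bracketings and observe that they differ by $\theta(1-\theta)(c-a)$, which is nonzero whenever $a\neq c$ because $0<\theta<1$. The only difference is cosmetic: the paper fixes $b=0$ from the start and takes arbitrary $a\neq c$, while you keep $b$ general, note that its terms cancel, and then take the witness $a=b=0$, $c=h$.
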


\begin{proof}
Choose $a,c\in B$ with $a\neq c$ and set $b=0$.  Then
\[
  \mu_\theta(\mu_\theta(a,0),c)
  =\theta^2a+(1-\theta)c,
\]
whereas
\[
  \mu_\theta(a,\mu_\theta(0,c))
  =\theta a+(1-\theta)^2c.
\]
Equality would imply
\[
  \theta(1-\theta)a=\theta(1-\theta)c.
\]
Since $0<\theta<1$, this gives $a=c$, contradicting the choice of
$a$ and $c$.  Therefore associativity fails.
\end{proof}

\begin{corollary}[Averaging cannot be the Frobenius multiplication
without modification]
If the deployed crystal multiplication is exactly a nontrivial affine
average on a positive-dimensional vector slot, then it cannot be the
associative multiplication of a Frobenius algebra on that slot.
\end{corollary}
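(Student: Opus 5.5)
The plan is to argue by contraposition and reduce the statement directly to the preceding proposition that affine semantic averaging is not associative. Suppose the deployed crystal multiplication on a positive-dimensional vector slot $B=\RR^d$, $d>0$, is exactly $\mu_\theta(a,b)=\theta a+(1-\theta)b$ with $0<\theta<1$; this is what ``nontrivial'' means, since $\theta\in\{0,1\}$ gives a projection rather than an average. If $\mu_\theta$ were the multiplication of a Frobenius algebra on that slot, the definition of Frobenius algebra would require $(B,\mu_\theta,\eta)$ to be an associative unital algebra. In particular $\mu_\theta$ would have to satisfy $\mu_\theta(\mu_\theta(a,b),c)=\mu_\theta(a,\mu_\theta(b,c))$ for all $a,b,c$.

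The contradiction then comes from the explicit witness in the preceding proposition. I would take $b=0$ and $a\neq c$, which exist because $d>0$. The two bracketings give $\theta^2a+(1-\theta)c$ and $\theta a+(1-\theta)^2c$. Their equality would force $\theta(1-\theta)(a-c)=0$, hence $a=c$. So associativity fails, no choice of $\eta,\delta,\epsilon$ can rescue the Frobenius structure, and the corollary follows.

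The one point needing care, and the expected main obstacle, is typing. A Frobenius multiplication is a linear map $A\otimes A\to A$, whereas $\mu_\theta$ is a map $B\times B\to B$ that is affine in each argument jointly but not bilinear. I would handle this in two ways. First, note that if one insists on reading $\mu_\theta$ as the multiplication, associativity is a set-level identity on triples of elements. That identity is the one refuted above, independently of any linear extension. Second, and alternatively, observe that $\mu_\theta$ is not bilinear at all: for instance $\mu_\theta(2a,0)=2\theta a\neq 4\theta a$ fails the required scaling in the pair. So it is not even a candidate map $A\otimes A\to A$.

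Either reading shows that ``without modification'' the averaging cannot serve as the Frobenius multiplication. I would present the associativity argument as the main line, since it matches the corollary's wording, and add the bilinearity remark as a secondary observation.
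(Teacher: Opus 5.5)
Your proposal is correct and follows the paper's route. The Frobenius definition requires the multiplication to be associative, and the preceding proposition, using exactly your witness $b=0$, $a\neq c$, shows that $\mu_\theta$ is not; no choice of $\eta,\delta,\epsilon$ can repair that. Your secondary remark that $\mu_\theta$ is not even bilinear (e.g.\ $\mu_\theta(a,0)=\theta a\neq 0$ for $a\neq 0$) is a valid extra point that the paper's one-line proof does not make explicit.
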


\begin{proof}
A Frobenius algebra multiplication is associative by definition.  The
preceding proposition shows that a nontrivial affine average on a
positive-dimensional vector space is not associative.  Hence such an
operation cannot serve as the Frobenius multiplication unless the domain
is restricted, the operation is modified, or the algebra is placed on a
different quotient or linearization.
\end{proof}

\begin{remark}[Practical consequence for BrainiaK]
The most plausible rigorous path is not to declare the raw vector
averaging operation Frobenius.  A safer path is to construct a finite
or completed crystal algebra whose multiplication is exactly
associative, then prove that the deployed semantic operations approximate
or project to that algebra under a stated tolerance or quotient map.
\end{remark}

\subsection{A Proven Finite Crystal Frobenius Reference Model}

The previous conjecture concerns the deployed semantic-fibre operations.
There is, however, a simple finite Frobenius algebra that can serve as a
reference model for labelled crystals.  It proves that the desired
algebraic structure is mathematically available, while not claiming that
the full deployed system already satisfies it.

\begin{definition}[Finite label algebra]
Let $C=\{c_1,\ldots,c_n\}$ be a finite set of crystal labels and let
$A=\RR^C$ be the vector space with basis vectors $e_c$ for $c\in C$.
Define
\[
  \mu(e_c\otimes e_d)=
  \begin{cases}
    e_c,&c=d,\\
    0,&c\neq d,
  \end{cases}
  \qquad
  \eta(1)=\sum_{c\in C}e_c,
\]
\[
  \delta(e_c)=e_c\otimes e_c,\qquad
  \epsilon(e_c)=1.
\]
\end{definition}

\begin{theorem}[Finite label Frobenius algebra]
The tuple $(A,\mu,\eta,\delta,\epsilon)$ is a commutative special
Frobenius algebra.
\end{theorem}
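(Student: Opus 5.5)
We verify every axiom on basis vectors, which suffices by the corollary reducing finite Frobenius identities to basis checks. All maps involved are linear and the domains are finite tensor powers of $A$ with tensor-product bases.

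\textbf{Algebra axioms.} The multiplication is the linearization of the idempotent magma on $C$ in which $c\star d$ is $c$ if $c=d$ and is undefined otherwise. To avoid working with partial operations, I would check associativity directly. Both $\mu(\mu(e_a\otimes e_b)\otimes e_c)$ and $\mu(e_a\otimes\mu(e_b\otimes e_c))$ equal $e_a$ when $a=b=c$ and equal $0$ otherwise. Commutativity is immediate, since $\mu(e_c\otimes e_d)$ is symmetric in $c,d$. For the unit, write $1_A=\eta(1)=\sum_d e_d$. Then $\mu(1_A\otimes e_c)=\sum_d\mu(e_d\otimes e_c)=e_c$, and the right-hand unit law is the same computation.

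\textbf{Coalgebra axioms.} For coassociativity, both $(\delta\otimes\id)\delta(e_c)$ and $(\id\otimes\delta)\delta(e_c)$ equal $e_c\otimes e_c\otimes e_c$. For counitality, $(\epsilon\otimes\id)\delta(e_c)=\epsilon(e_c)e_c=e_c$, and symmetrically on the other side.

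\textbf{Frobenius compatibility.} This is the step with the most case analysis, so I would do it carefully on $e_a\otimes e_b$. The middle term is $\delta\mu(e_a\otimes e_b)$, which is $e_a\otimes e_a$ if $a=b$ and $0$ otherwise. The left term is
\[
(\mu\otimes\id)(e_a\otimes e_b\otimes e_b)=\mu(e_a\otimes e_b)\otimes e_b,
\]
which gives the same value in both cases. The right term is
\[
(\id\otimes\mu)(e_a\otimes e_a\otimes e_b)=e_a\otimes\mu(e_a\otimes e_b),
\]
which again matches. With all three equal on basis vectors, the identity holds everywhere.

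\textbf{Special property.} Specialness should mean $\mu\circ\delta=\id_A$, together with the counit–unit normalization $\epsilon\circ\eta=n\cdot\id_{\RR}$ where $n=|C|$. The first holds because $\mu\delta(e_c)=\mu(e_c\otimes e_c)=e_c$. The second is the direct computation $\epsilon(\eta(1))=\sum_c\epsilon(e_c)=n$.

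The only real obstacle is the bookkeeping in the Frobenius compatibility step. The other point worth stating in the proof is which convention of ``special'' is used: either $\mu\delta=\id$ alone, or that together with $\epsilon\eta=n$.
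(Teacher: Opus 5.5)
Your proof is correct and follows the paper's route: you verify every axiom on tensor-product basis vectors, and you handle Frobenius compatibility with the same two-case computation ($a=b$ versus $a\neq b$). The differences are cosmetic. You compute the third Frobenius expression explicitly where the paper appeals to symmetry, and you check associativity directly rather than via pointwise multiplication of functions on $C$. You also add the normalization $\epsilon\circ\eta=n$, which is not needed for the paper's sense of ``special'', namely $\mu\circ\delta=\id_A$.
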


\begin{proof}
Commutativity and associativity of $\mu$ follow from pointwise
multiplication of functions on the finite set $C$.  The element
$\eta(1)=\sum_ce_c$ is the pointwise unit.  Coassociativity of $\delta$
holds because
\[
  (\delta\otimes\id)\delta(e_c)
  =e_c\otimes e_c\otimes e_c
  =(\id\otimes\delta)\delta(e_c).
\]
The counit law holds because
$(\epsilon\otimes\id)\delta(e_c)=1\cdot e_c=e_c$ and similarly on the
other side.  For the Frobenius identity, it suffices to check basis
vectors:
\[
  ((\mu\otimes\id)(\id\otimes\delta))(e_c\otimes e_d)
  =(\mu\otimes\id)(e_c\otimes e_d\otimes e_d)
  =
  \begin{cases}
    e_c\otimes e_c,&c=d,\\
    0,&c\neq d,
  \end{cases}
\]
and
\[
  \delta\mu(e_c\otimes e_d)
  =
  \begin{cases}
    e_c\otimes e_c,&c=d,\\
    0,&c\neq d.
  \end{cases}
\]
The third Frobenius expression is identical by symmetry.  Finally
$\mu\delta(e_c)=\mu(e_c\otimes e_c)=e_c$, so the algebra is special.
\end{proof}

\begin{remark}[Use for BrainiaK]
If a finite snapshot of BrainiaK crystals is reduced to labels and exact
label equality, this theorem supplies a rigorous Frobenius algebra.  The
semantic system becomes richer only when labels are replaced by
continuous fibres and approximate decompositions; that richer system
requires additional proof.
\end{remark}

\subsection{Penrose Substitution as a Formal Fractal Analogy}

The source treatise refers to THICK/THIN Penrose tiles.  The rigorous
piece that can be stated without overreach is the substitution-matrix
calculation.  It supports a fractal growth analogy, not a proof that the
semantic crystal algebra is Penrose tiling.

\begin{definition}[Two-tile substitution]
Let $T$ denote a thick tile and $S$ a thin tile.  Consider the
substitution
\[
  T\mapsto T+S,\qquad S\mapsto T.
\]
The tile-count vector $(\#T,\#S)^\top$ evolves by
\[
  M=\begin{pmatrix}1&1\\1&0\end{pmatrix}.
\]
\end{definition}

\begin{proposition}[Golden-ratio growth]
The eigenvalues of $M$ are
\[
  \varphi=\frac{1+\sqrt5}{2},\qquad -\varphi^{-1}.
\]
Consequently, for any nonzero nonnegative initial tile-count vector, the
ratio of thick to thin tile counts converges to $\varphi$.
\end{proposition}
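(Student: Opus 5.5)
I will first compute the eigenvalues and then reduce the ratio claim to a closed form via Fibonacci numbers. The characteristic polynomial of $M$ is $\det(M-\lambda I)=\lambda^2-\lambda-1$. Its roots are $(1\pm\sqrt5)/2$, that is, $\varphi$ and $1-\varphi$. The identity $\varphi^2=\varphi+1$ gives $\varphi(\varphi-1)=1$, so $1-\varphi=-\varphi^{-1}$. This settles the first assertion.

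For the ratio, I will work with the explicit iterates rather than general Perron--Frobenius theory. An induction using $F_{n+1}=F_n+F_{n-1}$ shows that
\[
  M^n=\begin{pmatrix}F_{n+1}&F_n\\ F_n&F_{n-1}\end{pmatrix}
  \qquad (n\geq1),
\]
where $F_0=0$, $F_1=1$. Hence an initial vector $(t,s)^\top$ with $t,s\geq0$, not both zero, evolves to
\[
  \#T_n=F_{n+1}t+F_ns,\qquad \#S_n=F_nt+F_{n-1}s .
\]
Binet's formula $F_n=(\varphi^n-(-\varphi^{-1})^n)/\sqrt5$ gives $F_{n+1}/F_n\to\varphi$, since $|-\varphi^{-1}|<1<\varphi$.

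The main care is needed in dividing the two counts. For $n\geq2$ we have $F_n,F_{n-1}>0$, so $\#S_n>0$ and the ratio is defined. I will write
\[
  \frac{\#T_n}{\#S_n}=\frac{(F_{n+1}/F_n)\,t+s}{t+(F_{n-1}/F_n)\,s}.
\]
As $n\to\infty$, $F_{n+1}/F_n\to\varphi$ and $F_{n-1}/F_n\to\varphi^{-1}$. The ratio therefore tends to $(\varphi t+s)/(t+\varphi^{-1}s)$. Multiplying numerator and denominator by $\varphi$ shows this equals $\varphi$ exactly, because $\varphi(t+\varphi^{-1}s)=\varphi t+s$. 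The denominator limit $t+\varphi^{-1}s$ is positive precisely because the initial vector is nonnegative and nonzero, which is where that hypothesis enters.

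Alternatively, I could diagonalize $M$ with eigenvector $(\varphi,1)^\top$ for $\varphi$. Nonnegativity of the initial vector guarantees that it has a nonzero component along this eigenvector, since the other eigenvector $(1,-\varphi)^\top$ has entries of mixed sign. I prefer the Fibonacci route because it makes the positivity of $\#S_n$ explicit.
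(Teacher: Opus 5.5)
Your proof is correct, but for the convergence claim it takes a different route from the paper. The eigenvalue part is the same: both compute $\lambda^2-\lambda-1$. You also check $1-\varphi=-\varphi^{-1}$ explicitly, which the paper simply asserts.

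For the ratio, the paper invokes the Perron--Frobenius theorem for primitive nonnegative matrices. Normalized iterates of any nonzero nonnegative vector converge to the positive eigenvector for $\varphi$, and solving $M(a,b)^\top=\varphi(a,b)^\top$ gives $a/b=\varphi$. You instead write $M^n$ in closed form with Fibonacci entries and pass to the limit with Binet's formula. This amounts to an explicit diagonalization of $M$, as your closing remark about the eigenvector $(1,-\varphi)^\top$ shows.

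Your version is elementary and self-contained. It also makes explicit two points the paper's proof leaves implicit:
\begin{enumerate}
\item the ratio is only guaranteed to be defined from $n\geq2$ on; for the initial vector $(0,1)^\top$ one has $\#S_1=0$;
\item nonnegativity enters exactly through positivity of the limiting denominator $t+\varphi^{-1}s$.
\end{enumerate}
The same closed form would also give you a geometric convergence rate of order $\varphi^{-2n}$ if you wanted one.

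The paper's route is shorter and does not depend on the special Fibonacci structure. The same argument applies unchanged to any primitive substitution matrix with more tile types, where no closed form is available. The cost is importing an external theorem, which is why the paper records primitivity ($M^2>0$) and the Perron--Frobenius invocation in its external-theorem register.
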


\begin{proof}
The characteristic polynomial is
\[
  \det(M-\lambda I)=
  \begin{vmatrix}1-\lambda&1\\1&-\lambda\end{vmatrix}
  =\lambda^2-\lambda-1.
\]
Its roots are $(1\pm\sqrt5)/2$, namely $\varphi$ and
$-\varphi^{-1}$.  The matrix $M$ is primitive and nonnegative, so the
Perron--Frobenius theorem implies that normalized iterates of any
nonzero nonnegative vector converge to the positive eigenvector
associated with $\varphi$ \cite{HornJohnson2013}.  Solving
$M(a,b)^\top=\varphi(a,b)^\top$ gives $a/b=\varphi$.
\end{proof}

\begin{remark}[Status]
This proposition is a theorem about the substitution matrix only.  A
claim that semantic crystals obey Penrose substitution would require an
explicit map from semantic composition events to the two-tile
substitution and empirical or formal verification of that map.
\end{remark}

\subsection{Approximate Frobenius Diagnostics}

\begin{definition}[Approximate round-trip error]
Let $\mu_r$ be a selected recomposition operator and let $\delta$ be a
decomposition procedure on a class of semantic fibres.  The round-trip
error of a fibre $f$ is
\[
  \varepsilon_r(f)=d_{\mathrm{GCM}}\bigl(f,\mu_r(\delta_1(f),\delta_2(f))\bigr).
\]
\end{definition}

\begin{proposition}[Diagnostic meaning]
For any tolerance $\tau>0$, the condition
$\varepsilon_r(f)\leq\tau$ proves only that $f$ is $\tau$-stable under
the chosen decompose/recompose procedure.  It does not prove the
Frobenius identity for all fibres.
\end{proposition}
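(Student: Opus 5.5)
The proposition makes two claims. The first is that $\varepsilon_r(f)\leq\tau$ certifies only $\tau$-stability of $f$. The second is that this certificate does not yield the Frobenius identity for all fibres. I would treat them separately.

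\textbf{Positive part.} This is definitional. Unfolding the definition of approximate round-trip error, the inequality $\varepsilon_r(f)\leq\tau$ says exactly
\[
  d_{\mathrm{GCM}}\bigl(f,\mu_r(\delta_1(f),\delta_2(f))\bigr)\leq\tau .
\]
That is the statement that $f$ is $\tau$-stable under the chosen decompose/recompose pair. The inequality quantifies over the single fibre $f$ and over no other objects. It also involves only the composite $\mu_r\circ\delta$, not $\mu_r$ and $\delta$ separately.

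\textbf{Negative part.} I would argue that the hypothesis lacks the logical form needed for the conclusion, and then back this with an explicit counterexample.

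\emph{Why the form is wrong.} By the definition of Frobenius algebra, the Frobenius identity is an equality of linear maps $A\otimes A\to A\otimes A$ quantified over all inputs. It involves $\delta\circ\mu$ and $(\mu\otimes\id)\circ(\id\otimes\delta)$, and neither is the composite $\mu\circ\delta$ being measured. It also requires associativity and unitality as separate axioms. This is the same point as the earlier proposition that implementation existence is not theorem evidence.

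\emph{Counterexample.} The strongest concrete witness is to show that even exact round-trip, with $\tau=0$ for \emph{every} fibre, fails to give a Frobenius algebra. For this I would invoke two earlier results.
\begin{enumerate}
  \item The proposition that round-trip success does not imply a Frobenius algebra. Its example $A=\RR$, $\delta(x)=x\otimes 1$, $\mu(a\otimes b)=a$ satisfies $\mu\circ\delta=\id$, so every fibre has $\varepsilon=0\leq\tau$. Yet $\mu$ is not unital, so no Frobenius structure exists.
  \item The proposition on the non-associativity of affine averaging. Taking $\delta(f)=(f,f)$ and $\mu_r=\mu_\theta$ gives $\mu_\theta(f,f)=f$, hence zero round-trip error everywhere. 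Yet $\mu_\theta$ is not associative, so the corollary on averaging excludes it as a Frobenius multiplication.
\end{enumerate}
The second example is the better one, because it lives on the actual semantic slot $\RR^{14}$ with the metric $d_{\mathrm{GCM}}$.

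\textbf{Main obstacle.} The only delicate point is fitting the counterexample into the definition of $\varepsilon_r$, whose distance $d_{\mathrm{GCM}}$ is defined on $\EE$ rather than on a bare vector space. I would handle this by embedding the base slot through the slot injection $\iota_1$ with fixed neutral elements in the other slots, and letting the lifted operation act componentwise. Then the round-trip error is zero on the image of $\iota_1$, while associativity still fails in the first slot. This suffices to show that no finite or even universal round-trip bound implies the Frobenius identity.
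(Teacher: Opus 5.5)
Your ``why the form is wrong'' step is essentially the paper's entire proof. The paper observes only that $\varepsilon_r(f)\leq\tau$ concerns one fibre, one decomposition, one relation and one threshold, while the Frobenius identity is an equality of maps on all of $A\otimes A$, and concludes that the diagnostic is weaker. You add two things the paper does not. First, the diagnostic measures $\mu\circ\delta$ (the ``special'' condition of the finite label theorem), which does not appear in the compatibility equation at all, so the mismatch is not merely one of quantifier scope. Second, you give explicit witnesses with exact round trip on \emph{every} fibre, which would turn ``weaker'' into a genuine non-implication. Your $\iota_1$ device is fine, since on $\iota_1(\RR^{14})$ the GCM distance is $w_1d_{14}$. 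However, the other slot operations must fix the neutral elements (e.g.\ $\mu_k(x,y)=x$). The paper's own lifted operation, whose refined slot records the children, would not give zero error.

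The witnesses, however, miss the target. To show that the diagnostic does not prove the Frobenius identity, you need a case where the round trip holds and the compatibility equation fails. Failure of associativity or unitality only shows that the structure is not a Frobenius algebra.

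Your first example does not qualify. With $\mu(a\otimes b)=a$ and $\delta(x)=x\otimes1$, all three Frobenius expressions send $a\otimes b$ to $a\otimes1$, so the identity holds and only unitality fails. Moreover, $\mu(a\otimes b)=a$ is not bilinear, so it is not well defined on $\RR\otimes\RR$ (because $2\otimes1=1\otimes2$). This defect is inherited from the paper's earlier proposition.

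Your second example does violate the identity, but not for the reason you give. At the set level where $\varepsilon_r$ is defined, $(\mu_\theta\times\id)(\id\times\delta)(a,b)=(\mu_\theta(a,b),b)$, while $\delta\mu_\theta(a,b)=(\mu_\theta(a,b),\mu_\theta(a,b))$. These differ whenever $a\neq b$, since $\mu_\theta(a,b)-b=\theta(a-b)$. You should compute this directly, and note that $\mu_\theta$ and $f\mapsto(f,f)$ are not linear maps on tensor products.

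A witness entirely inside the linear framework is the finite label algebra with $|C|\geq2$ and the altered decomposition $\delta'(x)=x\otimes\eta(1)$. Then $\mu\circ\delta'=\id$, so every fibre has zero round-trip error. Yet $\delta'\mu(e_c\otimes e_c)=e_c\otimes\eta(1)\neq e_c\otimes e_c=(\id\otimes\mu)(\delta'\otimes\id)(e_c\otimes e_c)$.
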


\begin{proof}
The condition concerns one fibre, one chosen decomposition, one chosen
recomposition relation, and one metric threshold.  A Frobenius identity
is an equality of maps on all elements of $A\otimes A$.  A bounded
pointwise diagnostic is therefore weaker than the algebraic identity.
\end{proof}

\begin{proposition}[Finite diagnostics are not universal algebraic
proofs unless exhaustive]
Let $A$ be a vector space and let $F,G:A\to A$ be maps.  Agreement
$F(x)=G(x)$ on a finite test set $S\subset A$ proves the universal
identity $F=G$ only if the test protocol supplies an independent
argument that equality on $S$ determines equality on all of $A$.
In particular, for an infinite-dimensional or continuous semantic fibre
space, a finite diagnostic set is not by itself a proof of a universal
Frobenius identity.
\end{proposition}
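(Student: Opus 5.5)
The plan has two parts. First I show that agreement on a finite set cannot, by itself, force $F=G$ on all of $A$. Then I identify what extra structure would count as the "independent argument" the statement refers to.

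For the first part, the task is to construct, for any finite $S\subset A$ with $A$ not contained in $S$, a pair of maps that agree on $S$ and differ elsewhere. The simplest construction uses arbitrary set maps. Pick $x_0\in A\setminus S$. This is possible whenever $A$ is infinite, and in particular whenever $A$ is a real vector space of positive dimension. Set $F=\id_A$, and define $G(x)=x$ for $x\neq x_0$ and $G(x_0)=x_0+v$ with $v\neq 0$. Then $F$ and $G$ agree on $S$ but $F\neq G$.

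Because the Frobenius identities are identities between \emph{linear} maps, I would also give a linear counterexample, so the conclusion cannot be dismissed as an artefact of allowing non-linear maps. Let $W=\operatorname{span}(S)$. In the infinite-dimensional or continuous-fibre case, and more generally whenever $\dim A>\dim W$, we have $W\neq A$. Choose a linear functional $\ell$ with $\ell|_W=0$ and $\ell\neq0$; this needs a complement of $W$, which exists by a Hamel basis extension or, in finite dimensions, by elementary linear algebra. Fix $v\neq0$ and put $G=F+\ell(\cdot)\,v$. Then $G$ is linear, $G=F$ on $S$, and $G\neq F$. So whatever the test set, two candidate maps can pass the same finite diagnostic and still differ.

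For the second part, I would state the positive direction using the earlier lemma on basis checking for finite algebraic identities. If the protocol supplies that $F$ and $G$ are linear and that $S$ spans $A$, so that $S$ contains a basis and $A$ is finite-dimensional, then equality on $S$ implies $F=G$. This is exactly the kind of independent argument the statement requires: linearity, together with the spanning property, is information not contained in the finite test outcomes themselves. The "in particular" clause then follows. When $A$ is infinite-dimensional, no finite $S$ spans $A$, so the linear construction above applies. For a continuous semantic fibre space where the maps are not assumed linear, the set-map construction applies.

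The only delicate point I expect is making the scope precise. The statement says "only if the protocol supplies an independent argument", so I would phrase the result as a non-implication: there exist $F,G$ agreeing on $S$ with $F\neq G$. I would also note the degenerate case $A\subseteq S$, which cannot occur for a nonzero real vector space because such a space is infinite. I would then state that the Frobenius application follows by taking $F$ and $G$ to be the two sides of a Frobenius identity on $A\otimes A$.
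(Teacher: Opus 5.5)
Your proof is correct, and its core move is the same as the paper's: exhibit two maps that agree on $S$ but differ elsewhere, so finite agreement cannot by itself entail $F=G$. The differences are in the counterexamples and in the scope.

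The paper gives a single nonlinear example on $A=\RR$, namely $F=0$ and $G(x)=\prod_{s\in S}(x-s)$. It leaves the positive direction to the subsequent corollary on when finite Frobenius testing is a proof. You give two constructions and then state the positive direction yourself through the basis-checking lemma:
\begin{enumerate}
  \item a pointwise perturbation of $\id_A$ at a point outside $S$, which works for every nonzero real $A$ and every finite $S$;
  \item a linear perturbation $G=F+\ell(\cdot)\,v$ with $\ell$ vanishing on $\operatorname{span}(S)$, which works whenever $\operatorname{span}(S)\neq A$.
\end{enumerate}

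The linear example is what your route buys. The Frobenius identities are equalities of linear maps, and on $\RR$ two linear maps that agree at one nonzero point are already equal. So the paper's polynomial example says nothing about the linear setting. Yours shows that linear maps escape finite checking exactly when $S$ fails to span, which is automatic in infinite dimension.

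The paper's example is shorter and fully explicit. Your linear construction needs a complement of $\operatorname{span}(S)$, which in general requires a basis extension. For a fibre space with an explicit basis, though, a coordinate functional at a basis vector outside the finitely many supports of elements of $S$ suffices.

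One sentence should be corrected. A continuous fibre such as $\RR^{3072}$ is finite-dimensional, so a finite $S$ can span it, and then $W=A$. Your claim that $W\neq A$ ``in the continuous-fibre case'' is therefore false there. That case is covered only by your nonlinear construction. For linear maps it is exactly the situation in which finite checking is a proof, as your final paragraph correctly says.
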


\begin{proof}
The universal identity $F=G$ means $F(x)=G(x)$ for every $x\in A$.
Agreement on $S$ gives this equality only for elements of $S$.  Unless
there is an additional theorem saying that values on $S$ determine the
maps everywhere, the implication from finite agreement to universal
agreement is invalid.  For example, on $A=\RR$, the maps
$F(x)=0$ and $G(x)=\prod_{s\in S}(x-s)$ agree on every point of the
finite set $S$ but are not equal as functions whenever $S$ is finite.
Thus finite diagnostics are evidence or exhaustive proofs only under an
explicit determining-set argument.
\end{proof}

\begin{corollary}[When finite Frobenius testing is a proof]
For a finite-dimensional candidate crystal algebra with explicitly
given linear maps, checking the Frobenius axioms on a full tensor-product
basis is a proof.  Checking them on sampled fibres, benchmark examples,
or logged successful decompositions is not a proof unless those samples
are shown to contain such a determining basis.
\end{corollary}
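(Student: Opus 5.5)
The corollary has two halves, and I would prove them separately. The first half says that exhaustive basis checking is a proof. The second says that sampled checking is not a proof without a determining-set argument.

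For the positive direction, I would invoke the corollary that finite Frobenius identities reduce to basis checks. Each axiom is an equality between two explicitly given linear maps on a finite tensor power of $A$ (or of $A$ with $\RR$):
\begin{itemize}
\item associativity and the two Frobenius equations on $A\otimes A\otimes A$ or $A\otimes A$;
\item coassociativity, unit, and counit on $A$ or $\RR\otimes A$.
\end{itemize}
Fix a basis $(v_i)$ of $A$. The elementary tensors $v_{i_1}\otimes\cdots\otimes v_{i_k}$ form a basis of $A^{\otimes k}$. The lemma on basis checking for finite algebraic identities then shows that agreement on these basis tensors is equivalent to equality of the linear maps. So a check over the full tensor-product basis establishes every axiom, and hence the Frobenius algebra structure by the definition. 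One point to state explicitly: the maps must be known to be linear. Without linearity, the lemma does not apply, which is why the statement requires ``explicitly given linear maps.''

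For the negative direction, I would appeal to the preceding proposition on finite diagnostics. A finite collection of sampled fibres, benchmark examples, or logged decompositions is a subset $S$ of the relevant tensor power. Equality of two maps on $S$ implies equality everywhere only when there is an argument that values on $S$ determine the maps. For linear maps, such an argument exists exactly when $S$ spans the domain, that is, when $S$ contains a basis (a ``determining basis''). If $\operatorname{span}S$ is a proper subspace $W$, I would exhibit a counterexample. Take $G=F+\ell\, w_0$, where $\ell$ is a nonzero linear functional vanishing on $W$ and $w_0$ is a nonzero vector of the codomain. Then $F$ and $G$ agree on $S$ but differ as linear maps. In the continuous or infinite-dimensional case, the polynomial example from the proposition already suffices.

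The main obstacle is definitional rather than technical. I must make precise that ``contain a determining basis'' means ``span the tensor-product domain,'' since any spanning set contains a basis. I must also make sure the counterexample stays inside the class of linear candidate maps, so that the second half is not vacuous in the finite-dimensional setting. I would handle this with the functional construction above, which stays linear.
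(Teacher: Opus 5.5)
Your proposal is correct and follows the paper's own route. The positive half is the finite basis-checking corollary: the axioms are equalities of linear maps, and equality on a tensor-product basis determines equality everywhere. The negative half rests on the preceding proposition that finite diagnostics are not universal algebraic proofs. Your added linear counterexample $G=F+\ell\,w_0$, with $\ell$ vanishing on $\mathrm{span}\,S$, is a sharpening the paper omits: that proposition's polynomial example is nonlinear, so on its own it does not show that sampled checks can fail within the class of linear candidate maps.
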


\begin{proof}
The positive statement is exactly the finite basis-checking corollary:
the relevant axioms are equalities of linear maps, and equality on a
basis determines equality everywhere.  The negative statement follows
from the preceding proposition: sampled fibres or successful examples
establish only the checked instances unless an independent determining
argument is supplied.
\end{proof}

\paragraph{Mathematical versus empirical status.}
The conditional criterion is mathematical.  The existence of a useful
round-trip operation in code is empirical/engineering evidence.  The
claim that the deployed crystal system is a Frobenius algebra remains a
conjecture until exact algebraic maps are specified and verified.

\section{Knowledge Curvature and Gamma/CNS Dynamics}

The source treatise proposes a central unification: a curvature
$R(t)$ of the knowledge space controls the leading spectral value
$\lambda_{\max}(t)$ of the Gamma/CNS system, and a Hopf bifurcation
occurs at a critical curvature.  This idea is mathematically meaningful
only after separating definitions, model laws, and consequences.

\begin{definition}[Gamma spectrum]
Let $\GG(t)\in\RR^{m\times m}$ be a time-indexed real matrix associated
with the interaction of $m$ fibres.  If $\GG(t)$ is symmetric, write
$\lambda_{\max}(t)$ for its largest eigenvalue.  In the nonsymmetric
case, write $\rho(\GG(t))$ for its spectral radius.
\end{definition}

\begin{definition}[Spectral dispersion]
If $\GG(t)$ is real symmetric with eigenvalues
$\lambda_1(t),\ldots,\lambda_m(t)$, define
\[
  \bar\lambda(t)=\frac{1}{m}\sum_{i=1}^m\lambda_i(t),\qquad
  \sigma_\Gamma(t)=\frac{1}{m}\sum_{i=1}^m
  (\lambda_i(t)-\bar\lambda(t))^2.
\]
If $\GG(t)$ is not symmetric, this manuscript uses the symmetric part
$(\GG+\GG^\top)/2$ whenever a real spectral variance is required.
\end{definition}

\begin{proposition}[Largest symmetric eigenvalue is Lipschitz]
Let $A,B\in\RR^{m\times m}$ be real symmetric matrices and let
$\|\cdot\|_{\mathrm{op}}$ denote the operator norm induced by the
Euclidean norm.  Then
\[
  |\lambda_{\max}(A)-\lambda_{\max}(B)|
  \leq \|A-B\|_{\mathrm{op}}.
\]
\end{proposition}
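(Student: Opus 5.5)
The natural route is the Rayleigh-quotient (variational) characterization of the top eigenvalue of a real symmetric matrix:
\[
  \lambda_{\max}(A)=\max_{\|v\|_2=1} v^\top A v .
\]
This follows from the spectral theorem. Diagonalize $A=Q\Lambda Q^\top$ with $Q$ orthogonal. Then $v^\top Av=\sum_i\lambda_i (Q^\top v)_i^2$, which is at most $\lambda_{\max}(A)$ on the unit sphere, with equality at a top eigenvector. I will state this as a cited standard fact \cite{HornJohnson2013} and not reprove it.

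With the characterization in hand, fix a unit vector $u$ attaining the maximum for $A$. Then
\[
  \lambda_{\max}(A)=u^\top Au=u^\top Bu+u^\top(A-B)u\leq \lambda_{\max}(B)+u^\top(A-B)u .
\]
The inequality uses the variational formula for $B$. The remaining term is bounded by Cauchy--Schwarz and the definition of the operator norm:
\[
  |u^\top(A-B)u|\leq \|u\|_2\,\|(A-B)u\|_2\leq \|A-B\|_{\mathrm{op}} .
\]
This gives $\lambda_{\max}(A)-\lambda_{\max}(B)\leq\|A-B\|_{\mathrm{op}}$. Exchanging the roles of $A$ and $B$, and noting $\|B-A\|_{\mathrm{op}}=\|A-B\|_{\mathrm{op}}$, gives the reverse inequality. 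The two together yield the absolute-value bound.

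There is no real obstacle here; this is a special case of Weyl's inequality. The only points that need care are these:
\begin{itemize}
  \item the maximum in the variational formula is actually attained, which follows from compactness of the unit sphere or directly from an eigenvector;
  \item symmetry of both matrices is essential, since the formula fails for nonsymmetric matrices.
\end{itemize}
The second point should be flagged, because the preceding definition switches to the spectral radius in the nonsymmetric case, and the proposition should not be read as covering $\rho$.
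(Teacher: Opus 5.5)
Your proposal is correct and is essentially the paper's proof: both use the Rayleigh-quotient characterization $\lambda_{\max}(A)=\sup_{\|x\|_2=1}x^\top Ax$, split $x^\top Ax=x^\top Bx+x^\top(A-B)x$, bound the first term by $\lambda_{\max}(B)$ and the second by $\|A-B\|_{\mathrm{op}}$, and then exchange $A$ and $B$. The only cosmetic difference is that you evaluate at a maximizing unit vector $u$ for $A$, whereas the paper bounds $x^\top Ax$ for every unit $x$ and then takes the supremum, so it needs no attainment argument.
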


\begin{proof}
For a real symmetric matrix $A$, the Rayleigh quotient formula gives
\[
  \lambda_{\max}(A)=\sup_{\|x\|_2=1}x^\top Ax.
\]
For every unit vector $x$,
\[
  x^\top Ax=x^\top Bx+x^\top(A-B)x
  \leq \lambda_{\max}(B)+\|A-B\|_{\mathrm{op}}.
\]
Taking the supremum over unit $x$ gives
$\lambda_{\max}(A)\leq\lambda_{\max}(B)+\|A-B\|_{\mathrm{op}}$.
Interchanging $A$ and $B$ gives the reverse inequality.
\end{proof}

\begin{corollary}[Continuous Gamma matrices have continuous leading
spectrum]
If $t\mapsto\GG(t)$ is continuous as a map into real symmetric matrices
with the operator norm, then $t\mapsto\lambda_{\max}(t)$ is continuous.
\end{corollary}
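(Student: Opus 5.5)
The plan is to obtain the corollary as a direct composition of two continuity statements, with the preceding proposition (Lipschitz continuity of the largest symmetric eigenvalue) as the main input. Let $\mathrm{Sym}_m$ denote the real symmetric $m\times m$ matrices with the operator norm. That proposition says that the map $\lambda_{\max}:\mathrm{Sym}_m\to\RR$ satisfies
\[
  |\lambda_{\max}(A)-\lambda_{\max}(B)|\leq\|A-B\|_{\mathrm{op}},
\]
so it is $1$-Lipschitz and hence continuous. By hypothesis, $t\mapsto\GG(t)$ is a continuous map into $(\mathrm{Sym}_m,\|\cdot\|_{\mathrm{op}})$. The function $t\mapsto\lambda_{\max}(t)=\lambda_{\max}(\GG(t))$ is the composition of these two maps, and a composition of continuous maps is continuous.

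To make the argument self-contained at the $\epsilon$--$\delta$ level, I would fix $t_0$ and $\epsilon>0$. Continuity of $\GG$ at $t_0$ supplies $\delta>0$ such that $\|\GG(t)-\GG(t_0)\|_{\mathrm{op}}<\epsilon$ whenever $|t-t_0|<\delta$. The Lipschitz bound then gives $|\lambda_{\max}(t)-\lambda_{\max}(t_0)|<\epsilon$ for the same $t$.

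As a side remark, the same argument shows that if $\GG$ is $L$-Lipschitz in $t$, then $\lambda_{\max}$ is $L$-Lipschitz as well.

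I do not expect a genuine obstacle. The only point to state carefully is the typing: the eigenvalue $\lambda_{\max}(t)$ is the one from the Gamma spectrum definition, and it is well defined because $\GG(t)$ is symmetric for every $t$, which is exactly the hypothesis. If the time domain is an interval or a general topological space rather than $\RR$, I would replace the $\epsilon$--$\delta$ step with the composition argument alone, which works in any topological setting.
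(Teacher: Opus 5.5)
Your proposal is correct and follows essentially the same route as the paper. Both apply the preceding Lipschitz bound $|\lambda_{\max}(\GG(t))-\lambda_{\max}(\GG(s))|\leq\|\GG(t)-\GG(s)\|_{\mathrm{op}}$ and use operator-norm continuity of $\GG$ to make the right-hand side tend to zero; your composition and $\epsilon$--$\delta$ phrasings are just two ways of writing that step.
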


\begin{proof}
The preceding proposition gives
\[
  |\lambda_{\max}(\GG(t))-\lambda_{\max}(\GG(s))|
  \leq \|\GG(t)-\GG(s)\|_{\mathrm{op}}.
\]
The right-hand side tends to zero when $t\to s$ by the assumed
operator-norm continuity of $\GG$.
\end{proof}

\begin{proposition}[Symmetric part controls quadratic amplification]
For any real matrix $A\in\RR^{m\times m}$ and any vector
$x\in\RR^m$,
\[
  x^\top A x=x^\top \left(\frac{A+A^\top}{2}\right)x.
\]
Consequently, if a Gamma diagnostic is based only on quadratic
amplification $x^\top\GG x$, then only the symmetric part of $\GG$ is
visible to that diagnostic.
\end{proposition}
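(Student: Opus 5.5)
The identity follows from the fact that a real scalar equals its own transpose. For any $x\in\RR^m$, the quantity $x^\top A x$ is a $1\times1$ real matrix. Hence
\[
  x^\top A x=(x^\top A x)^\top=x^\top A^\top x .
\]
Averaging the two expressions and using bilinearity of $(B)\mapsto x^\top B x$ in the matrix argument gives
\[
  x^\top A x=\tfrac12\bigl(x^\top A x+x^\top A^\top x\bigr)
  =x^\top\left(\frac{A+A^\top}{2}\right)x ,
\]
which is the displayed equality.

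For the consequence, I would write $\GG=\GG_s+\GG_a$ with $\GG_s=(\GG+\GG^\top)/2$ and $\GG_a=(\GG-\GG^\top)/2$. The same transpose argument applied to $\GG_a$ gives $x^\top\GG_a x=-x^\top\GG_a x$, so $x^\top\GG_a x=0$ for every $x$.

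Therefore any two Gamma matrices with the same symmetric part yield identical values of $x\mapsto x^\top\GG x$ on all of $\RR^m$. A diagnostic that is a function only of these quadratic values therefore cannot distinguish them. The antisymmetric part is thus invisible to the diagnostic, and only $\GG_s$ is seen.

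I would make ``based only on quadratic amplification'' precise as ``a function of the map $x\mapsto x^\top\GG x$''. Under that reading, the consequence is immediate from the equality of these maps. There is no real obstacle; the only care needed is stating the interpretation of the diagnostic explicitly so that the word ``visible'' has formal content. One could optionally add the example of a nonzero antisymmetric matrix, which has zero quadratic form, to show that the antisymmetric part genuinely is lost.
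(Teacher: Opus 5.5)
Your proof is correct and is essentially the paper's argument: the paper splits $A=S+K$ with $K=(A-A^\top)/2$ and uses that the scalar $x^\top Kx$ equals its transpose $x^\top K^\top x=-x^\top Kx$, which is the same transpose trick you apply (first to $A$ directly, then to $\GG_a$). Your reading of ``visible'' as ``a function of the map $x\mapsto x^\top\GG x$'' makes explicit the consequence that the paper leaves implicit; one small wording fix is that $B\mapsto x^\top Bx$ is linear, not bilinear, in $B$.
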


\begin{proof}
Let $S=(A+A^\top)/2$ and $K=(A-A^\top)/2$.  Then $A=S+K$ and
$K^\top=-K$.  The scalar $x^\top Kx$ equals its transpose:
\[
  x^\top Kx=(x^\top Kx)^\top=x^\top K^\top x=-x^\top Kx.
\]
Hence $x^\top Kx=0$, and therefore $x^\top Ax=x^\top Sx$.
\end{proof}

\begin{corollary}[Symmetric leading spectrum is not a Hopf certificate]
If the only verified spectral information about a Gamma/CNS diagnostic
is the largest eigenvalue of a real symmetric matrix or symmetric part,
then that information alone cannot certify a Hopf bifurcation.
\end{corollary}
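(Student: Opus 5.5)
The claim is that knowing only the largest eigenvalue of a real symmetric matrix, or of the symmetric part of the Gamma matrix, cannot certify a Hopf bifurcation. I will prove it by exhibiting two matrix families with identical verified data, one satisfying the Hopf spectral requirement and one violating it.

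The Hopf requirement is that a complex-conjugate pair of eigenvalues $\alpha(\mu)\pm i\omega(\mu)$ of the Jacobian, with $\omega(\mu_0)\neq0$, crosses the imaginary axis transversally. Nondegeneracy conditions come on top of this \cite{GuckenheimerHolmes1983,Kuznetsov2004}. So it suffices to show that this spectral datum is not a function of $\lambda_{\max}$ of the symmetric part, equivalently (by the preceding proposition) of the quadratic amplification data.

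\textbf{Case 1: the verified object is itself a real symmetric matrix.} Its spectrum is real by the spectral theorem, so it has no non-real pair at all. If the Jacobian is that symmetric matrix, no Hopf crossing can occur. If it is not, the symmetric matrix simply is not the Jacobian spectrum. Either way the symmetric data alone give no certificate.

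\textbf{Case 2: the verified object is the symmetric part of a matrix.} Here I build the counterexample. Take the family $J_{\mu,c}=\begin{pmatrix}\mu&-c\\c&\mu\end{pmatrix}$ in $\RR^{2\times2}$.
\begin{itemize}
\item Its symmetric part is $\mu I$ for every $c$, so $\lambda_{\max}$ of the symmetric part equals $\mu$, independent of $c$.
\item With $c=1$, the eigenvalues are $\mu\pm i$, which cross the imaginary axis transversally at $\mu=0$ with nonzero frequency.
\item With $c=0$, the eigenvalues are $\mu,\mu$, which are real, so no Hopf crossing occurs.
\end{itemize}
Both families share exactly the same symmetric leading-spectrum trajectory $\mu\mapsto\mu$. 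Any would-be certificate depending only on that information must therefore give the same verdict for both, yet only one has the Hopf spectral structure.

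If one wants a full dynamical example rather than just the spectrum, realize the $c=1$ family as the normal form $\dot z=(\mu+i)z-|z|^2z$, which undergoes a supercritical Hopf bifurcation. Realize the $c=0$ family as the gradient-like system $\dot x=\mu x-|x|^2x$ in $\RR^2$, which has no periodic orbits.

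The main obstacle is precision about what ``certify'' means. I will fix it as follows: a certificate is a rule that decides the Hopf conclusion from the verified data alone. Then the indistinguishability argument above is rigorous. The rest is routine eigenvalue computation.
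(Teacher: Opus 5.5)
Your proof is correct, but it takes a more rigorous route than the paper. The paper argues only at the level of your Case~1. Real symmetric matrices, including a symmetric part, have real spectrum, and a Hopf crossing needs a non-real conjugate pair of Jacobian eigenvalues. So the leading symmetric eigenvalue ``does not supply the required pair'' and can at most serve as a scalar threshold to be linked separately to a Jacobian.

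That is brief, but in the symmetric-part case it is not a complete argument, because $\lambda_{\max}$ of the symmetric part $S$ is not spectrally inert. For any eigenpair $Av=\lambda v$ with $\|v\|=1$ one has $\mathrm{Re}\,\lambda=v^{*}Sv\leq\lambda_{\max}(S)$. So this real number does constrain the Jacobian spectrum; for example, it can certify stability. What actually rules out a certificate is indistinguishability, and your Case~2 supplies exactly that. $J_{\mu,1}$ and $J_{\mu,0}$ share the whole symmetric part $\mu I$, hence all quadratic amplification data and not just $\lambda_{\max}$. Yet only the first has the transversal crossing $\mu\pm i$. Your comparison of the normal form with the gradient system $\dot x=\mu x-|x|^2x$ lifts this to full dynamical systems. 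Your construction is in effect a same-dimension sharpening of the paper's later proposition that one scalar threshold is compatible with both a non-Hopf and a Hopf system.

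Two small wording fixes. First, the second branch of your Case~1 (``the symmetric matrix simply is not the Jacobian spectrum'') is asserted rather than argued. Your Case~2 pair covers it anyway, since the symmetric diagnostic $\mu I$ is compatible with both verdicts. Second, ``equivalently'' in your reduction should be ``a fortiori''. Non-dependence on the full symmetric part is strictly stronger than non-dependence on its largest eigenvalue, and your example proves the stronger statement.
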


\begin{proof}
Real symmetric matrices have only real eigenvalues.  Hopf bifurcation
requires a non-real conjugate pair of Jacobian eigenvalues crossing the
imaginary axis with nonzero imaginary part.  Therefore a largest
symmetric eigenvalue, by itself, does not supply the required pair.
It can at most define a scalar threshold or diagnostic that must be
connected separately to the Jacobian of a dynamical system.
\end{proof}

\begin{definition}[Knowledge curvature proxy]
Let $N(t)$ be the accumulated number of accepted crystals and let
$\sigma_\Gamma(t)$ be a chosen spectral dispersion statistic of
$\GG(t)$, such as the variance of real eigenvalues in the symmetric
case.  For $\alpha,\beta>0$, define the curvature proxy
\[
  R(t)=\alpha N(t)+\beta\sigma_\Gamma(t).
\]
\end{definition}

\begin{proposition}[Curvature proxy monotonicity under monotone inputs]
If $N(t)$ and $\sigma_\Gamma(t)$ are nondecreasing functions of $t$,
then $R(t)$ is nondecreasing.  If either input is strictly increasing on
an interval and its coefficient is positive, then $R(t)$ is strictly
increasing on that interval.
\end{proposition}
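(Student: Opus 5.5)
The plan is to argue directly from the definition $R(t)=\alpha N(t)+\beta\sigma_\Gamma(t)$ with $\alpha,\beta>0$. Everything reduces to the fact that a positive linear combination of order-preserving maps is order-preserving. No spectral or dynamical property of $\GG(t)$ is needed. The only input from the Gamma side is the hypothesis that $\sigma_\Gamma$ is nondecreasing, or strictly increasing on the interval; this is a property of the chosen dispersion statistic and is not derived here.

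\textbf{Nondecreasing case.} Fix $s\le t$ in the common domain. Then
\[
  R(t)-R(s)=\alpha\bigl(N(t)-N(s)\bigr)+\beta\bigl(\sigma_\Gamma(t)-\sigma_\Gamma(s)\bigr).
\]
By hypothesis both differences are $\ge0$, and since $\alpha,\beta>0$ each term is $\ge0$. Hence $R(s)\le R(t)$.

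\textbf{Strict case.} Let $J$ be an interval on which, say, $N$ is strictly increasing. I read the statement as assuming that the other input remains nondecreasing there, since the first sentence supplies that hypothesis. For $s<t$ in $J$, the $\alpha$-term is strictly positive because $\alpha>0$ and $N(t)>N(s)$. The $\beta$-term is nonnegative. So $R(t)-R(s)>0$. The case where $\sigma_\Gamma$ is the strictly increasing input is symmetric, with the roles of $\alpha$ and $\beta$ exchanged.

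The only real obstacle is interpretive rather than technical. The strict claim fails if the other input is allowed to decrease: for example, $N$ could increase by $1$ while $\beta\sigma_\Gamma$ drops by more than $\alpha$. I will therefore state explicitly that the strict conclusion is made under the standing nondecreasing hypothesis on both inputs. It is worth noting in the text that $N(t)$, as an accumulated count, is nondecreasing by construction. Monotonicity of $\sigma_\Gamma$, by contrast, is a genuine assumption about the Gamma trajectory and is not guaranteed by the definitions.
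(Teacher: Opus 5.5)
Your proof is correct and matches the paper's argument: both expand $R(t)-R(s)=\alpha(N(t)-N(s))+\beta(\sigma_\Gamma(t)-\sigma_\Gamma(s))$ and read off the sign from $\alpha,\beta>0$. Your remark that the strict case needs the other input to remain nondecreasing is a hypothesis the paper's proof uses without stating it, so making it explicit is an improvement.
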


\begin{proof}
For $s<t$,
\[
  R(t)-R(s)=\alpha(N(t)-N(s))
  +\beta(\sigma_\Gamma(t)-\sigma_\Gamma(s)).
\]
If both inputs are nondecreasing and $\alpha,\beta>0$, the right-hand
side is nonnegative.  If one input difference is strictly positive, the
corresponding weighted term is strictly positive and the whole
difference is positive.
\end{proof}

\begin{model}[Curvature--spectrum law]
For parameters $\kappa>0$ and a Gamma normalization in which the
critical spectral value lies in $(0,1)$, assume
\[
  \lambda_{\max}(t)=\frac{R(t)}{R(t)+\kappa}.
\]
This is a model law, not a theorem in this manuscript.
\end{model}

\begin{proposition}[Curvature threshold implied by the model law]
Assume the curvature--spectrum law and let $\lambda_c\in(0,1)$ be a
critical spectral threshold.  Then
\[
  \lambda_{\max}(t)>\lambda_c
  \quad\Longleftrightarrow\quad
  R(t)>R_c:=\frac{\lambda_c}{1-\lambda_c}\kappa.
\]
\end{proposition}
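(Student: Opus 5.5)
The plan is to invert the model law monotonically. Set $g(R)=R/(R+\kappa)$ on the domain $R\geq 0$. This domain is natural because $\alpha,\beta>0$ and $N(t)\geq 0$ counts crystals. The dispersion satisfies $\sigma_\Gamma(t)\geq 0$, since it is a variance. More generally, the argument works on $R>-\kappa$, where the denominator is positive.

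The first step is to show that $g$ is strictly increasing there. Rewrite it as $g(R)=1-\kappa/(R+\kappa)$, or compute $g'(R)=\kappa/(R+\kappa)^2>0$ using $\kappa>0$. The model law gives $\lambda_{\max}(t)=g(R(t))$. Because $g$ is strictly increasing, $g(R(t))>\lambda_c$ holds if and only if $R(t)>g^{-1}(\lambda_c)$, provided $\lambda_c$ lies in the range of $g$. The range of $g$ on $[0,\infty)$ is $[0,1)$, and this contains $\lambda_c\in(0,1)$.

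The second step computes the inverse directly. Solving $\lambda=R/(R+\kappa)$ gives $\lambda R+\lambda\kappa=R$, so $R=\lambda\kappa/(1-\lambda)$. This is well defined because $\lambda_c<1$. It yields exactly $R_c$, and $R_c>0$.

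A cleaner alternative avoids inverse functions altogether and is the route I would write up. Since $R(t)+\kappa>0$, multiply through:
\[
  \frac{R}{R+\kappa}>\lambda_c
  \iff R>\lambda_c R+\lambda_c\kappa
  \iff (1-\lambda_c)R>\lambda_c\kappa
  \iff R>\frac{\lambda_c\kappa}{1-\lambda_c}.
\]
Each step is reversible because $R+\kappa>0$ and $1-\lambda_c>0$. Together these give the biconditional.

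There is no real obstacle here. The only care point is sign bookkeeping: the equivalence needs $R(t)+\kappa>0$. I would state this explicitly as following from $R(t)\geq 0$, and note that it holds whenever the model law is meaningful, since the law forces $R(t)\neq-\kappa$.
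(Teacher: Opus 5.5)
Your proof is correct and is essentially the paper's argument: the paper notes that $f(R)=R/(R+\kappa)$ is strictly increasing on $[0,\infty)$ because $f'(R)=\kappa/(R+\kappa)^2>0$ and then solves the inequality, which is what your cross-multiplication by $R+\kappa>0$ and division by $1-\lambda_c>0$ spells out. One small correction to your closing remark: $R(t)\neq-\kappa$ does not give $R(t)+\kappa>0$, and for $R(t)<-\kappa$ the equivalence fails (there $\lambda_{\max}(t)>1>\lambda_c$ while $R(t)<0<R_c$), so the sign condition must rest on $R(t)\geq 0$ (or $R(t)>-\kappa$), which is what you and the paper actually use.
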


\begin{proof}
The function $f(R)=R/(R+\kappa)$ is strictly increasing on
$[0,\infty)$ because $f'(R)=\kappa/(R+\kappa)^2>0$.  Solving
$R/(R+\kappa)>\lambda_c$ gives
$R>\lambda_c\kappa/(1-\lambda_c)$.
\end{proof}

\begin{proposition}[Monotonicity and saturation]
Under the curvature--spectrum law, $\lambda_{\max}$ is increasing and
concave as a function of $R\geq0$, satisfies $\lambda_{\max}(0)=0$, and
converges to $1$ as $R\to\infty$.
\end{proposition}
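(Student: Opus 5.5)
The plan is to treat the model law as defining $\lambda_{\max}=f(R)$ with $f(R)=R/(R+\kappa)$ on $[0,\infty)$, where $\kappa>0$ is fixed, and to verify each of the four claims by elementary calculus on $f$. The denominator $R+\kappa$ is at least $\kappa>0$ on this domain, so $f$ is smooth there. This is the only regularity fact needed, and it will be stated first.

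\textbf{Step 1 (value at zero).} Substitute $R=0$ to get $f(0)=0/\kappa=0$.

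\textbf{Step 2 (monotonicity).} Reuse the derivative already computed in the proof of the curvature threshold proposition, namely $f'(R)=\kappa/(R+\kappa)^2>0$. A positive derivative on $[0,\infty)$ gives strict increase by the mean value theorem. As a derivative-free alternative, one can write
\[
  f(R)=1-\frac{\kappa}{R+\kappa},
\]
which makes strict increase visible directly, since $\kappa/(R+\kappa)$ strictly decreases in $R$.

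\textbf{Step 3 (concavity).} Differentiate once more to get $f''(R)=-2\kappa/(R+\kappa)^3<0$ for $R\geq0$. Hence $f'$ is decreasing and $f$ is (strictly) concave on $[0,\infty)$. If one wants to avoid relying on the second-derivative criterion, the same conclusion follows from the representation $f(R)=1-\kappa g(R)$ with $g(R)=1/(R+\kappa)$: the function $g$ is convex on $(-\kappa,\infty)$, and subtracting a positive multiple of a convex function yields a concave function.

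\textbf{Step 4 (saturation).} Use the same representation: $\kappa/(R+\kappa)\to0$ as $R\to\infty$, so $f(R)\to1$. The representation also gives $0\leq f(R)<1$ for all $R\geq 0$, so the limit is approached from below and is never attained.

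No step is a genuine obstacle. The only point needing care is interpretive: "increasing and concave as a function of $R$" refers to $f$ itself, not to $t\mapsto\lambda_{\max}(t)$. Monotonicity in $t$ would additionally require the monotone-input hypotheses of the curvature-proxy proposition, and that should be noted in a remark so the statement is not read as a claim about time dynamics.
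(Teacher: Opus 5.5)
Your proof is correct and follows the paper's argument. The paper likewise computes $f'(R)=\kappa/(R+\kappa)^2>0$, $f''(R)=-2\kappa/(R+\kappa)^3<0$, $f(0)=0$ and $\lim_{R\to\infty}f(R)=1$ for $f(R)=R/(R+\kappa)$; your rewriting $f(R)=1-\kappa/(R+\kappa)$ and your remark separating monotonicity in $R$ from monotonicity in $t$ are harmless additions.
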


\begin{proof}
For $f(R)=R/(R+\kappa)$, one has
$f'(R)=\kappa/(R+\kappa)^2>0$ and
$f''(R)=-2\kappa/(R+\kappa)^3<0$.  Also $f(0)=0$ and
$\lim_{R\to\infty}f(R)=1$.
\end{proof}

\begin{corollary}[Crystal-count threshold under fixed dispersion]
Assume the curvature--spectrum law and fix a spectral dispersion value
$\sigma_\Gamma=\sigma_0$.  If $\lambda_c\in(0,1)$ is the spectral
threshold, then the model reaches threshold exactly when
\[
  N(t)>
  \frac{1}{\alpha}
  \left(\frac{\lambda_c}{1-\lambda_c}\kappa-\beta\sigma_0\right).
\]
If the right-hand side is negative, every nonnegative crystal count is
already above threshold in this fixed-dispersion model.
\end{corollary}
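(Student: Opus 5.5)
The plan is to reduce the statement to the curvature threshold proposition proved just above, then solve a linear inequality in $N(t)$. Under the curvature--spectrum law with $\lambda_c\in(0,1)$, that proposition gives
\[
  \lambda_{\max}(t)>\lambda_c\iff R(t)>R_c,
  \qquad R_c=\frac{\lambda_c}{1-\lambda_c}\kappa .
\]
I will read ``the model reaches threshold'' as exactly this condition $\lambda_{\max}(t)>\lambda_c$. This reading is consistent with the strict inequality in the statement.

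Next I substitute the definition of the curvature proxy with the fixed dispersion value, so that $R(t)=\alpha N(t)+\beta\sigma_0$. The threshold condition then becomes $\alpha N(t)+\beta\sigma_0>R_c$. Subtracting $\beta\sigma_0$ and dividing by $\alpha>0$ gives the displayed inequality. Each step is an equivalence, because subtracting a constant and dividing by a positive number preserve strict inequalities in both directions. This gives the ``exactly when'' claim.

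For the second assertion, suppose the right-hand side $\alpha^{-1}(R_c-\beta\sigma_0)$ is negative. Then any $N(t)\geq0$ strictly exceeds it, so by the equivalence just established every nonnegative crystal count is above threshold.

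The only point needing care is the domain of the model law. The threshold proposition was stated for $R\geq0$, where $R+\kappa>0$ and $f(R)=R/(R+\kappa)$ is increasing. Here $N\geq0$ (it is a count), $\sigma_0\geq0$ (it is a variance), and $\alpha,\beta>0$, so $R\geq0$ automatically. I will state this explicitly so that the earlier proposition applies without change. Beyond this, I expect no real obstacle: the argument is purely algebraic.
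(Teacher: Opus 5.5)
Your proposal is correct and matches the paper's proof: substitute $R(t)=\alpha N(t)+\beta\sigma_0$ into the threshold inequality $R(t)>\frac{\lambda_c}{1-\lambda_c}\kappa$ from the preceding proposition, divide by $\alpha>0$, and note that any $N(t)\geq0$ exceeds a negative right-hand side. You also check that $N\geq0$, $\sigma_0\geq0$ and $\alpha,\beta>0$ force $R\geq0$, so the monotone inversion of $R/(R+\kappa)$ applies; the paper leaves this implicit, and it is a worthwhile addition.
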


\begin{proof}
Substitute $R(t)=\alpha N(t)+\beta\sigma_0$ into
$R(t)>\lambda_c\kappa/(1-\lambda_c)$ and solve for $N(t)$.  Since
$\alpha>0$, division by $\alpha$ preserves the inequality.  If the
right-hand side is negative and $N(t)\geq0$, the inequality is
automatically satisfied.
\end{proof}

\begin{proposition}[Curvature law alone does not define a Hopf
parameter]
The model law
\[
  \lambda_{\max}(t)=\frac{R(t)}{R(t)+\kappa}
\]
does not, by itself, define the bifurcation parameter $\mu$ in a
dynamical system $\dot{x}=F(x,\mu)$, nor does it identify the Jacobian
$D_xF(x^\ast(\mu),\mu)$.
\end{proposition}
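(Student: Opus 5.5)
The plan is to prove the statement by exhibiting non-uniqueness: the model law is compatible with many dynamical systems that differ in bifurcation parameter and Jacobian. It is a scalar identity relating two time-indexed scalars, $\lambda_{\max}(t)$ and $R(t)$. The data of a Hopf setting are different in kind: a family of vector fields $F(\cdot,\mu)$, an equilibrium branch $x^\ast(\mu)$, and the Jacobian $D_xF(x^\ast(\mu),\mu)$. So I would show that, for one and the same pair of trajectories $(R(t),\lambda_{\max}(t))$ satisfying the law, there exist two (indeed infinitely many) admissible systems whose parameters and Jacobians differ. Then the law cannot determine either object.

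\textbf{Step 1: the parameter is not determined.} Fix any $R(t)$ with $\lambda_{\max}(t)=R(t)/(R(t)+\kappa)$. Candidate parameters such as $\mu=R$, $\mu=\lambda_{\max}$, $\mu=N$ (with $\sigma_\Gamma$ fixed), or $\mu=t$ are all consistent with the law. By the monotonicity proposition for $f(R)=R/(R+\kappa)$, the first two are related by a strictly increasing reparametrization. The law supplies no rule choosing among them, and no rule tying any of them to the argument $\mu$ of some $F$. This is the analogue of the earlier proposition that extension data are additional structure: a lift is not determined by the base map.

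\textbf{Step 2: the Jacobian is not determined.} For a given scalar path $\lambda(\mu)=\mu/(\mu+\kappa)$, take the planar linear families
\[
  F_1(x,\mu)=\begin{pmatrix}\lambda(\mu)&0\\0&\lambda(\mu)\end{pmatrix}x,\qquad
  F_2(x,\mu)=\begin{pmatrix}\lambda(\mu)&-1\\1&\lambda(\mu)\end{pmatrix}x .
\]
Both have equilibrium $x^\ast=0$, and the symmetric parts of both Jacobians equal $\lambda(\mu)I$, whose largest eigenvalue is $\lambda(\mu)$. So both reproduce the model law exactly if $\lambda_{\max}$ is read from the symmetric part, consistent with the spectral-dispersion convention. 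The Jacobians nevertheless differ: $F_1$ has real eigenvalues, while $F_2$ has $\lambda(\mu)\pm i$. Shifting by a constant, as in $\lambda(\mu)-\lambda_c$, places a crossing at $R_c$ for $F_2$ only. This ties to the corollary that the symmetric leading spectrum is not a Hopf certificate.

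\textbf{Main obstacle.} The difficulty lies in Step 2: making precise what "compatible with the law" means, that is, how $\lambda_{\max}$ of $\Gamma$ is linked to a Jacobian at all. I would handle this by allowing any identification in which $\Gamma$ is (the symmetric part of) the Jacobian, or any other stated linkage. Since even the most favourable identification leaves the Jacobian underdetermined, weaker linkages do so a fortiori. The conclusion then follows by the same invalid-implication pattern used in the finite-diagnostics proposition.
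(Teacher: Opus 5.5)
Your proof is correct, but it takes a different route from the paper. The paper argues from missing data. The law is a scalar relation between $\lambda_{\max}$ and $R$ through $\kappa$. It contains no vector field $F$, no equilibrium branch $x^\ast(\mu)$, and no map from $R$ or $\lambda_{\max}$ to the entries of a Jacobian. So it can neither define $\mu$ nor identify $D_xF(x^\ast(\mu),\mu)$. You instead give a non-definability argument: you exhibit two models of the law that differ in exactly the data in question.

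Your route shows something the paper's proof does not. Even after adding a natural linkage between $\Gamma$ and the Jacobian, the eigenstructure that matters for Hopf (real versus complex eigenvalues) remains open. This anticipates the paper's later proposition that the same scalar threshold can be Hopf or non-Hopf.

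The price is that you must choose a linkage the law does not supply, and some care is needed there.
\begin{enumerate}
\item Reading $\lambda_{\max}$ off the symmetric part is your own extension. The paper uses $(\Gamma+\Gamma^\top)/2$ only for the dispersion $\sigma_\Gamma$. For nonsymmetric $\Gamma$ it uses the spectral radius, which for $F_2$ equals $\sqrt{\lambda(\mu)^2+1}\neq\lambda(\mu)$. Under the literal identification $\Gamma=J$ you would need a symmetric pair instead, such as $\mathrm{diag}(\lambda(\mu),\lambda(\mu))$ and $\mathrm{diag}(\lambda(\mu),\lambda(\mu)-1)$.
\item The shifted family with $\lambda(\mu)-\lambda_c$ no longer satisfies the law under your symmetric-part linkage. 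It does so only if you change the linkage to $\Gamma=(J+J^\top)/2+\lambda_c I$.
\item The ``a fortiori'' step is overstated. The symmetric-part identification is not the most informative linkage, since $\Gamma=J$ carries more information. A linkage that simply declares $J(\mu)$ as a function of $R$ would determine the Jacobian outright.
\end{enumerate}

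That last claim is also unnecessary. The proposition concerns the law \emph{by itself}, under which $F$ is entirely unconstrained, and that observation is precisely the paper's short argument.
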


\begin{proof}
The displayed law relates two scalar functions, $\lambda_{\max}$ and
$R$, through a parameter $\kappa$.  A Hopf bifurcation statement,
however, concerns eigenvalues of the Jacobian of a specified vector
field along an equilibrium branch.  The scalar law contains no vector
field $F$, no equilibrium $x^\ast(\mu)$, and no map from $R$ or
$\lambda_{\max}$ to the entries of a Jacobian.  Hence it supplies a
threshold model but not the dynamical data required to define or verify
a Hopf crossing.
\end{proof}

\begin{assumption}[Hopf crossing]
Let the Gamma/CNS state be governed by a differentiable dynamical system
$\dot{x}=F(x,\mu)$ with equilibrium $x^\ast(\mu)$ and Jacobian
$J(\mu)=D_xF(x^\ast(\mu),\mu)$.  A Hopf bifurcation occurs at
$\mu^\ast$ only if a conjugate pair of eigenvalues of $J(\mu)$ crosses
the imaginary axis transversally while the remaining eigenvalues have
non-zero real part.
\end{assumption}

\begin{theorem}[Classical Hopf bifurcation theorem, quoted form]
Let $\dot{x}=F(x,\mu)$ be a $C^r$ finite-dimensional dynamical system
with $r$ sufficiently large.  Suppose that at an equilibrium
$x^\ast(\mu^\ast)$ the Jacobian has one simple conjugate pair
$\alpha(\mu)\pm i\omega(\mu)$ with $\alpha(\mu^\ast)=0$,
$\omega(\mu^\ast)\neq0$, the transversality condition
$\alpha'(\mu^\ast)\neq0$ holds, and all other eigenvalues have nonzero
real part.  Under the usual nondegeneracy condition on the first
Lyapunov coefficient, a branch of small periodic orbits bifurcates from
the equilibrium.
\end{theorem}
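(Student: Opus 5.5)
The plan is the standard route: center manifold reduction followed by a normal-form computation; the paper quotes this result from \cite{GuckenheimerHolmes1983,Kuznetsov2004}. \textbf{First}, I would translate the equilibrium to the origin. The hypothesis that the Jacobian has a simple eigenvalue pair with $\omega(\mu^\ast)\neq0$ means $0$ is not an eigenvalue, so the implicit function theorem gives a $C^r$ branch $x^\ast(\mu)$ near $\mu^\ast$. Writing $y=x-x^\ast(\mu)$ and $\nu=\mu-\mu^\ast$, the system becomes $\dot y=G(y,\nu)$ with $G(0,\nu)=0$. Simplicity of the pair also makes $\alpha(\nu),\omega(\nu)$ smooth in $\nu$, because simple eigenvalues depend smoothly on the matrix entries.

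\textbf{Second}, I would reduce to a planar system. Appending the trivial equation $\dot\nu=0$, the linearization at the origin has a center subspace spanned by the eigenspace of the pair $\pm i\omega(\mu^\ast)$ together with the $\nu$ direction. Since all other eigenvalues have nonzero real part, the center manifold theorem gives a locally invariant $C^{r-1}$ manifold, and all small bounded orbits, periodic ones included, lie on it. On this manifold the dynamics form a two-dimensional family
\[
  \dot z=(\alpha(\nu)+i\omega(\nu))z+g(z,\bar z,\nu),\qquad z\in\mathbb{C},
\]
with $g=O(|z|^2)$.

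\textbf{Third}, for $r$ sufficiently large (say $r\geq5$ so that enough derivatives survive), I would remove the quadratic terms and the nonresonant cubic terms by near-identity polynomial changes of variable. This yields the Poincar\'e normal form $\dot w=(\alpha+i\omega)w+c_1(\nu)w|w|^2+O(|w|^4)$, in which the first Lyapunov coefficient is $\ell_1=\operatorname{Re}c_1(0)/\omega(\mu^\ast)$ and the nondegeneracy hypothesis is $\ell_1\neq0$. In polar coordinates this reads
\[
  \dot\rho=\rho\bigl(\alpha(\nu)+\operatorname{Re}c_1\rho^2\bigr)+O(\rho^4),\qquad \dot\phi=\omega+O(\rho).
\]

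\textbf{Fourth}, since $\dot\phi>0$ near the origin, I would take the Poincar\'e return map on a ray $\phi=0$ and look for its nontrivial fixed points. Factoring out $\rho$ leaves the equation $2\pi\alpha(\nu)/\omega+2\pi\ell_1\rho^2+\text{h.o.t.}=0$. Using the transversality condition $\alpha'(0)\neq0$, the implicit function theorem solves for $\nu$ as a function of $\rho^2$. This gives a unique small periodic orbit for $\nu$ on one side of $0$, the side fixed by the sign of $\ell_1\alpha'(0)$, with radius of order $\sqrt{|\nu|}$. The bifurcating branch is therefore supercritical or subcritical according to the sign of $\ell_1\alpha'(0)$.

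\textbf{Main obstacle.} I expect the third and fourth steps to be the hardest. One has to track the higher-order remainders so that the $O(\rho^4)$ terms do not destroy the implicit-function argument, and one has to confirm that the normal-form coefficient is indeed the Lyapunov coefficient named in the statement. For the paper itself, the honest course is to cite \cite{Kuznetsov2004} (Theorem 3.3 there) for these estimates and state only the hypotheses explicitly, as the quoted form does.
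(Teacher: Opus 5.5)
Your outline (center-manifold reduction, cubic normal form, then a return map solved by the implicit function theorem using transversality and $\ell_1\neq0$) is the standard proof behind \cite{GuckenheimerHolmes1983,Kuznetsov2004}, and your conclusion that one should cite it rather than reprove it is exactly what the paper does, since its proof consists only of that citation. Two small slips: ruling out $0$ as an eigenvalue in your first step also needs the hypothesis that the remaining eigenvalues have nonzero real part, and under the usual convention super- versus subcriticality (stability of the cycle) is decided by the sign of $\ell_1$ alone, while the sign of $\ell_1\alpha'(0)$ only decides on which side of $\mu^\ast$ the orbits appear.
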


\begin{proof}
This is the standard Hopf bifurcation theorem
\cite{GuckenheimerHolmes1983,Kuznetsov2004}.  It is quoted rather than
reproved here because the theorem is external mathematical machinery.
\end{proof}

\begin{proposition}[A scalar threshold is not a Hopf proof]
Let $a(\mu)$ be a real scalar and consider the one-dimensional system
\[
  \dot{x}=a(\mu)x .
\]
Even if $a(\mu)$ crosses zero transversally at $\mu=\mu^\ast$, this is
not a Hopf bifurcation.
\end{proposition}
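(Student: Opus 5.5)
The plan is to argue directly from the hypotheses of the quoted classical Hopf theorem and of the Hopf crossing assumption. Both require the Jacobian at the equilibrium to have a conjugate pair $\alpha(\mu)\pm i\omega(\mu)$ with $\omega(\mu^\ast)\neq0$. Such a pair can only exist in a state space of dimension at least two. So the first step is to compute the Jacobian of the one-dimensional system $\dot x=a(\mu)x$. The equilibrium branch is $x^\ast(\mu)=0$, and the Jacobian is the $1\times1$ real matrix $J(\mu)=(a(\mu))$. Its single eigenvalue is the real number $a(\mu)$.

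The second step is to observe that at $\mu=\mu^\ast$ the crossing eigenvalue is $a(\mu^\ast)=0$, which has zero imaginary part. No non-real conjugate pair is present at all. Writing $a(\mu)=\alpha(\mu)+i\omega(\mu)$ forces $\omega\equiv0$, so the condition $\omega(\mu^\ast)\neq0$ fails. Transversality $a'(\mu^\ast)\neq0$ is therefore irrelevant: the Hopf crossing assumption is violated, and the classical theorem does not apply.

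To make the point robust rather than merely saying the theorem is silent, I will add a direct dynamical check that no periodic orbits exist. The explicit solution is $x(t)=x_0e^{a(\mu)t}$. For every $\mu$ and every $x_0\neq0$ this is strictly monotone in $|x|$ when $a(\mu)\neq0$, and constant when $a(\mu)=0$. Hence there are no nonconstant periodic orbits near the equilibrium, while a Hopf bifurcation requires a branch of small nonconstant periodic orbits. More generally, a one-dimensional autonomous flow has no nonconstant periodic orbits, since any solution of a scalar ODE is monotone. At most the crossing is a degenerate real-eigenvalue (stationary) bifurcation, not Hopf.

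I do not expect a real obstacle; the only care needed is at $a(\mu^\ast)=0$. There the flow is identically stationary, and every point is an equilibrium. Those constant orbits must be excluded as trivial "periodic" orbits, which the convention of nonconstant periodic orbits handles. I will close by connecting the result to the curvature law: a scalar threshold such as $R(t)>R_c$ plays exactly the role of $a(\mu)$ crossing zero, so it cannot substitute for the complex-pair crossing hypothesis.
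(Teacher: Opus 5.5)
Your proposal is correct and follows the paper's argument: the Jacobian of $\dot x=a(\mu)x$ is the $1\times1$ matrix $[a(\mu)]$, whose only eigenvalue is real, so the required non-real pair with $\omega(\mu^\ast)\neq0$ cannot exist. Your extra check that $x(t)=x_0e^{a(\mu)t}$ has no nonconstant periodic orbits goes slightly beyond the paper: it shows the Hopf conclusion fails, not merely that the theorem's hypotheses fail.
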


\begin{proof}
A Hopf bifurcation requires a nonzero imaginary pair
$\alpha(\mu)\pm i\omega(\mu)$ with $\omega(\mu^\ast)\neq0$.  The
Jacobian of the one-dimensional system is the $1\times1$ matrix
$[a(\mu)]$, whose only eigenvalue is real.  It cannot contain a
non-real conjugate pair.  Therefore a scalar threshold crossing,
although dynamically meaningful, is not sufficient for Hopf.
\end{proof}

\begin{proposition}[The same scalar threshold can be non-Hopf or Hopf]
Let $\mu=R-R_c$ be a scalar threshold parameter.  The threshold
$\mu=0$ is compatible both with a non-Hopf scalar instability and with a
Hopf normal form.  Therefore the scalar threshold alone does not
determine the bifurcation type.
\end{proposition}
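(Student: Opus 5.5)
The plan is to prove the statement by exhibiting two explicit dynamical systems, both depending on the same scalar parameter $\mu=R-R_c$ and both with a distinguished transition exactly at $\mu=0$, but with different bifurcation types. A single pair of examples is enough, because the claim is only that the threshold value does not determine the type.

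\textbf{Non-Hopf example.} I would use the one-dimensional system $\dot{x}=\mu x$, which is the scalar system of the preceding proposition with $a(\mu)=\mu$.
\begin{itemize}
  \item Its equilibrium $x^\ast=0$ changes stability exactly at $\mu=0$.
  \item The crossing is transversal, since $a'(0)=1\neq0$.
  \item By that proposition, this is not a Hopf bifurcation, because the $1\times1$ Jacobian has only a real eigenvalue.
\end{itemize}
If a nonlinear instability is preferred, one can take the pitchfork or transcritical normal form $\dot{x}=\mu x-x^3$ instead; the linearization, and hence the argument, is the same.

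\textbf{Hopf example.} I would use the planar Hopf normal form
\[
  \dot{x}=\mu x-y-x(x^2+y^2),\qquad
  \dot{y}=x+\mu y-y(x^2+y^2).
\]
The steps are:
\begin{itemize}
  \item At the equilibrium $(0,0)$ the Jacobian is $\begin{pmatrix}\mu&-1\\1&\mu\end{pmatrix}$, with eigenvalues $\mu\pm i$. Hence $\alpha(\mu)=\mu$, $\omega(\mu)=1\neq0$ and $\alpha'(0)=1\neq0$.
  \item There are no other eigenvalues, so the spectral hypotheses of the Hopf crossing assumption and of the quoted classical Hopf theorem hold at $\mu^\ast=0$.
  \item For nondegeneracy, I would pass to polar coordinates, which give $\dot r=\mu r-r^3$ and $\dot\theta=1$. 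For $\mu>0$ this exhibits the periodic orbit $r=\sqrt{\mu}$ explicitly.
\end{itemize}
This orbit is exactly the branch of small periodic orbits the classical theorem predicts, so nondegeneracy is verified without computing the first Lyapunov coefficient abstractly; its sign is $-1\neq0$ in the standard normalization.

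\textbf{Conclusion.} Both systems are parametrized by the same $\mu=R-R_c$, and both have their critical point at $\mu=0$, i.e.\ at $R=R_c$. One undergoes a real-eigenvalue instability and the other a Hopf bifurcation. Any scalar threshold datum, such as the value $R_c$ obtained from the curvature-threshold proposition under the model law, is therefore shared by both systems. It cannot determine the bifurcation type, which matches the earlier proposition that the curvature law alone does not define a Hopf parameter or Jacobian.

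The only step needing care is the Hopf nondegeneracy condition, and the polar-coordinate reduction handles it directly. If one wants to avoid the Lyapunov coefficient entirely, it suffices to exhibit the periodic orbit family for $\mu>0$ and to note that no periodic orbits exist for $\mu<0$.
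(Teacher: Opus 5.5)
Your proposal is correct and follows essentially the same route as the paper. The two examples are the scalar system $\dot x=\mu x$, with its single real eigenvalue, and the planar Hopf normal form, with Jacobian eigenvalues $\mu\pm i$ and the polar reduction $\dot r=\mu r-r^3$, $\dot\theta=1$ exhibiting the periodic orbit $r=\sqrt{\mu}$ for $\mu>0$. One small slip in your optional aside: $\dot x=\mu x-x^3$ is the pitchfork normal form, not a transcritical one (that would be $\dot x=\mu x-x^2$), but this does not affect the argument.
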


\begin{proof}
For the non-Hopf case, take
\[
  \dot{x}=\mu x .
\]
Its Jacobian has the single real eigenvalue $\mu$, so no nonzero
imaginary conjugate pair exists.

For the Hopf normal form, take
\[
  \dot{x}=\mu x-y-x(x^2+y^2),\qquad
  \dot{y}=x+\mu y-y(x^2+y^2).
\]
In polar coordinates $x=r\cos\theta$, $y=r\sin\theta$, direct
calculation gives
\[
  \dot r=\mu r-r^3,\qquad \dot\theta=1.
\]
For $\mu>0$, the circle $r=\sqrt{\mu}$ is invariant and carries a
periodic orbit because $\dot r=0$ and $\dot\theta=1$.  At $\mu=0$, the
Jacobian at the origin is
\[
  \begin{pmatrix}0&-1\\1&0\end{pmatrix},
\]
with eigenvalues $\pm i$.  For general $\mu$, the linearization at the
origin is
\[
  \begin{pmatrix}\mu&-1\\1&\mu\end{pmatrix},
\]
whose eigenvalues are $\mu\pm i$, so the real part crosses zero
transversally at $\mu=0$.  Thus the same scalar threshold $\mu=0$ is
compatible with both a non-Hopf scalar crossing and a Hopf normal form.
\end{proof}

\begin{lemma}[Hitting-time transfer under monotone reparameterization]
Let $R:[0,T]\to\RR$ be continuous and let $\mu=g(R)$ where $g$ is
continuous and strictly monotone.  If a critical value
$\mu^\ast$ corresponds to $R_c=g^{-1}(\mu^\ast)$, then
\[
  \inf\{t:\mu(t)=\mu^\ast\}
  =
  \inf\{t:R(t)=R_c\},
\]
provided the two sets are nonempty.
\end{lemma}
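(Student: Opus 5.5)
The plan is to show that the two hitting sets are literally the same subset of $[0,T]$; equality of their infima then follows at once. Write $\mu(t)=g(R(t))$ and put
\[
  A=\{t\in[0,T]:\mu(t)=\mu^\ast\},\qquad B=\{t\in[0,T]:R(t)=R_c\}.
\]
Because $g$ is strictly monotone, it is injective on its domain, so $g^{-1}(\mu^\ast)$ is a single well-defined number whenever $\mu^\ast$ lies in the range of $g$. The statement fixes $R_c=g^{-1}(\mu^\ast)$, so I read it as including this range condition, equivalently $g(R_c)=\mu^\ast$.

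\textbf{Inclusion $B\subseteq A$.} Let $t\in B$, so $R(t)=R_c$. Then
\[
  \mu(t)=g(R(t))=g(R_c)=\mu^\ast,
\]
and hence $t\in A$.

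\textbf{Inclusion $A\subseteq B$.} Let $t\in A$, so $g(R(t))=\mu^\ast=g(R_c)$. Injectivity of $g$, which comes from strict monotonicity, gives $R(t)=R_c$, and hence $t\in B$.

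So $A=B$, and both sets are nonempty by hypothesis. Since they are bounded below by $0$, their infima exist in $[0,T]$ and are equal.

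Continuity is not needed for the equality itself. As an optional remark, continuity of $R$ and $g$ makes $\mu$ continuous, so $A$ and $B$ are closed in $[0,T]$ and the infima are actually attained, which justifies calling them hitting \emph{times}. I do not expect a real obstacle here. The only delicate point is making the hypothesis $g(R_c)=\mu^\ast$ explicit: without it $g^{-1}(\mu^\ast)$ is undefined, although in that case $A$ would be empty, which the nonemptiness proviso already excludes.
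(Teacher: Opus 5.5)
Your proof is correct and follows the paper's argument: strict monotonicity makes $g$ injective, so $\mu(t)=\mu^\ast$ holds exactly when $R(t)=R_c$, the two hitting sets coincide, and hence so do their infima. Your extra remarks are accurate refinements that the paper leaves implicit: continuity is not needed for the equality itself, and a nonempty hitting set already forces $\mu^\ast$ to lie in the range of $g$.
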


\begin{proof}
Because $g$ is strictly monotone, it is injective on its domain.
Therefore $\mu(t)=\mu^\ast$ is equivalent to
 $g(R(t))=g(R_c)$, which is equivalent to $R(t)=R_c$.  The two sets of
times are identical, so their infima are identical.
\end{proof}

\begin{proposition}[Conditional Hopf prediction]
Suppose the Hopf crossing assumption holds and suppose the bifurcation
parameter $\mu$ is a monotone differentiable function of the curvature
proxy $R$.  If the critical crossing occurs at $R=R_c$, then measuring
$N(t)$ and $\sigma_\Gamma(t)$ gives a model-based prediction of the
time at which the system reaches the Hopf threshold.
\end{proposition}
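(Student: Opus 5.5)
The plan is to reduce the statement to the hitting-time transfer lemma just proved, together with the definition of the curvature proxy. First I will make the informal phrase ``model-based prediction of the time'' precise. Let $g$ denote the monotone differentiable map with $\mu=g(R)$. Set $\mu^\ast=g(R_c)$, and let $t^\ast=\inf\{t:\mu(t)=\mu^\ast\}$ be the first time the bifurcation parameter reaches the critical value given by the Hopf crossing assumption. The claim to prove is then that $t^\ast$ is computable from the observed trajectories $N(t)$ and $\sigma_\Gamma(t)$ via
\[
  \hat t=\inf\{t:\alpha N(t)+\beta\sigma_\Gamma(t)=R_c\},
\]
and that $\hat t=t^\ast$ whenever both sets are nonempty.

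The key steps are as follows. (i) Upgrade ``monotone'' to ``strictly monotone,'' since the lemma needs injectivity of $g$. I expect this to be the main obstacle, because a merely monotone $g$ may be constant on an interval containing $R_c$, and then $\mu=\mu^\ast$ would not single out $R=R_c$. I would handle it by reading the hypothesis as strict monotonicity. This is justified by the Hopf crossing assumption: transversality $\alpha'(\mu^\ast)\neq0$ uses $\mu$ as a genuine coordinate near $R_c$, so it suffices to assume $g'(R_c)\neq0$, giving strict monotonicity on a neighbourhood. Failing that, I would state strict monotonicity as an explicit hypothesis, in keeping with the paper's downgrade policy. (ii) Note that $R(t)=\alpha N(t)+\beta\sigma_\Gamma(t)$ is a deterministic function of the measured quantities, by the curvature-proxy definition. (iii) Assume $t\mapsto R(t)$ is continuous on $[0,T]$, or restrict to an interval where $N$ and $\sigma_\Gamma$ are continuous, so that the lemma's continuity hypothesis holds. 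Then apply the lemma to obtain $\hat t=t^\ast$.

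For a closing remark, I would point out that under the extra hypotheses of the curvature-proxy monotonicity proposition, $R$ is nondecreasing. In that case the hitting time can equivalently be read off as the first time $R(t)\ge R_c$. If the curvature--spectrum law is also adopted, the earlier threshold proposition converts $R_c$ into $\lambda_c$ (and, at fixed dispersion, into a crystal-count threshold), but this is not needed for the main claim. I would also stress the limits of the result. The proof gives a prediction of when the threshold is reached, conditional on the crossing assumption; it does not derive the crossing itself, the nondegeneracy of the Lyapunov coefficient, or the map $g$ from Gamma dynamics.
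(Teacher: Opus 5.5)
Your proposal is correct and follows the paper's own route: the paper also uses the hitting-time transfer lemma to identify the Hopf threshold with $R(t)=R_c$, and then reads off $\inf\{t:R(t)=R_c\}$ from the observable combination $R(t)=\alpha N(t)+\beta\sigma_\Gamma(t)$. You are right that the lemma needs injectivity of $g$ while the proposition only says ``monotone'', a point the paper's proof passes over; of your two repairs, the explicit hypothesis is the sound one (strict monotonicity, or just $g'(R_c)\neq0$, which together with monotonicity already forces $g^{-1}(g(R_c))=\{R_c\}$), since transversality of the crossing in $\mu$ places no constraint on the map $g$.
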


\begin{proof}
Under the stated assumptions, the Hopf threshold is equivalent to
$R(t)=R_c$ by the hitting-time transfer lemma.  Since
$R(t)=\alpha N(t)+\beta\sigma_\Gamma(t)$ is observable in the model,
any fitted or estimated dynamics for $N(t)$ and $\sigma_\Gamma(t)$ gives
an estimated hitting time $\inf\{t:R(t)=R_c\}$.  The result is
conditional because both the curvature law and the Hopf crossing must
be validated separately.
\end{proof}

\begin{remark}[What remains to be verified]
To upgrade the CNS-Hopf claim from model to theorem, one must specify
the actual Gamma/CNS vector field $F$, identify the equilibrium branch
$x^\ast(\mu)$, compute or bound the Jacobian spectrum, prove the
transversality condition, and evaluate the Hopf nondegeneracy
coefficient.  Without those steps, the curvature threshold is a coherent
model, not a proved dynamical theorem.
\end{remark}

\paragraph{Implementation correspondence.}
The relevant implementation surfaces are
\codepath{brainiak/mathcore/gamma_unified.py},
\codepath{brainiak/mathcore/hopf_detector.py}, and the FDA/semantic
trajectory builders.  These files can compute or expose Gamma-like
states and Hopf-related diagnostics.  They do not by themselves prove
the curvature--spectrum law.

\section{Kalman Convergence on Semantic Fibres}

The source treatise stated that Kalman convergence is guaranteed by the
fibre structure.  The academically defensible statement is narrower:
Kalman convergence follows from standard filtering assumptions.  The
fibre structure supplies coordinates and projections; it does not
replace observability or stabilizability.

\begin{assumption}[Linear Gaussian state-space model]
Let
\[
  x_{t+1}=Fx_t+w_t,\qquad y_t=Hx_t+v_t,
\]
where $w_t$ and $v_t$ are zero-mean independent Gaussian noises with
covariances $Q\succeq0$ and $R\succ0$.  Assume the standard
detectability and stabilizability conditions for the pair
$(F,H)$ and the process noise.
\end{assumption}

\begin{theorem}[Standard Kalman error convergence]
Under the linear Gaussian assumption and the standard
detectability/stabilizability hypotheses, the Kalman filter covariance
recursion converges to the stabilizing solution of the discrete algebraic
Riccati equation.  If the corresponding error transition is a contraction
with factor $\rho<1$ and the process perturbation is bounded by
$\delta$, then the estimation error satisfies
\[
  \|e_t\|\leq \rho^t\|e_0\|+\frac{\delta}{1-\rho}.
\]
\end{theorem}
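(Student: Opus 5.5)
The theorem has two independent parts, and I will prove them separately.

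\textbf{Riccati convergence (quoted).} The first assertion is that the covariance recursion converges to the stabilizing solution of the discrete algebraic Riccati equation. This is external machinery, so I will quote it, as the paper does for the Hopf theorem. The recursion is
\[
  P_{t+1}=FP_tF^\top+Q-FP_tH^\top(HP_tH^\top+R)^{-1}HP_tF^\top .
\]
Under $R\succ0$, detectability of $(F,H)$ and stabilizability of $(F,Q^{1/2})$, the sequence $P_t$ converges from any $P_0\succeq0$ to the unique stabilizing solution $P_\infty$ \cite{AndersonMoore1979,Kalman1960}. I will state explicitly that the fibre structure enters only through $F$ and $H$. It therefore adds nothing beyond these hypotheses.

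\textbf{Error bound (proved).} The second assertion is elementary, and I will prove it directly. First I interpret the hypotheses as a recursion for the estimation error:
\[
  e_{t+1}=\Phi_te_t+\xi_t ,
\]
where $\Phi_t$ is the error transition, e.g.\ $F-K_tH$ with the standard Kalman gain $K_t$ (or its steady-state value $K_\infty$). The contraction hypothesis is $\|\Phi_t\|_{\mathrm{op}}\le\rho<1$. The bounded perturbation hypothesis is $\|\xi_t\|\le\delta$ for all $t$; the term $\xi_t$ collects process noise and the gain-weighted observation noise.

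The triangle inequality then gives the scalar recursion
\[
  \|e_{t+1}\|\le\rho\|e_t\|+\delta .
\]
Iterating by induction on $t$ yields
\[
  \|e_t\|\le\rho^t\|e_0\|+\delta\sum_{j=0}^{t-1}\rho^j .
\]
Since $\rho<1$, the partial geometric sum satisfies $\sum_{j=0}^{t-1}\rho^j\le 1/(1-\rho)$, which gives the displayed bound.

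\textbf{Main obstacle.} The difficulty is interpretive rather than computational. With Gaussian noise, $\delta$ cannot be an almost-sure bound, since Gaussian perturbations are unbounded. I will therefore read the bound in one of two ways. Either it is pathwise, conditional on realizations with $\|\xi_t\|\le\delta$. Or, taking expectations of norms, it holds with $\delta$ replaced by $\sup_t\mathbb{E}\|\xi_t\|$, which is finite because the perturbation covariances are bounded.

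I also need to justify that the contraction hypothesis is compatible with the first part. Stabilizing means $\Phi_\infty=F-K_\infty H$ has spectral radius less than $1$. This gives a contraction only in a suitable induced norm $\|\cdot\|_\ast$, and after finitely many steps once $K_t$ is near $K_\infty$. I will note this and otherwise keep contraction as a stated hypothesis, so that the proof reduces to the geometric-series induction above.
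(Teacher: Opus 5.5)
Your proposal is correct and follows the paper's proof. The paper likewise quotes the Riccati convergence from standard filtering theory and obtains the bound by iterating $\|e_{t+1}\|\leq\rho\|e_t\|+\delta$ and summing the geometric series; it takes that scalar recursion directly as the meaning of the hypotheses, so your derivation of it from $e_{t+1}=\Phi_te_t+\xi_t$ and your note that Gaussian noise forces a pathwise or in-expectation reading of $\delta$ are useful clarifications rather than a different route.
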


\begin{proof}
The Riccati convergence statement is the classical discrete Kalman
filter result under detectability and stabilizability
\cite{Kalman1960,AndersonMoore1979}.  For the explicit bound, suppose
\[
  \|e_{t+1}\|\leq \rho\|e_t\|+\delta,\qquad 0\leq\rho<1.
\]
Iterating gives
\[
  \|e_t\|\leq \rho^t\|e_0\|+\delta\sum_{j=0}^{t-1}\rho^j
  \leq \rho^t\|e_0\|+\frac{\delta}{1-\rho}.
\]
\end{proof}

\begin{remark}[Role of the fibre structure]
The $T^n$ bundle provides a typed state representation and projections
onto observable coordinates.  It can help define $F$, $H$, and the
measurement geometry.  It does not, by itself, imply the Kalman
hypotheses.
\end{remark}

\subsection{Observability Before Filtering}

The Kalman theorem quoted above is a filtering theorem.  Before it can
be applied to a semantic fibre model, the state, observation, and hidden
directions must be identified.  The following elementary observability
facts make this obligation explicit.

\begin{definition}[Finite-horizon observability matrix]
For a discrete linear system
\[
  x_{t+1}=Fx_t,\qquad y_t=Hx_t,
\]
with $x_t\in\RR^n$ and $y_t\in\RR^p$, define the horizon-$q$
observability matrix
\[
  \mathcal O_q=
  \begin{pmatrix}
    H\\
    HF\\
    \vdots\\
    HF^{q-1}
  \end{pmatrix}.
\]
The pair $(F,H)$ is observable in finite time if
$\rank \mathcal O_q=n$ for some $q$.
\end{definition}

\begin{proposition}[Noiseless observability criterion]
For the noiseless system $x_{t+1}=Fx_t$, $y_t=Hx_t$, the initial state
$x_0$ is uniquely determined by the observations
$y_0,\ldots,y_{q-1}$ if and only if $\rank\mathcal O_q=n$.
\end{proposition}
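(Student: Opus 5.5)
The plan is to reduce everything to a single linear equation. In the noiseless system, iterating $x_{t+1}=Fx_t$ gives $x_t=F^tx_0$, so $y_t=HF^tx_0$ for $t=0,\ldots,q-1$. Stacking these $q$ observations into one vector $Y_q=(y_0^\top,\ldots,y_{q-1}^\top)^\top\in\RR^{pq}$ yields the identity
\[
  Y_q=\mathcal O_q\,x_0 .
\]
This is a one-line induction on $t$. Both directions then become statements about the linear map $\mathcal O_q:\RR^n\to\RR^{pq}$. I would read ``uniquely determined'' as: the map $x_0\mapsto Y_q$ is injective, so that distinct initial states always produce distinct observation records.

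For the direction ``$\rank\mathcal O_q=n$ implies uniqueness'', I would use the rank--nullity theorem. It gives $\dim\ker\mathcal O_q=n-\rank\mathcal O_q=0$, so $\mathcal O_q$ is injective. Hence $Y_q=\mathcal O_qx_0=\mathcal O_qx_0'$ forces $x_0=x_0'$. To make the recovery explicit, I would also note that $\mathcal O_q^\top\mathcal O_q$ is then invertible, which gives the formula
\[
  x_0=(\mathcal O_q^\top\mathcal O_q)^{-1}\mathcal O_q^\top Y_q .
\]

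For the converse, I would argue by contrapositive. If $\rank\mathcal O_q<n$, rank--nullity supplies a nonzero $h\in\ker\mathcal O_q$. The initial states $x_0$ and $x_0+h$ are distinct but produce identical observations $y_0,\ldots,y_{q-1}$, so $x_0$ is not uniquely determined. This mirrors the hidden-direction phenomenon in the corollary that hidden fibres are quotiented out rather than estimated: kernel directions of $\mathcal O_q$ are exactly the unobservable ones.

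No step is a real obstacle; the argument is linear algebra throughout. The only point needing care is fixing the meaning of ``uniquely determined''. It must mean injectivity of $x_0\mapsto(y_0,\ldots,y_{q-1})$ over all of $\RR^n$, not uniqueness for one particular observed record. With that reading, both directions follow at once from the stacked identity and the kernel characterization of injectivity.
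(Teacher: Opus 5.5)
Your proposal is correct and follows the paper's proof essentially step for step. Both use the stacked identity $Y_q=\mathcal O_q x_0$, deduce injectivity from full rank, and exhibit a nonzero kernel vector $h$ so that $x_0$ and $x_0+h$ give identical observations; your explicit recovery formula $x_0=(\mathcal O_q^\top\mathcal O_q)^{-1}\mathcal O_q^\top Y_q$ is a harmless extra, since the invertibility it needs is exactly the paper's later Gramian criterion.
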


\begin{proof}
The stacked observation vector is
\[
  \begin{pmatrix}
    y_0\\y_1\\ \vdots\\ y_{q-1}
  \end{pmatrix}
  =
  \begin{pmatrix}
    H\\HF\\ \vdots\\HF^{q-1}
  \end{pmatrix}x_0
  =\mathcal O_qx_0.
\]
If $\rank\mathcal O_q=n$, then the linear map
$x_0\mapsto\mathcal O_qx_0$ is injective, so two initial states with
the same observations must be equal.  Conversely, if
$\rank\mathcal O_q<n$, then $\ker\mathcal O_q$ contains a nonzero
vector $h$.  The initial states $x_0$ and $x_0+h$ produce identical
stacked observations, so the initial state is not uniquely determined.
\end{proof}

\begin{definition}[Observability Gramian]
The horizon-$q$ observability Gramian is
\[
  W_q=\mathcal O_q^\top\mathcal O_q
  =\sum_{i=0}^{q-1}(F^i)^\top H^\top HF^i .
\]
\end{definition}

\begin{proposition}[Gramian criterion]
The observability matrix $\mathcal O_q$ has rank $n$ if and only if
$W_q$ is positive definite.
\end{proposition}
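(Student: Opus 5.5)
The plan is to pass through the quadratic form of $W_q$ and use the identity $\ker W_q=\ker\mathcal O_q$. Since $W_q=\mathcal O_q^\top\mathcal O_q$ is symmetric, for every $x\in\RR^n$ we have
\[
  x^\top W_qx=(\mathcal O_qx)^\top(\mathcal O_qx)=\|\mathcal O_qx\|_2^2\geq0 .
\]
So $W_q$ is always positive semidefinite. Positive definiteness is then equivalent to the statement that $x^\top W_qx=0$ only for $x=0$.

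Next we translate this into a statement about $\mathcal O_q$. By the displayed identity, $x^\top W_qx=0$ holds exactly when $\mathcal O_qx=0$. Hence $W_q$ is positive definite if and only if $\ker\mathcal O_q=\{0\}$.

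Finally, $\mathcal O_q$ is a $qp\times n$ matrix, so rank--nullity gives $\rank\mathcal O_q=n-\dim\ker\mathcal O_q$. Therefore $\ker\mathcal O_q=\{0\}$ holds precisely when $\rank\mathcal O_q=n$. Chaining the two equivalences proves the proposition. The same argument ties in with the noiseless observability criterion above: rank $n$ means the map $x_0\mapsto\mathcal O_qx_0$ is injective.

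No step is a real obstacle. The only point needing care is the middle step, where we use that a symmetric positive semidefinite matrix is definite exactly when its quadratic form vanishes only at $0$. This follows directly from the definition of positive definiteness, $x^\top W_qx>0$ for all $x\neq0$, so no spectral theorem is needed.
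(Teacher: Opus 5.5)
Your proof is correct and takes essentially the same approach as the paper. Both arguments rest on the identity $x^\top W_qx=\|\mathcal O_qx\|_2^2$, which makes positive definiteness equivalent to $\ker\mathcal O_q=\{0\}$, i.e.\ to rank $n$; the paper simply argues the two directions separately (exhibiting a nonzero kernel vector when the rank is deficient) rather than citing rank--nullity.
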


\begin{proof}
For every $x\in\RR^n$,
\[
  x^\top W_qx=x^\top\mathcal O_q^\top\mathcal O_qx
  =\|\mathcal O_qx\|_2^2.
\]
If $\mathcal O_q$ has rank $n$, then $\mathcal O_qx\neq0$ for every
nonzero $x$, so $x^\top W_qx>0$ and $W_q$ is positive definite.  If
$\mathcal O_q$ does not have rank $n$, choose nonzero
$x\in\ker\mathcal O_q$.  Then $x^\top W_qx=0$, so $W_q$ is not positive
definite.
\end{proof}

\begin{proposition}[Invertible fibre-coordinate changes preserve
observability]
Let $S:\RR^n\to\RR^n$ be invertible and change coordinates by
$x=Sz$.  The transformed system has
\[
  z_{t+1}=S^{-1}FSz_t,\qquad y_t=HSz_t.
\]
The original pair $(F,H)$ is observable if and only if the transformed
pair $(S^{-1}FS,HS)$ is observable.
\end{proposition}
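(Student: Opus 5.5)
The plan is to compute the observability matrix of the transformed pair directly and compare it with that of the original pair. Write $\tilde F=S^{-1}FS$ and $\tilde H=HS$. The key identity is $\tilde F^{\,i}=S^{-1}F^iS$ for every $i\geq0$. It holds for $i=0$ trivially, and the inductive step uses
\[
  \tilde F^{\,i+1}=S^{-1}F^iS\,S^{-1}FS=S^{-1}F^{i+1}S .
\]
From this, $\tilde H\tilde F^{\,i}=HSS^{-1}F^iS=HF^iS$. Stacking these rows for $i=0,\ldots,q-1$ gives the block identity
\[
  \tilde{\mathcal O}_q=\mathcal O_qS
\]
for every horizon $q$.

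Next, invertibility of $S$ shows that right multiplication preserves rank: $\ker(\mathcal O_qS)=S^{-1}\ker\mathcal O_q$, so the two kernels have equal dimension. By rank--nullity, $\rank\tilde{\mathcal O}_q=\rank\mathcal O_q$ for every $q$. Hence there exists a $q$ with $\rank\mathcal O_q=n$ if and only if there exists a $q$ with $\rank\tilde{\mathcal O}_q=n$. This is exactly the equivalence of finite-time observability in the definition.

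Alternatively, the same conclusion can be phrased through the Gramian criterion. Since $\tilde W_q=S^\top W_qS$ is a congruence by an invertible matrix, $\tilde W_q$ is positive definite if and only if $W_q$ is. I would mention this only as a remark.

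There is no real obstacle here. The only points needing care are the induction for $\tilde F^{\,i}$, and checking that the horizon $q$ is the same on both sides so that the existential quantifier in the definition transfers.
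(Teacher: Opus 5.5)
Your proposal is correct and takes essentially the same route as the paper: both compute the transformed observability matrix as $\mathcal O_qS$ and conclude that invertibility of $S$ leaves the rank unchanged, so finite-time observability holds at the same horizon $q$ on both sides. You give more detail than the paper, namely the induction $\tilde F^{\,i}=S^{-1}F^iS$ and the kernel/rank--nullity argument, whereas the paper simply asserts $\rank(\mathcal O_qS)=\rank\mathcal O_q$.
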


\begin{proof}
The transformed observability matrix is
\[
  \mathcal O'_q=
  \begin{pmatrix}
    HS\\
    HS(S^{-1}FS)\\
    \vdots\\
    HS(S^{-1}FS)^{q-1}
  \end{pmatrix}
  =
  \begin{pmatrix}
    H\\HF\\ \vdots\\HF^{q-1}
  \end{pmatrix}S
  =\mathcal O_qS.
\]
Since $S$ is invertible, $\rank(\mathcal O_qS)=\rank\mathcal O_q$.
Thus full rank is preserved in both directions.
\end{proof}

\begin{corollary}[Coordinate choice is not the source of Kalman
convergence]
Changing to an equivalent fibre coordinate system cannot create or
destroy observability.  Any Kalman convergence proof must therefore
come from the dynamics and observation map, not from the mere naming of
the coordinates as fibres.
\end{corollary}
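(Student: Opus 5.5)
The corollary is a direct consequence of the preceding proposition on invertible fibre-coordinate changes. I will split it into its two assertions and treat each in turn.

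For the first assertion, I will model an equivalent fibre coordinate system as an invertible linear change of variables $x=Sz$ on $\RR^n$. The transformed pair is then $(S^{-1}FS,HS)$. The preceding proposition showed that its observability matrix is $\mathcal O_qS$ at every horizon $q$, and that $\rank(\mathcal O_qS)=\rank\mathcal O_q$.

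\begin{itemize}
\item If $(F,H)$ is observable, some $q$ gives full rank $n$, and the same $q$ works after the change of coordinates.
\item If $(F,H)$ is not observable, no $q$ gives rank $n$, and the equality of ranks shows the same holds after the change.
\end{itemize}

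So observability can be neither created nor destroyed. Equivalently, by the Gramian criterion, $W'_q=S^\top W_qS$ is positive definite if and only if $W_q$ is, because congruence by an invertible $S$ preserves definiteness. This gives a second check of the same fact.

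For the second assertion, I will argue as follows. A relabelling of coordinates as ``fibres'' is at most such an invertible coordinate change, since it is a slotwise bijective linear presentation in the spirit of the slotwise isometric change of presentation. The standard Kalman convergence theorem needs detectability/stabilizability hypotheses. These hypotheses are rank and stability conditions on $(F,H)$ and the noise. Each is invariant under similarity, because $S^{-1}FS$ has the same eigenvalues as $F$ and the unobservable subspace is mapped by $S^{-1}$. Consequently the truth value of the hypotheses is fixed before any choice of coordinates. Any convergence proof must therefore rest on properties of $F$, $H$, $Q$, $R$ themselves, not on the naming of coordinates.

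I expect the main obstacle to be the interpretive step: stating precisely what ``equivalent fibre coordinate system'' means. I will fix it as an invertible linear $S$, possibly block-structured by slots. I will also note briefly that detectability, and not only observability, is similarity-invariant. For this I will use the PBH test: $\rank\begin{pmatrix}\lambda I-F\\ H\end{pmatrix}$ is unchanged when the matrix is multiplied on the right by $S$.
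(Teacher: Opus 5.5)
Your proof is correct and takes essentially the same route as the paper: you model the equivalent fibre coordinates as an invertible $S$ and use the preceding proposition's identity $\mathcal O'_q=\mathcal O_qS$. The Gramian congruence $W'_q=S^\top W_qS$ and the detectability remark are harmless extras, with one small slip in the PBH step: the transformed PBH matrix is $\begin{pmatrix}\lambda I-S^{-1}FS\\ HS\end{pmatrix}=\begin{pmatrix}S^{-1}&0\\0&I\end{pmatrix}\begin{pmatrix}\lambda I-F\\ H\end{pmatrix}S$, so it is multiplied by invertible matrices on both sides, not only on the right, though its rank is still preserved.
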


\begin{proof}
Equivalent fibre coordinates are represented by invertible changes of
state coordinates.  The preceding proposition shows that observability
is invariant under such changes.  Therefore a failure or success of
observability is a property of the represented system, not of the chosen
coordinate labels.
\end{proof}

\begin{proposition}[Projection does not preserve observability in general]
Let $P:\EE\to B$ be any non-injective projection from a fibre product to
a base coordinate space.  Observability of a linear system on $\EE$ does
not follow from observability of its projected dynamics on $B$.
\end{proposition}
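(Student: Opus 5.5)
The statement is a non-implication, so the plan is to disprove it by one explicit linear counterexample, in the spirit of the earlier scalar-threshold and round-trip counterexamples. The setting will be the smallest possible: a two-dimensional linear state space $\EE=\RR^2=B\oplus\RR$ with $B=\RR$, and the non-injective projection $P(b,u)=b$. Here $P$ is non-injective because any two states differing only in the fibre coordinate have the same image, as in the corollary on loss of unobserved coordinates. The dynamics will be chosen to respect the projection, so that the projected system on $B$ is well defined. Concretely, take $F=\id_{\RR^2}$ and $H=(1\ \ 0)$. The projected dynamics is then $b_{t+1}=b_t$ with observation $y_t=b_t$, so $F_B=1$ and $H_B=1$. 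Its horizon-one observability matrix is $(1)$, which has rank $1=\dim B$, so the projected system is observable.

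The next step applies the noiseless observability criterion to the full system on $\EE$. For every $q$, $HF^i=(1\ \ 0)$, so $\mathcal O_q$ consists of $q$ copies of the row $(1\ \ 0)$ and has rank $1<2$. Equivalently, by the Gramian criterion, $W_q=q\,e_1e_1^\top$ is singular. The nonzero kernel vector $h=(0,1)^\top$ gives two initial states, $x_0$ and $x_0+h$, with identical observation sequences. The full system is therefore not observable even though its projection is. This proves the statement: observability on $B$ does not imply observability on $\EE$.

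For a general non-injective $P$, the same pattern applies. Choose any nonzero $h\in\ker P$, which exists by non-injectivity and linearity of $P$. Let $F$ be any map compatible with $P$, for instance one with $h$ as an eigenvector, and let $H=H_B\circ P$. Then $H_BP F^i h=H_B F_B^i Ph=0$ for every $i$, so $h\in\ker\mathcal O_q$ for all $q$, while the projected pair $(F_B,H_B)$ can be taken observable. The only point needing care, and the main obstacle, is interpretive. The phrase ``projected dynamics'' presupposes an intertwining relation $PF=F_BP$, and that relation must be stated explicitly. Once it is, the calculation is routine. It is cleanest to exhibit the concrete $2\times2$ instance and then note that it extends to any nonzero kernel direction. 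The conclusion to close on is that hidden fibre directions must be observed through the dynamics, which is exactly what the earlier corollary on hidden fibres being quotiented out predicts.
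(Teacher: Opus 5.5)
Your proof is correct and is essentially the paper's argument. The paper takes $\EE=B\oplus F$ with a nonzero fibre, identity dynamics $x_{t+1}=x_t$ and observation $y_t=Px_t$, so that states differing only in the fibre produce identical outputs; your $2\times2$ example is the case $B=\RR$ with a one-dimensional fibre, checked via $\rank\mathcal O_q=1<2$ rather than by comparing output sequences directly. Your explicit check that the projected pair is observable, and your insistence on the intertwining relation $PF=F_BP$, make precise what the paper leaves implicit; note, however, that in your general paragraph $h$ being an eigenvector of $F$ does not by itself give that relation, although it does give $PF^ih=0$ directly.
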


\begin{proof}
Take $\EE=B\oplus F$ with $F\neq\{0\}$ and define a system
$x_{t+1}=x_t$ with observation $y_t=P x_t$.  Any two initial states
$(b,f_1)$ and $(b,f_2)$ with $f_1\neq f_2$ have identical observations
for all time.  Thus the full state is not observable even though the
base coordinate $b$ is perfectly observed.
\end{proof}

\begin{proposition}[Observable quotient carries exactly the observed
linear state]
Let $x_{t+1}=Fx_t$, $y_t=Hx_t$ be a linear system on $\RR^n$, and define
$x\sim x'$ if $Hx=Hx'$.  If $\ker H$ is $F$-invariant, then the update
$[x]\mapsto[Fx]$ is well-defined on the quotient $\RR^n/{\sim}$.
The induced output map $[x]\mapsto Hx$ is injective.
\end{proposition}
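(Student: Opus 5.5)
The argument has three parts: the equivalence relation, the well-definedness of the induced update, and the injectivity of the output map. Each reduces to a short linear-algebra check. First I will note that $\sim$ is an equivalence relation, since it is the kernel relation of the map $x\mapsto Hx$. Moreover, $x\sim x'$ holds exactly when $x-x'\in\ker H$. So $\RR^n/{\sim}$ is the vector-space quotient $\RR^n/\ker H$, and the class of $x$ is the coset $x+\ker H$. This identification is the one substantive translation, and everything else follows from it.

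For the update, take $x\sim x'$, so that $h=x-x'\in\ker H$. Linearity gives $Fx-Fx'=Fh$. The hypothesis that $\ker H$ is $F$-invariant gives $Fh\in\ker H$, hence $H(Fx)=H(Fx')$ and $[Fx]=[Fx']$. The formula $[x]\mapsto[Fx]$ is therefore independent of the representative. This is exactly the congruence-type argument used earlier for quotient compositions, now for a unary linear map. I will also remark that the induced map is linear, because it is the map $F$ descends to on $\RR^n/\ker H$.

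For the output, I will first check that $[x]\mapsto Hx$ is well defined: $x\sim x'$ means $Hx=Hx'$ by definition. Injectivity is then immediate. If $Hx=Hx'$, then $x\sim x'$, so $[x]=[x']$. This is the same reasoning as in the proposition that the observable quotient is determined by observed slots, with the slot projection replaced by the linear map $H$.

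I expect no real obstacle. The only point needing care is where $F$-invariance of $\ker H$ enters: it is used solely for the update map, not for the output map. I will add a sentence noting that without this invariance the update can fail to be well defined. For example, if $h\in\ker H$ but $HFh\neq0$, then $x$ and $x+h$ lie in the same class while $Fx$ and $F(x+h)$ do not.
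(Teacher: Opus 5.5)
Your proposal is correct and follows the paper's proof essentially step for step: rewrite $x\sim x'$ as $x-x'\in\ker H$, use $F$-invariance of $\ker H$ to get $HFx=HFx'$ for well-definedness of the update, and read injectivity of $[x]\mapsto Hx$ directly off the definition of $\sim$. Your extra remarks are correct additions the paper does not spell out: the explicit well-definedness of the output map, the linearity of the induced update, and the observation that invariance is also necessary for the update to descend.
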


\begin{proof}
The relation $x\sim x'$ is equivalent to $x-x'\in\ker H$.  If
$x\sim x'$, then $x-x'\in\ker H$.  By $F$-invariance,
$F(x-x')\in\ker H$, so $HFx=HFx'$ and therefore $Fx\sim Fx'$.
Thus the quotient update is well-defined.  If two quotient classes
have the same output, $Hx=Hx'$, then $x\sim x'$, hence $[x]=[x']$.
Therefore the induced output map is injective.
\end{proof}

\begin{proposition}[Unobserved unstable modes block full-state Kalman
convergence]
Consider the deterministic linear system
\[
  x_{t+1}=
  \begin{pmatrix}1&0\\0&a\end{pmatrix}x_t,\qquad
  y_t=\begin{pmatrix}1&0\end{pmatrix}x_t,
\]
with $|a|>1$.  The first coordinate is perfectly observed, but the
second coordinate is unobservable and unstable.  Hence no estimator
using only $y_0,y_1,\ldots$ can guarantee convergence to the full state
for all initial conditions.
\end{proposition}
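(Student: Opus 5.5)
The argument has three steps: compute the trajectories explicitly, show that two distinct initial states give identical observations, and conclude that no estimator can converge on both. Since $F=\mathrm{diag}(1,a)$, we have $x_t=(x_{0,1},\,a^t x_{0,2})^\top$ and $y_t=x_{0,1}$ for all $t$. The observation sequence is constant and depends only on the first coordinate. Perfect observation of the first coordinate is then immediate: the estimator $\hat x_{t,1}=y_t$ is exact. For unobservability of the second coordinate, compute $\mathcal O_q=\begin{pmatrix}1&0\\ \vdots&\vdots\\ 1&0\end{pmatrix}$. It has rank $1<2$ for every $q$, so the noiseless observability criterion proved above gives that $x_0$ is not determined by $y_0,\ldots,y_{q-1}$. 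Instability follows because $|a|>1$ is an eigenvalue of $F$ with eigenvector $e_2$, so $|x_{t,2}|=|a|^t|x_{0,2}|\to\infty$ whenever $x_{0,2}\neq0$.

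For the estimator claim, I formalize an estimator as any family of maps $\hat x_t=\Phi_t(y_0,\ldots,y_t)$; randomness or extra memory does not matter, since the maps only see $y$. Fix $x_0=(b,0)$ and $x_0'=(b,c)$ with $c\neq0$. These produce identical output sequences $y_t=b$ for all $t$, hence identical estimates $\hat x_t$. Their true states differ in the second coordinate by $a^tc$. By the triangle inequality,
\[
  \|\hat x_t-x_t\|+\|\hat x_t-x_t'\|\ \geq\ \|x_t-x_t'\|=|a|^t|c|\to\infty .
\]
So at least one of the two errors is unbounded along a subsequence, and in particular the errors cannot both tend to zero. Hence no estimator guarantees convergence for all initial conditions. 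I will note that even the weaker goal of bounded error fails, which is stronger than what the proposition asks.

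The only delicate step is making ``no estimator'' precise. Quantifying over arbitrary measurable functions of the observation history handles this, and the indistinguishability argument then goes through verbatim. I will also remark that this is exactly the failure of detectability in the standard Kalman hypotheses: the unobservable mode has eigenvalue outside the unit disc. The example therefore illustrates that the standard Kalman error convergence theorem cannot be invoked, and it matches the earlier proposition that projection does not preserve observability.
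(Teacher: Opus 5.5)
Your proof is correct and is essentially the paper's argument: two initial states that differ only in the hidden coordinate produce the same output sequence $y_t=b$, hence the same estimates, while their true states separate by $a^t c$, so no output-driven estimator can converge to both. Your triangle-inequality bound makes the paper's final step quantitative and shows that even bounded error fails. Your causal formalization $\hat x_t=\Phi_t(y_0,\ldots,y_t)$ is narrower than the statement, which also allows smoothers using the whole sequence $y_0,y_1,\ldots$; the argument still applies unchanged, because the entire output sequence coincides for the two initial states.
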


\begin{proof}
Let $x_0=(b,f)$ and $x'_0=(b,f')$ with $f\neq f'$.  The observations
are identical for the two initial states because
$y_t=b$ for all $t$.  The hidden coordinates are $a^t f$ and $a^t f'$,
whose difference is $a^t(f-f')$.  Since $|a|>1$, the hidden difference
does not converge to zero.  Any estimator driven only by the common
observation sequence must produce the same estimate for both initial
states, so it cannot converge to both full trajectories.  Therefore
full-state convergence is impossible without additional information,
restriction to the observable quotient, or a stability/detectability
hypothesis controlling the hidden mode.
\end{proof}

\begin{corollary}[Required audit for BrainiaK Kalman claims]
A BrainiaK Kalman convergence claim must identify the state being
estimated, the observation map, and the unobserved fibre directions.  It
must then verify detectability or explicitly restrict the theorem to the
observable quotient.
\end{corollary}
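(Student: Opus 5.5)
The corollary is a combination of the observability results just proved, so the plan is to read it as a conditional: a Kalman convergence claim about the full BrainiaK state is \emph{admissible} only if the listed objects are identified and one of the two verification routes is taken. The first step fixes the setting of the Standard Kalman error convergence theorem, namely $x_{t+1}=Fx_t+w_t$, $y_t=Hx_t+v_t$. That theorem has detectability and stabilizability among its hypotheses. Its conclusion cannot even be stated until the state space, the map $H$, and the noise covariances are given. So identifying the state and the observation map is simply a matter of instantiating the theorem's hypotheses. The unobserved fibre directions are then defined as $\ker\mathcal O_q$, via the Noiseless observability criterion and the Gramian criterion.

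The second step shows that fibre language cannot stand in for this identification. Two earlier results do the work. The Coordinate-choice corollary shows that relabelling coordinates as fibres neither creates nor destroys observability. The Projection proposition shows that observing a base coordinate does not make the full fibre product observable. So the hidden directions must be computed from $(F,H)$ and cannot be read off from the bundle description.

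The third step shows that, once hidden directions exist, some verification really is needed, and that the two routes offered are the natural ones. The Unobserved unstable modes proposition gives a system with a hidden direction on which full-state convergence fails for every estimator. Some hypothesis controlling the hidden modes is therefore indispensable. \emph{Detectability} is exactly the condition that unobservable modes are stable, so it is the first option. If one does not want to assume it, the Observable quotient proposition, under $F$-invariance of $\ker H$, yields a well-defined quotient system with injective output, on which the convergence claim can legitimately be stated. That is the second option. The corollary on hidden fibres being quotiented out confirms that nothing beyond the quotient can be recovered without extra information.

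The main obstacle is that ``must'' is normative rather than a formal implication. I would handle this by stating explicitly that the corollary asserts necessity. Any valid convergence proof either uses detectability, so that it follows the first route, or has a conclusion that does not concern the hidden directions, so that it is in effect a statement on the observable quotient. The counterexample excludes every third option.
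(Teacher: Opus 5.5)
Your proposal is correct and follows essentially the paper's route. The paper's proof is the one-line remark that the unobserved-unstable-mode counterexample, read contrapositively together with the standard Kalman theorem, forces this audit; you expand it with the coordinate-invariance, projection, and observable-quotient results. Two minor tightenings: take the hidden directions to be $\ker\mathcal O_n$, which is $F$-invariant by Cayley--Hamilton, so the quotient is always well defined without assuming $F$-invariance of $\ker H$; and to make ``the counterexample excludes every third option'' literal, use its general Hautus-test form, in which any non-detectable pair has $Fv=\lambda v$, $Hv=0$ with $v\neq0$ and $|\lambda|\geq1$, giving (after taking real parts if needed) the same indistinguishable initial states whose difference does not decay.
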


\begin{proof}
This is the contrapositive lesson of the preceding proposition combined
with the standard Kalman convergence theorem: unobserved unstable
directions cannot be controlled by a filter proof that only sees the
base projection.
\end{proof}

\paragraph{Implementation correspondence.}
The active repository contains \codepath{brainiak/mathcore/kalman_bank.py}
and learning-weight surfaces under
\codepath{brainiak/mathcore/fda/semantic/learning_weights.py}.  The
abstract path from the source treatise
\codepath{brainiak/mathcore/kalman/kalman_filter.py} is therefore treated
as an architectural label rather than a verified file path.

\section{SSTD as a Bundle Morphism}

\begin{definition}[Text and spectral bundles]
Let $X$ be a set of texts and let $D$ be a set of semantic domains.
Let $p:X\to D$ be a domain projection.  Define the trivial text bundle
$\EE_{\mathrm{text}}=X\times F_{\mathrm{text}}$ and the spectral bundle
$\EE_{\mathrm{spec}}=D\times \RR^{3072}$.  A map
\[
  \Phi_{\mathrm{SSTD}}:\EE_{\mathrm{text}}\to\EE_{\mathrm{spec}}
\]
is a bundle morphism over $p$ if
\[
  \pi_{\mathrm{spec}}\circ \Phi_{\mathrm{SSTD}}
  =p\circ \pi_{\mathrm{text}}.
\]
\end{definition}

\begin{definition}[SSTD sedimentation abstraction]
For the purposes of this manuscript, an SSTD encoder is a map
$z:X\to\RR^{3072}$ decomposed into three fixed views
\[
  z(x)=\bigl(z_{\mathrm{mean}}(x),z_{\mathrm{max}}(x),
  z_{\mathrm{std}}(x)\bigr),
  \qquad z_\bullet(x)\in\RR^{1024}.
\]
This abstraction matches the documented wire slot but does not specify
the full engineering pipeline.
\end{definition}

\begin{proposition}[SSTD morphism criterion]
If an SSTD encoder maps each text $x\in X$ to
$(p(x),z(x))\in D\times\RR^{3072}$ for a spectral representation
$z(x)$, then it defines a bundle morphism from the trivial text bundle
to the spectral bundle over $p$.
\end{proposition}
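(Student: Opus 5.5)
The plan is a direct verification of the commuting-square condition in the definition of a bundle morphism over $p$. First I will fix the data: the trivial text bundle $\EE_{\mathrm{text}}=X\times F_{\mathrm{text}}$ with projection $\pi_{\mathrm{text}}(x,f)=x$, and the spectral bundle $\EE_{\mathrm{spec}}=D\times\RR^{3072}$ with projection $\pi_{\mathrm{spec}}(d,v)=d$. The hypothesis gives the encoder on texts, $x\mapsto(p(x),z(x))$. I will promote it to a map on the total space by
\[
  \Phi_{\mathrm{SSTD}}(x,f)=(p(x),z(x)),
\]
so the fibre coordinate $f$ is ignored. If the encoder is allowed to depend on fibre data, I will instead write $\Phi_{\mathrm{SSTD}}(x,f)=(p(x),z(x,f))$; the argument below does not change.

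Second, I will check that $\Phi_{\mathrm{SSTD}}$ is well typed. By the product membership criterion, it suffices that $p(x)\in D$ and $z(x)\in\RR^{3072}$. The first holds because $p:X\to D$, and the second holds by the SSTD sedimentation abstraction, in which each of the three views lies in $\RR^{1024}$.

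Third, I will compute both sides of the square on an arbitrary point $(x,f)$:
\[
  \pi_{\mathrm{spec}}(\Phi_{\mathrm{SSTD}}(x,f))=\pi_{\mathrm{spec}}(p(x),z(x))=p(x)=p(\pi_{\mathrm{text}}(x,f)).
\]
Since $(x,f)$ is arbitrary, $\pi_{\mathrm{spec}}\circ\Phi_{\mathrm{SSTD}}=p\circ\pi_{\mathrm{text}}$, which is exactly the defining condition of a bundle morphism over $p$.

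I do not expect a genuine obstacle; the only delicate point is bookkeeping. The statement describes the encoder on $X$, but the morphism must be defined on $X\times F_{\mathrm{text}}$, so the extension to the fibre coordinate has to be stated explicitly. I will also remark that, as in the result card, no continuity, linearity or complexity property is claimed. The proposition is purely set-theoretic, and any topological version would require separate hypotheses on $p$ and $z$.
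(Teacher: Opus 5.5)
Your proposal is correct and follows the paper's proof, which is exactly the one-line computation $\pi_{\mathrm{spec}}(\Phi_{\mathrm{SSTD}}(x,f))=\pi_{\mathrm{spec}}(p(x),z(x))=p(x)=p(\pi_{\mathrm{text}}(x,f))$. Your extra bookkeeping is harmless: you extend the encoder to $X\times F_{\mathrm{text}}$ and type-check it, and the membership $z(x)\in\RR^{3072}$ already follows from the hypothesis without invoking the three-view abstraction.
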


\begin{proof}
By construction
\[
  \pi_{\mathrm{spec}}(\Phi_{\mathrm{SSTD}}(x,f))
  =\pi_{\mathrm{spec}}(p(x),z(x))=p(x)
  =p(\pi_{\mathrm{text}}(x,f)).
\]
Thus the bundle square commutes.
\end{proof}

\begin{proposition}[Changing the domain projection changes the morphism
claim]
Let $p,p':X\to D$ be two domain projections.  A map
$\Phi:X\times F_{\mathrm{text}}\to D\times\RR^{3072}$ of the form
$\Phi(x,f)=(p(x),z(x))$ is a bundle morphism over $p$.  It is a bundle
morphism over $p'$ if and only if $p=p'$.
\end{proposition}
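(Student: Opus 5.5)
The first assertion follows directly from the SSTD morphism criterion proved just above. Since $\Phi(x,f)=(p(x),z(x))$ has exactly the form required there, we get $\pi_{\mathrm{spec}}\circ\Phi=p\circ\pi_{\mathrm{text}}$. So $\Phi$ is a bundle morphism over $p$ with no further work.

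For the equivalence concerning $p'$, I will unpack the definition of a bundle morphism over $p'$. It is the pointwise identity $\pi_{\mathrm{spec}}(\Phi(x,f))=p'(\pi_{\mathrm{text}}(x,f))$ for all $(x,f)\in X\times F_{\mathrm{text}}$. The left side equals $p(x)$ by the form of $\Phi$, and the right side equals $p'(x)$ since $\pi_{\mathrm{text}}(x,f)=x$. The condition therefore reduces to $p(x)=p'(x)$ for every $x$ that occurs as the first coordinate of some point of $X\times F_{\mathrm{text}}$.

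\emph{If $p=p'$:} the identity holds trivially.

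\emph{Conversely:} given $x\in X$, I will pick some $f\in F_{\mathrm{text}}$ and evaluate the commuting square at $(x,f)$, which yields $p(x)=p'(x)$. Since $x$ is arbitrary, function extensionality gives $p=p'$.

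The only real obstacle is this last step, which needs a point $f$ to exist, i.e.\ $F_{\mathrm{text}}\neq\emptyset$. If the fibre is empty, then $X\times F_{\mathrm{text}}=\emptyset$ and every map is vacuously a morphism over every projection, so the ``only if'' direction fails. I will therefore state explicitly that the text fibre is nonempty. This is implicit in the paper's use of trivial text bundles, just as nonempty unobserved slots were assumed for the observable quotient bijection. Apart from this, the proof is a direct evaluation of the definition.
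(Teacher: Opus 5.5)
Your proof is correct and is essentially the paper's argument: you evaluate the commuting-square condition over $p'$ pointwise, note that the left side is $p(x)$ and the right side is $p'(x)$, and conclude that the condition holds exactly when $p=p'$. Your added hypothesis $F_{\mathrm{text}}\neq\emptyset$ is a genuine sharpening. The paper uses it silently when passing from ``for all $x$ and $f$'' to ``for every $x\in X$''. Without it, the ``only if'' direction is false whenever $X\neq\emptyset$ and $D$ has at least two elements.
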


\begin{proof}
The spectral projection of $\Phi(x,f)$ is $p(x)$.  The morphism
condition over $p'$ requires
\[
  \pi_{\mathrm{spec}}\Phi(x,f)=p'(\pi_{\mathrm{text}}(x,f))=p'(x)
\]
for all $x$ and $f$.  Since the left-hand side is $p(x)$, this condition
holds for all inputs if and only if $p(x)=p'(x)$ for every $x\in X$.
\end{proof}

\begin{proposition}[Composition of SSTD bundle morphisms]
Let $\Phi:E\to E'$ be a bundle morphism over $p:B\to B'$ and let
$\Psi:E'\to E''$ be a bundle morphism over $q:B'\to B''$.  Then
$\Psi\circ\Phi:E\to E''$ is a bundle morphism over $q\circ p$.
\end{proposition}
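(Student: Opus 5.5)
The argument is a direct diagram chase using the definition of bundle morphism from the SSTD definition. I will write the bundle projections as $\pi:E\to B$, $\pi':E'\to B'$ and $\pi'':E''\to B''$. The hypotheses are then the two commuting squares
\[
  \pi'\circ\Phi=p\circ\pi,\qquad \pi''\circ\Psi=q\circ\pi'.
\]
The goal is the single identity $\pi''\circ(\Psi\circ\Phi)=(q\circ p)\circ\pi$.

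The proof is one chain of equalities, using only associativity of composition of maps. First,
\[
  \pi''\circ\Psi\circ\Phi=(q\circ\pi')\circ\Phi,
\]
which applies the square for $\Psi$. This equals $q\circ(\pi'\circ\Phi)$, and by the square for $\Phi$ that is $q\circ(p\circ\pi)=(q\circ p)\circ\pi$. Pointwise, for $e\in E$ this reads $\pi''(\Psi(\Phi(e)))=q(\pi'(\Phi(e)))=q(p(\pi(e)))$.

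The only point that needs care is typing, in the spirit of the paper's dimensional hygiene. The codomain of $\Phi$ must equal the domain of $\Psi$, and the base map $p$ must land in the base $B'$ on which $q$ is defined. Both conditions are built into the hypotheses, so the composites $\Psi\circ\Phi$ and $q\circ p$ are well defined.

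I expect no real obstacle. If any structure such as continuity or smoothness were required of morphisms, it would follow because composites of continuous or smooth maps are again continuous or smooth. The paper's definition only requires the commuting square, so the chase above suffices.
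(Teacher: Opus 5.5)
Your proof is correct and essentially identical to the paper's. You name the three projections, record the commuting squares $\pi'\circ\Phi=p\circ\pi$ and $\pi''\circ\Psi=q\circ\pi'$, and chain them by associativity to obtain $\pi''\circ(\Psi\circ\Phi)=(q\circ p)\circ\pi$; your typing remark is a harmless addition that the paper leaves implicit.
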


\begin{proof}
Let $\pi,\pi',\pi''$ be the bundle projections.  Since $\Phi$ is a
morphism over $p$ and $\Psi$ is a morphism over $q$,
\[
  \pi'\circ\Phi=p\circ\pi,\qquad
  \pi''\circ\Psi=q\circ\pi'.
\]
Therefore
\[
  \pi''\circ(\Psi\circ\Phi)
  =(\pi''\circ\Psi)\circ\Phi
  =(q\circ\pi')\circ\Phi
  =q\circ(\pi'\circ\Phi)
  =q\circ p\circ\pi.
\]
This is the commuting-square condition for a morphism over $q\circ p$.
\end{proof}

\begin{definition}[SSTD slot insertion]
Fix neutral elements in all $T^n$ slots except the SSTD slot
$E_7=\RR^{3072}$.  The SSTD insertion map is
\[
  \iota_{\mathrm{SSTD}}:\RR^{3072}\to\EE,
\]
defined by placing the spectral vector in slot $7$ and the neutral
elements in every other slot.
\end{definition}

\begin{proposition}[SSTD slot insertion is Lipschitz]
If the $T^n$ product carries the GCM metric and the SSTD slot has
weight $w_7>0$, then
\[
  d_{\mathrm{GCM}}(\iota_{\mathrm{SSTD}}u,
  \iota_{\mathrm{SSTD}}v)=w_7d_7(u,v).
\]
Consequently $\iota_{\mathrm{SSTD}}$ is $w_7$-Lipschitz from
$(\RR^{3072},d_7)$ to $(\EE,d_{\mathrm{GCM}})$.
\end{proposition}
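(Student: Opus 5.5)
The plan is to compute the distance directly from the definition of the product GCM distance; the Lipschitz statement then follows at once. Write $u,v\in\RR^{3072}$ and expand $\iota_{\mathrm{SSTD}}u$ and $\iota_{\mathrm{SSTD}}v$ as $12$-tuples. By the definition of the SSTD slot insertion, both tuples carry the same fixed neutral element $e_k$ in every slot $k\neq7$. Slot $7$ carries $u$ and $v$ respectively.

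Substituting into $d_{\mathrm{GCM}}(x,y)=\sum_k w_kd_k(x_k,y_k)$, each term with $k\neq 7$ is $w_kd_k(e_k,e_k)$. This term vanishes by the identity axiom of the slot metric $d_k$, which is guaranteed by the metric-slot assumption. Only the $k=7$ term survives, giving exactly $w_7d_7(u,v)$. This proves the displayed identity.

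For the consequence, the identity shows that $\iota_{\mathrm{SSTD}}$ scales distances exactly by the factor $w_7$. Hence $d_{\mathrm{GCM}}(\iota_{\mathrm{SSTD}}u,\iota_{\mathrm{SSTD}}v)\leq w_7d_7(u,v)$, which is the $w_7$-Lipschitz property. In fact $\iota_{\mathrm{SSTD}}$ is a similarity onto its image, so it is also injective since $w_7>0$.

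There is no genuine obstacle here. The only point requiring care is that the neutral elements are the same fixed choices for both inputs. This is what the definition of the insertion map stipulates, and it is what makes the off-slot terms vanish. If they were allowed to depend on the input, the identity would fail.

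As a sanity check, the result is consistent with the earlier proposition that $\pi_7$ is $1/w_7$-Lipschitz, since $\pi_7\circ\iota_{\mathrm{SSTD}}=\id$.
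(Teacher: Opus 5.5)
Your proof is correct and is essentially the paper's argument: the two inserted tuples carry the same fixed neutral element in every slot $k\neq 7$, so those terms of $d_{\mathrm{GCM}}$ vanish, only $w_7d_7(u,v)$ remains, and the $w_7$-Lipschitz bound follows at once. Your extra remarks are correct but not needed for the statement: the map scales distances exactly by $w_7$, it is injective, and the result is consistent with $\pi_7\circ\iota_{\mathrm{SSTD}}=\id$.
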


\begin{proof}
The inserted points differ only in slot $7$.  Every other coordinate
distance is zero because the same neutral element is used in that slot.
Thus the weighted product sum reduces to $w_7d_7(u,v)$.
\end{proof}

\begin{definition}[Fixed-length three-view aggregation]
For token vectors $u_1,\ldots,u_L\in\RR^d$, define
\[
  A_L(u_1,\ldots,u_L)=
  \bigl(\mathrm{mean}(u),\mathrm{max}(u),\mathrm{std}(u)\bigr)
  \in\RR^{3d},
\]
where mean, maximum, and standard deviation are computed coordinatewise.
\end{definition}

\begin{proposition}[Three-view aggregation has fixed dimension]
For fixed token dimension $d$, the aggregation $A_L$ maps every
fixed-length input $(\RR^d)^L$ into $\RR^{3d}$, independently of the
sequence length $L$.
\end{proposition}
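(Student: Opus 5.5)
The plan is to unpack the definition of $A_L$ coordinatewise and check that each of the three views lands in $\RR^d$, so that their concatenation lands in $\RR^d\times\RR^d\times\RR^d=\RR^{3d}$, a space in which $L$ does not appear. I will fix $L\geq1$ (for $L=0$ the mean and standard deviation are undefined, so the statement is read for nonempty sequences) and fix $(u_1,\ldots,u_L)\in(\RR^d)^L$, writing $u_{i,j}$ for the $j$th coordinate of $u_i$.

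For each coordinate $j\in\{1,\ldots,d\}$, I first show that the three scalar statistics are well-defined real numbers:
\[
  m_j=\frac1L\sum_{i=1}^Lu_{i,j},\qquad
  M_j=\max_{1\leq i\leq L}u_{i,j},\qquad
  s_j=\Bigl(\frac1L\sum_{i=1}^L(u_{i,j}-m_j)^2\Bigr)^{1/2}.
\]
The mean is a finite sum divided by $L\neq0$. The maximum is taken over a finite nonempty set of reals, so it exists. The standard deviation is the square root of a nonnegative finite sum, so it is real. Hence $\mathrm{mean}(u)=(m_j)_j$, $\mathrm{max}(u)=(M_j)_j$ and $\mathrm{std}(u)=(s_j)_j$ each lie in $\RR^d$. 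If the unbiased convention with divisor $L-1$ is preferred, one restricts to $L\geq2$; the dimension count is unchanged.

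Next, I invoke the product membership criterion (the lemma on typed products with $m=3$ and $E_1=E_2=E_3=\RR^d$). It gives that $A_L(u)$ is an element of $\RR^d\times\RR^d\times\RR^d$, which is identified with $\RR^{3d}$ by concatenation. Since this codomain is defined using only $d$, the same space serves every $L$.

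The only genuine obstacle is the degenerate case: the empty sequence, and the unbiased standard deviation when $L=1$. I will handle it by stating the domain convention explicitly, as above, rather than by any computation. With $d=1024$, this recovers the $3072$-dimensional SSTD slot.
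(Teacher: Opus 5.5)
Your proof is correct and follows the paper's own argument: each coordinatewise view (mean, maximum, standard deviation) lies in $\RR^d$, and concatenating them places $A_L(u)$ in $\RR^{3d}$, a codomain that depends only on $d$. Your explicit domain convention ($L\geq 1$, or $L\geq 2$ for the unbiased standard deviation) is a small addition that the paper leaves implicit.
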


\begin{proof}
The coordinatewise mean is a vector in $\RR^d$, the coordinatewise
maximum is a vector in $\RR^d$, and the coordinatewise standard
deviation is a vector in $\RR^d$.  Concatenating the three vectors gives
an element of $\RR^{3d}$.  The output dimension is therefore $3d$ and
does not depend on $L$.
\end{proof}

\begin{corollary}[The \texorpdfstring{$3072=3\cdot1024$}{3072=3*1024}
slot is dimensionally consistent]
If the internal token dimension is $d=1024$, the fixed-length
three-view aggregation has output dimension $3d=3072$.
\end{corollary}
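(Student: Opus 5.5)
This corollary is a direct specialization of the preceding proposition on fixed dimension of three-view aggregation, so the plan is to instantiate that result at $d=1024$ and then do one arithmetic check. Concretely, I would take arbitrary token vectors $u_1,\ldots,u_L\in\RR^{1024}$ for an arbitrary length $L\geq1$. I would then note that the coordinatewise mean, maximum, and standard deviation are each well-defined vectors in $\RR^{1024}$. For the maximum this needs $L\geq1$ so that the maximum over a nonempty finite set exists. The standard deviation is the square root of a nonnegative average of squares, so it is a real number in each coordinate.

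By the preceding proposition, the concatenation $A_L(u_1,\ldots,u_L)$ lies in $\RR^{3\cdot1024}$ regardless of $L$. The arithmetic $3\cdot1024=3072$ then identifies the codomain with $\RR^{3072}$. That space is exactly the SSTD slot $E_7$ from the slot list, and it matches the sedimentation abstraction $z(x)=(z_{\mathrm{mean}},z_{\mathrm{max}},z_{\mathrm{std}})$ with each view in $\RR^{1024}$.

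To make ``dimensionally consistent'' precise in the sense of the dimensional-consistency theorem, I would add one further observation. The identification $\RR^{1024}\oplus\RR^{1024}\oplus\RR^{1024}\cong\RR^{3072}$ is the explicit concatenation isomorphism, so the aggregated output enters $\EE$ only through the typed slot $E_7$. It does so via the SSTD insertion map $\iota_{\mathrm{SSTD}}$, and it is never identified with $B_{14}$.

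There is no real obstacle in this argument. The only point needing care is the implicit hypothesis $L\geq1$: for $L=0$ the mean and maximum are undefined, so the corollary should be read for nonempty token sequences, as the proposition already presupposes.
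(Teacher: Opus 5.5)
Your proposal is correct and matches the paper's proof, which simply substitutes $d=1024$ into the proposition that three-view aggregation has fixed output dimension $3d$. Your extra remarks are sound but not needed for the corollary as stated: the implicit hypothesis $L\geq1$, and the routing of the output through $\iota_{\mathrm{SSTD}}$ into slot $E_7$.
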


\begin{proof}
Substitute $d=1024$ into the preceding proposition.
\end{proof}

\begin{proposition}[Asymptotic complexity under fixed spectral depth]
Assume a text contains $L$ tokens, lexicon lookup costs
$O(\log |\mathcal{L}|)$ per token, spectral diffusion has fixed depth
$D$ and fixed dimension $d$, and aggregation uses a fixed number of
views.  Then SSTD encoding costs
\[
  O\bigl(L\log|\mathcal{L}|+LDd\bigr).
\]
For fixed $D$ and $d$, this is linear in $L$ up to lookup cost.
\end{proposition}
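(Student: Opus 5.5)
The plan is to fix an explicit computational model and bound each stage of the encoding pipeline separately, then add the bounds. I would read the SSTD encoder as three sequential stages: (i) lexicon lookup, which maps each of the $L$ tokens to an initial vector in $\RR^d$; (ii) spectral diffusion, which applies $D$ rounds of an update to the token representations; (iii) the fixed-length three-view aggregation $A_L$ from the preceding definition. The cost model is the unit-cost real RAM: arithmetic operations and comparisons on reals cost $O(1)$, and a lexicon lookup costs $O(\log|\mathcal L|)$ by hypothesis, for example binary search in a sorted lexicon.

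\textbf{Lookup.} Each of the $L$ tokens needs one lookup of cost $O(\log|\mathcal L|)$. Copying the retrieved vector of dimension $d$ costs $O(d)$ per token, and this is absorbed into the diffusion term since $D\geq1$. The stage therefore costs $O(L\log|\mathcal L|+Ld)$.

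\textbf{Diffusion.} This is the step I expect to be the main obstacle, because the statement says only ``fixed depth $D$ and fixed dimension $d$''. I would make the assumption explicit: each diffusion round updates each token vector using $O(d)$ arithmetic operations that involve only that token and a bounded number of neighbours, as with a local or sedimentation-style update. Under this assumption each round costs $O(Ld)$ and $D$ rounds cost $O(LDd)$. I would also point out where the assumption can fail. If a round mixed every pair of tokens, as attention does, the cost would be $O(L^2d)$ per round and the proposition would not hold. This matches the paper's contrast with quadratic self-attention. If a round used a dense $d\times d$ matrix, the term would become $LDd^2$, which is still linear in $L$ for fixed $d$. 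I would say this in a remark so that the displayed bound is clearly conditional on the per-token $O(d)$ update model.

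\textbf{Aggregation and conclusion.} The coordinatewise mean, maximum and standard deviation each take one or two passes over $L$ vectors in $\RR^d$, so each costs $O(Ld)$. With a fixed number of views, the total aggregation cost is $O(Ld)$, and the output dimension is $3d$ by the fixed-dimension proposition. Summing the three stages gives
\[
O(L\log|\mathcal L|)+O(Ld)+O(LDd)+O(Ld)=O\bigl(L\log|\mathcal L|+LDd\bigr),
\]
using $D\geq1$. When $D$ and $d$ are fixed, $LDd=O(L)$, so the cost is linear in $L$ apart from the lookup factor $\log|\mathcal L|$. That factor does not depend on $L$ once the lexicon is fixed.
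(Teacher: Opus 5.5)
Your proposal is correct and uses the same stage-by-stage accounting as the paper's proof: lookup costs $O(L\log|\mathcal L|)$, diffusion costs $O(Dd)$ per token for a total of $O(LDd)$, and aggregation costs $O(Ld)$, which is absorbed using $D\geq1$. Your explicit assumption that each diffusion round costs $O(d)$ per token is exactly what the paper's proof takes for granted when it asserts that fixed-depth diffusion performs $O(Dd)$ work per token. Stating it openly, together with your remark on the all-pairs and dense $d\times d$ variants, sharpens the same argument without changing its route.
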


\begin{proof}
Token iteration contributes $O(L)$.  Lookup contributes
$O(L\log|\mathcal{L}|)$ under balanced-tree lookup, or expected $O(L)$
under hashing.  Fixed-depth diffusion performs $O(Dd)$ work per token,
giving $O(LDd)$.  A fixed number of aggregations over $d$ dimensions
also contributes $O(Ld)$, absorbed by $O(LDd)$ when $D\geq1$.
\end{proof}

\begin{proposition}[Continuity of a linearized SSTD encoder]
Assume tokens have vector encodings in a normed space, diffusion is a
finite composition of bounded linear maps, and aggregation uses mean and
linear projections only.  Then the corresponding SSTD map is Lipschitz
on fixed-length token sequences.
\end{proposition}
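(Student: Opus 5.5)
The plan is to write the linearized SSTD map on fixed-length inputs as a composition of three stages and to use the fact that compositions of Lipschitz maps are Lipschitz with constant equal to the product of the constants.

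Fix $L$ and the token dimension $d$. Equip $(\RR^d)^L$ with the sum norm $\norm{(u_1,\ldots,u_L)}=\sum_i\norm{u_i}$. Since all norms on this finite-dimensional space are equivalent, the choice only changes constants.

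\textbf{Stage one: token encoding.} Here I must be careful. If the input is already the sequence of token vectors, this stage is the identity. If instead the input is a sequence of discrete tokens, equip the finite sequence space with the discrete metric and let the token-vector map play the role of a bounded lexicon. On a discrete metric space, any map into a set of bounded diameter is Lipschitz. This is the point where I expect the main subtlety: Lipschitz continuity of the whole encoder must be stated on the space of token vectors. I will read the hypothesis ``tokens have vector encodings in a normed space'' as saying that the domain is $(\RR^d)^L$.

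\textbf{Stage two: diffusion.} By hypothesis, diffusion is $T=T_D\circ\cdots\circ T_1$ with each $T_j$ a bounded linear map. A bounded linear map satisfies $\norm{T_jx-T_jy}\le\norm{T_j}_{\mathrm{op}}\norm{x-y}$, so $T$ is Lipschitz with constant at most $\prod_j\norm{T_j}_{\mathrm{op}}$.

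\textbf{Stage three: aggregation.} The coordinatewise mean $(u_1,\ldots,u_L)\mapsto L^{-1}\sum_iu_i$ is linear. It satisfies $\norm{\mathrm{mean}(u)-\mathrm{mean}(u')}\le L^{-1}\sum_i\norm{u_i-u_i'}$, so it is $1/L$-Lipschitz in the sum norm. Any further linear projection $P$ onto the output slot is bounded because the dimensions are finite, and so is $\norm{P}_{\mathrm{op}}$-Lipschitz.

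Composing the three stages gives the bound
\[
  \norm{\Phi(u)-\Phi(u')}\le \frac{\norm{P}_{\mathrm{op}}}{L}\prod_j\norm{T_j}_{\mathrm{op}}\ \norm{u-u'}.
\]
By the slot-insertion result, the $w_7$-Lipschitz insertion $\iota_{\mathrm{SSTD}}$ then transfers this Lipschitz property into $(\EE,d_{\mathrm{GCM}})$.

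I will also remark why the hypothesis restricts aggregation to mean and linear projections. The coordinatewise maximum is also $1$-Lipschitz in the $\ell^\infty$ sense. The standard deviation, by contrast, is Lipschitz only as a seminorm, since it is the norm of the centered vector divided by $\sqrt L$. These views can be added later, but they are not needed for this statement.

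The main obstacle is only this interpretive one: making explicit that the Lipschitz claim concerns the fixed-length token-vector domain with a chosen norm. The constant depends on $L$ through norm equivalence, which is why the statement is restricted to fixed length.
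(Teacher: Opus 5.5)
Your proof is correct and is essentially the paper's argument. On a fixed length the input is a finite product of normed spaces, the diffusion is a finite composition of bounded linear maps, and the mean and linear projections are bounded linear, so the composite is Lipschitz; the differences are cosmetic, since you use the sum norm (so the mean is $1/L$-Lipschitz) where the paper uses the average product norm (operator norm at most $1$), and you add explicit constants and an optional transfer through $\iota_{\mathrm{SSTD}}$ that the paper does not need.
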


\begin{proof}
On a fixed length $L$, the input space is a finite product of normed
spaces.  A finite composition of bounded linear maps is bounded linear,
hence Lipschitz.  The mean aggregation is linear with operator norm at
most $1$ under the average product norm.  Therefore the full map is
Lipschitz.  This proposition does not cover discontinuities introduced
by tokenization, dictionary fallback, or max/std aggregation at
variable-length boundaries.
\end{proof}

\begin{proposition}[Continuity of max and standard-deviation views]
On fixed-length token sequences with fixed vector dimension, the
coordinatewise maximum view is $1$-Lipschitz with respect to the
$\ell_\infty$ product norm.  The coordinatewise standard-deviation view
is continuous, and is Lipschitz on each bounded fixed-length region.
Thus the three-view SSTD abstraction is continuous on bounded
fixed-length regions once token encodings have been fixed.
\end{proposition}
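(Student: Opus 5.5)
The proof works one coordinate at a time and then assembles. Fix the length $L$ and dimension $d$. Write a fixed-length input as $u=(u_1,\ldots,u_L)\in(\RR^d)^L$, with $\ell_\infty$ product norm $\|u\|_\infty=\max_{i,c}|u_{i,c}|$. Each view acts coordinatewise in $c\in\{1,\ldots,d\}$, so it suffices to study the scalar maps $(s_1,\ldots,s_L)\mapsto\max_i s_i$ and $(s_1,\ldots,s_L)\mapsto\bigl(\frac1L\sum_i(s_i-\bar s)^2\bigr)^{1/2}$ for each coordinate $c$, and then use the componentwise continuity corollary together with finite-dimensional norm equivalence to recombine the coordinates.

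\textbf{Max view.} The key inequality is $|\max_i s_i-\max_i t_i|\le\max_i|s_i-t_i|$. To prove it, note that $s_i\le t_i+\max_j|s_j-t_j|\le\max_j t_j+\max_j|s_j-t_j|$ for every $i$; taking the maximum over $i$ and then swapping the roles of $s$ and $t$ gives the claim. Applied coordinatewise, this shows that the max view is $1$-Lipschitz from $\ell_\infty$ on $(\RR^d)^L$ to $\ell_\infty$ on $\RR^d$.

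\textbf{Standard-deviation view.} I would factor it as $s\mapsto s-\bar s\mathbf 1$ followed by $v\mapsto L^{-1/2}\|v\|_2$. The centering map is linear, and the second map is $L^{-1/2}$ times a norm, hence $L^{-1/2}$-Lipschitz by the reverse triangle inequality. So $\mathrm{std}$ is in fact globally Lipschitz. The constant depends on $L$ but not on any bound for the data, because the centering map has operator norm at most $2$ in $\ell_\infty$ and $\|v\|_2\le\sqrt L\|v\|_\infty$. This gives more than the statement requires. The step that needs the most care is the square root. One should not try to bound the derivative of $\sqrt{\mathrm{variance}}$, since that derivative blows up near the constant sequences where the variance vanishes. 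The norm factorization avoids this problem entirely, and it also covers the bounded-region Lipschitz claim trivially. If the factorization were unavailable, the fallback would be continuity as a composition of the polynomial variance with the continuous $\sqrt{\cdot}$ on $[0,\infty)$, combined with local Lipschitz bounds away from zero variance. That route is weaker, which is why I prefer the factorization.

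\textbf{Assembly.} The mean view is linear and hence Lipschitz; this is the preceding proposition on linearized encoders. The three views are now each continuous, and Lipschitz on bounded regions, so their concatenation $A_L$ into $\RR^{3d}$ has the same property. This follows because a map into a finite product is Lipschitz when each component is, with constant at most the sum of the component constants, as in the weighted stability proposition. Finally, the three-view SSTD abstraction applies $A_L$ after token encodings that are held fixed. So on a bounded fixed-length region of encoded token sequences, the full map is continuous, and Lipschitz with a constant depending only on $L$ and $d$. This result makes no claim about variable-length boundaries or tokenization discontinuities.
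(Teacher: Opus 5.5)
Your proposal is correct and follows essentially the same route as the paper. You use the same coordinatewise inequality $|\max_i s_i-\max_i t_i|\le\max_i|s_i-t_i|$ for the max view, and the same factorization of the standard deviation as the linear centering map $s\mapsto s-\bar s\mathbf{1}$ followed by $L^{-1/2}\|\cdot\|_2$, which gives a global Lipschitz bound and hence covers bounded fixed-length regions; you then assemble the three views componentwise, as the paper does.
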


\begin{proof}
For real numbers $a_i,b_i$, the inequality
\[
  |\max_i a_i-\max_i b_i|\leq \max_i|a_i-b_i|
\]
is immediate from $a_i\leq b_i+\max_j|a_j-b_j|$ and the symmetric
inequality with $a$ and $b$ exchanged.  Applying this coordinatewise
proves the max statement.

For a fixed coordinate, the standard deviation of a length-$L$ vector
$u\in\RR^L$ is
\[
  \sigma(u)=L^{-1/2}\|u-\bar u{\bf 1}\|_2,
  \qquad \bar u=L^{-1}\sum_i u_i .
\]
The map $u\mapsto u-\bar u{\bf 1}$ is linear, and the Euclidean norm is
continuous and Lipschitz.  Hence $\sigma$ is Lipschitz on all of
$\RR^L$ with respect to the Euclidean norm.  Since all norms on the
fixed finite-dimensional input region are equivalent, the same map is
Lipschitz with respect to any chosen fixed product norm up to a constant.
Combining finitely many coordinates preserves continuity and
Lipschitzness on bounded fixed-length regions.
\end{proof}

\begin{proposition}[Concatenation is not automatically a morphism law]
Let $z:X\to\RR^{3072}$ be an arbitrary SSTD encoder.  The equality
$z(x\mathbin{\Vert}y)=M(z(x),z(y))$ for a fixed binary operation $M$ is
an additional algebraic property, not a consequence of the bundle
morphism definition.
\end{proposition}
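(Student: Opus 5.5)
To show that the concatenation law is not a consequence of the bundle morphism definition, I will exhibit an encoder that satisfies the morphism definition but admits no fixed binary operation $M$ with $z(x\mathbin{\Vert}y)=M(z(x),z(y))$ for all texts $x,y$. The first step is a reduction. By the SSTD morphism criterion, \emph{every} map of the form $\Phi(x,f)=(p(x),z(x))$ is a bundle morphism over $p$, whatever $z$ is. The commuting square $\pi_{\mathrm{spec}}\circ\Phi_{\mathrm{SSTD}}=p\circ\pi_{\mathrm{text}}$ constrains only the base component $p(x)$ and places no condition on the fibre component $z$. So it suffices to produce one encoder $z:X\to\RR^{3072}$ for which no $M$ exists, and then invoke the criterion to conclude that the corresponding $\Phi$ is still a bundle morphism.

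For the counterexample, I take $X$ to be the finite strings over an alphabet containing two distinct symbols $a,b$, closed under concatenation $\Vert$. The only requirement is that $z$ be non-injective on some pair whose concatenations with a common text are separated by $z$. One concrete choice is $z(x)=c(x)\,e_1$, where $c(x)$ is an indicator of a property that is not compositional with respect to the values of the parts. For instance, let $c(x)=1$ if the length of $x$ is a perfect square and $c(x)=0$ otherwise. Then the texts $a$ and $aaa$ satisfy $z(a)=e_1$ and $z(aaa)=0$, which are distinct, so I need a slightly different pair. A cleaner choice is to take $x=b$ and $x'=bbb$ (lengths $1$ and $3$) together with $y=aaa$:
\[
  z(b)=e_1,\quad z(bbb)=0,
\]
so these two do not share a $z$-value either. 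To avoid such trial and error, I will instead define $z$ directly: $z(x)=e_1$ if $x$ contains the substring $ab$, and $z(x)=0$ otherwise. Then $z(a)=z(b)=0$. However, $z(a\mathbin{\Vert}b)=z(ab)=e_1$, while $z(b\mathbin{\Vert}b)=z(bb)=0$.

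The conclusion then follows in a single line. If a fixed $M$ existed, we would have
\[
  e_1=z(ab)=M(z(a),z(b))=M(0,0)=M(z(b),z(b))=z(bb)=0,
\]
which is a contradiction. Hence no such $M$ exists for this $z$, even though $\Phi$ is a bundle morphism over any chosen $p$. The main obstacle is only bookkeeping: I must make sure the chosen $z$ genuinely takes values in $\RR^{3072}$ (it does, since $e_1\in\RR^{3072}$), and that the morphism criterion is applied with an arbitrary but fixed domain projection $p$. I would also remark, via the three-view aggregation $A_L$, that even natural encoders such as $\mathrm{std}$-aggregation fail to be compositional in $(z(x),z(y))$ alone, because the standard deviation of a concatenation depends on the segment lengths and means. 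This is offered only as motivation and is not needed for the proof.
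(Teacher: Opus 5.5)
Your proof is correct and follows essentially the paper's route: the commuting square constrains only the base component $p(x)$, so any non-compositional fibre map $z$ gives a bundle morphism that violates the concatenation law. The only difference is the counterexample. The paper merely gestures at a hash-dependent encoder, whereas your substring indicator with the chain $e_1=z(ab)=M(0,0)=z(bb)=0$ actually verifies that no $M$ exists. That makes yours the cleaner argument; just delete the abandoned perfect-square and $b$/$bbb$ attempts from the write-up.
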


\begin{proof}
The bundle morphism condition only states that the domain projection
commutes with the spectral bundle projection.  It imposes no relation
between the representation of a concatenated text and the
representations of its parts.  A counterexample is any encoder that
sets $z(x)$ equal to a hash-dependent vector of the full string; it can
still be placed in a trivial bundle over a domain map while violating
any fixed compositional law.
\end{proof}

\begin{remark}[Transformer comparison]
The standard self-attention layer has $O(L^2d_{\mathrm{model}})$
attention cost.  It is therefore academically incorrect to call
transformer inference ``exponential'' solely on the basis of
self-attention.  The defensible comparison is linear or
linear-logarithmic fixed-depth SSTD encoding versus quadratic
self-attention in sequence length.
\end{remark}

\begin{proposition}[Dense self-attention forms quadratically many scores]
In a dense single-head self-attention layer on $L$ tokens, if every
query token attends to every key token, then the layer forms $L^2$
query-key scores before masking.  Thus score formation is
$\Omega(L^2)$ in sequence length.
\end{proposition}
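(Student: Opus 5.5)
The plan is a direct counting argument, followed by a conversion of the count into a lower bound on work. First I will fix the model. A dense single-head layer receives token representations $h_1,\ldots,h_L\in\RR^{d_{\mathrm{model}}}$ and forms queries $q_i=W_Qh_i$ and keys $k_j=W_Kh_j$. It then forms one score $s_{ij}=q_i^\top k_j/\sqrt{d_k}$ for each ordered pair $(i,j)$. The hypothesis that every query attends to every key before masking means the score array is indexed by all of $\{1,\ldots,L\}\times\{1,\ldots,L\}$.

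Next I will count the pairs. The map $(i,j)\mapsto s_{ij}$ is defined on a product set of cardinality $L\cdot L=L^2$. This count uses only the cardinality of a finite Cartesian product, in the same spirit as the paper's treatment of typed finite products. Masking is applied afterwards, so it cannot reduce the number of scores formed.

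The last step turns the count into the asymptotic statement, and this is where care is needed. I will make the cost model explicit, as the paper does for the SSTD complexity proposition: forming or writing each score counts as at least one elementary operation. Under that model the layer performs at least $L^2$ operations, so the cost is $\Omega(L^2)$, with constant $1$ and no dependence on $d_{\mathrm{model}}$. Under the standard dense algorithm, each score is an inner product in dimension $d_k$, which gives total cost $\Theta(L^2d_k)$ and matches the $O(L^2d_{\mathrm{model}})$ stated in the preceding remark.

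The main obstacle is stating the scope correctly. The claim is about the dense score-formation procedure, not about every algorithm that computes the attention output; kernelized, sparse, or low-rank methods avoid materializing all $L^2$ scores. I will state this boundary explicitly so the proposition stays within the paper's rule against overclaiming.
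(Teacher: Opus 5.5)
Your proposal is correct and follows the paper's proof, which is the same ordered-pair count $L\cdot L=L^2$ over query and key positions. Your unit-cost-per-score model for reaching $\Omega(L^2)$ and your scope remark about sparse or kernelized methods spell out what the paper's one-line ``Thus'' leaves implicit.
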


\begin{proof}
There are $L$ query positions and $L$ key positions.  Dense attention
forms one score for each ordered pair of positions.  The number of
ordered pairs is $L\cdot L=L^2$.
\end{proof}

\begin{proposition}[Asymptotic dominance is not a benchmark result]
Let $T_1(L)=O(L)$ and $T_2(L)=\Omega(L^2)$ be asymptotic operation
counts.  These bounds imply that $T_1(L)/T_2(L)\to0$ for sufficiently
idealized cost models with compatible constants, but they do not imply
that one deployed implementation is faster than another on a fixed
finite workload.
\end{proposition}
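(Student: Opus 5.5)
The proof has two parts. First I will show the positive asymptotic claim, and then I will show that the asymptotic bounds fail to fix the ordering on any fixed finite workload. I will do this by exhibiting explicit operation-count functions that satisfy the hypotheses.

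For the positive half, I will make precise what a "sufficiently idealized cost model with compatible constants" means. It will mean that there are constants $C_1,c_2>0$ and a threshold $L_0$ with $T_1(L)\le C_1L$ and $T_2(L)\ge c_2L^2$ for all $L\ge L_0$. Under that reading, for $L\ge L_0$,
\[
  0\le \frac{T_1(L)}{T_2(L)}\le \frac{C_1L}{c_2L^2}=\frac{C_1}{c_2L},
\]
which tends to $0$. Nonnegativity holds because operation counts are nonnegative, and $T_2(L)>0$ for large $L$ because of the $\Omega(L^2)$ lower bound. This is just the definition of the $O$ and $\Omega$ bounds, combined with the quadratic score count from the preceding proposition on dense self-attention.

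For the negative half, I will fix any finite workload size $L^\ast\ge1$ and build a counterexample. Take
\[
  T_1(L)=K L \quad\text{and}\quad T_2(L)=L^2,
\]
with a constant $K>L^\ast$. Then $T_1=O(L)$ and $T_2=\Omega(L^2)$, yet
\[
  T_1(L^\ast)=KL^\ast>(L^\ast)^2=T_2(L^\ast).
\]
So the asymptotically dominant procedure is slower at the workload $L^\ast$. More generally, every $L<K$ is a crossover region where this happens.

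I will then add a remark that deployed runtime is not even a fixed function of the operation count. Hardware parallelism, memory traffic, and batching all intervene. Wall-clock time therefore needs a separate cost model, and the conclusion is still an empirical benchmark matter.

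The main obstacle I expect is interpretive rather than technical. The phrase "sufficiently idealized cost models with compatible constants" has to be formalized so that the positive claim is actually a theorem. I will handle this by stating the two-sided constant conditions explicitly, so that the limit becomes a one-line estimate. Everything after that is elementary.
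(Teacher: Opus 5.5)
Your proof is correct. It rests on the same idea as the paper's proof: asymptotic classes suppress constant factors. The difference is in what gets proved, and how rigorously.

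\textbf{The paper's route.} Its proof is purely qualitative. It notes that the limit $L\to\infty$ hides constant factors, memory traffic, batching, hardware kernels, cache effects, tokenization cost and implementation overhead. It concludes that a fixed finite workload depends on these hidden quantities. It treats the positive half ($T_1/T_2\to0$) as simply the meaning of an asymptotic comparison and never computes it.

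\textbf{Your route.} You prove the positive half with the explicit estimate $T_1/T_2\le C_1/(c_2L)$. This amounts to reading the hedge ``compatible constants'' as the eventually-for-all (Knuth) definitions of $O$ and $\Omega$. That choice matters: under the Hardy--Littlewood infinitely-often $\Omega$ you would only obtain $\liminf T_1/T_2=0$, not a limit. You then establish the non-implication with an actual countermodel, $T_1=KL$ and $T_2=L^2$ with $K>L^\ast$. A countermodel is what a non-implication claim logically requires, and the paper does not supply one.

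\textbf{What each buys.} Your version shows that the bounds underdetermine the finite ordering even in the most idealized model, where runtime equals operation count. The paper's version is broader: its list of hidden effects explains why deployed wall-clock time is not even a function of operation count. You mention this only in a closing remark.

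\textbf{A minor point.} Your appeal to the dense self-attention proposition in the positive half is unnecessary, since $T_2=\Omega(L^2)$ is already a hypothesis of the statement.
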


\begin{proof}
The asymptotic statement concerns the limit as $L\to\infty$ and hides
constant factors, memory traffic, batching, hardware kernels, cache
effects, tokenization cost, and implementation overhead.  A fixed
finite workload depends on those hidden quantities.  Therefore an
asymptotic comparison can justify a scaling claim under a specified
cost model, but a wall-clock performance claim requires empirical
measurement under a stated benchmark protocol.
\end{proof}

\begin{remark}[No stronger performance claim]
The preceding proposition is only a complexity counting fact about
dense attention.  It does not prove that SSTD is faster on a particular
hardware target, workload, tokenizer, cache regime, or implementation.
Those statements remain empirical claims requiring the traceability
protocol stated later.
\end{remark}

\paragraph{Implementation correspondence.}
\codepath{brainiak/mathcore/fda/semantic/sstd_codec.py} documents an
SSTD slot of dimension $3072=1024\times3$ and a fixed-size wire
projection.  Encoders and benchmark-specific surfaces include
\codepath{brainiak/mathcore/fda/semantic/encoder_sts22.py},
\codepath{brainiak/mathcore/fda/semantic/encoder_sick.py}, and
\codepath{brainiak/mathcore/fda/semantic/sstd_embedder.py}.

\section{SpiderR as a Connection}

The source treatise states that SpiderR is a flat connection with
trivial holonomy.  This can be proved only for an idealized connection
whose local connection forms commute and whose base is simply connected.

\begin{definition}[Connection on a trivial semantic bundle]
Let $B$ be a smooth parameter manifold and let $\EE=B\times F$ be a
trivial vector bundle with fibre $F$.  A connection has the form
\[
  \nabla=\dd+A,
\]
where $A$ is a matrix-valued one-form acting on $F$.  Its curvature is
\[
  \Omega_\nabla=\dd A + A\wedge A.
\]
\end{definition}

\begin{proposition}[Flat SpiderR idealization]
Assume that the SpiderR connection form is constant in the chosen
coordinates and that its component matrices commute.  Then
$\Omega_\nabla=0$.  If $B$ is simply connected, parallel transport is
path-independent and every closed-loop holonomy is trivial.
\end{proposition}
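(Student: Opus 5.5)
In local coordinates $(u^1,\ldots,u^n)$ on $B$, write the connection form as
\[
  A=\sum_{i=1}^n A_i\,\dd u^i,
\]
where the $A_i$ are constant matrices acting on $F$ and satisfy $[A_i,A_j]=0$ for all $i,j$. The plan has three steps: compute the two terms of the curvature separately, show they vanish, and then pass from $\Omega_\nabla=0$ to path-independence of parallel transport using simple connectivity.

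\textbf{Curvature.}
\begin{itemize}
  \item Since every $A_i$ is constant, $\dd A=\sum_i \dd A_i\wedge \dd u^i=0$.
  \item For the quadratic term, expand in the basis of two-forms:
  \[
    A\wedge A=\sum_{i,j}A_iA_j\,\dd u^i\wedge \dd u^j
    =\tfrac12\sum_{i,j}[A_i,A_j]\,\dd u^i\wedge \dd u^j .
  \]
  The second equality uses antisymmetry of $\dd u^i\wedge\dd u^j$ when the $(i,j)$ and $(j,i)$ terms are paired. Commutativity then gives $A\wedge A=0$.
  \item Hence $\Omega_\nabla=\dd A+A\wedge A=0$. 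This step is purely algebraic.
\end{itemize}

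\textbf{Parallel transport.}
\begin{itemize}
  \item Along a piecewise smooth path $\gamma:[0,1]\to B$, transport solves
  \[
    \dot v(s)=-A(\dot\gamma(s))\,v(s),
    \qquad
    A(\dot\gamma(s))=\sum_i A_i\,\dot\gamma^i(s).
  \]
  \item The matrices $A(\dot\gamma(s))$ commute with one another for all $s$. So the time-ordered exponential reduces to an ordinary exponential:
  \[
    P_\gamma=\exp\!\Bigl(-\sum_i A_i\bigl(\gamma^i(1)-\gamma^i(0)\bigr)\Bigr).
  \]
  I will verify this by differentiating the candidate. Its derivative is $-A(\dot\gamma(s))$ times itself, by commutativity, and uniqueness of ODE solutions closes the argument.
  \item The resulting transport depends only on the coordinate endpoints. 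Any closed loop inside the chart therefore has holonomy $\exp(0)=\id$.
\end{itemize}

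\textbf{Main obstacle: globalization.} The phrase ``constant in the chosen coordinates'' only has meaning chartwise unless the coordinates are global. If $B$ is (an open subset of) $\RR^n$ with global coordinates, the endpoint formula already finishes the proof and simple connectivity is not needed. For a general simply connected $B$ I will instead use the standard route:
\begin{itemize}
  \item A flat connection has holonomy that is invariant under homotopy of loops. This is the standard fact that transport along homotopic paths with fixed endpoints agrees when the curvature vanishes; it can be proved by subdividing the homotopy square into small squares lying in charts, where the local endpoint formula applies.
  \item Simple connectivity makes every loop null-homotopic, so its holonomy equals that of the constant loop, which is $\id$.
  \item Path-independence follows: given two paths with the same endpoints, the first followed by the reverse of the second is a closed loop with trivial holonomy.
\end{itemize}
The delicate point is making the small-square subdivision argument honest. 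I would cite the standard result (e.g.\ Lang) rather than reprove it, which matches how the paper handles external theorems elsewhere.
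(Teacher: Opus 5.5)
Your proof is correct, and its local core matches the paper's. Both kill $\dd A$ by constancy and $A\wedge A$ by commutativity; your expansion $\tfrac12\sum_{i,j}[A_i,A_j]\,\dd u^i\wedge\dd u^j$ makes the paper's one-line claim explicit. Both then read path independence off the endpoint exponential $\exp\bigl(-\sum_iA_i(q^i-p^i)\bigr)$, which you additionally justify by differentiation and ODE uniqueness.

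The difference is the global step. The paper works on a single chart and stops at the endpoint formula. It therefore tacitly assumes global coordinates, or paths confined to one chart, and never uses the hypothesis that $B$ is simply connected. You note that with global coordinates simple connectivity is indeed superfluous. Otherwise you read the hypothesis chartwise, so that $\Omega_\nabla=0$ on all of $B$, and globalize by homotopy invariance of holonomy for flat connections; this is exactly where simple connectivity does work. It is consistent with the paper's own later example $\nabla=\dd+i\alpha\,\dd\theta$ on $S^1$, where the form is constant in a non-global angular coordinate yet the holonomy is nontrivial.

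Your route gives a statement that is actually true for a general simply connected $B$, at the cost of the homotopy-invariance lemma. If you prefer not to cite that lemma, take the charts to be convex coordinate balls. Within each ball transport depends only on endpoints. You may then replace the grid edges of a merely continuous homotopy by straight segments in local coordinates, and the subdivision argument becomes routine.
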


\begin{proof}
If $A$ is constant, then $\dd A=0$.  If its component matrices commute,
then $A\wedge A=0$.  Hence $\Omega_\nabla=0$.  More explicitly, write
$A=\sum_iA_i\,\dd x^i$ on a single coordinate chart with constant
commuting matrices $A_i$.  Along a path $\gamma$ from $p$ to $q$, the
parallel-transport equation has solution
\[
  U_\gamma
  =\exp\left(-\sum_iA_i\int_\gamma \dd x^i\right)
  =\exp\left(-\sum_iA_i(q^i-p^i)\right),
\]
where commutativity removes path ordering.  Thus transport depends only
on the endpoints.  For a closed loop $p=q$, the exponent is zero and
$U_\gamma=I$.
\end{proof}

\begin{proposition}[Curvature detects non-commutation]
Let $A=A_1\,\dd x^1+A_2\,\dd x^2$ be a constant connection form on
$\RR^2$ with matrices $A_1,A_2$.  Then
\[
  \Omega_\nabla=[A_1,A_2]\,\dd x^1\wedge\dd x^2.
\]
In particular, if $[A_1,A_2]\neq0$, the connection is not flat.
\end{proposition}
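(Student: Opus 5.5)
The argument is a direct computation of the two terms in the curvature formula $\Omega_\nabla=\dd A+A\wedge A$ from the preceding definition, specialized to $A=A_1\,\dd x^1+A_2\,\dd x^2$ with constant matrices $A_1,A_2$ on $\RR^2$.

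\textbf{First term.} Since each coefficient matrix is constant, $\dd A=\dd A_1\wedge\dd x^1+\dd A_2\wedge\dd x^2=0$. This is the same observation used in the flat SpiderR idealization.

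\textbf{Second term.} I will expand the matrix-valued wedge product, in which the matrix coefficients multiply in order and the one-forms wedge:
\[
  A\wedge A=\sum_{i,j=1}^2 A_iA_j\,\dd x^i\wedge\dd x^j .
\]
The diagonal terms vanish because $\dd x^i\wedge\dd x^i=0$. Using $\dd x^2\wedge\dd x^1=-\dd x^1\wedge\dd x^2$, the off-diagonal terms combine to
\[
  A_1A_2\,\dd x^1\wedge\dd x^2+A_2A_1\,\dd x^2\wedge\dd x^1=(A_1A_2-A_2A_1)\,\dd x^1\wedge\dd x^2=[A_1,A_2]\,\dd x^1\wedge\dd x^2 .
\]
Adding this to $\dd A=0$ gives the displayed formula.

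\textbf{Non-flatness.} On $\RR^2$ the two-form $\dd x^1\wedge\dd x^2$ is nowhere zero. So whenever $[A_1,A_2]\neq0$, the curvature $\Omega_\nabla$ is a nonzero matrix-valued two-form at every point, and the connection is not flat.

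\textbf{Main obstacle.} The only delicate point is the sign convention in $A\wedge A$. I will state explicitly that matrix coefficients are multiplied in the order of the forms, and note that this is the convention under which the preceding proposition's claim "commuting matrices give $A\wedge A=0$" holds. With that convention fixed, the rest is bookkeeping.
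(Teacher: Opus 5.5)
Your proposal is correct and follows the same route as the paper's proof. Both note that $\dd A=0$ for constant coefficients and compute $A\wedge A=(A_1A_2-A_2A_1)\,\dd x^1\wedge\dd x^2$; you additionally spell out the ordering convention for the matrix wedge product and the nowhere-vanishing of $\dd x^1\wedge\dd x^2$, which the paper leaves implicit.
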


\begin{proof}
Since $A_1,A_2$ are constant, $\dd A=0$.  The wedge product satisfies
\[
  A\wedge A=(A_1A_2-A_2A_1)\,\dd x^1\wedge\dd x^2.
\]
Thus $\Omega_\nabla=A\wedge A=[A_1,A_2]\dd x^1\wedge\dd x^2$.
\end{proof}

\begin{definition}[Holonomy of a closed loop]
For a connection on a vector bundle, the holonomy along a closed loop
$\gamma$ based at $p$ is the parallel-transport operator
$U_\gamma:F_p\to F_p$ obtained by solving the connection transport
equation along $\gamma$.
\end{definition}

\begin{proposition}[Path independence and trivial holonomy]
For a connection whose parallel transport is defined along piecewise
smooth paths and satisfies composition and reversal of paths, transport
depends only on endpoints if and only if every closed-loop holonomy is
trivial on each path-connected component.
\end{proposition}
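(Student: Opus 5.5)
The plan is to argue purely from the two structural properties assumed for transport: composition, $U_{\gamma_2\ast\gamma_1}=U_{\gamma_2}U_{\gamma_1}$ for concatenable piecewise smooth paths, and reversal, $U_{\gamma^{-1}}=U_\gamma^{-1}$. We also use that the constant path gives the identity operator. No curvature or coordinates are needed, so the statement is a formal consequence of the groupoid-like behaviour of transport. The equivalence is proved separately on each path-connected component, since both conditions only compare paths with common endpoints and loops based at a point.

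For ($\Rightarrow$), assume transport depends only on endpoints. Let $\gamma$ be a closed loop based at $p$. Then $\gamma$ and the constant path $c_p$ share the same endpoints $p,p$. Hence $U_\gamma=U_{c_p}=I$, so every closed-loop holonomy is trivial.

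For ($\Leftarrow$), assume all holonomies are trivial. Take two paths $\gamma_1,\gamma_2$ from $p$ to $q$. The concatenation $\gamma_2^{-1}\ast\gamma_1$ is a piecewise smooth closed loop at $p$. By composition and reversal,
\[
  I=U_{\gamma_2^{-1}\ast\gamma_1}=U_{\gamma_2}^{-1}U_{\gamma_1}.
\]
Therefore $U_{\gamma_1}=U_{\gamma_2}$. Paths exist between the two points only when they lie in the same path-connected component, which explains the componentwise phrasing.

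The only real obstacle is bookkeeping rather than mathematics. We must make sure that the constant path is admissible and transports by the identity. This follows from the transport ODE with zero velocity, or from composition, since $U_{c_p}=U_{c_p}U_{c_p}$ and $U_{c_p}$ is invertible. We must also make sure that the concatenation of piecewise smooth paths remains piecewise smooth, so that the composition hypothesis applies. I will state both points explicitly as part of the standing hypotheses before running the two-line argument above.
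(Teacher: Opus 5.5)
Your proof is correct and follows the paper's argument exactly: the forward direction compares a closed loop with the constant path at its base point, and the converse applies composition and reversal to the loop $\overline{\gamma_2}\ast\gamma_1$ to obtain $U_{\gamma_2}^{-1}U_{\gamma_1}=I$. Your extra bookkeeping only makes explicit what the paper takes for granted: that the constant path is admissible and transports by the identity (for example via $U_{c_p}=U_{c_p}U_{c_p}$ with $U_{c_p}$ invertible), and that concatenations of piecewise smooth paths remain piecewise smooth.
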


\begin{proof}
If transport depends only on endpoints, then a closed loop starts and
ends at the same point.  Its transport is therefore the same as the
transport along the constant path, namely the identity.

Conversely, assume every closed-loop holonomy is trivial.  Let
$\gamma_1$ and $\gamma_2$ be two paths from $p$ to $q$ in the same
path-connected component.  Let $\overline{\gamma_2}$ denote the reverse
path.  The concatenation
$\overline{\gamma_2}\ast\gamma_1$ is a closed loop based at $p$.
By composition of parallel transport,
\[
  U_{\overline{\gamma_2}\ast\gamma_1}
  =U_{\overline{\gamma_2}}U_{\gamma_1}
  =U_{\gamma_2}^{-1}U_{\gamma_1}.
\]
The closed-loop holonomy is the identity, hence
$U_{\gamma_2}^{-1}U_{\gamma_1}=I$ and $U_{\gamma_1}=U_{\gamma_2}$.
Thus transport depends only on endpoints.
\end{proof}

\begin{proposition}[Constant gauge changes preserve flatness]
Let $G$ be an invertible constant matrix and let
\[
  A'=G^{-1}AG
\]
be the connection form obtained by changing the fibre basis by $G$.
Then
\[
  \Omega_{\nabla'}=G^{-1}\Omega_\nabla G.
\]
Consequently $\Omega_\nabla=0$ if and only if $\Omega_{\nabla'}=0$.
\end{proposition}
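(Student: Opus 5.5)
The plan is a direct computation of the transformed curvature, working from the definition $\Omega_\nabla=\dd A+A\wedge A$ on the trivial bundle. Because $G$ is constant, exterior differentiation commutes with left and right multiplication by $G^{-1}$ and $G$. Writing $A=\sum_iA_i\,\dd x^i$ with matrix-valued coefficient functions $A_i$, we have $A'=\sum_iG^{-1}A_iG\,\dd x^i$, and therefore
\[
  \dd A'=\sum_{j,i}G^{-1}(\partial_jA_i)G\,\dd x^j\wedge\dd x^i=G^{-1}(\dd A)G .
\]
This is the first step. It is the only place where constancy of $G$ is used; for a non-constant gauge an extra $G^{-1}\dd G$ term would appear, and that case is not needed here.

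The second step treats the quadratic term. I would expand
\[
  A'\wedge A'=\sum_{i,j}(G^{-1}A_iG)(G^{-1}A_jG)\,\dd x^i\wedge\dd x^j .
\]
The inner factors $GG^{-1}$ cancel, so each summand equals $G^{-1}A_iA_jG\,\dd x^i\wedge\dd x^j$. Summing gives $A'\wedge A'=G^{-1}(A\wedge A)G$. Adding the two steps yields $\Omega_{\nabla'}=G^{-1}\Omega_\nabla G$.

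The equivalence then follows quickly. If $\Omega_\nabla=0$, the conjugation formula gives $\Omega_{\nabla'}=0$ at once. Conversely, conjugating by $G$ gives $\Omega_\nabla=G\,\Omega_{\nabla'}G^{-1}$, so $\Omega_{\nabla'}=0$ forces $\Omega_\nabla=0$. Equivalently, conjugation by an invertible matrix is injective on matrix-valued forms, acting componentwise on each coefficient of $\dd x^i\wedge\dd x^j$.

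None of these steps is a real obstacle. The only point needing care is bookkeeping in the wedge product of matrix-valued forms. The order of matrix multiplication must be kept, since the matrices need not commute, while the sign rules apply only to the scalar one-forms. This is the same convention used in the proof of the preceding proposition that curvature detects non-commutation, so I would follow that convention verbatim.
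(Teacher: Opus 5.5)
Your proposal is correct and follows essentially the same route as the paper: constancy of $G$ gives $\dd A'=G^{-1}(\dd A)G$, the inner $GG^{-1}$ cancels in $A'\wedge A'$, and invertibility of conjugation gives the equivalence. Your coordinate expansion $A=\sum_iA_i\,\dd x^i$ just spells out the matrix-ordering bookkeeping that the paper's proof leaves implicit.
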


\begin{proof}
Because $G$ is constant, $\dd G=0$ and
\[
  \dd A'=G^{-1}(\dd A)G.
\]
Also
\[
  A'\wedge A'=(G^{-1}AG)\wedge(G^{-1}AG)
  =G^{-1}(A\wedge A)G,
\]
where matrix multiplication is associated with the wedge product of
forms.  Adding the two displayed identities gives
$\Omega_{\nabla'}=G^{-1}(\dd A+A\wedge A)G$.  Since conjugation by
$G$ is invertible, vanishing is preserved in both directions.
\end{proof}

\begin{proposition}[Variable gauge transformations require the
\texorpdfstring{$G^{-1}\dd G$}{G^{-1}dG} term]
Let $G:B\to GL(F)$ be a smooth change of local frame for the trivial
bundle $B\times F$.  If $\nabla=\dd+A$, then in the new frame
\[
  A'=G^{-1}AG+G^{-1}\dd G.
\]
With this full transformation law the curvature satisfies
\[
  \Omega_{\nabla'}=G^{-1}\Omega_\nabla G.
\]
\end{proposition}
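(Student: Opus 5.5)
The plan is to derive the transformation law from the requirement that $\nabla$ be frame-independent, and then compute the new curvature directly in terms of the old one.

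\textbf{Step 1: the law for $A'$.} Write a section in the old frame as $s=Gs'$, where $s'$ holds the coordinates in the new frame. The connection must act consistently, so $\nabla s = G\,\nabla' s'$. By the Leibniz rule,
\[
  \dd(Gs')+AGs'=G\,\dd s'+(\dd G)s'+AGs'.
\]
Multiplying on the left by $G^{-1}$ gives
\[
  \nabla's'=\dd s'+\bigl(G^{-1}AG+G^{-1}\dd G\bigr)s',
\]
which is the stated formula for $A'$.

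\textbf{Step 2: expand $\dd A'$.} I will use the matrix-valued graded Leibniz rule $\dd(\alpha\wedge\beta)=\dd\alpha\wedge\beta+(-1)^{p}\alpha\wedge\dd\beta$ for a $p$-form $\alpha$, together with the identity $\dd(G^{-1})=-G^{-1}(\dd G)G^{-1}$, which follows by differentiating $G^{-1}G=I$. This yields
\[
  \dd(G^{-1}AG)=-G^{-1}\dd G\,G^{-1}\wedge AG+G^{-1}\dd A\,G-G^{-1}A\wedge\dd G,
\]
where the last sign comes from $A$ being a one-form. It also yields
\[
  \dd(G^{-1}\dd G)=-G^{-1}\dd G\wedge G^{-1}\dd G,
\]
using $\dd\dd G=0$.

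\textbf{Step 3: expand $A'\wedge A'$.} Set $\theta=G^{-1}\dd G$. Then
\[
  A'\wedge A'=G^{-1}(A\wedge A)G+G^{-1}A\wedge\dd G+\theta\wedge G^{-1}AG+\theta\wedge\theta.
\]
Here I use $GG^{-1}=I$ to simplify products such as $G^{-1}AG\wedge G^{-1}\dd G=G^{-1}A\wedge\dd G$.

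\textbf{Step 4: add and cancel.} Adding the expansions from Steps 2 and 3, three pairs cancel:
\begin{itemize}
  \item $-G^{-1}A\wedge\dd G$ against $+G^{-1}A\wedge\dd G$;
  \item $-\theta\wedge G^{-1}AG$ against $+\theta\wedge G^{-1}AG$;
  \item $-\theta\wedge\theta$ against $+\theta\wedge\theta$.
\end{itemize}
What remains is $G^{-1}(\dd A+A\wedge A)G=G^{-1}\Omega_\nabla G$, as claimed.

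The main obstacle is bookkeeping of signs and ordering. Matrix-valued forms do not commute, and the one-form degree produces the $(-1)^p$ sign. Both the $A\wedge\dd G$ cancellation and the $\theta\wedge\theta$ cancellation depend on getting these conventions right, and they must match the convention used for $A\wedge A$ in the earlier proposition on curvature detecting non-commutation. If the abstract manipulation becomes error-prone, I will verify the result in coordinates. Writing $A=\sum_iA_i\,\dd x^i$, the curvature has components
\[
  \Omega_{ij}=\partial_iA_j-\partial_jA_i+[A_i,A_j],
\]
and with $A'_i=G^{-1}A_iG+G^{-1}\partial_iG$ the identity $\Omega'_{ij}=G^{-1}\Omega_{ij}G$ can be checked term by term.
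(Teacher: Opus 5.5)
Your proposal is correct. Step~1 coincides with the paper's derivation of $A'$ from $s=Gs'$. The two arguments differ only in the curvature step.

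\textbf{The paper's route.} The paper never expands $\dd A'+A'\wedge A'$. It observes that, as operators, $\nabla'=G^{-1}\nabla G$. Hence $(\nabla')^2=G^{-1}\nabla G\,G^{-1}\nabla G=G^{-1}\nabla^2G$, and it reads off $\Omega_{\nabla'}=G^{-1}\Omega_\nabla G$. This is shorter and makes the conceptual point: curvature is the tensorial operator $\nabla^2$, so it transforms by conjugation and the inhomogeneous $G^{-1}\dd G$ term cannot survive. However, it tacitly relies on two facts the paper asserts but does not prove (``in connection-form notation this is exactly''):
\begin{itemize}
  \item the extension of $\nabla$ to bundle-valued forms;
  \item the identification of $\nabla^2$ with wedge-multiplication by $\dd A+A\wedge A$.
\end{itemize}

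\textbf{Your route.} You work directly from the paper's definition $\Omega_\nabla=\dd A+A\wedge A$. You need only the graded Leibniz rule and $\dd(G^{-1})=-G^{-1}\dd G\,G^{-1}$. I checked your expansions of $\dd(G^{-1}AG)$, of $\dd\theta=-\theta\wedge\theta$, and of $A'\wedge A'$. All three cancellations are correct, and your sign convention for $A\wedge A$ matches the one in the non-commutation proposition, so the coordinate fallback is unnecessary. The cost is sign and ordering bookkeeping; the gain is a self-contained verification that does not depend on the operator formalism.
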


\begin{proof}
Write a section in the old frame as $s=G s'$.  Then
\[
  \nabla s=\dd(Gs')+A Gs'
  =G\left(\dd s'+(G^{-1}\dd G+G^{-1}AG)s'\right).
\]
Hence the connection form in the new frame is
$A'=G^{-1}AG+G^{-1}\dd G$.

As operators, the transformed connection is
\[
  \nabla'=G^{-1}\nabla G.
\]
Therefore its curvature operator is
\[
  (\nabla')^2=(G^{-1}\nabla G)(G^{-1}\nabla G)
  =G^{-1}\nabla^2G,
\]
where the middle $GG^{-1}$ cancels in the operator composition.  In
connection-form notation this is exactly
$\Omega_{\nabla'}=G^{-1}\Omega_\nabla G$.
\end{proof}

\begin{proposition}[Flatness alone does not force trivial holonomy]
On the trivial complex line bundle over $S^1$ with angular coordinate
$\theta$, consider the connection
\[
  \nabla=\dd+i\alpha\,\dd\theta
\]
for a real constant $\alpha$.  Its curvature is zero, but the holonomy
around the circle is $\exp(-2\pi i\alpha)$, which is nontrivial when
$\alpha\notin\ZZ$.
\end{proposition}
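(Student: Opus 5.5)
The plan is a direct computation in the single global frame of the trivial line bundle $S^1\times\mathbb{C}$. Here the connection form is the scalar-valued one-form $A=i\alpha\,\dd\theta$, where $\dd\theta$ denotes the globally defined closed angular one-form on $S^1$ (not exact, since $\theta$ itself is only a local coordinate). I will treat the curvature claim and the holonomy claim separately. The first uses the formula $\Omega_\nabla=\dd A+A\wedge A$ from the definition of a connection on a trivial semantic bundle. The second solves the transport equation explicitly, as in the proof of the flat SpiderR idealization.

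For flatness, two observations suffice. First, $\dd A=i\alpha\,\dd(\dd\theta)=0$, because $\alpha$ is constant and $\dd\theta$ is closed; alternatively, $S^1$ is one-dimensional, so every two-form on it vanishes. Second, $A\wedge A=-\alpha^2\,\dd\theta\wedge\dd\theta=0$, since $A$ is scalar-valued and a one-form wedged with itself is zero. Hence $\Omega_\nabla=0$.

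For the holonomy, I parametrize the loop by $\gamma(t)=t$ for $t\in[0,2\pi]$, based at $\theta=0$. The parallel-transport condition $\nabla_{\dot\gamma}s=0$ reads $s'(t)+i\alpha\,s(t)=0$, so $s(t)=e^{-i\alpha t}s(0)$ and $U_\gamma=\exp(-2\pi i\alpha)$. The same result comes from the endpoint-exponential formula $\exp(-\oint A)$, since $\oint_\gamma\dd\theta=2\pi$; no path ordering is needed because the fibre is one-dimensional. Finally, $\exp(-2\pi i\alpha)=1$ if and only if $\alpha\in\ZZ$, so for $\alpha\notin\ZZ$ the holonomy is a nontrivial rotation of the fibre $\mathbb{C}$.

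The only delicate point is conceptual, not computational. I need to explain why this does not contradict the flat SpiderR idealization: that proposition assumed a simply connected base, and $S^1$ is not simply connected. The integral $\oint\dd\theta=2\pi\neq0$ is exactly the failure of $\dd\theta$ to be exact, so the computation must use the global closed form rather than a global coordinate $\theta$. I will also observe that no smooth gauge change can remove the holonomy. A frame change $G=e^{i\beta\theta}$ shifts $\alpha$ by $\beta$, but single-valuedness forces $\beta\in\ZZ$, and conjugation invariance of $U_\gamma$ in the abelian case means holonomy modulo $\ZZ$ is gauge invariant. This is consistent with the variable gauge transformation proposition.
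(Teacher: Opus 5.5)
Your proof is correct and follows the paper's argument: $\dd A=0$ because $A$ is a constant multiple of the closed form $\dd\theta$, $A\wedge A=0$ in rank one, and solving $u'+i\alpha u=0$ around the loop gives holonomy $\exp(-2\pi i\alpha)$, which equals $1$ exactly when $\alpha\in\ZZ$. Your extra remarks on the non-simply-connected base and on gauge invariance are not needed for the statement but do no harm; note that the gauge-invariant quantity is $\alpha$ modulo $\ZZ$ (equivalently, the holonomy itself), not ``the holonomy modulo $\ZZ$.''
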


\begin{proof}
Since $i\alpha\,\dd\theta$ is a constant multiple of a closed one-form
on $S^1$, its exterior derivative is zero.  In rank one the wedge term
$A\wedge A$ is also zero, so the curvature vanishes.

Parallel transport $u(\theta)$ satisfies
\[
  \frac{\dd u}{\dd\theta}+i\alpha u=0.
\]
Thus $u(\theta)=\exp(-i\alpha\theta)u(0)$.  After one positive circuit,
$\theta=2\pi$, so the holonomy is multiplication by
$\exp(-2\pi i\alpha)$.  This equals $1$ exactly when
$\alpha\in\ZZ$.
\end{proof}

\begin{proposition}[Exact commuting connections on star-shaped domains
have trivial closed-loop holonomy]
Let $B\subseteq\RR^n$ be star-shaped and let $A=\dd\Phi$ for a smooth
matrix-valued function $\Phi$ whose values commute along the considered
paths.  Then parallel transport depends only on the endpoints, and
closed-loop holonomy is trivial.
\end{proposition}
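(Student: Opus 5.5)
The plan is to write down the parallel transport explicitly and check that it solves the transport equation. Then path independence and trivial holonomy follow from the endpoint formula, exactly as in the flat SpiderR idealization, without any path-ordered exponential.

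\textbf{Setting up the transport equation.} I use the same sign convention as the flat SpiderR idealization. Along a piecewise smooth path $\gamma:[0,1]\to B$ from $p$ to $q$, transport $U(s)$ solves
\[
  U'(s)=-\,\dd\Phi(\gamma'(s))\,U(s)=-\tfrac{\dd}{\dd s}\bigl[\Phi(\gamma(s))\bigr]U(s),\qquad U(0)=I.
\]
Write $\phi(s)=\Phi(\gamma(s))$. The candidate solution is
\[
  U(s)=\exp\bigl(-(\phi(s)-\phi(0))\bigr).
\]

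\textbf{Key step: differentiating the exponential.} The hypothesis is that the values of $\Phi$ commute along the path, i.e.\ $[\phi(s),\phi(s')]=0$ for all $s,s'$. Then $\phi'(s)$ also commutes with $\phi(s)-\phi(0)$, since $\phi'(s)$ is a limit of difference quotients of commuting matrices. For commuting $X$ and $X'$ one has $\tfrac{\dd}{\dd s}\exp(X(s))=X'(s)\exp(X(s))$. This gives $U'(s)=-\phi'(s)U(s)$. Uniqueness of solutions to linear ODEs (Picard--Lindelöf, applied piecewise and concatenated at the breakpoints) identifies $U$ with the parallel transport. Hence
\[
  U_\gamma=\exp\bigl(-(\Phi(q)-\Phi(p))\bigr),
\]
which depends only on the endpoints. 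For a closed loop $q=p$ the exponent vanishes, so $U_\gamma=I$. Alternatively, the closed-loop statement follows from the path-independence/trivial-holonomy equivalence proved above.

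\textbf{Role of star-shapedness.} Star-shapedness guarantees that $B$ is path-connected, so the statement covers any two points. It is also where one would use the Poincaré lemma if one only assumed $A$ closed, but here exactness $A=\dd\Phi$ is already given, so it is not otherwise needed.

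\textbf{Main obstacle: the commutation hypothesis.} The delicate point is making the hypothesis strong enough. If the values of $\Phi$ only commute pairwise at each point, that does not give $[\phi'(s),\phi(s)-\phi(0)]=0$. So I will read ``values commute along the considered paths'' as the family $\{\Phi(\gamma(s))\}$ being a commuting family. I will remark that without this, the chain-rule formula for $\exp$ fails and path ordering returns. This is consistent with the earlier result that non-commutation produces curvature. Indeed, $\dd A=0$ and $A\wedge A=\dd\Phi\wedge\dd\Phi$, which vanishes under commutation, so the connection is flat.
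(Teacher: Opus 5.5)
Your proposal is correct and follows the paper's route: both arrive at the endpoint formula $U_\gamma=\exp\bigl(-(\Phi(q)-\Phi(p))\bigr)$ and read off path independence and trivial closed-loop holonomy from it. The only difference is that the paper simply asserts that commutativity removes path ordering and then applies the fundamental theorem for line integrals, whereas you justify that step: you differentiate $\exp(-(\phi(s)-\phi(0)))$ under the commuting-family hypothesis and invoke uniqueness for the linear transport ODE.
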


\begin{proof}
Along a path $\gamma:[0,1]\to B$, the commuting assumption removes path
ordering and the transport is
\[
  U_\gamma
  =\exp\left(-\int_\gamma A\right)
  =\exp\left(-\int_\gamma \dd\Phi\right).
\]
By the fundamental theorem for line integrals,
\[
  \int_\gamma \dd\Phi=\Phi(\gamma(1))-\Phi(\gamma(0)).
\]
Therefore $U_\gamma$ depends only on the endpoints.  If $\gamma$ is a
closed loop, then $\gamma(1)=\gamma(0)$ and the exponent is zero, so
$U_\gamma=I$.
\end{proof}

\begin{corollary}[SpiderR verification obligation]
To claim that a concrete SpiderR implementation is flat, one must prove
that the relevant operator matrices commute on the state region of
interest, or else compute the curvature and show it vanishes by another
argument.
\end{corollary}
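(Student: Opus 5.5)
The statement is a verification obligation, so the proof is a logical reduction: a flatness claim for a concrete SpiderR connection means exactly that $\Omega_\nabla=\dd A+A\wedge A$ vanishes on the relevant state region. The plan is to show that any such claim must rest on one of the two routes named in the corollary, and that nothing weaker will do.

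First, I separate the two curvature terms. In the constant-coefficient setting of the flat SpiderR idealization, $\dd A=0$. By the proposition that curvature detects non-commutation, $\Omega_\nabla$ then reduces to $\sum_{i<j}[A_i,A_j]\,\dd x^i\wedge\dd x^j$. The two-form computation given there for $\RR^2$ extends componentwise to any number of coordinates. Since the $\dd x^i\wedge\dd x^j$ with $i<j$ are linearly independent, flatness in this case is equivalent to $[A_i,A_j]=0$ for all $i,j$ on the region. This is exactly the commutator verification, and the flat SpiderR idealization then yields $\Omega_\nabla=0$.

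Second, I treat the non-constant case, which is the "another argument" branch. Here flatness is simply the defining equation $\dd A+A\wedge A=0$, so one must compute or otherwise prove that this curvature vanishes. Neither route can be skipped. If some commutator is nonzero in the constant case, the curvature-detects-non-commutation proposition shows the connection is not flat, so flatness cannot be asserted without checking. The proposition that implementation existence is not theorem evidence rules out substituting the bare existence of Spider files for this check. The finite-diagnostics proposition rules out sampled holonomy tests unless a determining argument is supplied. I would also note that the gauge-change propositions show the obligation does not depend on the chosen frame, since curvature transforms by conjugation.

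The main obstacle is stating precisely what "relevant operator matrices" means, that is, identifying the deployed operators with the components $A_i$ of a connection form on a chosen chart. I would handle this by making the identification an explicit hypothesis of the corollary, in keeping with the paper's convention that implementation correspondences become premises only after a formal axiom is extracted. Once that hypothesis is in place, the corollary follows directly from the two curvature propositions.
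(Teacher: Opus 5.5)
Your proposal is correct and takes essentially the same route as the paper, whose proof just invokes the curvature formula $\Omega_\nabla=\dd A+A\wedge A$: non-commuting constant operators already give nonzero curvature by the non-commutation proposition, and state-dependent operators bring in the extra $\dd A$ term. These are exactly your two branches. Your additions are sound elaborations rather than a different argument: the extension to $n$ coordinates via linear independence of the $\dd x^i\wedge\dd x^j$, the appeals to the implementation-evidence and finite-diagnostics propositions, and making the operator-to-connection identification explicit (though only vanishing curvature, not commutativity of the components, is invariant under variable gauge changes).
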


\begin{proof}
This follows directly from the curvature formula.  Non-commuting
constant local operators already produce non-zero curvature; more
general state-dependent operators require the additional $\dd A$ term.
\end{proof}

\begin{remark}[When flatness fails]
If SpiderR operations are path-dependent, state-dependent, or
non-commuting, the curvature term $A\wedge A$ need not vanish.  In that
case SpiderR should be studied as a curved connection, not asserted to
be flat.
\end{remark}

\paragraph{Implementation correspondence.}
Spider-related implementation surfaces include
\codepath{brainiak/mathcore/fda/semantic/spider.py},
\codepath{brainiak/mathcore/fda/semantic/spider_comb.py},
\codepath{brainiak/mathcore/fda/semantic/spider_formula.py},
\codepath{brainiak/mathcore/fda/semantic/spider_memory.py}, and
\codepath{brainiak/mathcore/fda/semantic/spider_r_fallback.py}.  No
file named \codepath{spider/spider_connection.py} was verified in the
current repository; the connection is therefore a formal abstraction in
this manuscript.

\section{Empirical Claims and Traceability Requirements}

The source corpus contains reported empirical statements such as STS22-FR
Spearman performance, SICK-FR relatedness performance, CPU latency,
curriculum accuracy, and numbers of anchored or crystallized concepts.
These are not mathematical consequences of the previous sections.  They
belong in the paper only as empirical claims with traceability.

\begin{definition}[Traceable empirical claim]
An empirical claim is traceable if it states:
\begin{enumerate}
  \item the dataset or benchmark;
  \item the sample size;
  \item the metric;
  \item the exact configuration or script/report path;
  \item the baseline, if a comparative claim is made.
\end{enumerate}
\end{definition}

\begin{longtable}{p{0.20\textwidth}p{0.20\textwidth}p{0.23\textwidth}p{0.25\textwidth}}
\caption{Empirical claims mentioned in the source corpus and their required
traceability before peer-reviewed submission.}\label{tab:empirical}\\
\toprule
\textbf{Claim family} & \textbf{Metric/evidence needed} &
\textbf{Current paper status} & \textbf{Required action}\\
\midrule
\endfirsthead
\toprule
\textbf{Claim family} & \textbf{Metric/evidence needed} &
\textbf{Current paper status} & \textbf{Required action}\\
\midrule
\endhead
STS22-FR similarity & Spearman correlation, sample size, split,
baseline, script path & Not asserted as validated & Attach benchmark report
or reproduce from script.\\
\addlinespace
SICK-FR relatedness & Spearman or Pearson, sample size, language conversion
details & Not asserted as validated & Attach dataset preprocessing and
evaluation report.\\
\addlinespace
Curriculum accuracy & Exercise count, train/test separation, scoring
rule & Internal-draft claim only & Produce reproducible curriculum runner
and frozen output.\\
\addlinespace
CPU latency & Hardware, input length, warm/cold cache, repetitions &
Internal-draft claim only & Add benchmark harness and summary statistics.\\
\addlinespace
Crystal counts & Store path, manifest hash, acceptance policy & Not used
as proof & Cite manifest and define accepted-crystal semantics.\\
\bottomrule
\end{longtable}

\subsection{Benchmark Reporting Protocols}

The following protocol templates define what must be reported before an
empirical statement can be promoted from internal draft language to a
paper claim.  They intentionally contain no performance number.

\begin{definition}[Semantic textual similarity protocol]
A traceable STS claim must specify the benchmark release, language,
pair count, gold-score scale, train/development/test separation if any,
the frozen BrainiaK encoder configuration, and the exact similarity
function used on embeddings.  The primary metric must be Spearman
correlation unless the benchmark documentation prescribes another
metric.  Any comparison to a baseline must use the same split and the
same gold labels.
\end{definition}

\begin{definition}[Relatedness and entailment protocol]
A traceable SICK-style claim must separate relatedness from entailment.
Relatedness requires the regression or correlation metric, gold-label
scale, and pair count.  Entailment requires the label set, class balance,
decision rule, and accuracy or macro-F1.  A translated or adapted corpus
must include the translation method, filtering rules, and any examples
removed before evaluation.
\end{definition}

\begin{definition}[Curriculum protocol]
A traceable curriculum claim must specify the exercise generator or
dataset, the number of exercises, the scoring rule, whether examples
used during calibration are excluded from evaluation, and the exact
acceptance threshold.  If the curriculum adapts online, the report must
distinguish pre-adaptation from post-adaptation performance.
\end{definition}

\begin{definition}[Latency protocol]
A traceable latency claim must state the CPU model, core count, memory,
operating-system context, input length distribution, warm-up procedure,
number of repetitions, and whether disk or cache effects are included.
The report must provide at least median, interquartile range, and a high
percentile such as p95.  A single timing run is not a reproducible
latency claim.
\end{definition}

\begin{definition}[Crystal-count protocol]
A traceable crystal-count claim must identify the store path, manifest
or database hash, acceptance policy, deduplication rule, and timestamp
or commit.  Counts of generated candidates, accepted crystals, rejected
candidates, and manually curated entries must not be merged into one
number.
\end{definition}

\subsection{Empirical Claim Template}

Every empirical result inserted into the final paper should instantiate
the following template:
\[
\begin{array}{ll}
\text{Dataset/report:} & \text{exact name, version, path, or DOI},\\
\text{Sample size:} & n \text{ after all filtering},\\
\text{Metric:} & \text{primary metric and confidence interval if applicable},\\
\text{Configuration:} & \text{commit, script, parameters, hardware},\\
\text{Baseline:} & \text{baseline identity and identical evaluation split},\\
\text{Reproducibility artefact:} & \text{script path and frozen output path}.
\end{array}
\]
If any row is missing, the statement remains an internal report rather
than a peer-review-ready empirical claim.

\begin{remark}[Current inclusion rule]
Until a benchmark report is attached or cited, empirical numbers should
be phrased as ``reported in internal drafts'' rather than as validated
results.  This is necessary for peer review.
\end{remark}

\section{Bibliographic and External-Theorem Hygiene}

References in this manuscript have different logical roles.  Some
support standard external theorems, some provide mathematical context,
and some are included to situate the work historically.  A bibliography
entry must not be used as a silent substitute for a proof.

\begin{definition}[Bibliographic role]
A bibliography entry has one of four roles in this manuscript:
\begin{description}
  \item[Theorem machinery.] A source for an external theorem invoked in
  a proof, such as Hopf bifurcation, Kalman convergence, or
  Perron--Frobenius theory.
  \item[Mathematical background.] A standard reference for definitions
  and context, not a hidden step in a proof.
  \item[Cognitive or computational motivation.] A reference motivating
  a modelling choice, benchmark, or representational convention.
  \item[Context-only citation.] A relevant source included to locate the
  work historically, but not used to prove any theorem in this paper.
\end{description}
\end{definition}

\begin{proposition}[Citation role discipline]
If every citation used inside a proof is either theorem machinery or a
previously stated manuscript result, and every context-only citation is
kept outside proof obligations, then the bibliography cannot introduce a
hidden theorem assumption.
\end{proposition}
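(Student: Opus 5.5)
The plan is a case analysis on where a citation can appear, following the style of the earlier ``status drift'' and ``implementation correspondence'' propositions. The first step is to make ``hidden theorem assumption'' precise. It means a premise that is actually used in some proof, but whose logical status the reader cannot see. A premise counts as visible if it is either a manuscript result stated earlier or an external theorem explicitly cited as such. Its status is invisible if it enters only through a bibliography entry whose role is unstated or whose role is context-only.

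The argument then takes an arbitrary proof in the manuscript and an arbitrary premise used in it, and splits into cases according to the hypotheses of the statement. If the premise comes from inside the manuscript, the hypothesis says it is a previously stated result. That result carries its own proof, or its own status as an assumption or model law. It is therefore visible, and it is not a bibliography-induced assumption. If the premise comes from a citation, the first hypothesis forces that citation to have the role \emph{theorem machinery} in the bibliographic-role definition. By that definition, the external theorem is invoked explicitly as a source, for example the quoted Hopf theorem or the Kalman and Perron--Frobenius results. The assumption is thus declared, not hidden. The remaining citation roles are then handled. Context-only citations are excluded from proofs by the second hypothesis. Background and motivation citations are, by their definitions, not steps in a proof. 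So neither kind can supply a premise.

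Finally, the argument concludes by exhaustion. Every premise of every proof falls into one of the two visible categories, so no citation introduces an undeclared theorem assumption.

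I expect the main obstacle to be the informal notion of ``hidden'' in the statement. The argument has to fix a definition and check that the four roles exhaust all citations. I would handle this by appealing directly to the definition of bibliographic role, which assigns every entry exactly one of the four roles. I would also note explicitly that the proposition asserts discipline of provenance only: it does not claim that the cited external theorems are correctly applied.
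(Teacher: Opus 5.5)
Your proposal is correct and follows essentially the same route as the paper. Both arguments define a hidden theorem assumption as a premise used in a proof without a visible status, then use the two hypotheses to confine in-proof citations to theorem machinery or earlier results and to keep context-only citations out of proofs. Your explicit treatment of the theorem-machinery case is slightly more complete than the paper's, which leaves that case implicit in its definition of ``hidden''. Your exhaustion step should also list local hypotheses and displayed elementary inferences as visible premise types, as the paper's definition does, though these never come from the bibliography and so do not affect the conclusion.
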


\begin{proof}
A hidden theorem assumption would be a statement used in a proof without
being stated as a hypothesis, earlier result, elementary inference, or
external theorem.  Under the stated discipline, citations inside proofs
are restricted to theorem machinery or earlier manuscript results.
Context-only citations are not proof steps.  Therefore no context-only
or motivational reference can function as an unstated theorem
assumption.
\end{proof}

\begin{longtable}{>{\raggedright\arraybackslash}p{0.23\textwidth}
>{\raggedright\arraybackslash}p{0.24\textwidth}
>{\raggedright\arraybackslash}p{0.43\textwidth}}
\caption{Bibliographic role audit.}
\label{tab:bibroles}\\
\toprule
\textbf{Reference key} & \textbf{Role} & \textbf{Use in this manuscript}\\
\midrule
\endfirsthead
\toprule
\textbf{Reference key} & \textbf{Role} & \textbf{Use in this manuscript}\\
\midrule
\endhead
\cite{Bredon1993}, \cite{Lang1999}
& Mathematical background
& Product bundles, trivial bundles, manifolds, and connection language.\\
\addlinespace
\cite{Nosofsky1986}
& Cognitive motivation
& Motivation for GCM-style weighted similarity; not used to prove the
metric axioms.\\
\addlinespace
\cite{Kock2004}
& Mathematical background
& Frobenius algebra context and terminology.  The finite label theorem is
proved directly in this manuscript.\\
\addlinespace
\cite{HornJohnson2013}
& Theorem machinery
& Perron--Frobenius theorem for the primitive substitution matrix.\\
\addlinespace
\cite{GuckenheimerHolmes1983}, \cite{Kuznetsov2004}
& Theorem machinery
& Classical Hopf bifurcation theorem quoted in the Gamma/CNS section.\\
\addlinespace
\cite{Kalman1960}, \cite{AndersonMoore1979}
& Theorem machinery
& Standard Kalman covariance convergence under standard hypotheses.\\
\addlinespace
\cite{Transformer2017}
& Computational motivation
& Baseline complexity comparison for quadratic self-attention.\\
\addlinespace
\cite{Mohammad2018}, \cite{ZwaanRadvansky1998}
& Motivation
& Semantic-axis and situation-model context; no theorem depends on these
citations.\\
\addlinespace
\cite{Atiyah1967}, \cite{EilenbergSteenrod1952},
\cite{Hatcher2002}, \cite{Frobenius1878}, \cite{Penrose1974},
\cite{Rudin1976}, \cite{Coecke2010}
& Context-only or background
& Included for mathematical and historical orientation; not used as
unstated proof steps.\\
\bottomrule
\end{longtable}

\subsection{External Source Register}

The following register records the primary-source checks already
performed for references that carry theorem, complexity, or empirical
protocol weight.  The register is intentionally separate from the
proofs.  It verifies bibliographic identity; it does not replace the
mathematical hypotheses required by the cited theorems.

\begin{longtable}{p{0.17\textwidth}p{0.24\textwidth}p{0.20\textwidth}p{0.23\textwidth}}
\caption{Primary-source verification register for high-impact
references.}
\label{tab:external-source-register}\\
\toprule
\textbf{Reference} & \textbf{Primary source checked} &
\textbf{Verified metadata} & \textbf{Proof role}\\
\midrule
\endfirsthead
\toprule
\textbf{Reference} & \textbf{Primary source checked} &
\textbf{Verified metadata} & \textbf{Proof role}\\
\midrule
\endhead
\cite{Kuznetsov2004}
& Springer book page
& Title, author, third edition, 2004 copyright, DOI, chapter structure.
& Hopf theorem machinery.\\
\addlinespace
\cite{GuckenheimerHolmes1983}
& Springer book page
& Title, authors, 1983 copyright, DOI, local-bifurcation chapter.
& Hopf and dynamical-systems background.\\
\addlinespace
\cite{Mohammad2018}
& ACL Anthology page
& Title, author, ACL 2018 venue, pages 174--184, DOI.
& Semantic-axis motivation, not theorem machinery.\\
\addlinespace
\cite{Transformer2017}
& NeurIPS proceedings page and arXiv record
& Title, authors, NeurIPS 2017 venue, arXiv identifier.
& Complexity baseline context; dense-attention score count is proved
directly in this manuscript.\\
\addlinespace
\cite{Coecke2010}
& arXiv record with journal-reference field
& Title, authors, arXiv identifier, Linguistic Analysis 2010 journal
reference.
& Compositional-distributional semantics background.\\
\addlinespace
\cite{Hatcher2002}
& Author-maintained Cornell page
& Cambridge University Press 2002 publication and ISBN information.
& Algebraic-topology background only.\\
\addlinespace
\cite{Bredon1993}
& Springer book page
& Title, author, 1993 copyright, DOI, publisher, page count.
& Bundle and geometric background for the trivial-bundle conventions.\\
\addlinespace
\cite{Lang1999}
& Springer book page
& Title, author, 1999 copyright, DOI, publisher, page count.
& Differential-geometric background for connection terminology.\\
\addlinespace
\cite{Kalman1960}
& DOI identifier and secondary bibliographic cross-check
& Title, author, Journal of Basic Engineering volume 82, pages 35--45,
DOI; publisher-page metadata should still be rechecked in the final
release audit.
& Historical source for Kalman filtering; covariance convergence is
cited through standard filtering theory.\\
\bottomrule
\end{longtable}

\begin{remark}[Bibliographic verification status]
The BibTeX file is explicit and compiles.  High-impact references used
for theorem machinery, complexity context, or empirical-protocol
motivation now have a primary-source verification record in
Table~\ref{tab:external-source-register}.  Lower-risk background
references should still be checked before a final arXiv package, but no
unverified bibliographic detail is used as a mathematical proof.
\end{remark}

\subsection{External Theorem Invocation Register}

External theorems are used sparingly and only for standard mathematical
machinery.  The following register separates the imported theorem from
the local verification work carried out in this manuscript.

\begin{longtable}{>{\raggedright\arraybackslash}p{0.16\textwidth}
>{\raggedright\arraybackslash}p{0.22\textwidth}
>{\raggedright\arraybackslash}p{0.24\textwidth}
>{\raggedright\arraybackslash}p{0.24\textwidth}}
\caption{External theorem invocation register.}
\label{tab:external-theorem-invocations}\\
\toprule
\textbf{Invocation} & \textbf{Imported theorem} &
\textbf{Local hypotheses checked here} & \textbf{Local conclusion}\\
\midrule
\endfirsthead
\toprule
\textbf{Invocation} & \textbf{Imported theorem} &
\textbf{Local hypotheses checked here} & \textbf{Local conclusion}\\
\midrule
\endhead
Penrose substitution matrix
& Perron--Frobenius theorem for primitive nonnegative matrices
\cite{HornJohnson2013}
& The matrix
$\begin{psmallmatrix}1&1\\1&0\end{psmallmatrix}$ is explicitly
nonnegative and primitive because $M^2$ has strictly positive entries.
& Normalized positive tile-count iterates converge to the dominant
positive eigenvector; the thick/thin ratio is $\varphi$.\\
\addlinespace
Hopf bifurcation
& Classical finite-dimensional Hopf bifurcation theorem
\cite{GuckenheimerHolmes1983,Kuznetsov2004}
& The manuscript does not verify a concrete Gamma/CNS vector field,
equilibrium branch, transversality condition, or first Lyapunov
coefficient.
& Hopf is quoted only as conditional machinery; the BrainiaK claim is
not promoted beyond the explicit Hopf crossing assumption.\\
\addlinespace
Kalman covariance convergence
& Standard discrete Kalman Riccati convergence under
detectability/stabilizability \cite{Kalman1960,AndersonMoore1979}
& The manuscript states the linear Gaussian assumption and proves
observability counterexamples; it does not verify detectability for a
deployed BrainiaK model.
& Convergence holds only under the standard filtering hypotheses; fibre
coordinates alone do not imply convergence.\\
\bottomrule
\end{longtable}

\begin{proposition}[External invocation does not verify local
hypotheses]
Quoting a standard external theorem does not prove that a BrainiaK
system satisfies the hypotheses of that theorem.
\end{proposition}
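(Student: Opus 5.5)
The argument is a logical-form argument of the same type as the earlier results on implementation existence and citation discipline. First I would make the structure of an external theorem explicit: each imported result in Table~\ref{tab:external-theorem-invocations} has the conditional form ``for every object $X$ in a class $\mathcal C$, if $X$ satisfies hypotheses $H_1,\ldots,H_k$, then $C(X)$''. Quoting such a theorem yields only this conditional. It says nothing about whether a particular object, such as a concrete Gamma/CNS vector field, a deployed Kalman model, or a substitution matrix, belongs to $\mathcal C$ or satisfies each $H_i$. Instantiating the theorem at a BrainiaK system $X_0$ therefore requires separate premises: $X_0\in\mathcal C$ and $H_i(X_0)$ for every $i$. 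These premises are exactly what the ``local hypotheses checked here'' column is meant to record.

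Second, I would show that the implication cannot be reversed or bypassed. I would exhibit systems for which a quoted theorem is true as stated but whose hypotheses fail. The paper already supplies them. The proposition that a scalar threshold is not a Hopf proof gives the system $\dot x=a(\mu)x$, to which the classical Hopf theorem, though valid, does not apply, since there is no non-real eigenvalue pair. The proposition on unobserved unstable modes blocking full-state Kalman convergence gives a linear system that violates detectability, so the standard Kalman theorem yields no conclusion for it. In each case the truth of the external theorem is unchanged while its hypotheses are false for the system. Hence the truth or quotation of the theorem cannot entail the hypotheses.

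Third, I would conclude that any verification of local hypotheses must come from an independent argument about the concrete system. The Perron--Frobenius row shows what such an argument looks like: primitivity is checked directly from $M^2>0$, not inferred from the citation. The only delicate point is stating precisely what ``quoting does not prove'' means. I would formalize it as the non-entailment claim that the theorem's statement is consistent with a system failing the hypotheses, and the counterexamples establish exactly that consistency.
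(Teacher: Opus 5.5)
Your proposal is correct, and its first step is exactly the paper's proof. An external theorem has the form $H_{\mathrm{ext}}\Longrightarrow C_{\mathrm{ext}}$, the citation supplies only this implication, and applying it to a BrainiaK object needs a separate verification of the antecedent. Your second step, the systems $\dot x=a(\mu)x$ and the undetectable diagonal system on which the quoted theorems stay true while their hypotheses fail, is an extra strengthening the paper does not use: it makes ``does not prove'' precise as non-entailment, whereas the paper treats this as immediate from the conditional form.
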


\begin{proof}
An external theorem has the logical form
\[
  H_{\mathrm{ext}}\Longrightarrow C_{\mathrm{ext}},
\]
where $H_{\mathrm{ext}}$ denotes the theorem's hypotheses.  A concrete
BrainiaK application requires a separate proof that the formalized
BrainiaK object satisfies $H_{\mathrm{ext}}$.  The citation supplies the
implication, not the verification of the antecedent for the local
system.  Therefore quoting the theorem cannot by itself establish that
the local hypotheses hold.
\end{proof}

\section{Proof Verification Ledger}

This ledger summarizes the proof dependency of every non-definitional
mathematical result used as a claim in the paper.  It is not a substitute
for the proofs above; it is a verification index for review.

\begin{longtable}{p{0.25\textwidth}p{0.42\textwidth}p{0.23\textwidth}}
\caption{Proof-dependency ledger for theorem-level and proposition-level
claims.}\label{tab:proof-ledger}\\
\toprule
\textbf{Result} & \textbf{Proof dependencies} & \textbf{Publication status}\\
\midrule
\endfirsthead
\toprule
\textbf{Result} & \textbf{Proof dependencies} & \textbf{Publication status}\\
\midrule
\endhead
Product membership criterion
& Definition of typed finite product.
& Proved.\\
\addlinespace
Implementation existence is not theorem evidence
& Logical distinction between program existence, formal semantics, and
universal quantification.
& Proved methodological proposition.\\
\addlinespace
Faithful downgrade
& Definition requiring retained provable content, explicit non-theorem
status for unproved content, and no stronger replacement theorem.
& Proved source-preservation criterion.\\
\addlinespace
Product functoriality
& Identity and composition preservation for coordinatewise maps; weighted
Lipschitz estimate.
& Proved categorical/product result.\\
\addlinespace
Closed semantic subobjects
& Preimage of closed sets under continuous maps; closed subsets of
complete metric spaces are complete.
& Proved under stated hypotheses.\\
\addlinespace
Recursive-tree scale filtration
& Depth monotonicity of finite logged trees.
& Proved filtration result.\\
\addlinespace
Notation register prevents dimensional conflation
& Controlled notation register; typed distinction between
$B_{14}$, $\RR^{24}$, $S^5$, $\RR^{3072}_{\mathrm{SSTD}}$, and $T^n$.
& Proved notation-hygiene result.\\
\addlinespace
No conflict between base and extensions
& Direct-sum injection and projection; retraction
$B_{14}\to B_{14}\oplus U\to B_{14}$.
& Proved.\\
\addlinespace
Older $\RR^{24}$ language
& Extended presentation $B_{14}\oplus U$ with $\dim U=10$; projection to
canonical coordinates.
& Proved conditional interpretation.\\
\addlinespace
Extension data are additional structure
& Decomposition of any lift into base map plus auxiliary update
$h:B_{14}\oplus U\to U$.
& Proved non-uniqueness and transfer limitation.\\
\addlinespace
Sphere slots as constraints
& Inclusion $S^5\subset\RR^6$ and dimensional/type distinction from
$B_{14}$.
& Proved.\\
\addlinespace
Spectral slots are separate typed coordinates
& Cartesian-product projections and absence of implicit identification
between $B_{14}$ and $\RR^{3072}$.
& Proved type-separation result.\\
\addlinespace
Dimensional consistency of the manuscript
& Previous dimensional propositions; typed-product interpretation of
$T^n$.
& Proved global consistency theorem.\\
\addlinespace
Weighted-sum and maximum metric equivalence
& Two-sided metric comparison using positive minimum weight.
& Proved.\\
\addlinespace
Slot projections are Lipschitz
& Metric-slot assumption; positivity of weights; definition of
$d_{\mathrm{GCM}}$.
& Proved.\\
\addlinespace
$T^n$ as global section
& Discrete-base bundle definition; elementary bijection between sections
and finite product elements.
& Proved.\\
\addlinespace
Finite-slot observability
& Equality in finite Cartesian products.
& Proved.\\
\addlinespace
Subproduct projections
& Restricted weighted product metric; finite sub-sum bounded by the full
weighted sum.
& Proved.\\
\addlinespace
Observed subproducts are retracts
& Neutral completion assumption; direct calculation
$\pi_J\circ\eta_J=\id_{\EE_J}$.
& Proved under stated neutral-element hypothesis.\\
\addlinespace
Loss of unobserved coordinates
& Existence of a nontrivial unobserved slot; construction of two points
with equal observed projection.
& Proved negative result.\\
\addlinespace
Slot permutations preserve the product metric
& Bijective reindexing of the weighted metric sum.
& Proved.\\
\addlinespace
Isometric change of presentation
& Slotwise isometries and equality of positive weights.
& Proved.\\
\addlinespace
Presentation-invariance corollary
& Transport of metric, convergence, continuity, Lipschitz estimates, and
slotwise maps by isometry.
& Proved under isometric presentation hypothesis.\\
\addlinespace
Sensory normalization
& Quotient continuity away from zero; reverse triangle inequality and
lower norm bound on $U_r$.
& Proved local Lipschitz result.\\
\addlinespace
Metric property of $d_{\mathrm{GCM}}$
& Metric axioms for every slot; positive finite weighted sum.
& Proved.\\
\addlinespace
Product topology equivalence
& Positive weights; projection Lipschitzness; finite product topology
basis.
& Proved.\\
\addlinespace
Coordinatewise convergence criterion
& Projection Lipschitzness and finiteness of the slot index set.
& Proved.\\
\addlinespace
Weighted stability
& Componentwise Lipschitz hypothesis; weighted-sum estimate.
& Proved under stated hypothesis.\\
\addlinespace
Completeness
& Completeness of every slot; coordinatewise convergence criterion.
& Proved under stated hypothesis.\\
\addlinespace
Componentwise continuity
& Universal property of finite product topologies.
& Proved.\\
\addlinespace
$\RR^{14}$ GCM norm metric
& Positive weighted sum of $\ell^1$ norms.
& Proved.\\
\addlinespace
Finite symbolic slots
& Eventual constancy of Cauchy sequences in a finite discrete metric;
finite subcover argument.
& Proved.\\
\addlinespace
Finite products of bounded and symbolic slots
& Heine--Borel theorem; finite symbolic compactness; completeness of
finite weighted products.
& Proved under bounded-slot hypothesis.\\
\addlinespace
Unbounded vector slots are not compact
& Completeness of finite-dimensional normed spaces; divergent sequence
$(ne_1)$.
& Proved negative result.\\
\addlinespace
Observable quotient
& Equivalence relation defined by observed slots; canonical bijection
with the product of observed coordinates.
& Proved.\\
\addlinespace
Hidden fibres are quotiented out
& Observable quotient collapses states agreeing on observed slots.
& Proved methodological corollary.\\
\addlinespace
Type preservation on empirical base
& Direct-sum closure; convexity of coordinate intervals.
& Proved.\\
\addlinespace
Lipschitz bound for socle-wise contraction
& Triangle inequality in $\ell^1$; coefficients in $[0,1]$.
& Proved.\\
\addlinespace
Lifted type preservation
& Product membership criterion; component update assumptions.
& Proved under stated hypothesis.\\
\addlinespace
Closure of finite composition chains
& Induction; lifted type preservation.
& Proved.\\
\addlinespace
Exact inversion of logged composition
& Logged-tree definition; stored children and relation label.
& Proved for logged trees only.\\
\addlinespace
Recursive recovery of leaves
& Induction on finite tree depth; tree comultiplication.
& Proved for finite logged trees.\\
\addlinespace
Affine contraction non-injectivity
& Explicit one-parameter family of distinct preimages for the same affine
combination.
& Proved negative result.\\
\addlinespace
Logs as mathematical data
& Non-injectivity of unrestricted affine contraction; alternatives are
stored data, restricted domains, or representative selection.
& Proved methodological corollary.\\
\addlinespace
Conditional Frobenius criterion
& Definition of Frobenius algebra.
& Tautological criterion; deployed verification still open.\\
\addlinespace
Quotient composition criterion
& Equivalence classes; representative-independence exactly equivalent to
the congruence condition.
& Proved.\\
\addlinespace
Associativity passes to finite linearization
& Bilinear extension from basis products; equality of basis vectors.
& Proved.\\
\addlinespace
Basis units pass to finite linearization
& Basis-vector unit equations are equivalent to two-sided identity
equations in the finite magma.
& Proved finite-algebra criterion.\\
\addlinespace
Quotient associativity under congruence
& Well-defined quotient operation; associativity and identity descend by
representative calculation.
& Proved under associativity and congruence hypotheses.\\
\addlinespace
Basis checking for finite algebraic identities
& Linear maps agree everywhere iff they agree on a basis.
& Proved.\\
\addlinespace
Finite Frobenius basis checks
& Frobenius axioms are equalities of linear maps on finite tensor-product
basis vectors.
& Proved verification corollary.\\
\addlinespace
Affine averaging non-associativity
& Explicit comparison of
$\mu_\theta(\mu_\theta(a,0),c)$ and
$\mu_\theta(a,\mu_\theta(0,c))$ for $a\neq c$.
& Proved negative result.\\
\addlinespace
Averaging cannot be raw Frobenius multiplication
& Frobenius multiplication requires associativity; affine averaging fails
associativity on positive-dimensional slots.
& Proved limitation corollary.\\
\addlinespace
Round-trip success does not imply Frobenius
& Counterexample on $A=\RR$ with $\mu(a\otimes b)=a$ and
$\delta(x)=x\otimes1$.
& Proved negative result.\\
\addlinespace
Finite label Frobenius algebra
& Basis calculation in $\RR^C$; associativity, coassociativity, counit,
and Frobenius identity checked on basis vectors.
& Proved toy/reference model.\\
\addlinespace
Golden-ratio growth
& Characteristic polynomial of the substitution matrix; Perron--Frobenius
theorem for primitive nonnegative matrices.
& Proved for the substitution matrix.\\
\addlinespace
Diagnostic meaning of round-trip error
& Logical comparison between pointwise metric inequality and universal
algebraic identity.
& Proved limitation statement.\\
\addlinespace
Finite diagnostics are not universal algebraic proofs
& Universal quantification over all fibres; finite-set counterexample on
$\RR$.
& Proved methodological limitation.\\
\addlinespace
When finite Frobenius testing is a proof
& Basis-checking criterion versus non-exhaustive sampled diagnostics.
& Proved proof-status criterion.\\
\addlinespace
Largest symmetric eigenvalue is Lipschitz
& Rayleigh quotient formula for real symmetric matrices; operator-norm
bound.
& Proved spectral stability result.\\
\addlinespace
Continuous Gamma leading spectrum
& Operator-norm continuity of $\Gamma(t)$ and Lipschitz eigenvalue bound.
& Proved under symmetric-continuity hypothesis.\\
\addlinespace
Symmetric part controls quadratic amplification
& Decomposition of a real matrix into symmetric and skew-symmetric parts;
$x^\top Kx=0$ for skew-symmetric $K$.
& Proved diagnostic-scope result.\\
\addlinespace
Symmetric leading spectrum is not a Hopf certificate
& Real symmetric matrices have real eigenvalues; Hopf requires a non-real
conjugate Jacobian pair.
& Proved limitation corollary.\\
\addlinespace
Curvature proxy monotonicity
& Positive weighted sum of nondecreasing functions.
& Proved under monotone-input hypothesis.\\
\addlinespace
Curvature threshold
& Curvature--spectrum model law; monotonic algebraic inversion.
& Proved consequence of a model law.\\
\addlinespace
Monotonicity and saturation
& Calculus of $R/(R+\kappa)$ for $\kappa>0$.
& Proved consequence of a model law.\\
\addlinespace
Crystal-count threshold
& Substitution of fixed dispersion into the curvature-threshold
inequality; division by positive $\alpha$.
& Proved consequence of a model law.\\
\addlinespace
Curvature law alone does not define a Hopf parameter
& Separation between scalar model law and dynamical data
$(F,x^\ast,J)$ required for Hopf.
& Proved methodological limitation.\\
\addlinespace
Classical Hopf theorem
& External theorem from dynamical systems; cited references.
& Quoted standard theorem, not original proof.\\
\addlinespace
Scalar threshold is not Hopf
& One-dimensional linear counterexample; absence of a non-real conjugate
eigenvalue pair.
& Proved negative result.\\
\addlinespace
Same scalar threshold can be non-Hopf or Hopf
& Comparison of one-dimensional scalar crossing with the planar Hopf
normal form in polar coordinates.
& Proved limitation statement.\\
\addlinespace
Hitting-time transfer
& Injectivity of a continuous strictly monotone reparameterization.
& Proved.\\
\addlinespace
Conditional Hopf prediction
& Hopf crossing assumption; monotone curvature parameterization.
& Conditional model proposition.\\
\addlinespace
Kalman error convergence
& External Kalman covariance convergence theorem; elementary contraction
iteration for the displayed bound.
& Standard theorem under standard hypotheses.\\
\addlinespace
Noiseless observability criterion
& Stacked observation equation
$Y_q=\mathcal O_qx_0$; injectivity equivalent to full column rank.
& Proved finite-horizon linear result.\\
\addlinespace
Observability Gramian criterion
& Identity $x^\top W_qx=\|\mathcal O_qx\|_2^2$.
& Proved.\\
\addlinespace
Fibre-coordinate observability invariance
& Transformed observability matrix
$\mathcal O'_q=\mathcal O_qS$ with $S$ invertible.
& Proved.\\
\addlinespace
Coordinate choice is not Kalman convergence
& Observability invariance under invertible coordinate change.
& Proved methodological corollary.\\
\addlinespace
Projection and observability counterexample
& Direct construction on $B\oplus F$ with non-injective projection.
& Proved.\\
\addlinespace
Observable quotient carries observed state
& $F$-invariance of $\ker H$; quotient well-definedness; injectivity of
the induced output map.
& Proved under stated invariance hypothesis.\\
\addlinespace
Unobserved unstable modes block full-state convergence
& Two-dimensional diagonal system with hidden eigenvalue $|a|>1$ and
identical observations for distinct hidden states.
& Proved negative result.\\
\addlinespace
Kalman audit corollary
& Counterexample plus standard Kalman theorem.
& Proved methodological corollary.\\
\addlinespace
SSTD morphism criterion
& Definition of bundle morphism and commuting square.
& Proved under defined projections.\\
\addlinespace
Changing domain projection changes morphism claim
& Bundle square equation over $p'$ forces $p(x)=p'(x)$ for all texts.
& Proved projection-dependence result.\\
\addlinespace
Composition of SSTD bundle morphisms
& Commuting-square equations for two morphisms; associativity of
composition.
& Proved categorical/bundle result.\\
\addlinespace
SSTD slot insertion
& Neutral completion outside slot $7$; GCM metric sum reduces to the
SSTD slot term.
& Proved Lipschitz insertion result.\\
\addlinespace
Three-view aggregation dimension
& Coordinatewise mean, maximum, and standard deviation each lie in
$\RR^d$; concatenation lies in $\RR^{3d}$.
& Proved dimension result.\\
\addlinespace
$3072=3\cdot1024$ consistency
& Three-view aggregation dimension with $d=1024$.
& Proved dimensional corollary.\\
\addlinespace
SSTD asymptotic complexity
& Token iteration, lookup cost, fixed-depth fixed-dimension diffusion,
and fixed aggregation cost.
& Proved under stated computational model.\\
\addlinespace
Linearized SSTD continuity
& Bounded linear maps on fixed-length finite products; linear mean
aggregation.
& Proved for linearized fixed-length model.\\
\addlinespace
Max and standard-deviation view continuity
& Lipschitz maximum inequality; standard deviation as norm after linear
centering on fixed finite dimension.
& Proved for fixed-length regions.\\
\addlinespace
Concatenation not automatic
& Bundle morphism definition imposes no compositional law; hash-style
counterexample.
& Proved negative result.\\
\addlinespace
Dense attention score count
& Counting ordered query-key pairs in dense self-attention.
& Proved quadratic lower-bound counting fact.\\
\addlinespace
Asymptotic dominance is not a benchmark result
& Distinction between asymptotic limits and finite deployed wall-clock
measurements with hidden constants and hardware effects.
& Proved methodological limitation.\\
\addlinespace
Flat SpiderR idealization
& Constant commuting connection matrices; explicit parallel-transport
exponential.
& Proved idealization.\\
\addlinespace
Curvature detects non-commutation
& Direct computation of $A\wedge A$ for a constant two-coordinate
connection.
& Proved.\\
\addlinespace
Path independence and trivial holonomy
& Composition of parallel transports; transport along reversed paths is
the inverse.
& Proved under standard transport hypotheses.\\
\addlinespace
Constant gauge changes preserve flatness
& Constant conjugation of connection forms; curvature transforms by
$G^{-1}\Omega G$.
& Proved.\\
\addlinespace
Variable gauge transformations require $G^{-1}\dd G$
& Local-frame calculation for $s=Gs'$; curvature operator conjugation.
& Proved gauge-covariance result.\\
\addlinespace
Flatness does not force trivial holonomy
& Rank-one flat connection on $S^1$; explicit solution of the transport
ODE.
& Proved counterexample.\\
\addlinespace
Exact commuting connections on star-shaped domains
& Fundamental theorem for line integrals applied to $A=\dd\Phi$ under
commuting transport.
& Proved endpoint-dependence result.\\
\addlinespace
SpiderR verification obligation
& Curvature formula and non-commutation proposition.
& Proved methodological corollary.\\
\addlinespace
Citation role discipline
& Bibliographic role definition; separation between theorem machinery and
context-only references.
& Proved bibliographic hygiene criterion.\\
\addlinespace
External invocation does not verify local hypotheses
& Logical separation between an implication supplied by an external
theorem and verification of its antecedent for a BrainiaK system.
& Proved external-theorem hygiene criterion.\\
\addlinespace
Closure prevents hidden assumptions
& Definition of closed proof; exhaustion of allowed proof dependencies.
& Proved methodological proposition.\\
\addlinespace
Safe revision criterion
& Closed proof units for theorem-level claims; explicit non-theorem
status labels.
& Proved editorial criterion.\\
\addlinespace
Complete result cards prevent status drift
& Complete-card fields; synchronization between body claim and card.
& Proved central-claim hygiene criterion.\\
\addlinespace
Implementation correspondence is not a proof-card substitute
& Separation between proof location, empirical status, and code-surface
correspondence.
& Proved evidential-role separation.\\
\addlinespace
Dependency normal form existence
& Finite list of dependency atoms attached to a closed proof audit unit.
& Proved methodological normalization result.\\
\addlinespace
Acyclic dependency ledger criterion
& Ordering of internal dependencies by earlier manuscript results and
external theorem atoms.
& Proved anti-circularity criterion.\\
\addlinespace
Non-theorem labels do not discharge proof obligations
& Definition of theorem-level claim versus model law, conjecture, or
empirical protocol.
& Proved status-discipline result.\\
\addlinespace
Release certificate soundness
& Finite checklist containing build, proof-environment, citation,
external-invocation, empirical-traceability, and status-label checks.
& Proved release-audit sufficiency criterion.\\
\addlinespace
Release certificate incompleteness
& Difference between formal/editorial checks and mathematical novelty or
referee acceptance.
& Proved limitation of mechanical release certification.\\
\addlinespace
Environment balance necessity
& Editorial rule requiring proof or explicit exception for theorem-level
environments; insufficiency follows because counts do not inspect proof
content.
& Proved mechanical-audit limitation.\\
\addlinespace
Untraceable empirical numbers
& Definition of traceable empirical claim and empirical-number promotion
check.
& Proved empirical-status criterion.\\
\addlinespace
Submission gate separation
& Distinction between internal logical proof gates and external empirical
or bibliographic artefact gates.
& Proved editorial/submission criterion.\\
\addlinespace
Promotion package sufficiency
& Formal domain/codomain, exact operations, stated hypotheses, and closed
proof.
& Proved methodological proposition.\\
\addlinespace
Promotion package necessity
& Failure modes for untyped domains, undefined operations, hidden
hypotheses, missing closed proofs, or unverified implementation claims.
& Proved within editorial standard.\\
\bottomrule
\end{longtable}

\section{Limitations}

This formalization intentionally reduces several strong internal claims.
\begin{enumerate}
  \item It does not prove that the deployed crystal system is a
  Frobenius algebra.
  \item It does not derive the curvature--spectrum law
  $\lambda_{\max}=R/(R+\kappa)$ from Gamma dynamics.
  \item It does not claim that fibre structure alone guarantees Kalman
  convergence.
  \item It does not claim transformer complexity is exponential.
  \item It proves SpiderR flatness only under explicit commutativity and
  local-triviality assumptions.
  \item It does not use consciousness terminology as a theorem-level
  claim.
\end{enumerate}

\section{Peer-Review Completion Checklist}

Before submission, the following checklist must be satisfied.
\begin{enumerate}
  \item Every theorem statement has a proof or is reclassified.
  \item Every model equation is labelled as a model law unless derived.
  \item Every empirical number is backed by a reproducible artefact.
  \item Every implementation path cited in the paper exists in the
  submitted repository or is labelled as an architectural abstraction.
  \item Every symbol is defined before first theorem-level use.
  \item The bibliography is maintained as BibTeX and each entry is
  checked against a primary bibliographic source before arXiv submission.
  \item The relation between $\RR^{14}$ and any older $\RR^{24}$
  formulation is stated once and never left ambiguous.
  \item The Hopf claim includes the actual vector field and Jacobian if
  it is promoted beyond model status.
  \item The Frobenius claim includes exact maps
  $(\mu,\eta,\delta,\epsilon)$ if it is promoted beyond conjectural
  status.
  \item The SpiderR flatness claim includes a commutator or curvature
  computation for the concrete operators if it is promoted beyond the
  idealized proposition.
\end{enumerate}

\section{Proof-Audit Protocol}

The manuscript is designed to be auditable.  This section states the
verification protocol that should be applied before every public
revision.  It is included in the paper because the main risk of the
source material is not a single false calculation, but category drift:
definitions becoming assumptions, assumptions becoming theorems, and
implementation diagnostics being read as proofs.

\begin{definition}[Proof audit unit]
A proof audit unit is a triple $(S,H,P)$ where $S$ is a theorem-level
statement, $H$ is the list of hypotheses explicitly used by $S$, and
$P$ is the proof text or cited external theorem supporting $S$.
\end{definition}

\begin{definition}[Closed proof]
A proof audit unit $(S,H,P)$ is closed if every non-definitional claim
used in $P$ is one of the following:
\begin{enumerate}
  \item an element of $H$;
  \item a definition already stated in the manuscript;
  \item an earlier theorem, proposition, lemma, or corollary in the
  manuscript;
  \item a standard external theorem cited with a bibliographic reference;
  \item an elementary inference explicitly shown in the proof.
\end{enumerate}
\end{definition}

\begin{proposition}[Closure prevents hidden assumptions]
If a proof audit unit is closed, then the proof does not depend on any
unstated implementation fact or hidden modelling assumption.
\end{proposition}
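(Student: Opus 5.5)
The plan is to argue by exhaustion over the five admissible kinds of dependency listed in the definition of a closed proof, in the same style as the methodological propositions earlier in the paper (for instance, ``Citation role discipline'' and ``Complete cards prevent central-claim status drift'').

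\textbf{Fix the target notion.} First I make precise what it means for $P$ to depend on an unstated implementation fact or a hidden modelling assumption. Such a dependency is a non-definitional claim $c$ used in $P$ that satisfies all of the following:
\begin{enumerate}
  \item $c$ is not listed in $H$;
  \item $c$ is not stated anywhere in the manuscript as a definition, result, or cited external theorem;
  \item $c$ is not displayed as an inference inside $P$.
\end{enumerate}
The typical cases are a property read off from code behaviour or a file name, and a model law used without being declared.

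\textbf{Case analysis.} Take an arbitrary non-definitional claim $c$ used in $P$. By closedness, $c$ falls into one of the five listed cases, and I check each against the target notion.
\begin{itemize}
  \item If $c\in H$, it is explicitly stated as a hypothesis of $S$, so it is not hidden.
  \item If $c$ is a definition of the manuscript, it is stated in the text. Moreover, by the proposition ``Implementation existence is not theorem evidence'', a definition carries no implementation fact unless one is formally extracted as an axiom, and any such axiom would then appear in $H$ or in the text.
  \item If $c$ is an earlier manuscript result, I apply well-founded induction on the order of results in the manuscript. The earlier result is itself a closed unit, so it too has no hidden dependencies.
  \item If $c$ is a cited external theorem, it is explicit. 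By ``External invocation does not verify local hypotheses'', its antecedent must still be discharged within $P$ through one of the other four cases.
  \item If $c$ is an elementary inference, it is shown explicitly in $P$.
\end{itemize}
In every case $c$ is stated somewhere, so no claim satisfying the target notion can occur.

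\textbf{Main obstacle.} The hard step is the third case, the appeal to earlier results. Closedness of $(S,H,P)$ by itself says nothing about whether the earlier results it relies on are closed. To handle this, I will either read the proposition as implicitly assuming that all cited manuscript results are closed audit units, or add that as an explicit standing hypothesis. I would then invoke acyclicity, since results can only cite strictly earlier results, so that the induction is well-founded.

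The external-theorem case also needs care, because an imported theorem might itself carry modelling content. I will resolve this by noting that its conclusion is used only under its antecedent, and that the antecedent is routed back through $H$.
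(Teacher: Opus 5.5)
Your exhaustion over the five admissible categories of a closed proof is the paper's argument. Hypotheses, definitions, earlier results, cited external theorems and displayed inferences are all explicitly stated, so any implementation fact or modelling assumption used in $P$ would have to appear as one of them, and otherwise the unit would not be closed.

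The induction over earlier results that you call the main obstacle is not needed. Under your own target notion, a stated earlier result already fails your condition 2, and the paper likewise uses only that earlier results are stated (``stated and proved or cited''). The extra standing hypothesis that cited results are themselves closed belongs to a transitive reading of ``depend on'' that the statement does not require. Adding it would prove a weaker statement than the one given, so drop that step.
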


\begin{proof}
By definition, every non-definitional claim used in a closed proof must
belong to one of the listed categories.  None of those categories is an
unstated implementation fact: hypotheses are stated, definitions are
stated, earlier results are stated and proved or cited, external
theorems are cited, and elementary inferences are displayed.  Therefore
any implementation fact or modelling assumption used by the proof would
have to appear explicitly as one of those categories.  If it does not,
the unit is not closed.
\end{proof}

\begin{definition}[Status-preserving edit]
An edit is status-preserving if it does not change a result from
definition, assumption, model law, conjecture, or empirical claim into a
theorem-level assertion unless it also supplies a closed proof.
\end{definition}

\begin{proposition}[Safe revision criterion]
Suppose every theorem-level result in a manuscript revision has a closed
proof audit unit and every non-theorem-level claim keeps its explicit
status label.  Then the revision satisfies the editorial rule
``no unsupported theorem''.
\end{proposition}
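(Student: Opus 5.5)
The plan is to read ``no unsupported theorem'' as the conjunction of two conditions. First, every claim presented at theorem level in the revision comes with a proof whose dependencies are all stated. Second, no claim that lacks such a proof is presented at theorem level. I will show each condition separately from the two hypotheses of the statement. The argument is a case split over the claims of the revision according to their status label: either a claim is theorem-level (theorem, proposition, lemma, corollary), or it carries one of the labels definition, assumption, model law, conjecture, or empirical claim.

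For the theorem-level case, the first hypothesis supplies a closed proof audit unit $(S,H,P)$ for each such result. I will invoke the proposition ``Closure prevents hidden assumptions''. It gives that $P$ depends only on listed hypotheses, stated definitions, earlier results, cited external theorems, or displayed elementary inferences, and on no unstated implementation fact or modelling assumption. Hence each theorem-level result is supported in the sense required by the editorial rule. One point deserves a remark here. Earlier results used inside $P$ are themselves theorem-level and so also have closed proofs. I will therefore order the theorem-level results by their position in the manuscript and argue inductively that support is not circular. This uses clause (3) of the closed-proof definition, which permits only \emph{earlier} results.

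For the non-theorem case, the second hypothesis says each such claim keeps its explicit status label. By the result-class conventions, a model law, conjecture, or empirical claim is not asserted as proved. A definition or assumption is a convention or hypothesis, not a proved claim. So none of these constitutes a theorem-level assertion, supported or unsupported. This is where I would cite the definition of a status-preserving edit. It guarantees that a label cannot be silently upgraded, so no claim can move out of this case without acquiring a closed proof, at which point it falls under the first case.

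The main obstacle is making ``unsupported theorem'' precise enough that the two cases really exhaust it. The risk is a claim that is theorem-level in the body while its label says otherwise. I would handle this by declaring that a claim's status is determined by its label, in line with the synchronization convention used for the complete result cards. Under that reading the case split is exhaustive, and combining the two cases yields the statement.
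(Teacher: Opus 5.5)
Your proposal is correct and matches the paper's proof, which uses the same two-case split: each theorem-level result is supported by its closed proof audit unit, and every other claim keeps a label that prevents it from being read as a theorem. Your inductive non-circularity argument via clause (3) of the closed-proof definition and your stipulation that status is fixed by label are sound refinements the paper leaves implicit. The appeal to the status-preserving-edit definition is unnecessary, since a definition guarantees nothing by itself and the first hypothesis already covers any claim that is theorem-level in the revision.
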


\begin{proof}
The editorial rule forbids unsupported theorem-level assertions.  By
hypothesis, every theorem-level assertion has a closed proof audit unit.
All other claims retain labels that prevent them from being read as
theorems.  Hence no unsupported theorem-level assertion remains.
\end{proof}

\subsection{Dependency Normal Form}

The proof ledger is useful only if the dependencies behind a proof can
be read without guessing.  We therefore impose the following normal form
for theorem-level results.  It is not a new mathematical axiom; it is an
editorial device that makes peer review easier because a reader can
separate definitions, assumptions, internal results, external theorems,
and elementary calculations.

\begin{definition}[Dependency atom]
A dependency atom for a theorem-level statement is one of the following
items:
\begin{enumerate}
  \item a named definition already introduced in the manuscript;
  \item a stated hypothesis or assumption local to the result;
  \item an earlier internal theorem, proposition, lemma, or corollary;
  \item a cited external theorem with a bibliographic reference;
  \item an elementary calculation displayed in the proof.
\end{enumerate}
\end{definition}

\begin{definition}[Dependency normal form]
A theorem-level statement is in dependency normal form if its proof can
be rewritten as a finite sequence
\[
  D_1,\ldots,D_m \Longrightarrow S,
\]
where each $D_i$ is a dependency atom and the final implication is
justified by explicit logical inference in the proof text.
\end{definition}

\begin{proposition}[Closed proofs admit dependency normal form]
Every closed proof audit unit admits a dependency normal form.
\end{proposition}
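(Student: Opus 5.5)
The plan is to build the normal form directly from the closure condition. Fix a closed proof audit unit $(S,H,P)$. By the definition of closed proof, every non-definitional claim used in $P$ falls into one of five categories: an element of $H$, a definition already stated, an earlier internal result, a cited external theorem, or an elementary inference shown explicitly in $P$. These five categories match the five kinds of dependency atom item by item. So the first step is to record this correspondence: hypotheses in $H$ become local-assumption atoms, stated definitions become definition atoms, earlier results become internal-result atoms, cited theorems become external-theorem atoms, and displayed elementary inferences become elementary-calculation atoms. The definitional claims used in $P$, which the closure condition exempts, are themselves definitions already introduced in the manuscript, so they too are definition atoms.

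Second, I would enumerate the claims invoked in $P$ in the order they appear and label them $D_1,\ldots,D_m$. The proof text $P$ is a finite string, so it contains only finitely many claim invocations, and $m$ is finite. Repeated invocations may be merged, which keeps the list finite.

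Third, I would justify the final implication $D_1,\ldots,D_m\Longrightarrow S$. Here the plan is to observe that $P$ is a proof of $S$. The only steps $P$ takes are the ones catalogued in the list, together with the logical inferences linking them, and those inferences are displayed in the proof text. Reading $P$ as the inference connecting the atoms to $S$ therefore gives exactly the explicit logical inference required by the definition of dependency normal form.

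The main obstacle is the third step. One has to make sure that the logical glue of $P$, meaning its connectives and inference moves, is counted as ``explicit logical inference in the proof text'' rather than as a hidden extra premise. I would handle this by treating any nontrivial connecting step as an elementary inference displayed in $P$, which the closure definition already allows, and hence as an atom of type five. What remains is purely logical and is supplied by the proof text itself.
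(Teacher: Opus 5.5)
Your proposal is correct and follows the paper's proof essentially step for step: you match the five closure categories to the five kinds of dependency atom, use finiteness of the proof text to get a finite list $D_1,\ldots,D_m$ ordered by first use, and let the displayed inferences of $P$ supply the final implication to $S$. Your extra remarks on exempted definitional claims and on the logical glue of $P$ are reasonable refinements that the paper leaves implicit.
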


\begin{proof}
Let $(S,H,P)$ be a closed proof audit unit.  By the definition of
closed proof, every non-definitional claim used in $P$ is either a
hypothesis in $H$, a definition already stated, an earlier internal
result, a cited external theorem, or an elementary inference displayed
in the proof.  These are exactly the dependency atoms.  Since the proof
text is finite, only finitely many such atoms are used.  Listing them in
their order of first use yields a finite sequence
$D_1,\ldots,D_m$ whose displayed inferences establish $S$.  Hence the
unit admits dependency normal form.
\end{proof}

\begin{definition}[Acyclic internal dependency ledger]
An internal dependency ledger is acyclic if every internal theorem,
proposition, lemma, or corollary depends only on definitions,
assumptions, external theorem atoms, elementary calculations, or
internal results that occur earlier in the manuscript.
\end{definition}

\begin{proposition}[Acyclic ledger prevents circular proof by reference]
If the internal dependency ledger is acyclic, then no theorem-level
result is proved solely by a chain of internal references that eventually
returns to itself.
\end{proposition}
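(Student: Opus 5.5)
The plan is to argue by contradiction using the strict order that acyclicity imposes on internal results. I will order the internal theorem-level results by their position in the manuscript, writing $r_1\prec r_2$ when $r_1$ occurs earlier than $r_2$. Because the manuscript is a finite linear text, this is a strict total order on its internal results. By the definition of an acyclic internal dependency ledger, each internal dependency of a result $r$ is either a non-internal atom (a definition, an assumption, an external theorem, or an elementary calculation) or an internal result $r'$ with $r'\prec r$.

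Next, I will suppose that some theorem-level result $S$ were proved solely by a chain of internal references that returns to itself. Such a chain has the form
\[
  S=r_0\leftarrow r_1\leftarrow\cdots\leftarrow r_k=S,\qquad k\geq1,
\]
where each $r_{i-1}$ cites $r_i$ as an internal dependency. The acyclicity condition gives $r_i\prec r_{i-1}$ for every $i$. By transitivity of $\prec$ along the chain, this yields $S=r_k\prec r_0=S$, which contradicts the irreflexivity of a strict order. Hence no such chain exists, and the proposition follows.

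I expect the main obstacle to be interpretive rather than technical. The statement's phrase ``chain of internal references that eventually returns to itself'' has to be read precisely as a finite sequence of internal-dependency edges of length at least one that starts and ends at the same result. Once that reading is fixed, the positional order supplies the well-founded, irreflexive structure, and the rest is the standard fact that a relation contained in a strict order has no cycles. I would also note that the word ``solely'' plays no role in the argument: any cycle among internal results is excluded, whatever non-internal atoms the proofs use alongside it.
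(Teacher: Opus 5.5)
Your proof is correct and is essentially the paper's argument. Both assume a cycle $R_0\to R_1\to\cdots\to R_k=R_0$ of internal dependencies and use acyclicity to get strictly decreasing manuscript positions along it. Your appeal to transitivity and irreflexivity of the positional order is a slightly more formal version of the paper's ``strict descending chain that returns to its start'' contradiction.
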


\begin{proof}
Assume, for contradiction, that a theorem-level result $R$ is supported
by a chain of internal references that eventually returns to $R$.  The
chain contains a cycle
\[
  R=R_0\to R_1\to\cdots\to R_k=R
\]
of internal dependencies.  But acyclicity requires every internal
dependency to point to an earlier result in the manuscript ordering.
Following the cycle would therefore produce a strict descending chain of
positions that returns to its starting position, which is impossible in
a finite linear ordering.  Thus such a circular proof chain cannot
occur.
\end{proof}

\begin{proposition}[Non-theorem labels do not discharge theorem
obligations]
Labelling a statement as a model law, conjecture, or empirical protocol
does not provide a proof of the corresponding theorem-level statement.
\end{proposition}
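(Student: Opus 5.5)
The plan is to argue directly from the result classes and the definition of a closed proof audit unit, in the same style as the proofs of ``Closure prevents hidden assumptions'' and ``Safe revision criterion''. First I would fix what a proof of a theorem-level statement $S$ is in this manuscript. It is a closed proof audit unit $(S,H,P)$, and every non-definitional claim used in $P$ must lie in one of the five allowed categories: stated hypotheses, prior definitions, earlier proved results, cited external theorems, or displayed elementary inferences. Equivalently, by the dependency-normal-form proposition, it is a finite list of dependency atoms $D_1,\ldots,D_m\Longrightarrow S$.

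Next I would check, label by label, that attaching a non-theorem status adds none of these atoms for $S$. There are three cases.
\begin{itemize}
\item A \emph{model law} is, by the result-class description, adopted as a structural equation and explicitly not derived. Labelling it supplies the law as a modelling premise, not as a derivation of $S$. At most it turns $S$ into the conditional statement ``model law $\Rightarrow S$''. The curvature--spectrum law and the ``Curvature law alone does not define a Hopf parameter'' proposition serve as the template here.
\item A \emph{conjecture} is, by definition, a claim for which the paper provides no proof. Its label therefore asserts precisely the absence of a proof unit.
\item An \emph{empirical protocol} is a measurement-dependent statement. By the earlier propositions ``Finite diagnostics are not universal algebraic proofs unless exhaustive'' and ``Implementation correspondence is not a proof-card substitute'', finite measurements do not establish universal identities. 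So a protocol label contributes no dependency atom for a universal $S$.
\end{itemize}

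Finally I would conclude by the contrapositive. If labelling discharged the obligation, the label itself would have to be a dependency atom for $S$. The only candidate category is ``definition'', since a label is an editorial classification. But a definition stated in the manuscript fixes meaning; it does not assert that $S$ holds, except in the degenerate case where $S$ is itself definitional, and then $S$ was not theorem-level to begin with. So the closed-proof requirement stays undischarged. To make non-vacuity concrete, I would also cite the Frobenius conjecture: it carries a label, yet the conditional Frobenius criterion shows that its proof obligations, namely verifying $\mu,\eta,\delta,\epsilon$, remain open.

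The main obstacle is the last step. Ruling out that a label counts as a ``definition already stated'' needs care, because the closed-proof definition allows definitions as dependencies. My first approach is to separate definitions of mathematical objects from status annotations. I would then observe that a status annotation such as ``model law'' by its own wording denies derivation. Using it as a proof atom for $S$ would therefore contradict its content: a statement labelled ``not derived'' cannot certify a derivation.
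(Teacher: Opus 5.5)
Your proposal is correct and follows essentially the paper's route. The paper's proof is exactly your label-by-label observation: a model law is an adopted rather than derived assumption, a conjecture is explicitly unproved, and an empirical protocol is a measurement recipe rather than a derivation, so none of these labels asserts that the statement follows by proof. Your closed-proof and dependency-atom scaffolding only makes the paper's phrase ``follows from definitions and hypotheses by proof'' explicit.

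One small imprecision is in your final step. ``Stated hypothesis'' is also a candidate atom, not only ``definition'', because a model law or conjecture can be assumed. Your model-law bullet already handles this case, since assuming the labelled content yields only the conditional ``model law $\Rightarrow S$'', not $S$ itself.
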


\begin{proof}
A model law is an assumption within a model; a conjecture is explicitly
unproved; an empirical protocol is a recipe for measurement rather than
a mathematical derivation.  None of these labels asserts that the
statement follows from definitions and hypotheses by proof.  Therefore
such labels prevent overclaiming, but they do not discharge the proof
obligation that would be required to promote the same content to theorem
status.
\end{proof}

\subsection{Release Audit Certificate}

The phrase ``line-by-line audit'' is too vague to be a submission
criterion.  In this manuscript it is replaced by the following finite
certificate.  The certificate does not claim that reviewers will accept
the model assumptions; it claims only that the submitted artefact obeys
the proof-status discipline advertised in the introduction.

\begin{definition}[Release audit certificate]
A release audit certificate for this manuscript consists of the
following records:
\begin{enumerate}
  \item a successful \LaTeX{} build with no undefined citations,
  undefined references, fatal errors, or overfull boxes in the release
  log;
  \item a proof-environment count showing that every theorem-level
  environment is matched by a proof or an explicit external-theorem
  invocation;
  \item the proof ledger for all theorem-level results;
  \item the external theorem invocation register for every cited
  external theorem used in a proof;
  \item the result cards for the six central BrainiaK claims;
  \item the empirical-claim table showing that no untraceable numerical
  claim is asserted as validated;
  \item the submission gate table stating which external artefacts
  remain pending.
\end{enumerate}
\end{definition}

\begin{proposition}[Release certificate soundness]
If a release audit certificate is complete and all records in it pass,
then the manuscript satisfies the internal rule ``no unsupported theorem,
no unlabelled empirical claim, no implementation-as-proof step''.
\end{proposition}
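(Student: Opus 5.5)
The plan is to split the internal rule ``no unsupported theorem, no unlabelled empirical claim, no implementation-as-proof step'' into its three conjuncts. For each conjunct I will identify which records of the release audit certificate discharge it, and then invoke the earlier methodological propositions that turn a passing record into the required conclusion. The argument is a case analysis over the three conjuncts. Only records (2)--(7) carry logical weight; record (1), the build, only guarantees that the cited references and labels resolve, so that the other records refer to actual objects.

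\textbf{No unsupported theorem.} Record (2) guarantees that every theorem-level environment is paired with a proof or an explicit external-theorem invocation. Record (3), the proof ledger, lists the dependencies of each such proof. Passing the ledger check means that every dependency is a hypothesis, an earlier result, a definition, a displayed elementary inference, or a cited external theorem. That is exactly the definition of a closed proof audit unit. Record (4) is needed for the external-theorem case: by the proposition that external invocation does not verify local hypotheses, the invocation register must additionally record that the hypotheses are either checked locally or kept as explicit assumptions. With that in place, each theorem-level result has a closed proof audit unit. Record (5) supplies the remaining hypothesis of the safe revision criterion: the result cards are synchronized with the body, so by the proposition that complete cards prevent central-claim status drift, every non-theorem claim keeps its status label. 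The safe revision criterion then yields ``no unsupported theorem''.

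\textbf{No unlabelled empirical claim.} This follows from record (6) together with record (7). Record (6) certifies that no untraceable numerical claim is presented as validated. Any empirical statement is therefore either traceable in the sense of the definition of traceable empirical claim, or is explicitly labelled internal-draft or protocol-only. Record (7) marks the pending external artefacts, so that no claim whose artefact is missing can be read as validated.

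\textbf{No implementation-as-proof step.} I will combine closure with the proposition that closure prevents hidden assumptions. A closed proof cannot use an unstated implementation fact. A stated implementation correspondence is not a dependency atom, by the definition of implementation correspondence and by the proposition that implementation correspondence is not a proof-card substitute.

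The main obstacle is the passage from ``record passes'' to ``proof is closed''. The certificate checks finite syntactic artefacts, namely environment counts and ledger entries, whereas closure is a semantic property of the proof text. I would handle this by reading the ledger check as certifying, for each entry, that its listed dependencies exhaust the claims used in the proof. I would state this explicitly as the meaning of a passing ledger record, rather than claim that environment counts alone suffice, since counts do not inspect proof content.
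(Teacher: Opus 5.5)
Your proposal is sound at the level of rigor this editorial statement admits, but it is organised differently from the paper's proof.

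The paper argues record by record:
\begin{itemize}
\item the build rules out unresolved references;
\item the environment count and ledger give every theorem-level statement ``a proof entry or an explicit external-theorem invocation'';
\item the invocation register separates the imported implication from local hypothesis checks;
\item the result cards separate proof location from implementation correspondence for the six central claims;
\item the empirical and gate tables block promotion of untraceable numbers and pending artefacts.
\end{itemize}
It then simply asserts that these records ``together'' enforce the three rules. It never passes through closed proof audit units or the safe revision criterion. It also reads ``no unsupported theorem'' in the weak sense of the environment-balance proposition: each theorem-level environment has a proof or an allowed exception.

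You instead go conjunct by conjunct and chain the earlier methodological results: closed proof, the safe revision criterion, closure prevents hidden assumptions, and the card/implementation separation. This buys an explicit coverage map and a stronger conclusion. Every theorem-level proof is closed, so the implementation-as-proof exclusion follows for all proofs from closure, not from the cards plus the unexplained ``together''. The price is your stipulation that a passing ledger record certifies exhaustive dependency lists. That is more than the paper claims for its ledger, which it calls only ``an index toward closed proof units''. Making that premise explicit is right: the paper's proof glosses over exactly the syntactic-versus-semantic gap you identify. Your empirical conjunct matches the paper's use of records (6) and (7).

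One step needs repair. Record (5) covers only the six central claims, so the status-drift proposition cannot by itself supply the safe revision criterion's hypothesis that \emph{every} non-theorem claim keeps its label. For the other model laws, conjectures, assumptions and definitions, the label is carried by their own environments. For empirical statements, it is carried by record (6). Cite those, or bypass the safe revision criterion and argue the first conjunct directly from records (2)--(4), as the paper does.
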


\begin{proof}
The build record rules out unresolved citation and reference failures
in the release artefact.  The proof-environment count and proof ledger
ensure that theorem-level statements have a proof entry or an explicit
external-theorem invocation.  The external invocation register separates
the external implication from the local verification of its hypotheses.
The result cards classify the six central claims by assumptions,
statement, proof location, implementation correspondence, mathematical
status, and empirical status.  The empirical table rules out treating an
untraceable numerical statement as validated evidence.  Finally, the
submission gate table prevents pending external artefacts from being
silently treated as completed.  Together these records enforce exactly
the three internal rules stated in the proposition.
\end{proof}

\begin{proposition}[Release certificate incompleteness]
A complete release audit certificate does not prove the truth of model
laws, the correctness of unverified empirical claims, or the novelty of
the manuscript.
\end{proposition}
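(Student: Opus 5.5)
The approach is to show that each of the seven records in the release audit certificate is a check of form, status, or traceability. None of them checks the truth of a model-law premise, the accuracy of a measured quantity, or the relation of the manuscript to prior literature. I will then exhibit, for each of the three non-conclusions, a separation: either a situation in which every record passes while the property in question fails, or a logical reason why passing the records cannot imply it.

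I first go through the certificate definition record by record and name the predicate each one tests. The build log tests compilation and the resolution of references. The proof-environment count and the proof ledger test that each theorem-level environment is paired with a proof or a cited invocation. The external invocation register tests that imported implications are separated from their antecedents. The result cards and the status labels test classification. The empirical table tests that no untraceable number is asserted as validated. The submission gate table tests that pending artefacts are disclosed.

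For model laws, I apply the proposition that non-theorem labels do not discharge theorem obligations. A model law such as $\lambda_{\max}=R/(R+\kappa)$ passes the certificate precisely because it carries the model-law label and not a proof. Its truth for a deployed Gamma/CNS system would require a derivation from a specified vector field, or an empirical calibration, and neither is among the records. I make the independence concrete by replacing $\kappa$ with any other positive constant. Every record still passes, yet the two versions of the law cannot both be true of the same system, so passing cannot certify the truth of either.

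For empirical claims, the empirical table only ensures that unverified numbers are labelled as ``reported in internal drafts'' and not as validated. A labelled but false number passes as easily as a labelled true one, so passing does not establish correctness.

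Novelty is the main obstacle, because it is not a formally defined predicate in the manuscript. I will handle it by observing that novelty is a relation between the manuscript and the external literature, while all seven records are functions of the manuscript artefact and its own repository alone. To make this rigorous I will define novelty explicitly as not being contained in a given external corpus. Two manuscripts that are identical as artefacts but are judged against different corpora then receive the same certificate verdict while differing in novelty. With that definition in place, the separation argument goes through exactly as in the two previous cases.
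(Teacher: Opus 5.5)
Your argument is correct, but it establishes the statement by a different mechanism from the paper's. The paper argues by status persistence. The certificate checks only status discipline, proof closure, citation resolution and traceability. A model law therefore remains an assumption until it is derived or calibrated, an empirical claim remains unvalidated until its artefacts exist, and novelty is an external comparative judgement that needs review.

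You keep the same record-by-record reading, but you turn each case into a genuine non-entailment. You do this by exhibiting pairs of situations that receive the same certificate verdict while the target property differs. This gives an actual separation proof rather than an appeal to what the records are about. It also gives novelty a definition, which the paper never supplies. The price is that your novelty conclusion holds relative to the definition you stipulate, and the model-law witness needs some care.

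\begin{itemize}
\item The paper leaves $\kappa$ as a free model parameter. Your two versions must therefore be instantiations with distinct numerical values $\kappa_1\neq\kappa_2$.
\item These instantiations are incompatible only for a system with $R(t_0)>0$ at some time $t_0$. If $R\equiv 0$, both versions force $\lambda_{\max}\equiv 0$, and both can hold at once.
\end{itemize}

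A cleaner witness is to keep the manuscript fixed and vary the system. No record refers to the deployed Gamma/CNS trajectories, so a system that violates the law is consistent with every record passing. This is also the precise content of the paper's remark that the law remains an assumption unless derived or calibrated.
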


\begin{proof}
The certificate checks status discipline, proof closure, citation
resolution, and empirical traceability.  A model law is still an
assumption unless derived or empirically calibrated.  An empirical claim
is still unvalidated unless the required artefacts exist.  Novelty and
importance are comparative scholarly judgements that require external
review beyond the internal consistency of the manuscript.  Therefore the
certificate is a sound editorial guard, not a substitute for peer
review.
\end{proof}

\subsection{Current Release-Certificate Record}

The following record instantiates the release audit certificate for the
current manuscript revision.  The commands are recorded so that a reader
or maintainer can reproduce the certificate before submission.

\begin{longtable}{>{\raggedright\arraybackslash}p{0.22\textwidth}
>{\raggedright\arraybackslash}p{0.34\textwidth}
>{\raggedright\arraybackslash}p{0.34\textwidth}}
\caption{Current release-certificate record.}
\label{tab:current-release-certificate}\\
\toprule
\textbf{Certificate item} & \textbf{Command or artefact} &
\textbf{Current evidence}\\
\midrule
\endfirsthead
\toprule
\textbf{Certificate item} & \textbf{Command or artefact} &
\textbf{Current evidence}\\
\midrule
\endhead
\LaTeX{} build
& \texttt{pdflatex} in nonstop, halt-on-error mode, run to convergence
on the manuscript source.
& Build completes and writes
\codepath{TRAITE_TOPOLOGIE_ALGEBRIQUE_FRACTALE_EN.pdf}.\\
\addlinespace
Bibliography build
& \texttt{bibtex} on the manuscript auxiliary file after bibliography
edits.
& BibTeX completes with the \texttt{plain} style and the manuscript
bibliography database.\\
\addlinespace
Critical log scan
& \texttt{rg} over the \LaTeX{} log for undefined citations,
undefined references, rerun requests, overfull boxes, fatal errors, and
PDF warnings.
& Current release scan reports only the package-name occurrence of
\texttt{rerunfilecheck}; no active rerun, undefined-reference, citation,
error, or overfull-box warning remains.\\
\addlinespace
Proof-environment count
& Script counting \texttt{theorem}, \texttt{proposition},
\texttt{lemma}, \texttt{corollary}, and \texttt{proof} environments.
& The current release has equal theorem-level and proof counts; this
guards against accidental unproved theorem-level environments.\\
\addlinespace
External theorem register
& Table~\ref{tab:external-theorem-invocations}.
& Perron--Frobenius, Hopf, and Kalman invocations are separated from
local BrainiaK hypothesis verification.\\
\addlinespace
Empirical traceability
& Table~\ref{tab:empirical}.
& No numerical performance claim is asserted as validated without the
required dataset, metric, sample size, configuration, baseline, and
artefact path.\\
\addlinespace
Submission gates
& Table~\ref{tab:submission-gates}.
& Internal proof-status gates are separated from external bibliographic
and empirical artefact gates.\\
\bottomrule
\end{longtable}

\subsection{Mechanical Release Checks}

Mathematical correctness cannot be reduced to a parser, but mechanical
checks are useful guards against accidental editorial failure.  A public
release should record the following checks in its build log.

\begin{definition}[Mechanical proof-environment check]
The proof-environment check counts all theorem-level environments
\[
  \{\texttt{theorem},\texttt{proposition},\texttt{lemma},
  \texttt{corollary}\}
\]
and all \texttt{proof} environments in the \LaTeX{} source.  It passes
only if every theorem-level environment is intentionally paired with
either a local proof environment or an explicitly declared
external-theorem citation, model-law label, conjecture label, or
empirical-protocol label.
\end{definition}

\begin{proposition}[Environment balance is necessary but not sufficient]
If the number of theorem-level environments is larger than the number
of local proof environments plus declared external-theorem exceptions,
then the manuscript fails this paper's proof-status discipline.  The
converse is not guaranteed.
\end{proposition}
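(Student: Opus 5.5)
The statement has two parts, and I would treat them separately: a necessity claim via the pigeonhole principle, then a failure of the converse via a small counterexample.

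For necessity, let $N_T$ be the number of theorem-level environments. Let $N_P$ be the number of local \texttt{proof} environments, and $N_X$ the number of declared exceptions (external-theorem citations, model-law, conjecture, or empirical-protocol labels). By the definition of the mechanical proof-environment check, the check passes only if every theorem-level environment is \emph{intentionally paired} with a local proof or a declared exception. I would formalize this pairing as an injective assignment from theorem-level environments to the set of proof environments together with the declared exceptions. Injectivity is the key point: a single proof environment is the proof of one statement, and one exception label attaches to one statement. Injectivity from a set of size $N_T$ into a set of size $N_P+N_X$ forces $N_T\le N_P+N_X$. So if $N_T>N_P+N_X$, no such assignment exists, some theorem-level environment has no proof and no exception, and the check fails. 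Failing the check means failing the paper's proof-status discipline, in the sense of the closed-proof and safe-revision criteria: a theorem-level claim then has no closed proof audit unit.

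For the failure of the converse, I would give an explicit counterexample. Take a source with two theorem-level environments $S_1,S_2$ and two \texttt{proof} environments, both attached to $S_1$, or one of them placed after an unrelated remark. Then the counts balance, $N_T=N_P+N_X=2$, yet $S_2$ has no proof. A second counterexample covers content: pair each statement with a proof environment whose text does not establish it, for instance a proof that cites only an implementation path. By the proposition that implementation existence is not theorem evidence, such a unit is not closed, even though the counts agree. Either case shows that balance does not imply compliance.

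The main obstacle is the first step: justifying injectivity of the pairing, since the paper's definition says ``intentionally paired'' without requiring one proof per statement. I would read the pairing as a matching, on the grounds that a \texttt{proof} environment in \LaTeX{} follows and discharges exactly one preceding statement. I would state this reading explicitly as the interpretation of the definition rather than leave it hidden.
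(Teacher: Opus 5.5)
Your proposal is correct and follows the paper's route. Necessity holds because an excess of theorem-level environments leaves some claim with neither a local proof nor an allowed exception. The converse fails because a count does not inspect what a proof actually establishes: the paper points to incomplete, circular, or hidden-hypothesis proofs, which matches your implementation-path example. Your explicit pigeonhole step, with the one-statement-per-proof reading of ``intentionally paired'' declared as an interpretation, makes precise a counting step the paper's proof leaves implicit, and your misattached-proof example is a valid extra witness.
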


\begin{proof}
The first statement follows from the editorial rule that theorem-level
claims require proof unless explicitly assigned a non-internal-proof
status.  If a theorem-level environment has neither a local proof nor an
allowed exception, then it is unsupported.  Conversely, a balanced count
does not inspect the content of the proof.  A proof environment may be
incomplete, circular, or use an unstated hypothesis.  Therefore balance
is necessary as a guard but not sufficient for mathematical correctness.
\end{proof}

\begin{definition}[Empirical-number promotion check]
An empirical-number promotion check scans the release text for numerical
performance or latency claims and verifies that each such claim appears
inside an empirical-result table carrying dataset, sample size, metric,
configuration, baseline when relevant, and artefact path.  Numerical
constants appearing in definitions, dimensions, algebraic examples, or
external bibliographic metadata are not empirical performance claims.
\end{definition}

\begin{proposition}[Untraceable empirical numbers must be removed or
downgraded]
If a numerical performance claim lacks the traceability fields required
by the empirical-number promotion check, then it cannot appear as a
validated result in the manuscript.
\end{proposition}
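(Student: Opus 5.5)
The plan is to argue directly from the definitions, in the same style as the preceding methodological propositions: the statement follows by unfolding what ``validated result'' means in this manuscript. The three ingredients are the definition of a traceable empirical claim, the definition of the empirical-number promotion check, and the release audit certificate together with the current inclusion rule.

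\textbf{Step 1: what ``validated result'' requires.} I would first fix its meaning. By the result classes in the introduction, an empirical claim requires a dataset, metric, sample size, configuration, and reproducible path. The definition of a traceable empirical claim adds the baseline for comparative claims. The empirical-number promotion check requires every numerical performance or latency claim presented as a paper claim to sit inside an empirical-result table carrying exactly these fields. Item~6 of the release audit certificate requires that no untraceable numerical claim is asserted as validated.

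\textbf{Step 2: contrapositive.} Suppose a numerical performance claim lacks at least one required traceability field. Then it fails the promotion check. A failed check means the certificate record is not passing, and then the release certificate soundness proposition no longer guarantees the rule ``no unlabelled empirical claim.'' The manuscript then violates its own discipline unless the claim is treated in one of two ways. Either it is removed, or it is downgraded: rephrased as ``reported in internal drafts'' per the current inclusion rule, or moved to protocol-only status in Table~\ref{tab:empirical}. Either way it does not appear as validated, which is the conclusion.

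\textbf{Step 3: the only delicate point.} I need to make clear that the conclusion is relative to the manuscript's stated editorial standard, not an absolute logical impossibility. Any text can physically contain a number. The proposition asserts that such a number cannot appear \emph{as a validated result} while the manuscript satisfies its certificate. I would also recall the exclusion clause of the promotion check, so that dimensional constants such as $3072$ are not misclassified as performance claims.
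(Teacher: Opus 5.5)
Your conclusion and your list of remedies (remove, relabel as an internal-draft report, or move to protocol-only status) match the paper. However, the step that carries the weight in your Step~2 is not valid as written. The release certificate soundness proposition is a one-way implication: if every record passes, then the rule ``no unlabelled empirical claim'' holds. From the failure of the promotion check you may conclude only that this sufficient condition is lost. Inferring that ``the manuscript then violates its own discipline unless\ldots'' is denying the antecedent. Moreover, item~6 of the certificate (no untraceable numerical claim is asserted as validated) is essentially the proposition itself, so leaning on it is close to circular. The detour also forces you in Step~3 to retreat to a conditional statement (``while the manuscript satisfies its certificate'') that the proposition does not contain.

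The missing idea is the one the paper's proof rests on entirely: in this manuscript, being a validated empirical result \emph{means} being a traceable empirical claim. The introduction says empirical results ``require traceable datasets, metrics, and scripts before they can be treated as validated evidence.'' The claim template says a statement with a missing row ``remains an internal report.'' So validated implies traceable. A claim missing a required field fails the definition of a traceable empirical claim, and is therefore by definition not a validated result; the current inclusion rule then dictates the downgrade. You gesture at this in Step~1 when you set out to fix the meaning of ``validated.'' But you list the requirements for empirical claims in general plus the certificate's item~6, rather than asserting the identification. State it and drop the certificate chain, and the argument becomes unconditional and a few lines long. The exclusion clause for constants such as $3072$ is harmless but not needed.
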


\begin{proof}
A validated empirical result is defined in this manuscript as a
traceable empirical claim.  Traceability requires the fields listed in
the empirical protocol section.  If any required field is missing, the
statement does not satisfy the definition of a traceable empirical
claim.  It must therefore be removed, explicitly labelled as an
internal-draft report, or replaced by a protocol statement with no
validated number.
\end{proof}

\begin{remark}[Current audit status]
The proof ledger above is an index toward closed proof units, and the
release audit certificate specifies the finite checks required before a
public release.  The present manuscript has internal proof-status
discipline, while the submission gate table below records which
external artefacts, especially bibliographic pinpointing and empirical
traceability, remain outside the internal proof system.
\end{remark}

\section{Submission Gate Audit}

This section records the current submission gates.  It is deliberately
conservative: a gate is marked complete only when the manuscript itself
or a reproducible external artefact supplies the required evidence.

\begin{longtable}{p{0.25\textwidth}p{0.25\textwidth}p{0.40\textwidth}}
\caption{Current submission gate status.}
\label{tab:submission-gates}\\
\toprule
\textbf{Gate} & \textbf{Current status} & \textbf{Evidence or remaining action}\\
\midrule
\endfirsthead
\toprule
\textbf{Gate} & \textbf{Current status} & \textbf{Evidence or remaining action}\\
\midrule
\endhead
Theorem status discipline
& Satisfied internally
& Each theorem-level claim is accompanied by a proof, an explicit
external theorem citation, or a downgrade to model law, conjecture, or
empirical protocol.\\
\addlinespace
Original six-claim audit
& Satisfied internally
& Tables~\ref{tab:status} and~\ref{tab:audit} classify the
six source claims and state the promotion obligations.\\
\addlinespace
Dimensional consistency
& Satisfied internally
& The dimensional consistency theorem separates $B_{14}$, older
$\RR^{24}$ language, $S^5$, $\RR^{3072}$, and $T^n$.\\
\addlinespace
Implementation-as-proof exclusion
& Satisfied internally
& Implementation paths are used only as correspondence; the methodological
proposition on implementation evidence forbids using code existence as a
universal proof.\\
\addlinespace
Empirical traceability
& Not yet satisfied for numerical claims
& No empirical number is asserted as validated.  Any future numerical
claim must attach dataset, metric, sample size, baseline, script path,
hardware when relevant, and frozen output.\\
\addlinespace
Bibliographic primary-source verification
& Partially satisfied
& High-impact theorem, complexity, and empirical-protocol references are
listed in Table~\ref{tab:external-source-register}.  Lower-risk
background references and the Kalman publisher page still require a
final release audit.\\
\addlinespace
External theorem invocation
& Satisfied internally; bibliographic pinpointing still open
& Table~\ref{tab:external-theorem-invocations} separates imported
theorem machinery from local hypothesis verification.  Final submission
should still add exact edition/page or theorem references if the venue
requires them.\\
\addlinespace
LaTeX build
& Mechanically checkable
& The paper must compile without undefined citations, undefined
references, or overfull boxes in the final release artefact.\\
\bottomrule
\end{longtable}

\begin{proposition}[Submission gate separation]
If all internal mathematical gates are satisfied but empirical
traceability or bibliographic primary-source verification is pending,
then the manuscript may be treated as a mathematically audited draft but
not as a final arXiv submission package.
\end{proposition}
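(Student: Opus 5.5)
The argument unpacks the two designations, ``mathematically audited draft'' and ``final arXiv submission package'', into the gate conditions recorded in Table~\ref{tab:submission-gates}. I will proceed by definitional case analysis. A manuscript counts as a mathematically audited draft when every internal gate in the table is marked satisfied. These are theorem status discipline, the original six-claim audit, dimensional consistency, and implementation-as-proof exclusion. A final arXiv submission package additionally requires every gate to be satisfied, including the external artefact gates: empirical traceability and bibliographic primary-source verification. I take these two readings as the formal content of the statement, in the same spirit as the other editorial propositions of the paper.

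For the positive half, I invoke the Safe revision criterion and the Release certificate soundness proposition. If all internal gates pass, then every theorem-level claim has a closed proof audit unit or an explicit external-theorem invocation. By Closure prevents hidden assumptions, no implementation fact or hidden modelling assumption enters any proof. So the manuscript satisfies ``no unsupported theorem'' and ``no implementation-as-proof step''. That is exactly what ``mathematically audited'' means here, and the draft may be treated as such.

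For the negative half, I split into two cases.
\begin{itemize}
\item If empirical traceability is pending, then by the proposition on untraceable empirical numbers, any numerical claim lacking traceability fields cannot appear as a validated result. Item 3 of the Peer-Review Completion Checklist, ``every empirical number is backed by a reproducible artefact'', is therefore unmet.
\item If bibliographic verification is pending, then checklist item 6 fails.
\end{itemize}
In either case the release audit certificate is incomplete, since its submission-gate record lists pending external artefacts. The manuscript therefore fails the defining condition of a final package.

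The main obstacle is that the statement is editorial, not mathematical, so rigor depends entirely on fixing the definitions. The step I expect to be delicate is arguing that internal gates really are independent of external ones. I would first cite Release certificate incompleteness and External invocation does not verify local hypotheses to show that passing internal proof checks cannot discharge an artefact-existence requirement. The two gate families then constrain logically separate records, and the dichotomy follows.
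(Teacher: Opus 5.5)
Your conclusion is right, and the separation you defer to your last paragraph is in fact the paper's whole proof. Internal gates concern the logical status of statements in the manuscript. Empirical traceability and bibliographic verification concern external artefacts: benchmark files, datasets, scripts, publisher and DOI records. Satisfying the former does not supply the latter.

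The problem is the machinery you put in front of that argument.
\begin{enumerate}
  \item Your claim that ``the release audit certificate is incomplete, since its submission-gate record lists pending external artefacts'' is false. Item 7 of the certificate is, by definition, ``the submission gate table stating which external artefacts remain pending''. Listing them is what that record consists of, not a failure of it. The paper instantiates the certificate in its current release-certificate record while both external gates are still open. Release certificate incompleteness, which you cite yourself, says that a complete certificate does not vouch for unverified empirical claims.
  \item That false claim also collides with your positive half. There you invoke Release certificate soundness, whose hypothesis is a complete certificate with all records passing. Under one and the same hypothesis you both use completeness and deny it.
  \item The proposition on untraceable empirical numbers only restricts what may be presented as validated. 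It does not show that checklist item 3 fails. A manuscript that simply omits untraceable numbers, as this one does, satisfies item 3 vacuously while the empirical gate is still pending.
  \item The internal ``theorem status discipline'' gate only asks that each theorem-level claim have a proof, a citation, or a downgrade. That is weaker than the closed proof audit units that the Safe revision criterion presupposes. The paper itself calls its ledger only ``an index toward closed proof units''.
\end{enumerate}

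None of this is fatal, because none of these steps is needed. Under your own definitions, the positive half follows at once from ``all internal gates satisfied''. The negative half follows at once from ``some external gate pending''. The only substantive point is the one the paper makes: no internal proof-status check can discharge an artefact-existence requirement. So the two designations genuinely come apart, and the hypothesis is not vacuous. Drop the certificate and checklist detours and lead with that separation.
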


\begin{proof}
Internal mathematical gates concern the logical status of statements in
the manuscript: whether results are proved, assumed, modelled,
conjectured, or empirical.  Empirical traceability and bibliographic
primary-source verification concern external artefacts: benchmark files,
datasets, scripts, publisher records, DOI records, and library records.
Satisfying the former does not logically supply the latter.  Therefore
the manuscript can be internally coherent while still requiring external
submission checks.
\end{proof}

\section{Promotion Criteria for Conditional BrainiaK Claims}

The following criteria state exactly what would be required to turn the
conditional or model-based BrainiaK claims into theorem-level results.
They are intentionally demanding.  Their function is to prevent future
drafts from promoting attractive analogies without proof.

\begin{definition}[Promotion package]
A promotion package for a conditional claim consists of:
\begin{enumerate}
  \item a formal domain and codomain for every map involved;
  \item exact definitions of all operations used in the claim;
  \item a list of hypotheses separating mathematical assumptions from
  empirical facts;
  \item a closed proof of the promoted theorem;
  \item if implementation correspondence is mentioned, a verified path
  and a statement of what the implementation realizes.
\end{enumerate}
\end{definition}

\begin{proposition}[Promotion package sufficiency]
If a conditional claim is accompanied by a promotion package and the
closed proof establishes the desired theorem from the stated hypotheses,
then the claim may be promoted to theorem status under those hypotheses.
\end{proposition}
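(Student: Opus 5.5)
The plan is to unfold the definitions and check that the conditions the editorial standard imposes on a theorem-level claim are exactly the components of a promotion package. Nothing here is new mathematics. The work is to match the items of the package against the editorial rule stated in the Proof-Audit Protocol, which says a theorem-level assertion is supported precisely when it has a closed proof audit unit $(S,H,P)$ and carries no implementation-as-proof step.

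First, I will build the audit unit from the package. Take $S$ to be the promoted statement. Item (1) supplies its formal domain and codomain, and item (2) its exact operations, so $S$ is a well-typed mathematical statement and not a named engineering claim; this is the point of the product membership criterion and of the proposition that implementation existence is not theorem evidence. Take $H$ to be the hypothesis list of item (3). Take $P$ to be the closed proof of item (4). By hypothesis $P$ establishes $S$ from $H$, so $(S,H,P)$ is a closed proof audit unit. The proposition that closure prevents hidden assumptions then shows that $P$ relies on no unstated implementation fact or hidden modelling assumption.

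Second, I will handle the two residual risks. The first is that an empirical fact might enter as if it were a mathematical premise. Item (3) explicitly separates mathematical assumptions from empirical facts, so the theorem is read \emph{under those hypotheses} and its conclusion is stated relative to $H$. The second is that implementation correspondence might function as proof. Item (5) restricts such correspondence to a verified path together with a description of what the path realizes. By the definition of implementation correspondence, this is not a premise unless it is extracted as an axiom into $H$, so it does not occur in $P$. With both risks covered, the revision containing the promoted claim still satisfies the safe revision criterion: the edit is status-preserving, because it supplies a closed proof. Hence the claim may be labelled a theorem under $H$.

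The main obstacle is the first step: checking that "the closed proof establishes the desired theorem" really yields closure in the technical sense of the Closed proof definition. In particular, one must confirm that every cited external theorem comes with verified local hypotheses. I would handle this by appealing to the proposition that external invocation does not verify local hypotheses. That proposition shows that any such local verification must itself appear among the dependency atoms of $P$, which is guaranteed by the existence of a dependency normal form for closed proofs.
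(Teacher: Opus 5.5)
Your argument is correct and is essentially the paper's own proof. The paper notes that the package supplies typed objects, exact operations, separated hypotheses and a closed proof, and that a theorem under hypotheses is by definition a statement proved from those hypotheses; so the claim may be promoted provided the hypotheses stay in its statement, and your route through the closure and safe-revision propositions is a more explicit version of that same unfolding. One correction concerns your last paragraph: closure is item~(4) of the package, so there is nothing to check. What discharges external hypotheses is the closed-proof definition itself, since every non-definitional claim used in $P$, including the antecedent of any cited external theorem, must lie in $H$, be an earlier result, or be shown in $P$. The proposition on external invocation cannot do this because it is purely negative, and dependency normal form cannot because it presupposes closure.
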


\begin{proof}
The promotion package supplies formal objects, exact operations, stated
hypotheses, and a closed proof.  A theorem under hypotheses is precisely
a statement proved from those hypotheses and definitions.  Therefore the
claim may be promoted, provided the theorem statement includes the
hypotheses rather than suppressing them.
\end{proof}

\begin{proposition}[Promotion package necessity for this manuscript]
Within the editorial standard of this manuscript, no conditional
BrainiaK claim may be promoted to theorem status without a promotion
package.
\end{proposition}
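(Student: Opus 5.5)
The plan is to argue by contraposition, following the pattern of the earlier editorial results (Safe revision criterion, Non-theorem labels do not discharge theorem obligations). Suppose a conditional BrainiaK claim $C$ is promoted to theorem status while lacking a promotion package. By the Definition of promotion package, at least one of its five components is then missing. I would split into five cases, one per item, and show that in each case the promoted statement fails the manuscript's standard: it is not the statement $S$ of a closed proof audit unit $(S,H,P)$, or it violates the implementation-as-proof exclusion.

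The cases run as follows. First, if no formal domain and codomain are given for some map, the statement is not a typed mathematical assertion. Its universal quantifiers have no domain, so by the Product membership criterion and the Proposition that implementation existence is not theorem evidence, no proof over that domain can exist. Second, if some operation lacks an exact definition, a proof step using it is neither a definition already stated nor an elementary inference, so the unit is not closed. Third, if hypotheses are not listed and separated from empirical facts, any assumption used in $P$ is unstated; Closure prevents hidden assumptions then shows the proof is not closed. Fourth, if no closed proof is supplied, the Safe revision criterion is violated directly. Fifth, if an implementation correspondence is mentioned without a verified path and realization statement, the claim rests on implementation existence. That is excluded by the Proposition that implementation existence is not theorem evidence and by the Proposition that implementation correspondence is not a proof-card substitute.

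To finish, I would invoke Non-theorem labels do not discharge theorem obligations. It shows that the prior conditional, model-law, or conjectural label supplies no substitute for the missing component. Hence every route to promotion without a package breaks the editorial rule ``no unsupported theorem''. The statement holds within that standard, which is the scope the proposition explicitly claims.

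The main obstacle is item five. Its failure does not obviously break closure when the implementation mention is merely decorative. I would handle it by reading the rule as forbidding any unverified implementation statement inside a theorem-level claim, and by citing the Implementation-as-proof exclusion gate, rather than trying to derive a logical contradiction.
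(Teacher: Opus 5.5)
Your five-case contrapositive is essentially the paper's proof, which runs through the five package items and says why each absence breaks the editorial standard: untyped statement, indeterminate operations, assumptions conflated with empirical facts, unsupported theorem, and untraceable implementation statement; your treatment of item five matches the paper's. Two of your added cross-citations need tightening: the Safe revision criterion is only a sufficiency result, so for item four cite the definition of a status-preserving edit instead; and for item three, hypotheses that are listed but not separated are not unstated, so the failure there is the conflation of assumptions with empirical facts rather than non-closure.
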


\begin{proof}
Without a formal domain and codomain, the statement is not mathematically
typed.  Without exact operations, algebraic or dynamical identities have
no determinate meaning.  Without separated hypotheses, the proof cannot
distinguish assumptions from empirical facts.  Without a closed proof,
the theorem is unsupported.  Without verified implementation
correspondence, any implementation statement is untraceable.  Each item
is therefore necessary for the editorial standard adopted here.
\end{proof}

\begin{longtable}{p{0.22\textwidth}p{0.34\textwidth}p{0.34\textwidth}}
\caption{Promotion requirements for the main conditional claims.}
\label{tab:promotion}\\
\toprule
\textbf{Claim} & \textbf{Current status} & \textbf{Promotion requirement}\\
\midrule
\endfirsthead
\toprule
\textbf{Claim} & \textbf{Current status} & \textbf{Promotion requirement}\\
\midrule
\endhead
Deployed crystals form a Frobenius algebra
& Conjecture plus finite reference model
& Specify the exact vector space or quotient; define
$(\mu,\eta,\delta,\epsilon)$; prove associativity, unit, coassociativity,
counit, and Frobenius compatibility.\\
\addlinespace
Curvature law
$\lambda_{\max}=R/(R+\kappa)$
& Model law
& Derive the law from a specified Gamma/CNS dynamical model or fit it as
an empirical law with uncertainty, dataset, and calibration protocol.\\
\addlinespace
CNS-Hopf transition
& Conditional Hopf proposition
& Specify the vector field $F$, equilibrium branch, Jacobian, eigenvalue
crossing, transversality, and Hopf nondegeneracy coefficient.\\
\addlinespace
Kalman convergence from semantic fibres
& Standard Kalman theorem under standard hypotheses
& Identify state, observation, noise model, observable quotient, and
detectability/stabilizability conditions.\\
\addlinespace
SSTD performance superiority
& Empirical protocol only
& Provide benchmark release, sample size, metric, baseline, hardware,
script path, frozen outputs, and statistical uncertainty.\\
\addlinespace
SpiderR flatness for deployed operators
& Flat idealization
& Define the operator-valued connection form and compute $\dd A+A\wedge
A$ or prove equivalent commutativity/path-independence conditions on the
actual state region.\\
\bottomrule
\end{longtable}

\section{Conclusion}

The peer-review-safe core of the fractal algebraic topology treatise is
the product-bundle account of $T^n$, the heterogeneous GCM metric, and
componentwise continuity.  Around this core, BrainiaK provides a rich
set of conditional and empirical structures: Frobenius-inspired
decomposition, Gamma/CNS curvature modelling, Kalman learning,
SSTD spectral morphisms, and SpiderR connection geometry.  These
structures are promising and mathematically articulable, but their
publication-quality presentation requires disciplined status labels:
proved theorem, conditional proposition, model law, conjecture, or
empirical claim.  With those labels in place, the manuscript becomes a
mathematically auditable draft.  Final arXiv submission still requires
the external gates stated above: primary-source bibliography checks and
traceable empirical artefacts for any numerical claim.

\nocite{Atiyah1967,Coecke2010,EilenbergSteenrod1952,Frobenius1878,Hatcher2002,Mohammad2018,Penrose1974,Rudin1976,Transformer2017,ZwaanRadvansky1998}
\bibliographystyle{plain}
\bibliography{TRAITE_TOPOLOGIE_ALGEBRIQUE_FRACTALE_EN}

\end{document}